\theoremstyle{definition}
\newtheorem{Def}{Definition}[section]
\newtheorem{Rem}[Def]{Remark}
\newtheorem*{Que}{Question}
\theoremstyle{plain}
\newtheorem{Thm}[Def]{Theorem}
\newtheorem*{Conj}{Conjecture}
\newtheorem{Lem}[Def]{Lemma}
\newtheorem{Prop}[Def]{Proposition}
\newtheorem{Cor}[Def]{Corollary}
\newtheorem{claim}{Claim}[section]
\numberwithin{equation}{section}
\newcommand{\AAa}{\mathbb{A}}
\newcommand{\BB}{\mathbb{B}}
\newcommand{\NN}{\mathbb{N}}
\newcommand{\RR}{\mathbb{R}}
\newcommand{\SSp}{\mathbb{S}}
\newcommand{\ZZ}{\mathbb{Z}}
\newcommand{\cA}{\mathcal{A}}
\newcommand{\cB}{\mathcal{B}}
\newcommand{\cC}{\mathcal{C}}
\newcommand{\cE}{\mathcal{E}}
\newcommand{\cF}{\mathcal{F}}
\newcommand{\cG}{\mathcal{G}}
\newcommand{\cI}{\mathcal{I}}
\newcommand{\cK}{\mathcal{K}}
\newcommand{\cL}{\mathcal{L}}
\newcommand{\cM}{\mathcal{M}}
\newcommand{\cN}{\mathcal{N}}
\newcommand{\cR}{\mathcal{R}}
\newcommand{\cS}{\mathcal{S}}
\newcommand{\cV}{\mathcal{V}}
\newcommand{\cZ}{\mathcal{Z}}
\newcommand{\scB}{\mathscr{B}}
\newcommand{\scC}{\mathscr{C}}
\newcommand{\scF}{\mathscr{F}}
\newcommand{\scH}{\mathscr{H}}
\newcommand{\scM}{\mathscr{M}}
\newcommand{\bfF}{\textbf{F}}
\newcommand{\mbfC}{\mathbf{C}}
\newcommand{\mbfD}{\mathbf{D}}
\newcommand{\mbfF}{\mathbf{F}}
\newcommand{\mbfI}{\mathbf{I}}
\newcommand{\mbfO}{\mathbf{O}}
\newcommand{\SCap}{\mathbf{SCap}}
\newcommand{\SCAP}{\mathbf{SCap}^*}
\newcommand{\IV}{\mathcal{IV}}
\newcommand{\set}[1]{\left\{#1\right\}}
\newcommand{\Ker}{\operatorname{Ker}}
\newcommand{\spt}{\operatorname{spt}}
\newcommand{\Clos}{\operatorname{Clos}}
\newcommand{\Int}{\operatorname{Int}}
\newcommand{\ind}{\operatorname{index}}
\newcommand{\vol}{\operatorname{vol}}
\newcommand{\diverg}{\operatorname{div}}
\newcommand{\graph}{\operatorname{graph}}
\newcommand{\Sym}{\operatorname{Sym}}
\newcommand{\injrad}{\operatorname{injrad}}
\newcommand{\dist}{\operatorname{dist}}
\newcommand{\Reg}{\operatorname{Reg}}
\newcommand{\Sing}{\operatorname{Sing}}
\newcommand{\Ric}{\operatorname{Ric}}
\newcommand{\Rm}{\operatorname{Rm}}
\newcommand{\Crit}{\mathrm{Crit}}
\newcommand{\gap}{\mathrm{gap}}
\newcommand{\eucl}{\mathrm{eucl}}
\title{Minimal hypersurfaces for generic metrics in~dimension~$8$}
\author{Yangyang Li}
\address{Department of Mathematics, Princeton University, Fine Hall, 304 Washington Road, Princeton, NJ 08540, USA}
\email{yl15@math.princeton.edu}
\author{Zhihan Wang}
\address{Department of Mathematics, Princeton University, Fine Hall, 304 Washington Road, Princeton, NJ 08540, USA}
\email{zhihanw@math.princeton.edu}
\date{}
\thanks{Y. Li was partially supported by NSF-DMS-1811840}
\begin{document}
\bibliographystyle{abbrvalpha}

\begin{abstract}
    We show that in an $8$-dimensional closed Riemmanian manifold with $C^\infty$-generic metrics, every minimal hypersurface is smooth and nondegenerate. This confirms a full generic regularity conjecture of minimal hypersurfaces in dimension eight. This also enables us to generalize many generic geometric properties of (Almgren-Pitts) min-max minimal hypersurfaces, previously only known in low dimensions, to dimension eight.
\end{abstract}
\maketitle
\tableofcontents
\section{Introduction}
    In Riemannian geometry, the theory of minimal surfaces has been the center since the 1760s, the era of L. Euler and J.-L. Lagrange. In particular, the well-known Plateau's problem on the existence of minimal surfaces with a given boundary had been open for over a century and was solved independently by J. Douglas \cite{douglas_solutionproblemplateau_1931} and T. Rad\'o \cite{rado_plateauproblem_1930} around 1930. Later, to solve the extended problem, especially higher-dimensional analogue, mathematicians have invented and developed geometric measure theory (GMT), which studies the geometric properties of sets via measure theory. The theory is successful in the sense that the Plateau's problem can be solved within a nontrivial homology class, if one accepts that solutions are not necessarily smooth submanifold, but ``area-minimizing rectifiable currents'' or ``stationary integral varifolds''. Moreover, the interior regularity theory also determines that the supports of these objects are indeed smooth if the ambient manifold has dimension between $3$ and $7$, and have singular sets of codimension at least $7$ in higher dimensions \cite{federerNormalIntegralCurrents1960, simonsMinimalVarietiesRiemannian1968, federerSingularSetsArea1970, bombieriMinimalConesBernstein1969, schoen_regularity_1981, wickramasekeraGeneralRegularityTheory2014, cheegerQuantitativeStratificationRegularity2013, naberSingularStructureRegularity2020}. For simplicity, we will abuse the notion ``minimal hypersurface'' to denote stationary integral varifolds with optimal regularity as above and their corresponding currents or supports.
    
    Built on these interior regularity results, the Almgren-Pitts min-max construction orignated from the work of F.J. Almgren Jr. \cite{almgren_homotopy_1962, almgren_theory_1965} and J. Pitts \cite{pitts_existence_1981} has resolved various problems related to minimal hypersurfaces in a closed Riemannian manifold $(M^{n+1}, g)$, especially in the recent decade due to the new ideas and techniques since the work of F.C. Marques and A. Neves \cite{marques_min-max_2014,marques_morse_2016,marques_existence_2017,marques_morse_2021}. Analogous to finding eigenvalues by the Reighley quotients, the theory identifies some critical values of the area functional via a variational method on multi-parameter sweepouts of (possibly singular) hypersurfaces,
    \[
        \set{\omega_p(M, g)}_{p \in \mathbb{N}^+}\,,
    \]
    which is called the volume spectra of $(M, g)$. Each $\omega_p(M, g)$ ($p$-width) can be realized by some stationary integral varifolds, and as in the previous regularity result, their supports are smooth in low dimensions and possibly singular in higher dimensions. Moreover, in the case where the closed manifold $M^{n+1} (3 \leq n+1 \leq 7)$ has a $C^\infty$ generic metric, more fascinating geometric properties of these min-max minimal hypersurfaces have been discovered, such as abundance, multiplicity one, density, Morse index estimates and so on \cite{irie_density_2018, marques_equidistribution_2019, marques_morse_2021, zhou_multiplicity_2019}. See Section \ref{Sec_Appl} for more details.
    
    These generic results heavily relies on the following structure theorem of minimal hypersurfaces by B. White \cite{white_space_1991,white_bumpy_2017}. 
    
    \begin{Thm}(White's Structure theorem)
        Given a closed Riemannian manifold $M$, an integer $k\geq 4$, and $\alpha\in (0, 1)$, let
        \begin{itemize}
            \item $\cG^{k, \alpha}(M)$ be the space of $C^{k,\alpha}$ Riemannian metrics on $M$;
            \item $\cM^{k, \alpha}(M)$ be the space of pairs $(g, \Sigma)$, where $g\in \cG^{k, \alpha}(M)$ and $\Sigma$ is a smooth minimal hypersurface in $(M, g)$.
        \end{itemize}
        Then the projection map $\Pi: \cM^{k, \alpha}(M) \rightarrow \cG^{k, \alpha}(M)$ to the first factor is a Fredholm map with Fredholm index zero.
        
        In particular, the regular values of $\Pi$, called \textbf{bumpy metrics}, form a generic subset $\cG^{k, \alpha}_{\mathrm{bumpy}}(M)$ of $\cG^{k, \alpha}(M)$. For any $g \in \cG^{k, \alpha}_{\mathrm{bumpy}}(M)$, if $\Sigma$ is a smooth $g$-minimal hypersurface, then $\Sigma$ is non-degenerate.
    \end{Thm}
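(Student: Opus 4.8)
The plan is to realize $\cM^{k,\alpha}(M)$ as the zero set of a smooth map of Banach manifolds whose fibrewise differential is Fredholm, to deduce from an implicit function theorem that $\cM^{k,\alpha}(M)$ is a Banach manifold, to identify the kernel and cokernel of $d\Pi$ with those of a Jacobi operator so as to compute $\operatorname{index}\Pi=0$, and then to invoke the Sard--Smale theorem; the nondegeneracy statement will fall out of the index-zero property at a regular value.

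First I would work locally near a fixed $(g_0,\Sigma_0)\in\cM^{k,\alpha}(M)$. By interior Schauder estimates $\Sigma_0$ is in fact a $C^{k+1,\alpha}$ submanifold, so every hypersurface $C^2$-close to it is a normal graph $\Sigma_u=\{\exp^{g_0}_x(u(x)):x\in\Sigma_0\}$ over a unique small section $u$ of the normal bundle $N\Sigma_0$. I would then define, near $(g_0,0)$ in $\cG^{k,\alpha}(M)\times C^{k,\alpha}(N\Sigma_0)$, the map $\Phi(g,u)\in C^{k-2,\alpha}(N\Sigma_0)$ given by the $g$-mean curvature vector of $\Sigma_u$ pulled back to $\Sigma_0$ via the graph parametrization and the normal projection. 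This $\Phi$ is $C^\infty$ (it is a smooth expression in $g$, $g^{-1}$, their first derivatives, and the $2$-jet of $u$, and $C^{k,\alpha}$ is a Banach algebra closed under such operations), and $\cM^{k,\alpha}(M)=\Phi^{-1}(0)$ near $(g_0,\Sigma_0)$. The partial differential $D_u\Phi_{(g_0,0)}$ is the Jacobi operator $L_{\Sigma_0}v=\Delta_{\Sigma_0}v+(|A_{\Sigma_0}|^2+\Ric_{g_0}(\nu,\nu))\,v$, a formally self-adjoint second-order elliptic operator, hence Fredholm of index $0$ from $C^{k,\alpha}$ to $C^{k-2,\alpha}$, with closed image of finite codimension $\dim\ker L_{\Sigma_0}$ and $\operatorname{coker}L_{\Sigma_0}$ identified with the space of Jacobi fields $\ker L_{\Sigma_0}$ through the $L^2(\Sigma_0)$ pairing.

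The crux is transversality: I would show $D\Phi_{(g_0,0)}$ is surjective. Since $\operatorname{image}L_{\Sigma_0}$ is closed of finite codimension, it suffices to rule out a nonzero Jacobi field $\phi$ that is $L^2(\Sigma_0)$-orthogonal to $\{\,D_g\Phi_{(g_0,0)}(h):h\in C^{k,\alpha}(\Sym^2T^*M)\,\}$. For this I would use conformal variations of the metric: for $h=2fg_0$, since $\Sigma_0$ is $g_0$-minimal its mean curvature changes to first order by a nonzero multiple of the normal derivative $\partial_\nu f|_{\Sigma_0}$ (the conformal transformation law of mean curvature; equivalently, a mixed second-variation-of-area computation), so that $\int_{\Sigma_0}\langle D_g\Phi_{(g_0,0)}(2fg_0),\phi\rangle=c\int_{\Sigma_0}\langle\phi,\nu\rangle\,\partial_\nu f$ with $c\neq0$ (here $\nu$ is a local unit normal and $\langle\phi,\nu\rangle$ the corresponding local component of $\phi$, well-defined also when $\Sigma_0$ is one-sided by passing to the normal line bundle). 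Picking a point where $\phi\neq0$ and $f$ supported near it with $\partial_\nu f|_{\Sigma_0}$ there equal to $\langle\phi,\nu\rangle$ times a nonnegative cutoff makes this integral nonzero, a contradiction. Hence $D\Phi_{(g_0,0)}$ is surjective; its kernel splits (a consequence of $L_{\Sigma_0}$ being Fredholm together with this surjectivity), and by the implicit function theorem $\cM^{k,\alpha}(M)$ is a smooth Banach manifold near $(g_0,\Sigma_0)$.

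Finally, at any $(g,\Sigma)\in\cM^{k,\alpha}(M)$ one has $T_{(g,\Sigma)}\cM^{k,\alpha}(M)=\ker D\Phi_{(g,\Sigma)}$, and $d\Pi_{(g,\Sigma)}$ is the restriction of $(h,v)\mapsto h$; therefore $\ker d\Pi_{(g,\Sigma)}\cong\ker L_\Sigma$ is finite dimensional, while $\operatorname{image}d\Pi_{(g,\Sigma)}=\{h:D_g\Phi_{(g,\Sigma)}(h)\in\operatorname{image}L_\Sigma\}$ has cokernel isomorphic to the image of the composite $C^{k,\alpha}(\Sym^2T^*M)\xrightarrow{D_g\Phi_{(g,\Sigma)}}C^{k-2,\alpha}\twoheadrightarrow\operatorname{coker}L_\Sigma$, which is all of $\operatorname{coker}L_\Sigma$ by the transversality above. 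Hence $\operatorname{index}d\Pi_{(g,\Sigma)}=\dim\ker L_\Sigma-\dim\operatorname{coker}L_\Sigma=\operatorname{index}L_\Sigma=0$, so $\Pi$ is a $C^\infty$ Fredholm map of index $0$, and the Sard--Smale theorem (with the usual device for handling the non-separability of Hölder spaces, e.g. passing to little Hölder spaces or to a separable slice) gives that the regular values $\cG^{k,\alpha}_{\mathrm{bumpy}}(M)$ are generic. If $g\in\cG^{k,\alpha}_{\mathrm{bumpy}}(M)$ and $\Sigma$ is a smooth $g$-minimal hypersurface, then $d\Pi_{(g,\Sigma)}$ is surjective, hence, being index $0$, injective, so $\ker L_\Sigma=0$ and $\Sigma$ is nondegenerate. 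I expect the main obstacle to be the transversality step: verifying rigorously that metric variations surject onto the cokernel of every Jacobi operator, together with the attendant care for non-two-sided hypersurfaces, for the regularity bookkeeping (the a priori $C^2$ minimal hypersurface being only as smooth as the $C^{k,\alpha}$ metric permits), and, for the sharpest form of the bumpy metrics theorem, for immersed hypersurfaces taken with multiplicity.
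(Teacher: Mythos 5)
The paper does not prove this theorem: it cites it to White (\cite{white_space_1991,white_bumpy_2017}) and quotes the conclusion. Your sketch is essentially a condensed version of White's own argument, and the overall structure — graph parametrization, implicit function theorem, transversality by conformal metric variations, index computation from the self-adjoint Jacobi operator, Sard--Smale — is the right one and is correct.

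One technical claim in your write-up is too strong: the assertion that $\Phi$ is $C^\infty$. The dependence of $\Phi(g,u)$ on $u$ passes through composition of the (only $C^{k,\alpha}$) metric $g$ with the graph map, and Nemytskii operators of $C^{k,\alpha}$ data are not Fr\'echet smooth between H\"older spaces — each derivative in $u$ spends a derivative of $g$. For a $C^{k,\alpha}$ metric (or in the little-H\"older scale, which White uses precisely for separability), $\Phi$ is $C^q$ for a $q$ depending on $k$, not $C^\infty$. This is harmless for the statement — Sard--Smale for a Fredholm map of index $0$ only requires $C^1$ regularity of the map, and you already invoke the little-H\"older device for separability — but the smoothness claim should be replaced with a precise finite-regularity statement. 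You correctly flag the remaining delicate points (one-sidedness, immersed/multiplicity cases, and the fact that an a priori $C^2$ minimal hypersurface has regularity limited by that of $g$); those are indeed where White spends the bulk of the technical effort, and they are not gaps in your plan so much as places where the execution would have to be careful.
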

    
    Intuitively, for bumpy metrics, the area functional looks like a Morse function, and the critical points of the area functional are ``discrete'', if there are ``only'' smooth minimal hypersurfaces. Therefore, in higher dimensions ($n+1 \geq 8$), the lack of generalized bumpy metrics for singular minimal hypersurfaces obstructs direct extension of the arguments for the geometric properties of min-max minimal hypersurfaces.
    
    Although previous works of the first author \cite{li_existence_2019, li_improvedMorseindex_2020} have shown that some of these properties can be obtained without the usage of bumpy metrics, in the current project, we are more interested in the other aspect of this generalization, i.e., the question of generic regularity. In \cite{10.2307/j.ctt1bd6kkq.37}, Yau has conjectured that a perturbation of the metric can eliminate the singularities of a given area minimizing hypersurface in its homology class. A more optimistic conjecture is the generic regularity in full as following.
    
    \begin{Conj}[Full generic regularity I]
        For a closed smooth manifold $M^{n+1}$ endowed with a (Baire) $C^\infty$-generic Riemannian metric $g$, every $g$-area homology minimizer is smooth.
    \end{Conj}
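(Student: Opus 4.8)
The conjecture is open in general, so the plan is to prove it --- together with the stronger statements of the abstract --- in the case $n+1=8$, where the singular set of an area-minimizing hypersurface is a finite set of isolated points. The scheme is a Baire-category argument. For $\Lambda>0$ let $\cG_\Lambda$ be the set of metrics all of whose homologically area-minimizing hypersurfaces of mass $\le\Lambda$ are smooth; since $H_n(M;\ZZ)$ is finitely generated and only finitely many classes have minimizing mass $\le\Lambda$, the intersection $\bigcap_{\Lambda\in\NN}\cG_\Lambda$ is exactly the set of metrics for which every homology minimizer is smooth. It therefore suffices to show that each $\cG_\Lambda$ contains a subset open and dense in some $C^{k,\alpha}$ topology ($k\ge 4$), and then to pass to the $C^\infty$ category. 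Openness is the comparatively routine part, resting on compactness and regularity for area-minimizing currents (a smooth multiplicity-one limit forces smooth graphical convergence by Allard's theorem, and higher multiplicity is controlled by the homology data). The real content is density: given a metric $g$ with an area-minimizer $\Sigma$ carrying a singular point $p$, build an arbitrarily $C^\infty$-small perturbation $g'$ whose minimizers in the finitely many relevant classes have strictly fewer singular points.

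To do this I would first pin down the local model at $p$. After blow-up, $\Sigma$ is asymptotic to an area-minimizing hypercone $\mathcal{C}\subset\RR^8$ over a compact minimal hypersurface $\Gamma\subset S^7$; the structural miracle special to this dimension is that such a cone is \emph{strictly minimizing} and \emph{strictly stable}, so that $\Sigma$ decays to $\mathcal{C}$ polynomially with a definite positive rate. Strict minimality lets one invoke Hardt--Simon's theorem to obtain a smooth foliation of a punctured neighborhood of the origin in $\RR^8$ by area-minimizing hypersurfaces $\{S_t\}_{t\in\RR}$ with $S_0=\mathcal{C}\setminus\{0\}$ and $S_t$ smooth and strictly on one side of $\mathcal{C}$ for $t\neq 0$; strict stability feeds Leon Simon's asymptotic analysis, which governs the decay rate by the spectral gap of the Jacobi operator on $\Gamma$ and shows that, after rescaling, $\Sigma\cap B_r(p)$ sits trapped between two nearby leaves of this foliation.

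The desingularization step then perturbs the metric only inside $B_r(p)$ --- say by $g'=\varphi^2 g$ with $0<\varphi\le 1$, $\varphi\equiv 1$ near $\partial B_r(p)$, and $\varphi$ strictly less than $1$ on the chosen side of $\mathcal{C}$ --- so that motion to that side is strictly area-decreasing and $\mathcal{C}$ is no longer a valid local minimizer for $g'$ (this is in the spirit of N.~Smale's desingularization of a single homology minimizer). Using the smooth minimizing leaves $S_t$ as barriers together with the strong maximum principle, one argues that the $g'$-minimizer $\Sigma'$ in the same class --- which still exists, and, by the decay estimate with a careful choice of $r$ and $\varphi$, still lies in the foliated region --- cannot meet $\mathcal{C}$ inside $B_r(p)$, hence is trapped strictly between leaves and is smooth near $p$ by elliptic regularity, while outside $B_r(p)$ nothing changed and mass bounds preclude new singularities. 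Iterating over the finitely many singular points of $\Sigma$, then over the finitely many relevant homology classes, and finally diagonalizing over $\Lambda\in\NN$ produces a residual set of $C^\infty$ metrics whose homology minimizers are all smooth; intersecting with the bumpy metrics of White's Structure Theorem gives nondegeneracy, and running the same local analysis with ``homology minimizer'' replaced by ``minimal hypersurface with optimal regularity'' (the tangent cones are again strictly stable, and the foliation/barrier arguments are unchanged) yields the full assertion of the abstract.

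The main obstacle is the desingularization step, and within it the compatibility of two quantitative inputs: the rate at which $\Sigma$ approaches its tangent cone at $p$ (controlled by the spectral gap of the Jacobi operator on $\Gamma$, i.e.\ by strict stability) and the width of the band of Hardt--Simon leaves into which an admissible $C^\infty$-small perturbation localized at scale $r$ can force $\Sigma'$. One must show that taking the perturbation small enough and $r$ small enough genuinely lands $\Sigma'$ on a smooth leaf --- rather than merely producing a nearby, possibly still singular, minimizer --- and must do so with enough uniformity that the induction over singular points, and the subsequent passage to all homology classes and all minimal hypersurfaces, go through without the perturbations accumulating out of control.
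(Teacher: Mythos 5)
Your sketch is, in essence, N.~Smale's $1993$ desingularization argument packaged into a Baire-category shell, whereas the paper's route is entirely different: it makes no direct use of the Hardt--Simon foliation or of strict minimality of the tangent cone. Instead the paper introduces an integer-valued \emph{singular capacity} $\SCAP$ on minimal hypersurfaces that is upper semi-continuous under varifold convergence (Theorem~\ref{Thm_Pre_SCAP under converg w multip}), proves a Sard--Smale-type statement that the set of metrics for which every LSMH is \emph{semi-nondegenerate} is generic (Corollary~\ref{Cor_Canon Neighb_Global generic semi-nondeg}, via canonical neighborhoods and a countable decomposition \`a la Edelen), and then shows that for such metrics a further generic perturbation strictly decreases the maximal singular capacity over all LSMH of bounded area and index (Lemma~\ref{Lem_SCap(g; Lambda, I) strict decrease}, fed by the Regular Deformation Theorem~\ref{Thm_Reg Deform Thm_OneTwo sided}). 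Iterating and diagonalizing over area/index bounds gives Theorem~\ref{Thm_Main}, from which Conjecture I (and II) in dimension $8$ drop out. So even where your strategy overlaps in spirit (localize, perturb, reduce singularities, iterate), the mechanism of control is disjoint from the paper's.

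There is, however, a genuine gap at the point where you try to leverage the same argument ``with `homology minimizer' replaced by `minimal hypersurface with optimal regularity'.'' The Hardt--Simon theorem produces a smooth minimizing foliation precisely for \emph{strictly area-minimizing} hypercones, and Leon Simon's decay analysis as you invoke it relies on strict stability plus minimizing structure. For a general locally stable minimal hypersurface (e.g.\ one produced by min-max), the tangent cone at a singular point is only a \emph{stable} minimal hypercone in $\RR^8$; it need not be area-minimizing, let alone strictly so. Hence there is no Hardt--Simon foliation to serve as barriers, and your conformal one-sided perturbation $g'=\varphi^2 g$ has no obvious trapping mechanism, since a non-minimizing $\Sigma'$ is not constrained to respect barriers that are only minimizing. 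The paper avoids this entirely: the perturbation step (Theorem~\ref{Thm_Reg Deform Thm_OneTwo sided}) is based on analysis of the induced Jacobi field and its asymptotic rate at the singularity, which makes sense for any LSMH. Even when you restrict to area minimizers, your density argument is underspecified in a way the paper addresses head-on: after the first local perturbation, you cannot conclude ``outside $B_r(p)$ nothing changed,'' since the new minimizer is a global object and can develop (or shift) singularities elsewhere, or a different minimizer in the same class can become competitive. The singular capacity and its upper semi-continuity (together with the finiteness of $\cM^{\max}$, Lemma~\ref{Lem_cM^(max) finite}) are exactly the bookkeeping that makes the induction close; without an analogue, your iteration over singular points and over homology classes is not obviously terminating or non-accumulating, which is the ``main obstacle'' you yourself flag but do not resolve.
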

    
    Since the min-max construction doesn't produce area minimizers generally, it is natural to extend this conjecture to min-max minimal hypersurfaces, i.e., those generated from the min-max theory.
    
    \begin{Conj}[Full generic regularity II]
        For a closed smooth manifold $M^{n+1}$ endowed with a (Baire) $C^\infty$-generic Riemannian metric $g$, every min-max $g$-minimal hypersurface is smooth.
    \end{Conj}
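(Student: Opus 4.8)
The plan is to prove the stronger assertion announced in the abstract --- for a $C^\infty$-generic metric on a closed $M^8$ every minimal hypersurface is smooth and nondegenerate --- which contains both conjectures in dimension eight as special cases. The first move is structural: by the interior regularity theory quoted above, when $n+1 = 8$ the singular set $\Sing(\Sigma)$ of any minimal hypersurface $\Sigma \subset (M, g)$ is discrete, hence finite once $\vol(\Sigma)$ is bounded, and at each $p \in \Sing(\Sigma)$ every tangent cone is a \emph{regular} stationary minimal hypercone $C \subset \RR^8$ --- smooth and minimal away from its vertex. For the hypersurfaces relevant here --- homological area minimizers and Almgren--Pitts min-max hypersurfaces --- these cones are moreover area minimizing (for min-max ones because the almost-minimizing property passes to tangent cones), hence by Hardt--Simon \emph{strictly minimizing} and \emph{strictly stable}, which is exactly what makes them controllable. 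So the problem localizes: it suffices to remove a single isolated singularity modeled on such a cone by an arbitrarily small metric perturbation supported near $p$, uniformly enough to run a Baire-category argument on the space of metrics.

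For the local step I would use the Hardt--Simon foliation: the complement of a strictly minimizing regular hypercone $C \subset \RR^8$ is foliated by smooth, properly embedded, area-minimizing hypersurfaces asymptotic to $C$ --- canonical one-sided smoothings of $C$. One transplants a rescaled leaf of this foliation into $(M, g)$ near $p$: after blow-up, $(M, g)$ is nearly Euclidean and $\Sigma$ is nearly $C$ on a small ball $B_r(p)$, and one perturbs $g$ inside $B_r(p)$ --- conformally, or by an ambient deformation adapted to $C$ --- so that a scaled Hardt--Simon leaf becomes minimal for the new metric and is the \emph{unique} minimal hypersurface in $B_r(p)$ staying close to $\Sigma$; because the leaf lies strictly on one side of $C$ and $C$ is strictly minimizing, the perturbation can be arranged to strictly decrease area among all hypersurfaces threading $B_r(p)$, which both produces the smooth replacement and excludes other nearby configurations. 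The analytic crux is glueing the leaf to $\Sigma$ across $\partial B_r(p)$: a leaf approaches $C$ only at the indicial rate fixed by the Jacobi operator of $C$, so the matching --- and the smallness of the metric perturbation needed --- is governed by the strict stability of $C$, i.e.\ the positivity of the spectral ``Simons-type'' quantity underlying $\SCap$.

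To globalize, stratify the space of metrics by the area and the Morse index of a putative singular minimal hypersurface. For min-max hypersurfaces the index is bounded by the Marques--Neves / Li estimates; for a general singular minimal hypersurface in dimension eight the index is finite because strict stability of the tangent cones confines all negative directions of the Jacobi form to small neighborhoods of the finitely many singular points, the complement being a compact smooth piece. A compactness theorem for minimal hypersurfaces in $8$-manifolds with bounded area and index --- a dimension-eight analogue of Sharp's compactness theorem, allowing the finitely many isolated singularities --- then shows that in each stratum the set $\cB_{A,I}$ of metrics admitting a singular minimal hypersurface with $\vol \le A$ and index $\le I$ is closed, while the local desingularization, applied at each singular point of a limiting hypersurface, shows $\cB_{A,I}$ has empty interior. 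Hence $\bigcup_{A,I}\cB_{A,I}$ is meager; its complement is residual and consists of metrics all of whose minimal hypersurfaces are smooth --- hence compact and of finite index --- and intersecting with White's residual set $\cG^{k,\alpha}_{\mathrm{bumpy}}(M)$ upgrades smoothness to nondegeneracy, once one checks via the same compactness that smooth minimal hypersurfaces cannot degenerate into a singular one.

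The step I expect to be the main obstacle is precisely the interface between this local perturbation and the global variational structure. The desingularizing deformation near one singular point must not manufacture new singular minimal hypersurfaces of comparable area and index elsewhere, and for Conjecture II it must leave the width $\omega_p$ unchanged and still attained --- so one genuinely needs the perturbation to be strictly area-decreasing where it acts and essentially invisible outside it, which is what strict minimality of the cone is for. Tightly bound up with this are: ruling out, or separately handling, hypercones in $\RR^8$ that fail to be strictly stable or strictly minimizing; coping with possibly higher multiplicity of min-max hypersurfaces, since multiplicity one is not available a priori in this dimension; and making the perturbations for the countably many area/index levels cohere. This coupling of a delicate Hardt--Simon-type local construction with Almgren--Pitts min-max and quantitative compactness in the presence of singularities is what makes the full, uncountable statement harder than the min-max conjecture alone.
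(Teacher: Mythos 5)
Your proposal takes a genuinely different route from the paper and has a gap that is fatal for the min-max case (and for general locally stable minimal hypersurfaces, which is what the paper actually handles). The central move you propose is to localize at a singular point $p$, identify the tangent cone $\mbfC_p\Sigma$, and appeal to the Hardt--Simon foliation of $\RR^8 \setminus \mbfC$ by smooth area-minimizing leaves to transplant a smoothing. That foliation exists because a non-planar area-minimizing regular cone is \emph{strictly minimizing}. But you justify the minimizing property of the tangent cone for min-max hypersurfaces by asserting that ``the almost-minimizing property passes to tangent cones''; this is false. The Almgren--Pitts almost-minimizing property, combined with Schoen--Simon, gives you that the replacement / tangent cone at a singular point is a \emph{stable} minimal hypercone, not an area-minimizing one, and whether every stable minimal hypercone in $\RR^8$ is (strictly) area-minimizing is an open problem. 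Without area-minimizing, there is no Hardt--Simon foliation and the local desingularization step has nothing to hang on. The paper explicitly positions itself against exactly this obstruction: Smale's 1993 generic regularity theorem for homology minimizers does use Hardt--Simon, and the Chodosh--Liokumovich--Spolaor result uses one-sidedness from positive Ricci; the whole point of the present paper is to drop these assumptions and handle general \emph{stable} cones.

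The paper's actual mechanism for the local removal of singularities is spectral rather than geometric-variational. In place of the Hardt--Simon leaf, the paper works with the indicial roots $\gamma_j^{\pm}(\mbfC)$ of the Jacobi operator of the cone and the notion of a Jacobi field of \emph{slower growth} (decay rate $\geq \gamma_2^+$ at each singular point). The key dichotomy (Theorem~\ref{Thm_App_Wang20 Induced Jac field}/Theorem~\ref{Thm_Reg Deform Thm_OneTwo sided}) is: when a sequence of nearby LSMH converges, the induced Jacobi field on the limit either has a slow-growing component at some singularity, in which case the nearby hypersurfaces are regular near that point, or the induced Jacobi field is of slower growth everywhere, in which case the limit is \emph{semi-nondegenerate}-degenerate. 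Genericity of semi-nondegeneracy (Corollary~\ref{Cor_Canon Neighb_Global generic semi-nondeg}), proved by a hand-unwrapped Sard--Smale argument over \emph{canonical pseudo-neighborhoods}, replaces White's bumpy metric theorem in the singular setting. The global induction is then run on the \emph{singular capacity} $\SCAP$, a weighted count of singular points that is upper semi-continuous under varifold convergence (Theorems~\ref{Thm_Pre_SCap exist}, \ref{Thm_Pre_SCAP under converg w multip}); Lemma~\ref{Lem_SCap(g; Lambda, I) strict decrease} shows one can always perturb a generic metric to strictly decrease $\SCAP$, and iterating drives it to zero. This is structurally similar to your meager-strata argument, but the nowhere-density of the bad set has to be earned through the semi-nondegeneracy machinery, not asserted from a local gluing.

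A secondary gap: you note that multiplicity one is not available a priori in dimension $8$, but you do not say how your compactness statement for $\cB_{A,I}$ handles a limit of index-$I$ hypersurfaces converging with multiplicity $m \geq 2$. The paper needs a genuine new ingredient here: the singular sheeting theorem of Appendix~\ref{Sec_App_Sing Cap} (answering Ilmanen's question in dimension $8$), which shows that a connected stable hypersurface converging to a cone with multiplicity $m$ forces $m = 1$, and Theorem~\ref{Thm_Pre_SCAP under converg w multip}~(iv), which says that multiplicity $\geq 2$ convergence to a singular limit already strictly decreases $\SCAP$. Your proposal cannot sidestep this without a separate argument.
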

    
    In the following, we always assume that $M^8$ is a closed smooth manifold of dimension $8$. Given a Riemannian metric $g$, an embedded smooth hypersurface $\Sigma\subset (M^{n+1}, g)$ is called a \textbf{locally stable minimal hypersurface (LSMH)}, if 
    \[ 
        \scH^n(\Sigma)<+\infty\,, \quad\scH^{n-2}(\bar{\Sigma}\setminus \Sigma) = 0\,,
    \]
    and $\forall p\in M$, there's a neighborhood $U_p$ in which $\Sigma$ is stable.
    
    The goal of this paper is to prove the following theorem.
    
    \begin{Thm} \label{Thm_Main}
        Let $M$ be a closed smooth manifold of dimension $8$, $k\geq 4$ or $k = \infty$, $\alpha \in (0, 1)$. Then for $C^{k,\alpha}$ generic metric $g$ on $M$, every embedded locally stable minimal hypersurface is regular and nondegenerate. 
    \end{Thm}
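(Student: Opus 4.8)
The plan is to combine two ingredients: (i) a local desingularization procedure for singular minimal hypersurfaces in dimension $8$ (where the singular set of an LSMH is a finite set of points, each modeled on a regular minimal hypercone in $\RR^8$), and (ii) a White-type transversality/Sard-Smale argument applied to the smooth part. Since the statement is a genericity statement, I would first reduce it — via a Baire category argument over a countable exhaustion by mass bounds and a net of metrics — to showing that, given any metric $g_0$, any $\varepsilon>0$, and any mass bound $\Lambda$, one can find $g$ with $\|g-g_0\|_{C^{k,\alpha}}<\varepsilon$ such that there are \emph{no} singular LSMHs of mass $\le\Lambda$ for $g$, and moreover all smooth LSMHs of mass $\le\Lambda$ are nondegenerate; standard compactness (Schoen--Simon for stable hypersurfaces in dimension $\le 7$, together with the discreteness of singular points in dimension $8$) then makes the ``good'' set of metrics relatively open in each stratum, so countable intersection gives the generic set.

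The first and main step is the desingularization. Let $\Sigma$ be a singular LSMH for $g_0$ with a singular point $p$; near $p$, $\Sigma$ is asymptotic to a minimizing (hence, by Simons and the classification, strictly stable and strictly minimizing) regular hypercone $C\subset\RR^8$. The idea — going back to Hardt--Simon and used in the area-minimizing perturbation literature of Liokumovich--Wang, Chodosh--Mantoulidis--Schulze, and Edelen--Li--Wang — is that the Hardt--Simon foliation of $\RR^8\setminus C$ by smooth minimal hypersurfaces $\{S_t\}$ provides a one-parameter family of smooth models that resolve $C$. One performs a conformal-type or normal-graph perturbation $g_0\rightsquigarrow g$ supported in a small ball around $p$ which, at the level of the minimal surface equation, pushes $\Sigma$ off its cone toward one leaf $S_t$ of the foliation, producing a new smooth minimal hypersurface $\Sigma'$ for $g$ that agrees with $\Sigma$ outside $B_r(p)$ and is a small $C^{2,\alpha}$ graph elsewhere. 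The quantitative input is that the perturbation can be taken with $C^{k,\alpha}$-norm $\le C r^{\gamma}$ for some $\gamma>0$, so that finitely many disjoint small balls (one per singular point — finitely many because $\Sing\Sigma$ is $0$-dimensional and $\bar\Sigma$ is compact) handle all singularities of one $\Sigma$ at total metric cost $<\varepsilon/2$. The genuinely hard part is making this uniform over \emph{all} LSMHs of mass $\le\Lambda$ simultaneously: one must argue that after perturbing to kill the singularities of one hypersurface, no \emph{new} singular LSMH of mass $\le\Lambda$ is created, and that the process terminates. Here I would use a dimension-reduction / induction on the ``singular index'' or on a suitable stratification of the (compact) space of LSMHs with mass $\le\Lambda$, exploiting that the Hardt--Simon perturbation strictly decreases a well-chosen semicontinuous quantity (e.g.\ the number of singular points counted over the finitely many minimal hypersurfaces that can occur, or the total density defect), combined with Schoen--Simon compactness to guarantee no escape to infinitely many singular points. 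This local-surgery-plus-compactness loop is the crux; everything should be set up so that it is driven by the strict stability and strict minimality of the $7$-dimensional cone, which in dimension $8$ holds for \emph{every} minimal hypercone by the work cited in the introduction.

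The second step, once a metric has only smooth LSMHs of mass $\le\Lambda$, is to upgrade to nondegeneracy. This is exactly White's structure theorem applied within the class of smooth minimal hypersurfaces: the projection $\Pi:\cM^{k,\alpha}(M)\to\cG^{k,\alpha}(M)$ is Fredholm of index zero, so bumpy metrics are generic, and for a bumpy metric every smooth minimal hypersurface — in particular every smooth LSMH — is nondegenerate. One has to check that the desingularizing perturbations can be taken to also land in $\cG^{k,\alpha}_{\mathrm{bumpy}}(M)$, which is automatic since the latter is generic and the former are produced by a construction that only needs an open dense set of admissible perturbations: intersect the two conditions. Finally, assemble the pieces: for each $(\Lambda,\varepsilon)$ in a countable cofinal family, the set of metrics with no singular LSMH of mass $\le\Lambda$ and all such LSMHs nondegenerate is open (compactness) and dense (Steps 1--2), and the countable intersection over $\Lambda\to\infty$ is the desired $C^{k,\alpha}$-generic set; the $C^\infty$ case follows by the usual argument of taking a $C^\infty$ metric in each $C^{k,\alpha}$-open set and diagonalizing, or by noting the $C^\infty$ topology refines all the $C^{k,\alpha}$ ones. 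I expect Step 1's uniformity/termination to require the bulk of the technical work, with the interplay between the Hardt--Simon foliation, the minimal surface equation in a perturbed metric, and Schoen--Simon curvature estimates being where the real difficulties lie.
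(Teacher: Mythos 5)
Your proposal and the paper share the same high-level skeleton (Baire category reduction, induction on a mass/index cut-off, a semicontinuous integer quantity that strictly drops under perturbation, White's bumpy theorem at the end), but the engine you propose for Step 1 does not work in the setting of this theorem, and this is a genuine gap.

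You invoke the Hardt--Simon foliation to ``push $\Sigma$ off its cone toward one leaf $S_t$,'' and you justify this by asserting that the tangent cone $C$ is ``strictly stable and strictly minimizing.'' Neither assertion holds here. The theorem is about \emph{locally stable} minimal hypersurfaces, not area-minimizers, and in dimension $8$ the tangent cone at a singular point of an LSMH is only known to be a regular stable minimal hypercone; stability does \emph{not} imply area-minimizing (let alone strictly minimizing), and the Hardt--Simon one-sided foliation by smooth minimal leaves is only available for strictly minimizing cones. This is exactly why the earlier results you are implicitly modeling on (Smale, Chodosh--Liokumovich--Spolaor, and the authors' own earlier paper) required either $H_7(M)\neq 0$ or $\mathrm{Ric}>0$: in those settings one can restrict attention to homology-minimizers (or first-width hypersurfaces that are minimizers inside a mean-convex region), where the Hardt--Simon machinery applies. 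For a general stable hypersurface near a non-minimizing stable cone, there is no foliation to push toward, and even when the cone is minimizing, a stable non-minimizing hypersurface need not track a foliation leaf under a metric perturbation --- it can simply vanish, or remain singular with a nearby singularity.

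The paper replaces your Step~1 with a fundamentally different mechanism that does not require minimizing structure: it extends White's structure theorem to singular LSMHs by introducing canonical (pseudo-)neighborhoods $\cL^{k,\alpha}$ and a notion of \emph{semi-nondegeneracy} (triviality of the space of Jacobi fields of slower growth), proves via a hand-rolled Sard--Smale argument (Section~8, with the growth estimates of Section~7) that semi-nondegeneracy is $C^{k,\alpha}$-generic, and then uses the Wang regular-deformation theorem (Theorem~2.16 / Appendix~D): if a sequence of singular LSMHs converges to a semi-nondegenerate one with the same density at corresponding singular points, the induced Jacobi field must have growth rate $\gamma_1^\pm$ at some singularity, forcing the approximating hypersurfaces to be \emph{smooth} near that point for infinitely many $j$. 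Coupled with the upper semicontinuity of the singular capacity $\SCAP$ (your ``semicontinuous quantity,'' but built out of cone-indexed weights rather than a raw count), this gives the strict drop in Lemma~4.5. Your instinct about the architecture --- a strictly decreasing integer invariant, compactness for openness, the danger of creating new singular hypersurfaces, bumpy metrics at the end --- is all present in the paper, but the core analytic input is the Jacobi-field/semi-nondegeneracy machinery, not a Hardt--Simon desingularization. Without replacing that core, your Step~1 does not go through.
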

    \begin{Rem}
        This can be viewed as a weak generalization of B. White's structure theorem. In fact, we only need to verify the case where $k$ is finite. As in \cite{white_bumpy_2017}, we can use similar arguments therein or \cite[Section~7.1]{ambrozio_compactness_2017} to extend our result to the case $k = \infty$.
    \end{Rem}
    
    Note that every minimal hypersurfaces obtained by Almgren-Pitts min-max theory (or Allen-Cahn min-max theory) is locally stable. Therefore, we confirm the full generic regularity conjecture II in dimension eight.
    
    \begin{Cor}
        Let $M$ be a closed smooth manifold of dimension $8$, $k\geq 4$ or $k = \infty$. Then for $C^{k,\alpha}$ generic metric $g$ on $M$, every minimal hypersurface generated from Almgren-Pitts min-max theory (or Allen-Cahn min-max theory) is regular and nondegenerate.
    \end{Cor}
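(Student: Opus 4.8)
The plan is to reduce the corollary directly to Theorem~\ref{Thm_Main} by showing that every min-max minimal hypersurface is an embedded locally stable minimal hypersurface (LSMH) in the sense defined above; the generic set of metrics furnished by Theorem~\ref{Thm_Main} then does the job. So the work is entirely in verifying the three defining properties of an LSMH --- finiteness of the $\scH^n$-mass, $\scH^{n-2}$-negligibility of the singular set, and local stability --- for the varifolds produced by the two min-max schemes.

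First I would treat the Almgren--Pitts case. Let $V$ be a stationary integral varifold realizing a $p$-width $\omega_p(M,g)$. By Pitts' combinatorial argument $V$ is almost minimizing in sufficiently small annuli, so by the Schoen--Simon regularity theorem together with the dimension constraint $n+1=8$, the support $\spt\|V\|$ is a smooth embedded minimal hypersurface off a finite set $S=\Sing(V)$. Put $\Sigma:=\spt\|V\|\setminus S$; then $\scH^n(\Sigma)=\|V\|(M)\le\omega_p(M,g)<\infty$, and $\bar\Sigma\setminus\Sigma=S$ satisfies $\scH^{n-2}(S)=0$ since $n-2=5>0$ and $S$ is finite. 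For local stability: at a regular point $q\in\Sigma$ a sufficiently small geodesic ball $U_q$ meets $\Sigma$ in a minimal hypersurface with positive bottom of the spectrum of the Jacobi operator, hence stable; at a singular point $q\in S$, the almost-minimizing property in a small annulus centered at $q$ exhibits $V$ there as a varifold limit of a minimizing sequence, so $\Sigma$ is stable on a punctured ball $U_q\setminus\{q\}$, which in view of $\scH^{n-2}(\{q\})=0$ is exactly the required local stability. Thus $V$ is (the varifold associated to) an embedded LSMH.

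The Allen--Cahn case is handled the same way after substituting the appropriate regularity inputs: by the work of Hutchinson--Tonegawa, Tonegawa--Wickramasekera and Guaraco (and Gaspar--Guaraco for the spectral version), the limit interface is an integral stationary varifold whose regular part is smooth and embedded with a finite singular set in dimension $8$, and the uniform Morse-index bound along the min-max sequence passes to the limit and makes the regular part stable away from finitely many points --- again precisely local stability. Granting both cases, the corollary follows at once from Theorem~\ref{Thm_Main}: for $C^{k,\alpha}$-generic $g$ every embedded LSMH is regular and nondegenerate, so in particular every Almgren--Pitts (resp.\ Allen--Cahn) min-max minimal hypersurface is smooth, closed and nondegenerate for such $g$. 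I expect the only delicate point to be the verification of local stability at the a priori possible singular points, but this is supplied verbatim by the almost-minimizing property (resp.\ the index bound) built into the min-max construction, so no analysis beyond Theorem~\ref{Thm_Main} is needed.
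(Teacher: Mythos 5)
Your proposal is correct and takes essentially the same route as the paper: the paper's argument is the one-line observation that Almgren--Pitts (and Allen--Cahn) min-max minimal hypersurfaces are embedded LSMH, after which Theorem~\ref{Thm_Main} immediately gives the conclusion. You have simply spelled out the verification of the LSMH properties (finite $n$-mass, $\scH^{n-2}$-null singular set, local stability via almost-minimizing in annuli / Morse-index bounds plus Schoen--Simon regularity in dimension $8$) that the paper leaves implicit.
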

    
    Historically, in dimension $8$, Yau's generic smoothing conjecture of area minimizing hypersurface was affirmed by N. Smale  \cite{smaleGenericRegularityHomologically1993} in 1993, using the Hardt-Simon analysis of area-minimizing hypercones in $\mathbb{R}^8$ \cite{hardtAreaMinimizingHypersurfaces1985} (Mazzeo-Smale \cite{mazzeo_perturbing_1994} also extended this to higher dimensions with technical assumptions on the singularities in 1994). Recently, Chodosh-Liokumovich-Spolaor \cite{chodoshSingularBehaviorGeneric2020} showed that in the case of positive Ricci curvature, the first-width min-max minimal hypersurfaces can be smoothened via a metric perturbation as well. These have implied that in a generic closed $8$-dimensional Riemannain manifold with suitable topological assumption ($H_7(M^8) \neq 0$) or geometric assumption ($\mathrm{Ric}(M^8) > 0$) as above, there exists at least one smooth minimal hypersurface. Inspired by their work, we \cite{liwang2020generic} removed those assumptions and proved the generic existence of a smooth minimal hypersurface in $M^8$.
    
    The main theorem herein genuinely improves our previous work on the generic existence. Unlike our previous proof, the approach herein is more systematic, self-contained, and independent of the min-max theory in the course.
    
    \subsection{Outline of the proof}

        The main idea is to control and even reduce the \textit{singular capacity} (See Section \ref{Sec_Prelim}) of minimal hypersurfaces, an integer-value geometric quantity which counts singular points with weights. 

        Given $(M^8, g)$, for any fixed $\Lambda > 0$ and $I > 0$, we would consider all the minimal hypersurfaces with area bounded above by $\Lambda$ and Morse index bounded above by $\Lambda$. One can verify that the singular capcity over them has a uniform upper bound, and the maximum value can only be realized by finitely many minimal hypersurfaces.

        Then, we appeal to a generic perturbation of the metric $g$, so that the singular capacity of the finite set of minimal hypersurfaces mentioned above would decrease at least by $1$. Once this is established, by a simple induction argument, one will be able to find a metric where all the minimal hypersurfaces with bounded area and bounded index have $0$ singular capacity, i.e., are smooth.

        To prove the existence of such a generic perturbation, we will investigate the relation between the singular behavior of a sequence of minimal hypersurfaces and the induced Jacobi field on their limiting minimal hypersurface. Roughly speaking, we shall show that if the singular capacity of the limiting one isn't strictly greater than that of the sequence, then the induced Jacobi field lives in a subspace, i.e., the space of functions of \textit{slower growth}. In view of White's Structure Theorem, this means the limiting minimal hypersurface is \textit{degenerate} in certain sense and is expected to not happen if the metric is chosen generic a priori (see Corollary \ref{Cor_Canon Neighb_Global generic semi-nondeg} for the precise statement of \textit{generic semi-nondegeneracy}). A further generic perturbation of metrics is then indeed possible.

        Finally, by taking $\Lambda \to \infty$ and $I \to \infty$, one can see that there exist Riemannian metrics where every minimal hypersurfaces is smooth, and furthermore, they are dense in the space of all Riemannian metrics.

    \subsection{Organiztion of the paper}
        In Section \ref{Sec_Prelim}, we give a definition of locally stable minimal hypersurfaces, and recall the properties of their singularities using the analysis of \cite{WangZH20_Deform}. We also review the notions of cone decomposition discussed in Edelen's work \cite{edelen_degeneration_2021} and singular capacity introduced in our previous work \cite{liwang2020generic}. Note that the notion of cone decomposition will only be used in Part \ref{Part_Tech}, so the first time reader may ignore it until Section \ref{Sec_Analysis on SMC}.
        
        We prove the full generic regularity conjecture and its applications in min-max theory in Part \ref{Part_Generic Reg} (Section 3 - 5).
        
        In Section \ref{Sec_Canonical Neighb}, we introduce the notion ``canonical (pseudo-)neighborhood'' and the related generic semi-nondegeneracy results (See the defintion in Section \ref{Sec_Prelim}). This can be viewed as a first step generalization of White's Structure Theorem to singular minimal hypersurfaces. The proofs of these are very involved, so are postponed to Part \ref{Part_Tech}.
        
        In Section \ref{Sec_Pf of Main Thm}, with the genericity results in the previous section and the aid of singular capacity, we confirm the full regularity conjecture. 

        In Section \ref{Sec_Appl}, we show some applications of the main result in Almgren-Pitts theory.

        All the technical work will be included in Part \ref{Part_Tech} (Section 6 - 9).

        In Section \ref{Sec_Analysis on SMC}, we focus on stable minimal hypercones. We investigate the asymptotic rates of a Jacobi function and the graphical property of nearby minimal hypersurfaces, which would be useful in the later sections.

        In Section \ref{Sec_Growth est}, by estimating growth rate of graphical functions, we show that a sequence of pairs of minimal hypersufaces close enough (in the $\cL^{k,\alpha}$ sense) would induce a Jacobi field of slower growth.

        In Section \ref{Sec:Gen_Semi-nondeg}, we unwrap the Sard-Smale type arguments in White's structure theorem to show that semi-nondegenracy is a generic property for canonical (pseudo-)neighborhood, so as to prove Theorem \ref{Thm_Generic Semi-nondeg_Loc}.

        In Section \ref{Sec:Count_Decomp}, inspired by Edelen's cone decomposition, we introduce a two-step decomposition scheme to show that countably many ``canonical (pseudo-)neighborhoods'' could cover the whole space.

        In Appendix \ref{Sec_App_Proof Growth Rate Mon}, we finish the proof of Lemma \ref{Lem_Ana on SMC_Growth Rate Monoton}.

        In Appendix \ref{Sec_App_Geom of Minimal Graph}, we explicitly write down various estimates for minimal graphs.

        In Appendix \ref{Sec_App_Sing Cap}, we affirm a question on singular sheeting property asked by Ilmanen \cite{ilmanen_strong_1996} in dimension $8$. As an application, we extend our definition of singular capacity in \cite{liwang2020generic} to one-sided minimal hypersurfaces as well.

        In Appendix \ref{Sec_App_Reg Deform Thm}, we review some important results from \cite{WangZH20_Deform}.
        
\section*{Acknowledgements}
    We are grateful to our advisor Fernando Marques for his support. We would also like to thank Xin Zhou's interest in this work and inspiring discussions.
    
\section{Preliminaries} \label{Sec_Prelim}
    
    Let $(M^{n+1}, g)$ be a closed Riemannian manifold where $n +1 = 8$, and an integer $k\geq 4$ and a real number $\alpha\in (0, 1)$. We first list some notations in geometric measure theory. Interested readers may refer to \cite{simonLecturesGeometricMeasure1984}.
    
    \begin{itemize}
        \item $\cV_n(M)$: the space of $n$-dimensional varifolds in $M$;
        \item $\mbfF$: the metric on $\cV_n(M)$ defined by 
              \[
                  \mbfF(V, W) = \sup\set{\|V\|(f) - \|W\|(f): f \in C^1(M), |f|\leq 1, |Df|\leq 1}\,.
              \]
              Moreover, $\mbfF_U(V, W) := \mbfF(V_1\llcorner U, V_2\llcorner U)$ for any open subset $U$;        \item $\mbfI_n(M;\ZZ_2)$: the space of $n$-dimensional mod $2$ flat chains in $M$ endowed with the $\cF$ metric;
        \item $\IV_n(M)$: the space of $n$-dimensional integral varifolds in $M$;
        \item $\cR(M)$: the space of $n$-dimensional integral varifold in $M$ whose support is regular away from a closed singular set of Hausdorff dimension at most $n - 7$;
        \item $|\Sigma|$: the associated integral varifold for a hypersurface $\Sigma$;
        \item $\|V\|$: the associated Radon measure for $V \in \IV_n(M)$;
        \item $\theta_{V,g}(a, r)$: the density ratio of a varifold $V$ in the geodesic ball $B^g(a, r) \subset (M, g)$. If $\lim_{r \rightarrow 0+} \theta_{V,g}(a, r)$ exists, we define the density of $V$ at $a$ as
              \[
                  \theta_{V,g}(a) := \lim_{r \rightarrow 0+} \theta_{V, g}(a, r)\,.
              \]
              Here, the subscript $g$ will be omitted if there is no ambiguity;
        \item $\mathcal{C}$: the collection of regular stable minimal hypercones $\mathbf{C}^7$ in $\mathbb{R}^8$;
        \item $\mathcal{C}_{\Lambda}$: the subset of $\mathcal{C}$ with density at the origin bounded above by $\Lambda$;
        \item $\cG^{k, \alpha}(M)$ be the space of $C^{k, \alpha}$-Riemannian metrics on $M$, which is naturally a Banach manifold;
        \item $\cM^{k, \alpha}(M)$ be the space of pairs $(g, \Sigma)$, where $g\in \cG^{k, \alpha}(M)$ and $\Sigma$ is a connected LSMH, two-sided or one-sided, in $(M, g)$; This space is endowed with the topology induced by $C^{k, \alpha}$-convergence in the $g$ factor and multiplicity $1$ $\mbfF$-convergence in the $\Sigma$ factor;
        \item $\Pi: \cM^{k, \alpha}(M) \to \cG^{k, \alpha}(M)$ be the projection onto the first factor;
        \item $\dist_H$: Hausdorff distance function;
        \item $\eta_{p, r}: M \to \mathbb{R}^{n+1}$ defined by $\eta_{p, r}(x):= \exp_{p}^{-1}(x)/r$ where we identify $T_p M$ with $\mathbb{R}^{n+1}$.
    \end{itemize}
    
    Let $\Sigma \subset (M^{n+1}, g)$ be a $C^2$ (one-sided or two-sided) hypersurface.  Throughout this article, every such hypersurface $\Sigma$ is assumed to have 
    \[
        \scH^n(\Sigma)<+\infty\,,  \quad \scH^{n-2}(\overline{\Sigma}\setminus \Sigma) = 0\,.
    \]
    Let 
    \begin{itemize}
        \item $\Reg(\Sigma):= \{x\in \overline\Sigma: \overline\Sigma \text{ is an embedded }C^2 \text{ hypersurface near }x\}$;
        \item $\Sing(\Sigma):= \overline\Sigma\setminus \Reg(\Sigma)$.
    \end{itemize}
    By modifying $\Sigma$ up to a measure zero set, we may assume without loss of generality that $\Sigma = \Reg(\Sigma)$. In particular, $\Sigma$ is said to be \textbf{regular} if $\Sing(\Sigma) = \emptyset$.

    For an open subset $U\subset M$, recall $\Sigma$ is called \textbf{two-sided} in $U$ if there exists a globally defined unit normal field $\nu$ on $\Sigma\cap U$; if not, then it's called \textbf{one-sided} in $U$.  Note that since $\scH^{n-2}(\Sing(\Sigma))= 0$, when $U$ is simply connected, $\Sigma$ is always two-sided in $U$.
    
    When $\Sigma$ is connected and one-sided in $M$, there's a unique double cover $\pi: \hat{M} \to M$ such that $\hat{\Sigma} := \pi^{-1}(\Sigma) \subset (\hat{M}, \hat{g} := \pi^* g)$ is a connected two-sided $\ZZ_2$-invariant LSMH, where $\ZZ_2$ acts on $\hat{M}$ by deck transformation. This also induces a natural $\ZZ_2$-action on $C^0_{loc}(\hat{\Sigma})$ by $(\mathbf{-1}, f)\mapsto \check{f}$, where $\check{f}(x):= -f(-x)$.  For any subset $\scF \subset C_{loc}^0(\hat{\Sigma})$, denote by 
    \begin{align}
     \scF^{\ZZ_2} := \{f\in \scF: \check{f} = f\}.  \label{Equ_Pre_Z2 inv functions}
    \end{align}
    
    The Riemannian metric on $\Sigma$ is induced from $(M, g)$. Since we may deal with families of minimal hypersurfaces and metrics, we may use notations $\nabla_{\Sigma, g}$, $\Delta_{\Sigma, g}$, etc. to emphasis that they are operators with respect to $\Sigma, g$, but may omit the subscript if there's no confusion. Locally, by choosing an orthonormal frame $\{e_i\}$ of $\Sigma$, we can let
    \begin{itemize}
        \item $(\vec{A}_{\Sigma, g})_{ij}:= (e_i\cdot \nabla^g e_j)^\perp$ be the second fundamental form of $\Sigma$;
        \item $\vec{H}_{\Sigma, g} := (\sum_i e_i\cdot \nabla^g e_i)^\perp $ be the mean curvature vector of $\Sigma$.
    \end{itemize}
    Recall that $\Sigma$ is minimal in $(M, g)$ if and only if $\vec{H}_{\Sigma, g} = 0$.
    
    Following \cite{schoen_regularity_1981}, a minimal hypersurface $\Sigma\subset (M, g)$ is called \textbf{stable} in an open subset $U\subset M$, if for every $C^2$ family of diffeomorphism $\{\phi_t:U \to U\}_{t\in (-1,1)}$ such that $\phi_t = id_U$ near $\partial U$, we have
        \[
            \frac{d^2}{dt^2}\Big|_{t=0}\scH^n(\phi_t(\Sigma)) \geq 0.   
        \]
        By \cite{schoen_regularity_1981}, since $n+1 = 8$, a stable minimal hypersurfaces in $U$ has isolated singular set. If $\Sigma$ admits a global unit normal field $\nu =\nu_{\Sigma, g}$ in $U$, then $\Sigma$ being stable in $U$ is equivalent to that
        \begin{align}
            Q_{\Sigma, g}(\varphi, \varphi):= \int_\Sigma |\nabla_{\Sigma, g} \varphi|^2 - (|A_{\Sigma, g}|^2 + \Ric_g(\nu, \nu))\varphi^2 \geq 0,\ \ \ \forall \varphi\in C^2_c(\Sigma\cap U).   \label{Equ_Pre_Stablity Ineq for MH}
        \end{align}
        In this case, let 
        \begin{align}
            L_{\Sigma, g}:= \Delta_{\Sigma, g} + |A_{\Sigma, g}|^2 + \Ric_g(\nu, \nu),  \label{Equ_Pre_Def Jac Operator}
        \end{align}
        be the Euler-Lagrange operator associated to $Q_{\Sigma, g}$, called \textbf{Jacobi operator}. Any solution $u\in C^2_{\mathrm{loc}}(\Sigma)$ to $L_{\Sigma, g} u = 0$ is called a \textbf{Jacobi field}.
        
        \begin{Def}
            A $C^2$ hypersurface $\Sigma$ is a \textbf{locally stable minimal hypersurface (LSMH)} if for every $p\in M$, there exists some neighborhood $U_p\ni p$ in which $\Sigma$ is stable. 
        \end{Def}
        
        Following \cite{marques_morse_2016}, the Morse index of a two-sided LSMH $\Sigma \subset (M^{n+1}, g)$ is defined by
        \begin{align}
            \begin{split}
                \ind(\Sigma):= \sup\Big\{\dim \cL:&\ \cL \subset \mathfrak{X}(M) \text{ is a linear subsapce such that }\\
                &\ \frac{d^2}{dt^2}\Big|_{t=0}\scH^n(e^{tX}(\Sigma)) < 0, \ \forall \mathbf{0}\neq X\in \cL \Big\}.    
            \end{split} \label{Equ_Pre_Def of Ind(Sigma)}
        \end{align}
        It is proved in \cite[Corollary 3.7]{WangZH20_Deform} that a two-sided LSMH in a closed manifold always has finite index, and coincides with the index of its Jacobi operator. (See Appendix \ref{Sec_App_Reg Deform Thm} for details.) By convention, we define the index of a one-sided LSMH to be the index of its two-sided double cover.
        
        If $\Sigma$ admits a global unit normal field $\nu$, for any $E\subset \Sigma$ and function $u: E\to \RR$, let \[
            \graph_{\Sigma, g}(u):= \{\exp^g_x(u(x)\cdot \nu(x)): x\in E \},
        \]
        be the graph of $u$ over $\Sigma$.  In Appendix \ref{Sec_App_Geom of Minimal Graph}, we carefully analyze the geometry of $\graph_{\Sigma, g}(u)$.   

    \begin{Def} \label{Def_Reg Scale}
    For every $x\in \Sigma$, we define the \textbf{regularity scale} $r_\cS (x):=r_\cS(x; M, g, \Sigma)$ of $\Sigma$ at $x$ to be the supremum among all $r\in (0, \injrad(x; M, g)/2)$ such that,
        \begin{itemize}
            \item $r^2\|\Rm_g\|_{C^0, B^g_r(x)} + r^3\|\nabla \Rm_g\|_{C^0, B^g_r(x)} \leq 1/10$;
            \item In $T_xM$, \[
                      \frac{1}{r}(\exp^g_x)^{-1}(\Sigma)\cap \BB_1 = \graph_L u \cap \BB_1,    \]
                  for some linear hyperplane $L \subset T_xM $ and $u\in C^3(L)$ with $\|u\|_{C^3}\leq 1/10$.
        \end{itemize}
    \end{Def}
        
    It's easy to check by definition that,
    \begin{enumerate} [(i)]
    \item If $(M, g, \Sigma) \neq (\RR^{n+1}, g_{\eucl}, \RR^n)$, then $r_\cS <+\infty$;
    \item $r_\cS$ is locally Lipschitz on $\Sigma$ and $r_\cS (x)\to 0$ when $x\to x_0\in \overline{\Sigma}\setminus \Sigma$;
    \item $|A_\Sigma|^2 + |\Rm_g| \leq C_n\cdot r_\cS^{-2}$ on $\Sigma$;
    \item When $n+1 = 8$ and $\Sigma$ is a LSMH in a closed manifold $(M, g)$, there exists constant $C(M, g, \Sigma)>1$ such that \[
                      C^{-1}r_\cS \leq \dist_g(\cdot, \Sing(\Sigma)) \leq C r_\cS\,.   
          \]
    \end{enumerate}
    
    For every open subset $U\subset \Sigma$ and every integer $k\geq 0$, let $L^2_{loc}(U)$, $W^{1,2}_{loc}(U)$ and $C^k_{loc}(U)$ to be the space of locally $L^2$, $W^{1,2}$ and $C^k$ functions respectively; Also let $L^2(U)$, $W^{1,2}(U)$ and $C^k(U)$ be the space of functions with finite $L^2$-, $W^{1,2}$- and $C^k$- norms respectively. 
    
    For $\phi\in C^k_{loc}(\Sigma)$ where $k = 0, 1$ or $2$, and a subset $E\subset \Sigma$, we define 
    \begin{align}
        \|\phi\|_{C^k_*, E, g}:= \sup_{x\in E}\sum_{j=0}^k r_\cS(x)^{j-1}|\nabla^j_{\Sigma, g} \phi|(x)\,.  \label{Equ_Pre_C^k_* norm}
    \end{align}
    When $E=\Sigma$, we may omit the subscript $E$ and if there's no ambiguity about choice of metric, then we may also omit the subscript $g$. The reason we use this norm is that it's invariant under rescalings, i.e. for every $\lambda>0$, we have that $\ \graph_{\Sigma, \lambda^2 g}(\lambda \phi) = \graph_{\Sigma, g}(\phi)$ and that, \[
      \|\lambda\phi\|_{C^k_*, E,\lambda^2 g} = \|\phi\|_{C^k_*, E, g}.   \]
    
    \subsection{Stable Minimal Hypercone}  \label{Subsec_Stable MHC}
        Let $\mbfC\subset \RR^{n+1}$ be a stable minimal hypercone, with normal field $\nu$.  Since $n+1 = 8$, $\mbfC$ has isolated singularity at $\mathbf{0}$, or equivalently, $S:= \mbfC\cap \SSp^n$ is a regular minimal hypersurface in $\SSp^n$. Such a cone with only isolated singularity at $\mathbf{0}$ is called a regular cone in higher dimensions. By the Łojasiewicz inequality \cite{simonAsymptoticsClassNonLinear1983}, the densities of stable minimal hypercones in $(\RR^8, g_\eucl)$ form a discrete set,
        \begin{align}
            \{\theta_\mbfC(\mathbf{0}): \mbfC\subset \RR^8 \text{ stable minimal hypercone}\} = \{1=\theta_0 <\theta_1<\theta_2<\dots \nearrow +\infty\}\,. \label{Equ_Pre_Density dicrete for SMC}
        \end{align}
        
        We can parametrize $\mbfC$ by 
        \begin{align}
            (0, +\infty)\times S\to \mbfC,\ \ \ (r, \omega)\mapsto x = r\omega.  \label{Equ_Pre_Parametrize minimal hypercone}
        \end{align}
        Recall by \cite{simonsMinimalVarietiesRiemannian1968}, the Jacobi operator of $\mbfC$ is decomposed under this parametrization as 
        \[
        L_\mbfC = \partial_r^2 + \frac{n-1}{r}\partial_r + \frac{1}{r^2}(\Delta_S + |A_S|^2)\,,
    \]
    where $A_S$ is the second fundamental form of $S$ in $\SSp^n$.
    
    To describe the space of Jacobi fields on $\mbfC$, let 
    \[
        \mu_1< \mu_2\leq \mu_3 \leq \dots \nearrow +\infty
    \]
    be the eigenvalues of $-(\Delta_S + |A_S|^2)$, and
    \[
        \varphi_1, \varphi_2, \dots
    \]
    be the corresponding $L^2(S)$-orthonormal eigenfunctions, where $\varphi_1>0$. By \cite{simonsMinimalVarietiesRiemannian1968}, the stability of $\mbfC$ is equivalent to that $\mu_1 \geq -(n-2)^2/4$. And for every nontrivial regular minimal hypercone $\mbfC\subset \RR^{n+1}$, we have $\mu_1\leq -n+1$, $\mu_2\leq 0$.  By \cite{caffarelliHardtSimon1984}, a general Jacobi field $v\in C^\infty(\mbfC)$ is given by a linear combination of homogeneous Jacobi fields,
    \begin{align}
        v(r, \omega) = \sum_{j\geq 1} (v_j^+(r) + v_j^-(r))\varphi_j(\omega), \label{Equ_Pre_Decomp Jac into homogen Jac field}
    \end{align}
    where 
    \begin{align}
        \begin{split}
            v_j^+(r) & = c_j^+\cdot r^{\gamma_j^+}; \\
            v_j^-(r) & = \begin{cases}
                c_j^- \cdot r^{\gamma_j^-},       & \ \text{ if }\mu_j > -\frac{(n-2)^2}{4}; \\
                c_j^- \cdot r^{\gamma_j^-}\log r, & \ \text{ if }\mu_j = -\frac{(n-2)^2}{4}.
            \end{cases}
        \end{split} \label{Equ_Pre_Homogen Jac field}
    \end{align}
    for some $c_j^\pm\in \RR$ and
    \begin{align}
        \gamma_j^\pm = \gamma_j^\pm(\mbfC) := -\frac{n-2}{2}\pm \sqrt{\mu_j+\frac{(n-2)^2}{4}}.  \label{Equ_Pre_Asymp Spectrum gamma_j^pm}
    \end{align}
    Note that $\mu_j = -(n-2)^2/4$ if and only if $j=1$ and $\gamma_1^+ = \gamma_1^- = - (n-2)/2$. Clearly, if $\{\mbfC_k\}_{1\leq k\leq \infty}$ is a family of regular cone such that 
    \[
        \mbfF(|\mbfC_k|, |\mbfC_\infty|) \to 0
    \] then the asymptotic spectrum also converges: For every $j\geq 1$, as $k\to \infty$,
    \[
        \mu_j(\mbfC_k) \to \mu_j(\mbfC_\infty)\,.
    \]
    
    For later reference, we define the \textbf{asymptotic spectrum} of $\mbfC$ by
    \[
        \Gamma(\mbfC):= \{\gamma_j^\pm: j\geq 1\}\,.
    \]
    
    For every $\Lambda>0$, we denote $\scC_\Lambda$,
    \[
        \scC_\Lambda:= \{\text{stable minimal hypercone }\mbfC\subset \RR^{n+1 = 8}: \|\mbfC\|(\BB_1)\leq \Lambda \}\,.
    \]
    By \cite{schoen_regularity_1981} and \cite[Lemma 3.2]{WangZH22_Smoothing}, for each $\Lambda>0$, $\scC_\Lambda$ is compact under $\mbfF$-metric.
    \begin{Lem} \label{Lem_gamma_gap >0}
        For every $\Lambda>0$,
        \[ 
            \gamma_{\gap, \Lambda}:= \inf\{\gamma_2^+(\mbfC) - \gamma_1^+(\mbfC): \mbfC\in \scC_\Lambda \} >0.
        \]
        Here we use the convention that $\inf \emptyset = +\infty$.
    \end{Lem}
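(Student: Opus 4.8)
The plan is a straightforward compactness argument built on the two facts already recorded in this section: that $\scC_\Lambda$ is compact under the $\mbfF$-metric, and that the asymptotic spectrum of a regular stable minimal hypercone depends continuously on the cone under $\mbfF$-convergence. The first step is purely algebraic: by the formula \eqref{Equ_Pre_Asymp Spectrum gamma_j^pm} for $\gamma_j^\pm$, every stable minimal hypercone $\mbfC\subset\RR^{n+1}$ satisfies
\[
    \gamma_2^+(\mbfC) - \gamma_1^+(\mbfC) = \sqrt{\mu_2(\mbfC) + \tfrac{(n-2)^2}{4}} - \sqrt{\mu_1(\mbfC) + \tfrac{(n-2)^2}{4}}\,,
\]
where stability ($\mu_1(\mbfC)\geq -(n-2)^2/4$) together with $\mu_2\geq\mu_1$ makes both radicands nonnegative. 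Since $t\mapsto\sqrt{t}$ is strictly increasing on $[0,+\infty)$ and $\mu_1(\mbfC)<\mu_2(\mbfC)$, this quantity is strictly positive for each individual $\mbfC$, and it depends continuously on the pair $(\mu_1(\mbfC),\mu_2(\mbfC))$.

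Then I would argue by contradiction. We may assume $\scC_\Lambda\neq\emptyset$, since otherwise $\gamma_{\gap,\Lambda}=+\infty$ by the stated convention. If $\gamma_{\gap,\Lambda}=0$, choose a minimizing sequence $\mbfC_k\in\scC_\Lambda$ with $\gamma_2^+(\mbfC_k)-\gamma_1^+(\mbfC_k)\to 0$. By the $\mbfF$-compactness of $\scC_\Lambda$, after passing to a subsequence we have $\mbfF(|\mbfC_k|,|\mbfC_\infty|)\to 0$ for some $\mbfC_\infty\in\scC_\Lambda$; since $n+1=8$, such a cone is automatically regular, so the continuity statement for the asymptotic spectrum applies and gives $\mu_j(\mbfC_k)\to\mu_j(\mbfC_\infty)$ for $j=1,2$. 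Passing to the limit in the displayed identity, using continuity of $\sqrt{\cdot}$ on $[0,+\infty)$, yields $\gamma_2^+(\mbfC_\infty)-\gamma_1^+(\mbfC_\infty)=0$, i.e.\ $\mu_1(\mbfC_\infty)=\mu_2(\mbfC_\infty)$. This contradicts the strict inequality $\mu_1<\mu_2$ recorded above (equivalently, the simplicity of the bottom eigenvalue of $-(\Delta_S+|A_S|^2)$ on the link, witnessed by the positive ground state $\varphi_1>0$), so $\gamma_{\gap,\Lambda}>0$.

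I do not expect a genuine obstacle here: everything of substance—the $\mbfF$-compactness of $\scC_\Lambda$ and the continuity of $\mbfC\mapsto\mu_j(\mbfC)$—is quoted in the excerpt, and the rest is the elementary observation that a strict inequality between the two lowest eigenvalues passes to $\mbfF$-limits. The only point that needs a moment's care is to confirm that the limit object $\mbfC_\infty$ is again an honest (regular) stable minimal hypercone lying in $\scC_\Lambda$, so that $\mu_1(\mbfC_\infty)<\mu_2(\mbfC_\infty)$ is legitimately available; this is exactly what the compactness of $\scC_\Lambda$ under $\mbfF$ supplies.
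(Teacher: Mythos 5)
Your argument is correct and is essentially the one the paper gives: $\mbfF$-compactness of $\scC_\Lambda$, continuity of $\mbfC\mapsto\gamma_2^+(\mbfC)-\gamma_1^+(\mbfC)$ (via continuity of the $\mu_j$), and pointwise strict positivity from $\mu_1<\mu_2$ and monotonicity of $\sqrt{\cdot}$. The paper's version is just phrased more compactly, as attainment of the infimum of a continuous positive function on a compact set, rather than as a contradiction with a minimizing sequence.
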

    \begin{proof}
        Since $\scC_\Lambda$ is $\mbfF$-compact and $\gamma_2^+(\mbfC) - \gamma_1^+(\mbfC)$ is continuous w.r.t. the $\mbfF$-metric, $\gamma_{\gap, \Lambda}$ (if not $+\infty$) is realized by some $\mbfC \in \scC_\Lambda$, and hence is positive.
    \end{proof}
    
    \subsection{Singularities of LSMH} \label{Subsec_Sing of LSMH}
        It is known from \cite{schoen_regularity_1981,ilmanen_strong_1996} that near each singular point $p$, an LSMH $\Sigma\subset M^{n+1}$ is modeled on stable minimal hypercones. More precisely, for every $r_j\searrow 0$, the blow up sequence $\eta_{p, r_j}(\Sigma)$ $\mbfF$-subconverges to some stable minimal hyercone $\mbfC\subset \RR^{n+1}$ with multiplicity $1$, which is called a tangent cone of $\Sigma$ at $p$. When $n+1 = 8$, since every stable minimal hypercone is regular, by \cite{simonAsymptoticsClassNonLinear1983}, the tangent cone of $\Sigma$ at $p$ is independent of the choice of blow-up radii $\{r_j\}$, and can be denoted by $\mbfC_p\Sigma$ in this paper.  Moreover, by adapting the proof in \cite{simonAsymptoticsClassNonLinear1983}, \cite[Theorem 6.3]{edelen_degeneration_2021} proved a quantitative version of uniqueness of tangent cone.  This will be discussed in Section \ref{Sec_Analysis on SMC} in detail.
        
        Let $\Sigma$ be an LSMH in a closed manifold $(M^8, g)$ with a singularity $p$.  For a $L_{loc}^2$-function $v$ defined near $p$, \cite[Section 3.3]{WangZH20_Deform} introduced the notion of \textbf{asymptotic rate} of $v$ at $p$, defined to be
        \begin{align*}
            \cA\cR_p(v):= \sup\{\gamma: \lim_{s\searrow 0} \int_{A^g(p; s, 2s)} v^2\cdot \rho^{-n-2\gamma}\ d\|\Sigma\| = 0 \},
        \end{align*}
        where $\rho(x):= \dist_g(x, \Sing(\Sigma))$ is the distance function to the singular set, and $A^g(p; s, t)$ is the geodesic annulus in $(M, g)$ centered at $p$ with inner radius $s$ and outer radius $t$.  Heuristically, if $v$ grows like $\rho^\gamma$ near $p$, then $\cA\cR_p(v) = \gamma$.  
        
        The advantage of this asymptotic rate is that it relates the behavior of Jacobi fields on $\Sigma$ near $p$ with the asymptotic spectrum of $\mbfC_p\Sigma$.  For example, by \cite[Section 3.3]{WangZH20_Deform} we have,
        \begin{Lem} \label{Lem_Pre_Asymp Rate takes value in Gamma(C) and int by part}
            Suppose that a nonzero function $u\in W^{2,2}_{loc}(\Sigma)$ satisfies $L_{\Sigma, g} u\in L^\infty(\Sigma)$. Then $\cA\cR_p(u) \in \{-\infty\}\cup \Gamma(\mbfC_p\Sigma)\cup [1, +\infty]$.
            
            In addition, 
            \begin{enumerate} [(i)]
                \item If $u\in W^{1,2}(\Sigma)$, then $\cA\cR_p(u)\geq \gamma_1^+(\mbfC_p\Sigma)$;
                \item If $u>0$ and $L_{\Sigma, g}u$ vanishes near $p$, then $\cA\cR_p(u)\in \{\gamma_1^\pm(\mbfC_p\Sigma)\}$;
                \item If $\cA\cR_q(u) > -(n-2)/2$ for every $q\in \Sing(\Sigma)$, then
                      \[
                          \int_\Sigma |\nabla_{\Sigma, g} u|^2 + \rho^{-2}u^2 \ d\|\Sigma\| <+\infty.
                      \]
                      In particular, if $u, v\in W^{2,2}_{loc}(\Sigma)$ such that $L_{\Sigma, g}u, L_{\Sigma, g}v \in L^\infty(\Sigma)$ and that $\cA\cR_q(u) > -(n-2)/2$, $\cA\cR_q(v) > -(n-2)/2$ for every $q\in \Sing(\Sigma)$, then we can do the following integration by parts, 
                      \[
                          \int_\Sigma u\cdot L_{\Sigma, g}v \ d\|\Sigma\| = \int_\Sigma v\cdot L_{\Sigma, g}u \ d\|\Sigma\|.
                      \]
            \end{enumerate}
        \end{Lem}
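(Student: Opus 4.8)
The plan is to reduce the statement to an ODE analysis on the exact tangent cone $\mbfC := \mbfC_p\Sigma$. Since $n+1=8$, this cone is unique and regular, and by the quantitative uniqueness of tangent cones — Simon's Łojasiewicz inequality \cite{simonAsymptoticsClassNonLinear1983} together with \cite[Theorem 6.3]{edelen_degeneration_2021} — there are $r_0>0$ and a function $w$ on $\mbfC\cap(\BB_{r_0}\setminus\{\mathbf 0\})$ with $\rho^{-1}|w|+|\nabla w|+\rho|\nabla^2 w|\to 0$ as $\rho\to 0$ at a definite positive rate, such that $\eta_{p,1}(\Sigma)$ coincides inside $\BB_{r_0}\setminus\{\mathbf 0\}$ with $\graph_{\mbfC,g_\eucl}(w)$. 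Transplanting $u$ and the equation $L_{\Sigma,g}u=f$ (with $f:=L_{\Sigma,g}u\in L^\infty$) through this graph, one obtains on $\mbfC\cap(\BB_{r_0}\setminus\{\mathbf 0\})$ a function $\tilde u\in W^{2,2}_{loc}$ solving
\[
    L_{\mbfC}\tilde u = \tilde f + \cE[\tilde u],
\]
where $\tilde f$ is bounded and $\cE$ is a second-order operator whose coefficients decay at a definite positive rate as $\rho\to 0$ — collecting the contributions of $w,\nabla w,\nabla^2 w$, of the bounded zeroth-order term $\Ric_g(\nu,\nu)$ transplanted to $\mbfC$, and of the deviation of $g$ from $g_\eucl$ in the chart. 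Since $\cA\cR_p$ is unchanged by such diffeomorphisms and by multiplication by functions pinched between two positive constants, it suffices to study $\tilde u$ near the vertex.

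I would then separate variables: write $\tilde u(r,\omega)=\sum_{j\geq 1}u_j(r)\varphi_j(\omega)$ with $u_j(r):=\int_S\tilde u(r,\cdot)\,\varphi_j$, in the parametrization \eqref{Equ_Pre_Parametrize minimal hypercone} and the $L^2(S)$-eigenbasis of $-(\Delta_S+|A_S|^2)$. Projecting the equation onto $\varphi_j$ and using the decomposition of $L_{\mbfC}$, each $u_j$ solves the inhomogeneous Euler equation
\[
    u_j'' + \frac{n-1}{r}u_j' - \frac{\mu_j}{r^2}u_j = g_j(r),
\]
where $g_j$ is the $j$-th Fourier coefficient of $\tilde f+\cE[\tilde u]$, controlled on dyadic annuli by $\|f\|_{L^\infty}$ plus a term whose asymptotic rate strictly exceeds that of $\tilde u$. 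The homogeneous solutions are $r^{\gamma_j^+}$ and $r^{\gamma_j^-}$, with $\gamma_j^\pm$ as in \eqref{Equ_Pre_Asymp Spectrum gamma_j^pm} (replaced by $r^{-(n-2)/2}$ and $r^{-(n-2)/2}\log r$ when $\mu_j=-(n-2)^2/4$). Variation of parameters against the Euler Green kernel expresses $u_j$ as a combination of the two homogeneous solutions plus a particular solution; one then reads off that the rate of $u_j$ is either one of $\gamma_j^\pm(\mbfC)$ (if a homogeneous mode sits below the forcing level) or at least $1$ (the forcing being essentially bounded, inverting the Euler operator against it yields a contribution of rate $\geq 1$, a logarithm appearing only in the resonant case). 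Taking the infimum over $j$ — legitimate because the $\gamma_j^\pm$ are discrete and, for $j$ large, the $j$-th Green kernel is small, so only finitely many modes can set a rate below any fixed level — gives $\cA\cR_p(u)=\cA\cR_p(\tilde u)\in\{-\infty\}\cup\Gamma(\mbfC)\cup[1,+\infty]$. The step needing the most care is the control of $\cE$: the mode-by-mode analysis must be iterated, each round feeding the rate of $\tilde u$ obtained in the previous round into the bound for $\cE[\tilde u]$ until the error is negligible against the homogeneous cone modes, and the borderline logarithmic modes must be carried through separately.

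For (i): if $u\in W^{1,2}(\Sigma)$ then near $p$ one has $\int_{B^g_r(p)}(|\nabla_{\Sigma,g}u|^2+u^2)\,d\|\Sigma\|<\infty$; since a function of rate $\gamma$ lies in $W^{1,2}$ near $p$ exactly when $\int_0^r\rho^{2\gamma-2}\rho^{n-1}\,d\rho<\infty$, i.e.\ $\gamma>-(n-2)/2$ (which also excludes the borderline modes of rate $-(n-2)/2$), the assertion just proved forces $\cA\cR_p(u)\in(\Gamma(\mbfC)\cup[1,+\infty])\cap(-(n-2)/2,+\infty]$; as every cone exponent exceeding $-(n-2)/2$ is $\geq\gamma_1^+$ (recall $\gamma_j^-\leq-(n-2)/2$ and $\gamma_1^+=\min_j\gamma_j^+$) and $\gamma_1^+<1$ for nontrivial cones in $\RR^8$, we get $\cA\cR_p(u)\geq\gamma_1^+(\mbfC_p\Sigma)$. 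For (ii): if $u>0$ and $L_{\Sigma,g}u\equiv 0$ near $p$, then in the cone model the Harnack inequality on each dyadic annulus, together with $\varphi_1>0$ and the positive gap $\gamma_1^+(\mbfC)<\gamma_2^+(\mbfC)$ (Lemma \ref{Lem_gamma_gap >0}), shows $\|\tilde u(r,\cdot)\|_{L^2(S)}\asymp u_1(r)$, so $u_1$ solves the homogeneous $j=1$ Euler equation up to an error of strictly larger rate; a positive solution of $y''+\tfrac{n-1}{r}y'-\tfrac{\mu_1}{r^2}y=0$ on $(0,r_0]$ has the form $c_+r^{\gamma_1^+}+c_-r^{\gamma_1^-}$ with $c_-\geq 0$ (else it would change sign near $0$, since $\gamma_1^-<0$), hence rate $\gamma_1^-$ or $\gamma_1^+$ (with the obvious modification if $\mu_1=-(n-2)^2/4$), so $\cA\cR_p(u)\in\{\gamma_1^\pm(\mbfC_p\Sigma)\}$.

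Finally, for (iii): under $\cA\cR_q(u)>-(n-2)/2$ for all $q\in\Sing(\Sigma)$, the same power counting makes the contributions of $|\nabla_{\Sigma,g}u|^2$ and $\rho^{-2}u^2$ near each $q$ finite (each reducing to $\int_0^r\rho^{2\gamma-2}\rho^{n-1}\,d\rho$ with $\gamma>-(n-2)/2$), and away from $\Sing(\Sigma)$ there is nothing to check, so $\int_\Sigma(|\nabla_{\Sigma,g}u|^2+\rho^{-2}u^2)\,d\|\Sigma\|<\infty$, and similarly for $v$. For the integration by parts, integrate $u\,L_{\Sigma,g}v-v\,L_{\Sigma,g}u$ over $\Sigma\setminus\bigcup_{q\in\Sing(\Sigma)}B^g_\epsilon(q)$, where both functions are smooth; the divergence theorem turns this into a sum over $q$ of boundary integrals of $u\,\partial_\nu v-v\,\partial_\nu u$ on $\partial B^g_\epsilon(q)$, which by Cauchy--Schwarz are bounded by a constant times the $\rho^{-1}$-weighted $L^2$ norms of $(u,\nabla_{\Sigma,g}u)$ and $(v,\nabla_{\Sigma,g}v)$ over $A^g(q;\epsilon,2\epsilon)$; the finiteness just established forces these to tend to $0$ as $\epsilon\to 0$, whence $\int_\Sigma u\,L_{\Sigma,g}v\,d\|\Sigma\|=\int_\Sigma v\,L_{\Sigma,g}u\,d\|\Sigma\|$.
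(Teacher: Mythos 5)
The paper's own proof is essentially a pointer: the dichotomy $\cA\cR_p(u)\in\{-\infty\}\cup\Gamma(\mbfC_p\Sigma)\cup[1,+\infty]$ and items (i), (ii) are quoted directly from \cite[Lemma~3.14, Cor.~3.17(1), Cor.~3.15]{WangZH20_Deform}, and only (iii) is argued in place, by a Caccioppoli estimate with a logarithmic cut-off. You instead reconstruct everything from scratch, and the route you take for the heart of the statement is genuinely different. Where the cited reference (and this paper's own related Lemma \ref{Lem_Ana on SMC_Growth Rate Monoton} / Corollary \ref{Cor_Ana on SMC_Growth Rate Monoton with Perturb}, and Appendix \ref{Sec_App_Proof Growth Rate Mon}) proceeds by a three-annulus growth-monotonicity inequality that never splits the infinite Fourier sum, you transplant to the exact tangent cone via Simon--Edelen, separate variables, and invert Euler ODEs mode by mode. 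That strategy can be made to work, but the content is hidden in the two things you flag as needing care and then do not execute: (a) feeding the rate of $\tilde u$ back into the error operator $\cE[\tilde u]$ and showing the iteration closes (the perturbation is second order, so a naive bootstrap does not obviously converge without a smallness or monotonicity input), and (b) making the ``infimum over $j$'' rigorous — the claim that only finitely many modes can set a rate below a fixed level requires decay of the projected Green kernels uniform over annuli, which is exactly what a frequency/doubling argument bypasses. As written, these are gaps: your argument sketches the right picture but does not yet establish the dichotomy. A smaller point: in (i), passing from $u\in W^{1,2}$ to $\cA\cR_p(u)>-(n-2)/2$ needs the gradient term — the $L^2$ bound on $u$ alone only yields $\cA\cR_p(u)\geq -n/2$ — and the step from the annular $L^2$ rate of $u$ to that of $\nabla u$ is an interior Caccioppoli estimate using $L_{\Sigma,g}u\in L^\infty$, which you invoke implicitly; the same remark applies to your power-counting proof of the gradient integrability in (iii). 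Your boundary-term limit in (iii) and your positivity/Harnack argument in (ii) are sound, with the cosmetic correction that the reason $c_-\geq0$ is that $\gamma_1^-<\gamma_1^+$, not merely $\gamma_1^-<0$. Overall: the approach is reasonable and close in spirit to Caffarelli--Hardt--Simon, but it is both harder than what the paper does (which leans on the growth-monotonicity lemma) and incomplete at the two steps that carry the real weight.
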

        \begin{proof}
            It follows from \cite[Lemma 3.14]{WangZH20_Deform} that if $-\infty < \cA\cR_p(u) < 1$, then
            \[
                \cA\cR_p(u)\in \Gamma(\mbfC_p\Sigma)\,.
            \]
            Furthermore, (i) follows from \cite[Corollary 3.17 (1)]{WangZH20_Deform}, and (ii) follows from \cite[Corollary 3.15]{WangZH20_Deform}.  
            
            To prove (iii), first note that $\cA\cR_q(u)>-(n-2)/2$ implies the existence of $\gamma>-(n-2)/2$ and $s_0>0$ satisfying that for every $s\in (0, s_0)$, 
            \begin{align}
                \int_{A^g(q, s, 2s)} u^2\ d\|\Sigma\| \leq s^{n+2\gamma}.  \label{Equ_Pre_L^2 growth est in annuli}
            \end{align}
            By taking sum of (\ref{Equ_Pre_L^2 growth est in annuli}) over $s = 2^{-l}s_0$, $l\geq 1$ and all singularities, we get 
            \begin{align}
                \int_{B^g(\Sing(\Sigma), s_0)} u^2\rho^{-2}\ d\|\Sigma\| < +\infty.  \label{Equ_Pre_L^2 weighted est in ball}
            \end{align}
            Now to estimate the energy, let $\varphi\in C_c^\infty((1/2,+\infty), [0, 1])$ be a non-negative cut-off function which equals to $1$ on $[1, +\infty)$. Then for every $0<r<<1$, let $\varphi_r:= \varphi(\rho/r)$, and we have an energy estimate,
            \begin{align}
                \int_\Sigma |\nabla_{\Sigma, g} u|^2 \varphi_r^2 \leq \int_\Sigma C(n, g)(|\nabla \varphi_r|^2 + 1 + |A_{\Sigma, g}|^2)u^2 + |L_{\Sigma, g}u|\cdot |u|\cdot \varphi_r^2,  \label{Equ_Pre_Energy est alm Jac equ}
            \end{align}
            Since $|\nabla \varphi_r|\leq C(\varphi)r^{-1}\cdot\chi_{\{r/2\leq\rho\leq r\}}\leq C'(\varphi)\rho^{-1}$, $|A_{\Sigma, g}|\leq C(g, \Sigma)\rho^{-1}$ and $L_{\Sigma, g}u\in L^\infty(\Sigma)$, (\ref{Equ_Pre_L^2 weighted est in ball}) implies that the right hand side is bounded as $r \to 0$. In particular, we have \[
                \int_\Sigma |\nabla_{\Sigma, g} u|^2 + \rho^{-2}u^2 \ d\|\Sigma\|_g <+\infty.   \]
            
            Therefore, we can the integration by parts simply following from a standard cut-off argument with the estimates above.
        \end{proof}
        
        \begin{Def}[Slower growth]
            A function $u\in L^2_{loc}(\Sigma)$ is a \textbf{function of slower growth}, if $\forall p\in \Sing(\Sigma)$, 
            \[
                \cA\cR_p(u) \geq \gamma_2^+(\mbfC_p\Sigma)\,.
            \]
            In particular, if $\Sing(\Sigma) = \emptyset$, the condition is always satisfied for any $L^2$ function.
            
            Similarly, when $\Sigma$ is two-sided, a Jacobi field on $\Sigma$ is a \textbf{Jacobi field of slower growth} if it is a function of slower growth. We denote by $\Ker^+ L_{\Sigma,g}$ the space of all the Jacobi fields of slower growth on $\Sigma \subset (M, g)$.
            
            A two-sided LSMH $\Sigma$ is called \textbf{semi-nondegenerate}, if 
            \[
                \Ker^+ L_{\Sigma, g} = \set{0}\,.
            \]
            Following the notation in (\ref{Equ_Pre_Z2 inv functions}), a one-sided connected LSMH $\Sigma$ is called \textbf{semi-nondegenerate}, if its connected two-sided double cover $\hat{\Sigma}$ has
            \[
                (\Ker^+ L_{\hat{\Sigma}, \hat{g}})^{\ZZ_2} = \set{0}\,.
            \]
        \end{Def}
        
        \begin{Rem}\label{Rem:slow_growth_1}
            By definition, if $\Sing(\Sigma) = \emptyset$, then $\Sigma$ is semi-nondegenerate if and only if $\Sigma$ is nondegenerate.  
            
            By Lemma \ref{Lem_gamma_gap >0}, for every stable minimal hypercone in $\RR^8$, we have $\gamma_2^+ > \gamma_1^+ \geq -(n-2)/2$. Thus by Lemma \ref{Lem_Pre_Asymp Rate takes value in Gamma(C) and int by part}, a Jacobi field of slower growth has finite energy. 
        \end{Rem}
        
        The following Theorem generalizes \cite[Lemma 2.24]{liwang2020generic} to semi-nondegenerate or one-sided LSMH, and plays an important role in the proof of Theorem \ref{Thm_Main}.
        \begin{Thm} \label{Thm_Reg Deform Thm_OneTwo sided}
            Let $k\geq 4$ be an integer, $\alpha\in (0, 1)$, and $\Sigma \subset (M^8, g)$ be a semi-nondegenerate LSMH with $\Sing(\Sing)\neq \emptyset$. Then there exists an open dense subset $\scF^{k,\alpha}\subset C^{k, \alpha}(M)$ depending on $\Sigma, M, g$ with the following property.
            For any $f\in \scF^{k,\alpha}$, and $j\geq 1$, Suppose that
            \begin{enumerate} [(a)]
                \item $g_j = (1+t_jf_j)g$ is a family of metrics, where either $\RR\ni t_j\searrow 0$ and $f_j\to f$ in $C^4(M)$ as $j\to \infty$, or $t_j \equiv 0$ for all $j$.
                \item $\Sigma_j \subset (M, g_j)$ is an LSMH with
                      \[
                          \ind(\Sigma_j) = \ind(\Sigma)\,,
                      \]
                      and 
                      \[
                          \mbfF(|\Sigma_j|_{g_j}, |\Sigma|_{g}) \to 0\,,
                      \] as $j\to \infty$. 
            \end{enumerate}
            Then there exists $p\in \Sing(\Sigma)$ and some neighborhood $U_p\subset M$ both depending on the sequence $\{(g_j, \Sigma_j)\}$ such that for infinitely many $j>>1$, \[
                \Sing(\Sigma_j)\cap U_p  = \emptyset.   \]
        \end{Thm}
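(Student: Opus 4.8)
The plan is to argue by contradiction: assuming the conclusion fails, I will extract from a ``persistently singular'' sequence $\{\Sigma_j\}$ a solution of the (possibly inhomogeneous) Jacobi equation on $\Sigma$ that is forced to have slow asymptotic growth at \emph{every} singular point of $\Sigma$, and then rule this out --- by semi-nondegeneracy of $\Sigma$ when the metric is not moving, and by a Sard--Smale-type genericity of the direction $f$ when it is. First I would reduce to the two-sided case: if $\Sigma$ is one-sided, pass to its connected two-sided double cover $\pi: (\hat M,\hat g,\hat\Sigma)\to(M,g,\Sigma)$ and lift the data, $\hat g_j:=\pi^*g_j=(1+t_j(f_j\circ\pi))\hat g$ and $\hat\Sigma_j:=\pi^{-1}(\Sigma_j)$, a $\ZZ_2$-invariant two-sided LSMH with $\ind(\hat\Sigma_j)=\ind(\hat\Sigma)$ and $\mbfF(|\hat\Sigma_j|_{\hat g_j},|\hat\Sigma|_{\hat g})\to0$. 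Since semi-nondegeneracy of $\Sigma$ is by definition $(\Ker^+L_{\hat\Sigma,\hat g})^{\ZZ_2}=\{0\}$ and $\Sing(\Sigma_j)\cap U_p=\emptyset$ is equivalent to $\Sing(\hat\Sigma_j)\cap\pi^{-1}(U_p)=\emptyset$, one may take $\scF^{k,\alpha}$ to be the preimage under $f\mapsto f\circ\pi$ of the ($\ZZ_2$-equivariant, open, dense) set produced in the two-sided case and carry the $\ZZ_2$-action through every step below. So assume from now on that $\Sigma$ is two-sided with global unit normal $\nu$; since $\Sigma$ is a locally stable hypersurface in a closed $8$-manifold, $\Sing(\Sigma)=\{p_1,\dots,p_N\}$ is finite, and write $\mbfC_i:=\mbfC_{p_i}\Sigma$.

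Suppose the conclusion fails. Then for each $i$ and each neighborhood $U$ of $p_i$ one has $\Sing(\Sigma_j)\cap U\neq\emptyset$ for all large $j$, i.e. $\dist_g(p_i,\Sing(\Sigma_j))\to0$. Away from $\Sing(\Sigma)$, interior regularity and compactness for locally stable minimal hypersurfaces upgrade the $\mbfF$-convergence to smooth convergence, so over a fixed compact exhaustion of $\Sigma\setminus\Sing(\Sigma)$ the surface $\Sigma_j$ is eventually $\graph_{\Sigma,g_j}(u_j)$ with $u_j\to0$ in $C^2_{loc}$, and $u_j$ satisfies
\[
    L_{\Sigma,g}u_j+Q(u_j)=t_j\,\cE_{f_j}+O(t_j^2),
\]
where $Q$ vanishes to second order at $0$ and $\cE_f$ (the first variation of the mean curvature of $\Sigma$ under the conformal change $(1+sf)g$, which is a fixed multiple of $\partial_\nu f$, hence bounded on $\overline\Sigma$ and linear in $f$) is determined by $f,\Sigma,g$. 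Normalizing by a scale-invariant quantity $\beta_j$ adapted to the singular structure of $\Sigma$ --- the $\cL^{k,\alpha}$-type normalization of Section~\ref{Sec_Growth est}, together with $t_j$ --- and passing to a subsequence, I would extract $w:=\lim u_j/\beta_j$ in $W^{2,2}_{loc}(\Sigma)$ and $c:=\lim t_j/\beta_j\in[0,1]$ with $(w,c)\neq(0,0)$ and $L_{\Sigma,g}w=c\,\cE_f$. This is where the hypothesis $\ind(\Sigma_j)=\ind(\Sigma)$ is used, through the growth estimate of Section~\ref{Sec_Growth est}: the persistence of a singular point of $\Sigma_j$ near $p_i$ excludes the Hardt--Simon mode $\rho^{\gamma_1^-(\mbfC_i)}\varphi_1$ from the expansion of $u_j$ near $p_i$, while the exact preservation of the index excludes the remaining index-carrying mode $\rho^{\gamma_1^+(\mbfC_i)}\varphi_1$; combined with the gap $\gamma_2^+(\mbfC_i)>\gamma_1^+(\mbfC_i)$ of Lemma~\ref{Lem_gamma_gap >0}, this forces $\cA\cR_{p_i}(w)\ge\gamma_2^+(\mbfC_i)$ for every $i$, i.e. $w$ is a function of slower growth.

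Now I would close the two cases. If $c=0$ (in particular whenever $t_j\equiv0$), then $w\neq0$, $L_{\Sigma,g}w=0$, and $w$ is of slower growth, so $w\in\Ker^+L_{\Sigma,g}\setminus\{0\}$, contradicting semi-nondegeneracy of $\Sigma$. If $c>0$, then $\tilde w:=w/c$ is of slower growth and solves $L_{\Sigma,g}\tilde w=\cE_f$, so it suffices to let $\scF^{k,\alpha}$ be the set of $f$ for which $L_{\Sigma,g}w=\cE_f$ has no solution $w$ of slower growth; for $f\in\scF^{k,\alpha}$ the existence of $\tilde w$ is then impossible. That this set is open and dense is precisely the generic semi-nondegeneracy statement of Section~\ref{Sec:Gen_Semi-nondeg} (Theorem~\ref{Thm_Generic Semi-nondeg_Loc}): by the deformation theory of \cite{WangZH20_Deform}, $L_{\Sigma,g}$ restricted to the space of slower-growth $W^{2,2}$ functions is Fredholm, and semi-nondegeneracy makes it injective, so the bad $f$ --- those for which $\cE_f$ lies in the closed image of this operator --- form a closed linear subspace $B$; a Sard--Smale argument around a canonical neighborhood of $(g,\Sigma)$ shows this operator has nonzero cokernel, and since $f\mapsto\cE_f$ has dense range, $B$ is proper, so $\scF^{k,\alpha}=C^{k,\alpha}(M)\setminus B$ is open and dense.

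I expect the main obstacle to be the growth estimate invoked in the middle step: converting the geometric hypothesis ``$\Sigma_j$ keeps a singularity near $p_i$ and keeps the same index'' into the analytic bound $\cA\cR_{p_i}(w)\ge\gamma_2^+(\mbfC_i)$ will require a delicate two-scale analysis near each $p_i$ --- relating rescalings of $\Sigma_j$ at the scale of its own singular set to the model cone $\mbfC_i$, controlling the coefficients of the low Jacobi modes $\rho^{\gamma_1^\pm(\mbfC_i)}\varphi_1$ via the Hardt--Simon foliation and an index-counting argument, and doing all of this uniformly enough to pass to the limit in the normalized graphs $u_j/\beta_j$. A secondary difficulty is the Sard--Smale step of Section~\ref{Sec:Gen_Semi-nondeg} establishing that the relevant Fredholm operator has nonzero cokernel, which is what makes a nontrivial generic perturbation available in the first place.
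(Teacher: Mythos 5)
Your overall strategy matches the paper's: reduce to the two-sided case, extract from a limit of normalized graph functions an induced function $w$ satisfying $L_{\Sigma,g}w=c\,\nu(f)$ with good asymptotic rates at $\Sing(\Sigma)$, and then rule out this $w$ by semi-nondegeneracy when $c=0$ and by genericity of $f$ when $c>0$. The paper carries out exactly this logic, only phrased directly rather than by contradiction, and it delegates the hard analytic content of your middle step to Theorem~\ref{Thm_App_Wang20 Induced Jac field} (i.e.\ \cite[Theorem~4.2]{WangZH20_Deform}), which asserts both the existence of the induced $u$ with $\cA\cR_p(u)\geq\gamma_1^-(\mbfC_p\Sigma)$ and, crucially, the implication that $\cA\cR_p(u)\in\{\gamma_1^\pm(\mbfC_p\Sigma)\}$ at some $p$ forces $\Sing(\Sigma_j)\cap U_p=\emptyset$ for infinitely many $j$. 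You propose to re-derive this from scratch via a two-scale analysis; your heuristic that singular persistence kills the $\gamma_1^-$ mode and index preservation kills the $\gamma_1^+$ mode is a plausible reading of the Hardt--Simon picture, but it is left at the level of a sketch, and it is precisely the part the paper does not reprove.

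There is, however, a genuine misattribution in your final step. You assert that the open-denseness of $\scF^{k,\alpha}$ ``is precisely the generic semi-nondegeneracy statement of Section~\ref{Sec:Gen_Semi-nondeg} (Theorem~\ref{Thm_Generic Semi-nondeg_Loc})'' and invoke ``a Sard--Smale argument around a canonical neighborhood of $(g,\Sigma)$.'' This is not what the paper uses. Theorem~\ref{Thm_Generic Semi-nondeg_Loc} is a genericity statement about the \emph{metric} $g$ (it shows $\cG^{k,\alpha}(g,\Sigma;\Lambda,\kappa_0)$ is open dense in $\cG^{k,\alpha}(M)$), whereas here the pair $(g,\Sigma)$ is fixed and one perturbs the conformal direction $f$; no Sard--Smale argument on the canonical-neighborhood machinery is needed. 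The relevant fact is Lemma~\ref{Lem_App_scF^k,alpha}, whose proof is much more elementary: when $\Sigma$ is degenerate the complement of $\scF^{k,\alpha}$ is literally the closed linear subspace $\{f:\nu(f)\in(\Ker L_{\Sigma,g})^\perp\}$; when $\Sigma$ is nondegenerate, openness is \cite[Lemma~3.21]{WangZH20_Deform} and denseness comes from an explicit construction of an $f$ supported near a singular point whose unique $\scB(\Sigma)$-solution $u$ is positive near $p$ and hence (by Lemma~\ref{Lem_Pre_Asymp Rate takes value in Gamma(C) and int by part}(ii)) has $\cA\cR_p(u)\in\{\gamma_1^\pm(\mbfC_p\Sigma)\}$, so is not of slower growth. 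Relatedly, the paper's $\scF^{k,\alpha}$ is defined differently in the degenerate and nondegenerate cases precisely to make this open-dense argument cheap; the uniform ``no slower-growth solution'' definition you use is valid for semi-nondegenerate $\Sigma$ (the paper records this as a remark) but costs a more involved openness proof than you acknowledge, adapted from Lemma~\ref{Lem_Cptness for Slower Growth Jac Fields}.
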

        For the sake of completeness, we include a proof of this Theorem in Appendix \ref{Sec_App_Reg Deform Thm}, which is a modification of \cite{liwang2020generic} based on \cite{WangZH20_Deform}.
    
    \subsection{Cone Decompositions \`a la Edelen}  \label{Subsec_Cone Decomp Edelen}
        In the recent work of N. Edelen \cite{edelen_degeneration_2021}, he proved a finite parametrization result of all the minimal hypersurfaces near a given minimal hypersurface in a closed 8-dimensional Riemannian manifold. The main difficulty lies in the parametrization near a singular point. Therein, Edelen introduces a notion of cone decomposition to overcome this issue.
        
        \begin{Def}[Strong Cone Region, {\cite[Definition~6.0.3]{edelen_degeneration_2021}}]\label{Def:strong_cone_region}
            Let $g$ be a $C^2$ metric on $\mathbb{B}(a, R) \subset \mathbb{R}^8$, and $V$ an integral varifold on $(\mathbb{B}(a, R), g)$. Given $\mathbf{C} \in \mathcal{C}, m \in \mathbb{N}, \beta, \tau, \sigma \in [0, 1/4], \rho \in [0, R]$, we say that $V\llcorner(\mathbb{A}(a, \rho, R), g)$ is a \textbf{$(\mathbf{C},m,\beta)$-strong-cone region} if there are $C^2$ functions 
            \[
                \set{u_j: (a + \mathbf{C}) \cap \mathbb{A}(a, \rho/8, R) \rightarrow \mathbf{C}^\perp}^m_{j=1}
            \] so that for any $r \in [\rho, R] \cap (0, \infty)$,
            \begin{enumerate}
                \item Small $C^2$ norms: $r^{-1}|u_j| + |\nabla u_j| + r|\nabla^2 u_j| \leq \beta$, for any $j = 1, \cdots, m$;
                \item Almost constant density ratios: $m \theta_\mathbf{C}(0) - \beta \leq \theta_V(a, r) \leq m\theta_\mathbf{C}(0) + \beta$;
                \item Multigraph: $V \llcorner  \mathbb{A}(a, \rho/8, R) = \sum^m_{j = 1} |\graph_{a + \mathbf{C}}(u_j) \cap  \mathbb{A}(a, \rho/8, R)|$.
            \end{enumerate}
            In this case, for simplicity, we will also call $\AAa(a, \rho, R)$ a strong-cone region for $V$.
        \end{Def}
        
        \begin{Def}[Smooth model, {\cite[Definition~7.0.1]{edelen_degeneration_2021}}]\label{Def:smooth_model}
            Given $\Lambda, \gamma \geq 0, \sigma \in (0, 1/3)$, a tuple $(S, \mbfC, \set{(\mbfC_\alpha,  \mathbb{B}(y_\alpha, r_\alpha))}_\alpha)$ is called a \textbf{$(\Lambda, \sigma, \gamma)$-smooth model} if
            \begin{itemize}
                \item $S$ is a stationary integral $7$-varifold in $(\mathbb{R}^8, g_{\mathrm{eucl}})$ with $$\theta_S(\mathbf{0}, \infty) \leq \Lambda\,;$$
                \item $\mbfC, \set{\mbfC_\alpha}_\alpha \subset \mathcal{C}_\Lambda$;
                \item $\set{ \mathbb{B}(y_\alpha, 2r_\alpha)}$ is a finite collection of disjoint balls in $\mathbb{B}_{1 - 3\sigma}$;
            \end{itemize}
            such that the following are satisfied:
            \begin{enumerate}
                \item $S$ can be represented by a union of disjoint closed smooth embedded minimal hypersurfaces in $\mathbb{R}^8\setminus \set{y_\alpha}_\alpha$ with multiplicities, i.e.,
                      \[
                          S = m_1 |S_1| + m_2 |S_2| + \cdots + m_k |S_k|\,;
                      \]
                \item $S\llcorner( \mathbb{A}(\mathbf{0}, 1, \infty), g_{\eucl})$ is a $(\mbfC, m, \gamma)$-strong-cone region;
                \item For each $\alpha$, there is a $j$ so that $\spt S \cap  \mathbb{A}(y_\alpha, 0, 2r_\alpha) = S_j \cap  \mathbb{A}(y_\alpha, 0, 2r_\alpha)$, and it is a $(\mbfC_\alpha, 1, \gamma)$-strong-cone region.
            \end{enumerate}
            If there is no ambiguity, we will refer to the smooth model as $S$ for simplicity.
        \end{Def}
        
        \begin{Def}[Smooth model scale constant, {\cite[Definition~7.0.2]{edelen_degeneration_2021}}]\label{Def:smooth_model_const}
            Given a $(\Lambda, \sigma, \gamma)$-smooth model $S$, we let $\epsilon_S$ be the largest number $\leq \min(1, \min_\alpha\set{r_\alpha})$ for which the graph map 
            \[
                \graph_S: T^\perp(\bigcup_j S_j) \rightarrow \mathbb{R}^8, \quad \graph_S(x, v):= x + v\,,
            \]
            is diffeomorphism from $\set{(x,v) \in T^\perp(\bigcup_j S_j): x \in \mathbb{B}_2\setminus \bigcup_\alpha \BB(y_\alpha, r_\alpha/8), |v| < 2\epsilon_S}$ onto its image, and satisfies
            \[
                \left|D\graph_S|_{(x,v)} - \mathrm{Id}\right| \leq |\epsilon_S|^{-1}|v|\,.
            \]
        \end{Def}
        
        \begin{Def}[Smooth region, {\cite[Definition~7.0.3]{edelen_degeneration_2021}}]
            Given a smooth model $S$, a $C^2$ metric $g$  on $ \mathbb{B}(a, R) \subset \mathbb{R}^8$, and $\beta \in (0, 1)$, we say that an integral varifold $V\llcorner ( \mathbb{B}(a, R), g)$ is an \textbf{$(S, \beta)$-smooth region} if for each $i = 1, \cdots, k$, there are $C^2$ functions $\set{u_{ij}: S_i \rightarrow S^\perp_i}^{m_i}_{j = 1}$ so that 
            \[
                \left((\eta_{a, R})_\sharp V\right)\llcorner({ \mathbb{B}_1} \setminus \bigcup_\alpha  \mathbb{B}(y_\alpha, r_\alpha /4)) = \sum^k_{i = 1} \sum^{m_i}_{j = 1}\left[\graph_{S_i}(u_{ij})\cap { \mathbb{B}_1} \setminus \bigcup_\alpha  \mathbb{B}(y_\alpha, {r_\alpha /4} )\right]_{R^{-2}(\eta^{-1}_{a,R})^*g}\,,
            \]
            and
            \[
                |u_{ij}|_{C^2(S_i)} \leq \beta \epsilon_{S}, \quad \forall i, j\,,
            \]
            where $\epsilon_S$ is a scale constant in the previous definition.
            
            In this case, for simplicity, we will also call $\BB(a, R)$ a smooth region for $V$.
        \end{Def}
        
        \begin{Def}[Cone decomposition,  {\cite[Definition~7.0.4]{edelen_degeneration_2021}}]\label{Def:cone_decomposition}
            Given $\theta, \gamma, \beta \in \mathbb{R}$, $\sigma \in (0, 1/3)$, and $N \in \mathbb{N}$, we let 
            \begin{itemize}
                \item $g$ be a $C^{k, \alpha}$ metric on $\mathbb{B}(x, R) \subset \mathbb{R}^8$;
                \item $V$ be an integral varifold in $(\BB(x, R), g)$;
                \item $\cS = \set{S_s}_{s}$ be a finite collection of $(\theta, \sigma, \gamma)$-smooth models.
            \end{itemize}
            A \textbf{$(\theta, \beta, \cS, N)$-cone decomposition} of $V \llcorner(\BB(x, R), g)$ consists of the following parameters:
            \begin{itemize}
                \item Integers $N_C$, $N_S$ satisfying $N_C + N_S \leq N$, where $N_C$ is the number of strong-cone regions and $N_S$ the number of smooth regions and ;
                \item Points $\set{x_a}_a, \set{x_b}_b \subset \BB(x, R)$, where $\set{x_a}$ are centers of strong-cone regions and $\set{x_b}$ centers of smooth regions;
                \item Radii $\set{R_a, \rho_a| R_a \geq 2 \rho_a}_a, \set{R_b}_b$, corresponding to radii of annuli in the definition of strong-cone regions and of balls in the definition of smooth regions, respectively;
                \item Cones $\set{\mbfC_a}_a \subset \cC$;
                \item Integers $\set{1 \leq m_a \leq [\theta]}_a$;
                \item Indices $\set{s_b}_b$, corresponding to the smooth model $S_{s_b}$;
            \end{itemize}
            where $a = 1, \cdots, N_C$ and $b = 1, \cdots, N_S$. Moreover, these parameters determine a covering of balls and annuli satisfying:
            \begin{enumerate}
                \item Every $V\llcorner(\AAa(x_a, \rho_a, R_a), g)$ is a $(\mbfC_a, m_a, \beta)$-strong cone region and every $V \llcorner (\BB(x_b, R_b), g)$ is a $(S_{s_b}, \beta)$-smooth region;
                \item In (1), there is either a strong-cone region $\AAa(x_a, \rho_a, R_a)$ for $V$ with $R_a = R$ and $x_a = x$, or a smooth region $B_{R_b}(x_b)$ for $V$ with $R_b = R$ and $x_b = x$;
                \item If $V \llcorner{\AAa(x_a, \rho_a, R_a)}$ is a $(\mbfC_a, m_a, \beta)$-strong-cone region and $\rho_a > 0$, then there exists either a smooth region $\BB(x_b, R_b)$ for $V$ with $R_b = \rho_a$, or another cone region $\AAa(x_{a'}, \rho_{a'}, R_{a'})$ for $V$ with $R_{a'} = \rho_a, x_{a'} = x_a$. If $\rho_a = 0$, then $\theta_{\mbfC_a}(\mathbf{0}) > 1$;
                \item If $V\llcorner (\BB(x_b, R_b), g)$ is a smooth region with $(S, \mbfC, \set{\mbfC_{\hat\alpha}, B(y_{\hat\alpha}, r_{\hat\alpha})}_{\hat\alpha})\in \cS$, then for any ${\hat\alpha}$, there exists a point $x_{b,{\hat\alpha}}$ and a radius $R_{b,{\hat\alpha}}$ satisfying
                      \[
                          |x_{b,{\hat\alpha}} - (x_b + R_b \cdot y_{\hat\alpha})| \leq \beta R_b r_{\hat\alpha}, \quad \frac{1}{2}\leq \frac{R_{b,{\hat\alpha}}}{R_b r_{\hat\alpha}} \leq 1 + \beta\,,
                      \]
                      and either a strong-cone region $\AAa(x_{a'}, \rho_{a'}, R_{a'})$ for $V$ with $R_a = R_{b, {\hat\alpha}}, x_a = x_{b, {\hat\alpha}}$, or another smooth region $\BB(x_{b'}, R_{b'})$ with $R_{b'} = R_{b, {\hat\alpha}}$ and $x_{b'} = x_{b, {\hat\alpha}}$.
            \end{enumerate}
        \end{Def}
        
        By (\ref{Equ_Pre_Density dicrete for SMC}), we can enumerate the set of densities with multiplicities, i.e.,
        \begin{equation}
            \set{\tilde \theta_i}_i := \set{m\theta_i: m, i\in \mathbb{N}, m > 0}\,,
        \end{equation}
        such that $1 = \tilde \theta_0 < \tilde \theta_1 < \cdots$.

        \begin{Thm}[Existence of Cone Decomposition, {\cite[Theorem~7.1]{edelen_degeneration_2021}}]\label{Thm:cone_decomposition}
            Given $l, I \in \mathbb{N}$, $0 < \gamma \leq 1$, $\sigma \in (0, \frac{1}{100(I+1)}]$, there exist constants $\delta_{l, I}$, $N$ and a finite collection of $(\tilde \theta_l, \sigma, \gamma)$-smooth models $\set{S_s}_s = \mathcal{S}$, all depending only on $(l, I, \gamma, \sigma)$ with the following property.
            
            For any $C^3$ metric $g$ on $\mathbb{B}_1$ satisfying $|g - g_{\eucl}|_{C^3(\mathbb{B}_1)} \leq \delta_{l, I}$, any stationary integral varifold $V$ in $(\mathbb{B}_1, g)$ with $\ind(\spt V) \leq I$, $\mathbf{C} \in \mathcal{C}$, and $m \in \mathbb{N}$, if
            \begin{itemize}
                \item $m \theta_{\mathbf{C}}(0) \leq \tilde \theta_l$;
                \item $\dist_H(\spt V \cap B_1, \mathbf{C} \cap B_1) \leq \delta_{l, I}$;
                \item $(m - 1/2)\theta_{\mathbf{C}}(0) \leq \theta_V(0, 1/2)$ and $\theta_V(0, 1) \leq (m + 1/2) \theta_{\mathbf{C}}(0)$;
            \end{itemize}
            Then there exists a radius $r \in (1 - 20(I + 1)\sigma, 1)$ so that $V \llcorner (B_r, g)$ admits a $(\tilde \theta_l, \gamma, \mathcal{S}, N)$-cone decomposition.
        \end{Thm}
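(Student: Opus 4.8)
The plan is to prove Theorem~\ref{Thm:cone_decomposition} by induction on the enumerated density level $l$, iterating, in quantitative form, Simon's analysis of the rate of convergence of a minimal varifold to its tangent cone. Fix $I,\gamma,\sigma$. For the base case, take $l$ with $\tilde\theta_l<2$; then the hypotheses force $m\theta_{\mathbf{C}}(\mathbf{0})=1$, i.e. $m=1$ and $\mathbf{C}$ a hyperplane. If $\delta_{1,I}$ is small enough, Allard's regularity theorem together with the density pinching shows that on $B_r$, for $r$ close to $1$, $V$ is a single smooth graph over a hyperplane with small $C^2$-norm; this is an $(S,\beta)$-smooth region with $S$ the trivial model (one plane, no singular balls), and we take $N_C=0$, $N_S=1$, $N=1$. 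Assume now the statement holds, with all constants already chosen, for every density level strictly below $l$, and consider $V,\mathbf{C},m$ as in the hypotheses with $m\theta_{\mathbf{C}}(\mathbf{0})\le\tilde\theta_l$.

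For the inductive step, start the scale $r$ at $1$ and let it decrease. Using the density pinching at scale $1$, the monotonicity formula, and the $\varepsilon$-regularity theory for bounded-index varifolds near cones in dimension $8$ (Schoen--Simon~\cite{schoen_regularity_1981}, with $\scC_\Lambda$ compact as in Lemma~\ref{Lem_gamma_gap >0}), one obtains a sub-interval $(\rho_*,1)$ of scales on which $V\llcorner\AAa(a,\rho_*/8,r)$ is an $m$-sheeted $C^2$-multigraph over $a+\mathbf{C}$ with $C^2$-norms controlled by some $\beta=\beta(\delta_{l,I})$ tending to $0$ with $\delta_{l,I}$. The crucial input is the quantitative uniqueness of the tangent cone, Edelen's adaptation of Simon's \L ojasiewicz--Simon inequality (\cite[Theorem~6.3]{edelen_degeneration_2021}, \cite{simonAsymptoticsClassNonLinear1983}): it yields summable decay, as $r$ decreases, of the multigraph functions and of the density gap $\theta_{V}(a,r)-m\theta_{\mathbf{C}}(\mathbf{0})$, so that the multigraph-plus-pinching property is \emph{self-improving} while it holds with $\beta$ below a fixed threshold. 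Two cases result. If self-improvement persists for all $r\in(0,1)$, then $\rho_*=0$ and $V\llcorner\AAa(a,0,r)$ is a $(\mathbf{C},m,\beta)$-strong-cone region; since $\theta_{\mathbf{C}}(\mathbf{0})>1$ unless we are in the base case, we are done with $N_C=1$, $N_S=0$. Otherwise there is a first scale $\rho_*\in(0,1)$ at which the threshold fails; then $V\llcorner\AAa(a,\rho_*,1)$ is still a $(\mathbf{C},m,\beta)$-strong-cone region (the top region, on $B_r$ with $r$ slightly below $1$ to absorb the $\sigma$-shrinking), while $(\eta_{a,\rho_*})_\sharp V$ is $\mathbf{F}$-close on $\BB_1$ to a stationary integral varifold $S$ with $\theta_S(\mathbf{0},\infty)\le m\theta_{\mathbf{C}}(\mathbf{0})$; by Schoen--Simon compactness in dimension $8$, $S$ is a finite union of smooth embedded minimal hypersurfaces with isolated singularities, each modelled on a cone of $\scC_\Lambda$, hence (after a small scale adjustment placing the singular balls inside $\BB_{1-3\sigma}$) a $(\tilde\theta_l,\sigma,\gamma)$-smooth model, and $V\llcorner\BB(a,R_b)$ with $R_b\sim\rho_*$ is an $(S,\beta)$-smooth region. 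Such $S$ ranges over a precompact family (mass and index bounded, metrics converging to $g_{\eucl}$) whose members coincide when close by rigidity, so finitely many models $\mathcal{S}=\{S_s\}_s$ suffice.

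To finish the inductive step, recall each model $S_{s_b}$ carries finitely many singular balls $\BB(y_{\hat\alpha},r_{\hat\alpha})$; near each, the rescaled $V$ is Hausdorff-close to the model cone $\mathbf{C}_{\hat\alpha}$ with $m_{\hat\alpha}\theta_{\mathbf{C}_{\hat\alpha}}(\mathbf{0})<m\theta_{\mathbf{C}}(\mathbf{0})\le\tilde\theta_l$ --- a strict drop, because $\rho_*>0$ means the tangent cone at $a$ has been resolved into smooth sheets plus strictly lower-density singularities --- hence $\le\tilde\theta_{l-1}$. On each sub-ball $\BB(x_{b,\hat\alpha},R_{b,\hat\alpha})$ the density pinching and Hausdorff closeness required by the induction hypothesis are exactly what the smooth-region structure supplies, so we obtain there a $(\tilde\theta_{l-1},\gamma,\mathcal{S},N_{l-1})$-cone decomposition; splicing these under the top region gives a cone decomposition of $V\llcorner B_r$ with the nesting conditions (2)--(4) of Definition~\ref{Def:cone_decomposition} satisfied by construction. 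The recursion terminates because each step strictly lowers the enumerated density level, so the decomposition tree has depth $\le l$; its branching is bounded by the number of singular balls in a model plus one, which is itself bounded in terms of $I$ and $\tilde\theta_l$ (each singular ball absorbs a definite amount of index or mass). Hence $N_C+N_S\le N$ for some $N=N(l,I,\gamma,\sigma)$, and $\mathcal{S}$ and $\delta_{l,I}$ depend only on $(l,I,\gamma,\sigma)$.

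The main obstacle is making the dichotomy in the inductive step genuinely ``clopen in scale'': one must show that the multigraph-plus-pinching property, once valid at the top scale with small $\beta$, persists on a whole interval of scales and can be destroyed only at a single well-defined scale $\rho_*$, below which a strictly simpler configuration emerges. This demands the summable-decay (quantitative) form of Simon's tangent-cone uniqueness as in \cite[Theorem~6.3]{edelen_degeneration_2021}, a cone-splitting argument excluding intermediate configurations, and a uniform choice of all thresholds over the compact family $\scC_\Lambda$. The remaining burden is bookkeeping: ensuring that $\beta$, the centers and radii $x_a,\rho_a,R_a$, the cones $\mathbf{C}_a$, the integers $m_a$, and the model indices $s_b$ depend only on $(l,I,\gamma,\sigma)$ and not on the particular $(V,g,\mathbf{C},m)$, and verifying the precise nesting conditions of Definition~\ref{Def:cone_decomposition}.
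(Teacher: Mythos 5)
The paper does not prove this theorem: it is stated as a verbatim citation of \cite[Theorem~7.1]{edelen_degeneration_2021} and used as a black box, so there is no in-paper proof to compare against.

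As a sketch of Edelen's argument your outline is structurally on the right track (induction on density level, quantitative uniqueness of the tangent cone in the {\L}ojasiewicz--Simon form to obtain long strong-cone regions, blow-up to produce smooth models, compactness to pass to a finite model list, recursion on the singular sub-balls), but the termination argument has a genuine gap. You assert that each smooth-region transition strictly drops the density at the singular sub-balls, ``hence $\le \tilde\theta_{l-1}$,'' so the recursion tree has depth $\le l$. This fails for translated-cone smooth models: $S = \mathbf{C} + y$ with $0<|y|\lesssim \gamma$ satisfies every clause of Definition~\ref{Def:smooth_model} (a single singular ball at $y$, $\mathbf{C}_\alpha = \mathbf{C}$, and the strong-cone condition on $\AAa(\mathbf{0},1,\infty)$ because the graphical function of $\mathbf{C}+y$ over $\mathbf{C}$ decays like $|y|/r$ by Lemma~\ref{Lem_Ana on SMC_AR_infty(C+x; C) = 0}), yet the density at the sub-ball equals $\theta_{\mathbf{C}}(\mathbf{0})$: there is no drop, only a re-centering. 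Such transitions can in principle chain, so $N$ is not a priori bounded and your induction does not close. Edelen's proof controls the number of these re-centering alternations by a separate quantitative argument (roughly, the summable decay from the {\L}ojasiewicz--Simon inequality bounds the total drift of the center, and a cone-splitting/covering argument over the compact class $\scC_\Lambda$ converts this into a uniform bound on the number of strong-cone to smooth-region transitions); your sketch flags ``summable decay'' as an ingredient but never connects it to termination, which is the most delicate point of the theorem.
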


    \subsection{Singular Capacity} \label{Subsec_SCAP}
        Let $\scM$ be the space of triples $(\Sigma, N, g)$, where $(N, g)$ is a Riemannian manifold of dimension $8$, not necessarily complete; $\Sigma\subset (N, g)$ be a connected LSMH with finitely many singularities.  Let $\scM^{2\text{-sided}}$ be the space of triples $(\Sigma, N, g)\in \scM$ such that $\Sigma$ is two-sided. As before, we endow topology on $\scM$ and $\scM^{2\text{-sided}}$ by $C^4_{loc}$ convergence in $(N, g)$ and $\mbfF$ convergence in $\Sigma$ with multiplicity $1$.
        
    \cite[Section 4]{liwang2020generic} introduced the following functional $\SCap$ on $\scM^{2\text{-sided}}$, which serves as a way to count the number of singularities with weight such that it is upper semi-continuous under convergence.
        \begin{Thm} [Singular Capacity {\cite[Theorem 4.2]{liwang2020generic}}]  \label{Thm_Pre_SCap exist}
            There exists a functional $\SCap: \scM^{2\text{-sided}}\to \NN$, called \textbf{singular capacity}, such that 
            \begin{enumerate} [(i)]
                \item For every nontrivial stable minimal hypercone $\mbfC\subset \RR^8$, we have \[
                          1\leq \SCap(\mbfC, \RR^8, g_{\eucl}) < +\infty.  \]
                      Denote for simplicity that $\SCap(\mbfC):= \SCap(\mbfC, \RR^8, g_{\eucl})$;
                \item For every triple $(\Sigma, N, g)\in \scM^{2\text{-sided}}$, \[
                          \SCap(\Sigma, N, g) = \sum_{p\in \Sing(\Sigma)} \SCap(\mbfC_p\Sigma),   \]
                      where recall $\mbfC_p\Sigma$ denotes the tangent cone of $\Sigma$ at $p$.  Conventionally, $\SCap(\Sigma, N, g) = 0$ if $\Sing(\Sigma) = \emptyset$;
                \item If $(\Sigma_j, N, g_j)\to (\Sigma_\infty, N, g_\infty)$ in $\scM^{2\text{-sided}}$, and $\forall 1\leq j\leq +\infty$, $\Sigma_j$'s are stable, then for every open subset $U\subset N$ with $\partial U\cap \Sing(\Sigma) = \emptyset$, we have \[
                              \limsup_{j\to \infty} \SCap(\Sigma_j, U, g_j) \leq \SCap(\Sigma_\infty, U, g_\infty).  \]
            \end{enumerate}
        \end{Thm}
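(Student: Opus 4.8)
The plan is to observe first that condition~(ii) is forced to be the \emph{definition} of $\SCap$ on a general triple once its values on cones are known (the sum is finite since by hypothesis $\Sigma$ has finitely many singularities), so the real content is to construct $\SCap(\mbfC)\in\{1,2,3,\dots\}$ for each nontrivial stable minimal hypercone $\mbfC\subset\RR^8$, to extend by~(ii), and to verify~(iii). I would build $\SCap(\mbfC)$ by induction on the \textbf{level} $\ell(\mbfC):=\#\{i:\theta_i\le\theta_\mbfC(\mathbf{0})\}$ in the discrete list~(\ref{Equ_Pre_Density dicrete for SMC}); heuristically, $\SCap(\mbfC)$ is the largest weighted count of singular points a single stable minimal hypersurface can carry near the vertex while lying close to $\mbfC$ in a metric close to $g_{\eucl}$, each singularity $q$ weighted by $\SCap(\mbfC_qV)$ of its tangent cone.

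The structural input that makes this well founded is a \textbf{separation lemma}: if $g_j\to g_{\eucl}$ in $C^3_{loc}(\RR^8)$ and $V_j$ is a stable (hence index-zero) minimal hypersurface in $(\mathbb B_1,g_j)$ with $V_j\to|\mbfC|$ as varifolds in $\mathbb B_1$, then every singularity of $V_j$ in $\mathbb B_{1/2}$ converges to $\mathbf{0}$, and for $j$ large at most one lies at $\mathbf{0}$ with density $\theta_\mbfC(\mathbf{0})$ while all the others have strictly smaller density (hence strictly smaller level). Indeed, any subsequential limit $q_\infty$ of singular points satisfies $\theta_\mbfC(q_\infty)\ge\limsup_j\theta_{V_j}(q_j)\ge\theta_1>1$ by upper semicontinuity of density, so $q_\infty\in\Sing(\mbfC)=\{\mathbf{0}\}$; and if $q_j\in\Sing(V_j)$ with $q_j\neq\mathbf{0}$, $q_j\to\mathbf{0}$, had density $\theta_\mbfC(\mathbf{0})$, then rescaling $V_j$ about $q_j$ at scale $|q_j|$ --- identifying the limit as a translate of $|\mbfC|$ via the quantitative uniqueness of the tangent cone of $|\mbfC|$ at $\mathbf{0}$ (\cite[Theorem~6.3]{edelen_degeneration_2021}, after~\cite{simonAsymptoticsClassNonLinear1983}) --- makes that limit \emph{regular} at $\mathbf{0}$, while $V_j$ there has density $\theta_\mbfC(\mathbf{0})>1$, contradicting upper semicontinuity of density.

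Granting the lemma, the singularities of any such $V_j$ in $\mathbb B_{1/2}$ split, for $j$ large, into at most one of level $\ell(\mbfC)$ (at $\mathbf{0}$) plus others of strictly smaller level, whose weights are already defined. Let $\overline{N}_\ell$ be the uniform bound on the complexity of the Edelen cone decomposition of Theorem~\ref{Thm:cone_decomposition} for stationary integral varifolds with index $\le0$ and density level $\le\ell$; this bounds the number of singular points of $V_j$ in $\mathbb B_{1/2}$ and, inductively, the total weight by $\overline{N}_\ell$. On functions $f$ from the (compact, metrizable) set of level-$\ell$ cones to $\{1,\dots,\overline{N}_\ell\}$ define
\[
    \Phi(f)(\mbfC):=\max\Big\{1,\ \sup\ \liminf_{j\to\infty}\sum_{q\in\Sing(V_j)\cap\mathbb B_{1/2}}w_f(\mbfC_q V_j)\Big\}\,,
\]
where the supremum ranges over all degenerations $V_j\to|\mbfC|$ as above and $w_f$ equals the already-defined $\SCap$ on levels $<\ell$ and $f$ on level $\ell$ (meaningful by the splitting just noted). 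Then $\Phi$ is monotone, valued in $\{1,\dots,\overline{N}_\ell\}$-functions, and $\Phi(f)\ge f$; iterating from the constant function $1$ it stabilizes at a fixed point, which I take to be $\SCap$ on level-$\ell$ cones. (The supremum is really over the compact set of limit singular configurations allowed by the decomposition, hence attained; the trivial degeneration $V_j\equiv|\mbfC|$ gives $\ge1$.) Properties~(i) and~(ii) then hold by construction, with $\SCap(\mbfC,\RR^8,g_{\eucl})=\SCap(\mbfC)$ since $\mbfC_{\mathbf{0}}\mbfC=\mbfC$.

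For~(iii), given $(\Sigma_j,N,g_j)\to(\Sigma_\infty,N,g_\infty)$ with all $\Sigma_j$ stable and $\partial U\cap\Sing(\Sigma_\infty)=\emptyset$, the singularities of $\Sigma_j$ in $U$ cannot escape to $\partial U$ and cluster around the finitely many points of $\Sing(\Sigma_\infty)\cap U$; grouping them by the point $p$ they approach, it suffices to show $\limsup_j\sum_{q\to p}\SCap(\mbfC_q\Sigma_j)\le\SCap(\mbfC_p\Sigma_\infty)$ for each $p$ and then sum (using $\limsup(\text{sum})\le\sum\limsup$). Fixing $p$ and a subsequence realizing the $\limsup$, rescale by $\eta_{p,r_j}$ with $r_j\searrow0$ chosen by a diagonal argument --- using the quantitative uniqueness of the tangent cone of $\Sigma_\infty$ at $p$ and compactness of stable hypersurfaces to keep $(\eta_{p,r_j})_\sharp\Sigma_j\to|\mbfC_p\Sigma_\infty|$ in $\mathbb B_1$, $r_j^{-2}(\eta_{p,r_j})^*g_j\to g_{\eucl}$ in $C^3_{loc}$, and all singularities $q\to p$ inside $\mathbb B_{1/2}$ --- so that the local picture of $\Sigma_j$ near $p$ becomes an admissible degeneration of $\mbfC_p\Sigma_\infty$; the fixed-point equation $\Phi(\SCap)=\SCap$ then bounds its weighted count, which along the subsequence equals $\sum_{q\to p}\SCap(\mbfC_q\Sigma_j)$, by $\SCap(\mbfC_p\Sigma_\infty)$. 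The hardest part is the separation lemma together with the multi-scale bookkeeping behind the choice of $r_j$: one must rule out ``holes'' and ensure no weight is double counted or lost across the nested cone regions, which is exactly where the hierarchical structure of Edelen's decomposition and the $\epsilon$-regularity near cones are indispensable.
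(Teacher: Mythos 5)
The statement you are proving, Theorem~\ref{Thm_Pre_SCap exist}, is \emph{not} proved in this paper: it is recalled verbatim from \cite[Theorem~4.2]{liwang2020generic}. What the paper does prove (in Appendix~\ref{Sec_App_Sing Cap}, as Theorem~\ref{Thm_Pre_SCAP under converg w multip}) is the closely analogous construction of the modified functional $\SCAP$, and that proof is the natural comparison point for your proposal. Against it, your outline shares the right structural skeleton — induction on the discrete density ladder~(\ref{Equ_Pre_Density dicrete for SMC}), a separation/isolation lemma at the vertex via quantitative uniqueness of tangent cones, and Edelen's cone decomposition (Theorem~\ref{Thm:cone_decomposition}) to bound the complexity — but the middle of the argument diverges in a way that introduces real problems.

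The paper's construction is a \emph{direct} inductive definition, not a fixed-point iteration: for a cone $\mbfC$ of level $\ell$,
\[
\SCAP(\mbfC) = 1 + \sup\Big\{\limsup_{j\to\infty}\SCAP(\Sigma_j\cap\BB_1,\BB_1,g_j)\Big\},
\]
where, crucially, the supremum is taken \emph{only} over degenerating sequences $\Sigma_j\to|\mbfC|$ in which \emph{every} singular point of $\Sigma_j$ in $\BB_1$ has density \emph{strictly less than} $\theta_\mbfC(\mathbf{0})$. Because of this restriction, the right-hand side refers exclusively to cones of strictly smaller level, so the recursion is grounded with no fixed-point argument at all. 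Degenerations containing a same-density singular point are then handled \emph{separately} in the proof of the upper semicontinuity property~(iii): there one shows, via Lemma~\ref{Lem_Ana on SMC_Quanti Uniqueness of Tang Cone} and Corollary~\ref{Cor_Converg in all Scales}, that in this case $\Sing(\Sigma_j)\cap\BB_1$ is a \emph{single point} $p_j$, and $\mbfC_{p_j}\Sigma_j$ $\mbfF$-converges to $\mbfC$, so the upper semicontinuity of $\SCAP$ on the compact set $\scC(\ell)$ closes the argument. Your map $\Phi$, by contrast, takes the supremum over \emph{all} degenerations, including the trivial one $V_j\equiv|\mbfC|$, which contributes exactly $f(\mbfC)$; so $\Phi(f)\geq f$ pointwise and the defining equation $\Phi(f)=f$ is partially tautological. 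Your iteration from the constant $1$ does stabilize by monotonicity and boundedness, but the value it stabilizes at is \emph{not} the paper's $\SCap$: it drops the ``$+1$'' for the cone vertex, so it will be strictly smaller whenever $\mbfC$ admits a nontrivial degeneration. That by itself is not fatal (properties~(i)--(iii) only require the existence of \emph{some} such functional, and a smaller fixed point still satisfies them), but it means the fixed-point machinery is doing extra work to recover something the paper gets for free.

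There is, however, a concrete gap in your separation lemma, and it bears directly on whether your $\Phi$ is manageable. You state: ``for $j$ large at most one [singularity] lies at $\mathbf{0}$ with density $\theta_\mbfC(\mathbf{0})$ while all the others have strictly smaller density.'' The correct statement, which the paper relies on and which makes the case analysis collapse, is stronger: \emph{if one singular point of $V_j$ in $\BB_{1/2}$ has density $\theta_\mbfC(\mathbf{0})$, then for $j$ large it is the only singular point of $V_j$ in $\BB_1$} — it does not merely dominate the others, it excludes them. This follows from the quantitative uniqueness of the tangent cone (Lemma~\ref{Lem_Ana on SMC_Quanti Uniqueness of Tang Cone}): the density pinching $\theta_{V_j}(p_j,1)-\theta_{V_j}(p_j)\to 0$ forces $V_j$ to be a single graph over (a translate of) $\mbfC$ on the whole annulus, hence smooth there. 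Notice that your own rescaling argument for the separation lemma already proves this stronger claim — you rescale about $q_j$ at scale $|q_j|$, obtain a limit regular at $\mathbf{0}$, and contradict upper semicontinuity of density; the contradiction uses only that $q_j$ is singular, i.e.\ $\theta_{V_j}(q_j)>1$, not that $q_j$ has density $\theta_\mbfC(\mathbf{0})$ — but you then state only the weaker conclusion. Working with the weaker conclusion means your $\Phi$ has to account for degenerations in which a same-level singularity coexists with lower-level ones, so the sum $w_f(\mbfC_{p_j}V_j)+\sum_{\text{smaller}}\SCap(\cdot)$ appears in the supremum, and keeping the fixed-point iteration finite then requires you to also prove upper semicontinuity of the partially-built $f$ on $\scC(\ell)$ simultaneously with the iteration — a circularity that the paper's cleaner definition sidesteps entirely. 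If you upgrade the separation lemma to the full statement (which your argument already supports), the same-level degenerations contribute only a single term controlled by upper semicontinuity, and your approach becomes essentially a repackaging of the paper's; without the upgrade, the claim that the iteration converges to a functional satisfying~(iii) is not established.

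Finally, the parenthetical ``the supremum is really over the compact set of limit singular configurations allowed by the decomposition, hence attained'' is asserted but not justified; the paper instead proves the needed upper semicontinuity of $\SCAP$ on $\scC(\ell)$ as part of the induction and never needs the supremum to be attained. The multi-scale bookkeeping you flag for part~(iii) — choosing $r_j$ so that the rescaled $\Sigma_j$ both converges to $\mbfC_p\Sigma_\infty$ and captures all nearby singular points inside $\BB_{1/2}$ — is indeed where the work lives, and is precisely what the paper resolves by first reducing (via the strong separation lemma and Corollary~\ref{Cor_Converg in all Scales}) to the case of a single same-density singularity, a reduction your proposal does not make.
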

        
        In this paper, we shall need a modified singular capacity, which is still upper semi-continuous under convergence with multiplicities. 
        \begin{Thm} \label{Thm_Pre_SCAP under converg w multip}
            There exists a functional $\SCAP: \scM \to \NN$ with the following properties.
            \begin{enumerate} [(i)]
                \item For every nontrivial stable minimal hypercone $\mbfC\subset \RR^8$, we have \[
                          1\leq \SCAP(\mbfC, \RR^8, g_{\eucl}) < +\infty.  \]
                      We denote for simplicity $\SCAP(\mbfC):= \SCAP(\mbfC, \RR^8, g_{\eucl})$;
                \item For every triple $(\Sigma, N, g)\in \scM$,
                      \begin{align}
                          \SCAP(\Sigma, N, g) := \begin{cases}
                              \displaystyle\sum_{p\in \Sing(\Sigma)} \SCAP(\mbfC_p\Sigma)\    & \text{ if }\Sigma \text{ is two-sided}; \\
                              2\displaystyle \sum_{p\in \Sing(\Sigma)} \SCAP(\mbfC_p\Sigma)\  & \text{ if }\Sigma \text{ is one-sided}.
                          \end{cases} \label{Equ_App_SCap for 1 and 2-sided}
                      \end{align}
                      Conventionally, $\SCAP(\Sigma, N, g) = 0$ if $\Sing(\Sigma) = \emptyset$;
                \item Given an integer $I\geq 0$, Suppose that $(\Sigma_j, N, g_j)\to (\Sigma_\infty, N, g_\infty)$ in $\scM$, where $\forall 1\leq j\leq +\infty$, $\ind(\Sigma_j) \equiv I$, and either all two-sided or all one-sided.
                      
                      Then for every open subset $U\subset N$ with $\partial U\cap \Sing(\Sigma) = \emptyset$, we have
                          \begin{align}
                              \limsup_{j\to \infty} \SCAP(\Sigma_j, U, g_j) \leq \SCAP(\Sigma_\infty, U, g_\infty).  \label{Equ_Pre_SCap w ind multip, usc}
                          \end{align}
                \item Suppose $N$ is a closed eight-manifold. Let $\set{g_j}$ be a family of $C^4$ metrics on $N$, and $\Sigma_j \subset (N, g_j)$ be connected stable minimal hypersurfaces, $1\leq j\leq \infty$, such that when $j\to \infty$, $g_j\to g_\infty$ in $C^4$, $|\Sigma_j|$ $\mbfF$-converges to $m|\Sigma_\infty|$ for some integer $m\geq 2$.
                      
                      If $\Sing(\Sigma_\infty) \neq \emptyset$, then we have,
                      \begin{align}
                          \limsup_{j\to \infty} \SCAP(\Sigma_j, N, g_j) \leq \SCAP(\Sigma_\infty, N, g_\infty) - 1.  \label{Equ_Pre_SCap w ind multip, improved usc}
                      \end{align}
            \end{enumerate}
        \end{Thm}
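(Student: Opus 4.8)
The plan is to \emph{define} $\SCAP$ on all of $\scM$ by the formula in (ii), taking $\SCAP(\mbfC):=\SCap(\mbfC)$ for the (automatically two-sided) stable minimal hypercones $\mbfC\subset\RR^8$ supplied by Theorem \ref{Thm_Pre_SCap exist}; then (i) is immediate and (ii) holds by construction, so all the content lies in the semicontinuity statements (iii) and (iv). For both I would first reduce the one-sided situations to two-sided ones. If the $\Sigma_j$ are connected and one-sided, then for $j\gg1$ the defining $\ZZ_2$-class in $H^1(N;\ZZ_2)$, hence the connected double cover $\pi\colon\hat N\to N$, is fixed; $\hat\Sigma_j:=\pi^{-1}(\Sigma_j)$ is a connected two-sided LSMH with $\ind(\hat\Sigma_j)=\ind(\Sigma_j)$, lifted stability, and $\pi^*g_j\to\pi^*g_\infty$ in $C^4_{loc}$; and since a neighborhood of each $q\in\Sing(\Sigma_j)$ is simply connected, its two preimages carry the same tangent cone, so $\SCAP(\Sigma_j,U,g_j)=\SCap(\hat\Sigma_j,\pi^{-1}U,\pi^*g_j)$ (and likewise for $\Sigma_\infty$). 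Thus (iii) and (iv) for one-sided data follow from their two-sided counterparts; when only one of $\Sigma_j,\Sigma_\infty$ is one-sided one passes to the double cover adapted to $\Sigma_\infty$ and selects a connected component, the $\SCAP$-count transforming by the factor in (ii) in a way that only helps the inequalities.

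For (iii) in the two-sided case I would localize around $\Sing(\Sigma_\infty)$. Off $\Sing(\Sigma_\infty)$, multiplicity-one $\mbfF$-convergence upgrades, by Allard's theorem and the regularity theory for stable minimal hypersurfaces \cite{schoen_regularity_1981}, to smooth graphical convergence; hence on every $W\Subset U\setminus\Sing(\Sigma_\infty)$ one has $\Sing(\Sigma_j)\cap W=\emptyset$ and $\Sigma_j$ stable on $W$ for $j\gg1$, so $\Sing(\Sigma_j)$ can accumulate only on $\Sing(\Sigma_\infty)$ and the index of $\Sigma_j$ can concentrate only there. Taking small disjoint balls $B^{g_\infty}_\epsilon(p)$, $p\in\Sing(\Sigma_\infty)\cap U$, we get $\SCAP(\Sigma_j,U,g_j)=\sum_p\SCAP(\Sigma_j,B^{g_\infty}_\epsilon(p),g_j)$ for $j\gg1$, and it suffices to bound each local term by $\SCAP(\mbfC_p\Sigma_\infty)$. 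Where $\Sigma_j$ is eventually stable on $B_\epsilon(p)$ this is precisely Theorem \ref{Thm_Pre_SCap exist}(iii). At the finitely many remaining points I would rescale by $\eta_{p,\lambda_j}$ with $\lambda_j$ the distance from $p$ to the outermost point of $\Sing(\Sigma_j)\cap B_\epsilon(p)$: the rescalings subconverge in $\scM$ to a connected stable minimal hypersurface $\Sigma'\subset(\RR^8,g_{\eucl})$, asymptotic to $\mbfC_p\Sigma_\infty$ at infinity, with $\ind(\Sigma')\le I$ and a singularity on $\partial\BB_1$, so $\Sigma'$ is not a cone; feeding this into an induction on the $\SCap$-value --- using the property, built into the construction of $\SCap$ in \cite{liwang2020generic}, that $\SCap$ strictly decreases on passing to a proper (non-conical) degeneration of a cone --- bounds the local term by $\SCAP(\mbfC_p\Sigma_\infty)$ there too. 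Summing over $p$ yields $\limsup_j\SCAP(\Sigma_j,U,g_j)\le\SCAP(\Sigma_\infty,U,g_\infty)$.

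For (iv) the $\Sigma_j$ are stable, so the only issue is the multiplicity $m\ge2$. Localizing as above, $|\Sigma_j|\to m|\Sigma_\infty|$ upgrades off $\Sing(\Sigma_\infty)$ to the regular sheeting theorem, so there $\Sigma_j$ is a singularity-free $m$-sheeted graph over $\Reg(\Sigma_\infty)$ and $\SCAP(\Sigma_j,N,g_j)=\sum_{p\in\Sing(\Sigma_\infty)}\SCAP(\Sigma_j,B^{g_\infty}_\epsilon(p),g_j)$ for $j\gg1$. Near each $p$, the affirmative answer to Ilmanen's singular sheeting question in dimension $8$ \cite{ilmanen_strong_1996} (established in Appendix \ref{Sec_App_Sing Cap}) gives, for $j\gg1$, a decomposition of $\Sigma_j$ near $p$ into $m$ ordered normal graphs over $\Sigma_\infty$ away from $\Sing(\Sigma_j)$; since a positive Jacobi-type function near a conical vertex has asymptotic rate in $\{\gamma_1^\pm(\mbfC_p\Sigma_\infty)\}$ (Lemma \ref{Lem_Pre_Asymp Rate takes value in Gamma(C) and int by part}(ii)) and $\gamma_1^\pm\le-2<0$ for every nontrivial cone in $\RR^8$, the gap between consecutive sheets blows up toward the vertex, so at most one sheet can reach it; the other $m-1\ge1$ sheets must cap off as smooth, Hardt--Simon-type leaves \cite{hardtAreaMinimizingHypersurfaces1985}, and the surviving singular sheet, being stable and converging with multiplicity one to $\mbfC_p\Sigma_\infty$, contributes at most $\SCAP(\mbfC_p\Sigma_\infty)$ by Theorem \ref{Thm_Pre_SCap exist}(iii). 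Finally, connectedness of $\Sigma_j$ --- the $m$ outer sheets over $\Reg(\Sigma_\infty)$ have to be joined through the balls $B_\epsilon(p)$, which is impossible if each $B_\epsilon(p)$ splits $\Sigma_j$ into $m$ disjoint pieces --- forces, at at least one $p_0$, the local contribution to drop strictly below $\SCAP(\mbfC_{p_0}\Sigma_\infty)$ (indeed $\Sigma_j$ is then regular near $p_0$); summing over $p$ gives $\limsup_j\SCAP(\Sigma_j,N,g_j)\le\SCAP(\Sigma_\infty,N,g_\infty)-1$.

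The main obstacle is this last step of (iv): proving the singular sheeting property in dimension $8$ and converting it, through the Hardt--Simon analysis of the individual sheets and the topological accounting via connectedness, into the honest strict drop; this is the technical heart, carried out in Appendix \ref{Sec_App_Sing Cap}. By comparison, the index-concentration bookkeeping in (iii) and the sidedness reductions are routine.
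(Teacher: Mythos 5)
Your plan deviates from the paper's in a way that creates a genuine gap, mostly concentrated in (iv), and it centers on how you define $\SCAP$.

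You propose simply to set $\SCAP(\mbfC) := \SCap(\mbfC)$, the functional inherited from Theorem~\ref{Thm_Pre_SCap exist}, and then prove (iv) by a direct sheeting argument. The paper does something different and essentially unavoidable: it introduces a \emph{new} functional via the recursive formula
\[
\SCAP(\mbfC) := 1 + \sup\Big\{ \limsup_{j\to\infty}\SCAP(\Sigma_j\cap \BB_1, \BB_1, g_j)\Big\},
\]
where the supremum runs over degenerating sequences whose singularities \emph{all} have strictly smaller density than $\theta_\mbfC(\mathbf 0)$. This recursion is precisely what makes the ``$-1$'' in (iv) fall out: whenever the incoming singularities near $p$ have uniformly smaller density (or whenever one of the sheets becomes regular), the $\sup$ in the definition applies and yields $\SCAP(\mbfC_p\Sigma_\infty)-1$ for free. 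Your appeal to the property ``that $\SCap$ strictly decreases on passing to a proper (non-conical) degeneration of a cone'' is not one of the stated properties of $\SCap$ in Theorem~\ref{Thm_Pre_SCap exist}, and the paper in fact \emph{uses} the old $\SCap$ only as a crude finiteness bound ($\#\Sing(\Sigma_j)\cap\BB_1 \le \SCap(\mbfC)$) in the course of showing the new $\SCAP$ is finite, which shows the two are not being identified. Without redefining $\SCAP$, your (iv) rests on an unestablished assertion.

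The second, more structural gap is in the reduction you make in (iv). You treat the $m\ge 2$ sheets over a general (two-sided) $\Sigma_\infty$ and try to argue, via a gap-growth estimate on positive Jacobi fields and a topological accounting, that one sheet must ``close off'' as a Hardt--Simon leaf. The paper proves something stronger and cleaner first (Proposition~\ref{Prop_Induct_conn stable converg implies multi 1} and Corollary~\ref{Cor_Conv_of_SMH_w_multi}): for connected stable $\Sigma_j$ with $|\Sigma_j|\to m|\Sigma_\infty|$, the multiplicity is forced to be $1$ when $\Sigma_\infty$ is two-sided, and at most $2$ (with $\Sigma_\infty$ then one-sided) in general. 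So the case you are analyzing --- $m$ sheets over a two-sided singular limit --- literally cannot occur under the hypotheses of (iv); only $m=2$ over a one-sided limit remains. The paper then passes to the connected double cover, obtains two \emph{disjoint} stable sheets, and invokes Lemma~\ref{Lem_App_Disjt MH near cone}: either one sheet is regular near $p$, or all singularities of both sheets have strictly smaller density. Feeding this into the recursive definition of $\SCAP$ yields the strict drop without ever needing a Hardt--Simon foliation for individual sheets. Your sheeting-plus-capping heuristic is morally related, but the precise dichotomy in Lemma~\ref{Lem_App_Disjt MH near cone} and the $m\le 2$ rigidity are the load-bearing facts, and both are absent from your outline.

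Your treatment of (i)--(iii) and the two-sided/one-sided bookkeeping via the double cover is fine, and your localization near $\Sing(\Sigma_\infty)$ in (iii) is the same as the paper's. But as written, (iv) does not close.
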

        The proof of Theorem \ref{Thm_Pre_SCAP under converg w multip} uses a fact about the multiplicity of limiting connected stable minimal hypersurface. See Appendix \ref{Sec_App_Sing Cap} for details.

\part{Generic Regularity} \label{Part_Generic Reg}
    
    \section{Canonical Neighborhoods} \label{Sec_Canonical Neighb}
        Throughout this section, let $M$ be a closed smooth manifold of dimension $8$, $k\geq 4$ be an integer, and $\alpha\in (0, 1)$.  
        \begin{Def} \label{Def_injrad, canonical neighb}
            Let $g$ be a $C^{k, \alpha}$-metric on $M$, $\Sigma\subset (M^8, g)$ be a LSMH, $\Lambda \geq 1$, and $\delta>0$. We denote 
            \[
                \injrad(g, \Sigma):= \min\{ \injrad(M, g), \min\{\dist_g(p, p')/2: p\neq p'\in \Sing(\Sigma)\}\}.
            \]
            We define $\cL^{k, \alpha}(\Sigma, g; \Lambda, \delta)$ to be the space of all pairs $(g', \Sigma')\in \cM^{k, \alpha}(M)$ such that,
            \begin{itemize}
                \item $g'$ is a $C^{k, \alpha}$ metric on $M$ with $\|g'\|_{C^{k, \alpha}}\leq \Lambda$ and $\|g'-g\|_{C^{k-1, \alpha}}\leq \delta$;
                \item $\Sigma'$ is a LSMH in $(M, g')$ satisfying
                      \[ \mbfF(|\Sigma|_g, |\Sigma'|_{g'})\leq \delta, \quad \ind(\Sigma'; g')\leq \Lambda\,; \]
                \item For every $p\in \Sing(\Sigma)$, there exists $p'\in \Sing(\Sigma')\cap B^g(p, \injrad(g, \Sigma))$ such that
                      \[ \theta_{|\Sigma'|}(p') = \theta_{|\Sigma|}(p) \]
            \end{itemize}
            Here, the norms of tensors are all measured under metric $g$.

            $\cL^{k, \alpha}(\Sigma, g;\Lambda, \delta)$ is called a \textbf{canonical (pseudo-)neighborhood} of $(g, \Sigma)$ in $\cM^{k, \alpha}(M)$, and its topology is induced from $\cM^{k, \alpha}(M)$.
        \end{Def}
        Clearly, when $g, \Sigma$ is fixed, $\cL^{k, \alpha}(\Sigma, g;\Lambda, \delta)$ is monotonely increasing in $\Lambda, \delta$. Also note that in general, $\cL^{k, \alpha}(g, \Sigma; \Lambda, \delta)$ is NOT a genuine \textit{neighborhood} of $(g, \Sigma)$ in $\cM^{k,\alpha}(M)$. In other words, there might be $(g', \Sigma')\in \cM^{k,\alpha}(M)$ arbitrarily close to $(g, \Sigma)$ but not contained in $\cL^{k,\alpha}(g, \Sigma; \Lambda, \delta)$. 
        
        \begin{Lem}[Compactness of $\cL^{k,\alpha}$] \label{Lem_Cptness for scL^k}
            For every $(g, \Sigma)\in \cM^{k, \alpha}(M)$ and every $\Lambda>1$, there exists $\delta_0(g, \Sigma, \Lambda, k, \alpha)\in (0, 1)$ with the following property. For every $\delta \in (0, \delta_0)$, $\cL^{k,\alpha}(g, \Sigma; \Lambda, \delta)$ is compact under $C^{k-1, \alpha}$-norm in metric and $\mbfF$-metric in minimal hypersurfaces. 
        \end{Lem}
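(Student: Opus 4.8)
The plan is to run a standard compactness-and-regularity argument, extracting a convergent subsequence from any sequence in $\cL^{k,\alpha}(g,\Sigma;\Lambda,\delta)$ and showing the limit stays in the same canonical pseudo-neighborhood, provided $\delta$ is chosen small. First I would take a sequence $(g_j,\Sigma_j)\in\cL^{k,\alpha}(g,\Sigma;\Lambda,\delta)$. Since $\|g_j\|_{C^{k,\alpha}}\le\Lambda$, by Arzelà–Ascoli the metrics $g_j$ subconverge in $C^{k-1,\alpha}$ to some limit metric $g_\infty$, which automatically satisfies $\|g_\infty-g\|_{C^{k-1,\alpha}}\le\delta$ (and one still has $\|g_\infty\|_{C^{k,\alpha}}\le\Lambda$ by lower semicontinuity of the Hölder norm). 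On the varifold side, the $\Sigma_j$ have uniformly bounded area (controlled by $\mbfF(|\Sigma|_g,|\Sigma_j|_{g_j})\le\delta$ together with $\|\Sigma\|_g(M)$) and uniformly bounded Morse index $\le\Lambda$; by the standard compactness theory for stable (and bounded-index) minimal hypersurfaces in dimension $8$ — e.g. the Schoen–Simon compactness together with the index-bound sheeting results used throughout \cite{WangZH20_Deform}, and the curvature estimates away from finitely many concentration points — a subsequence of $|\Sigma_j|_{g_j}$ converges in $\mbfF$, and smoothly away from finitely many points, to an integral varifold $|\Sigma_\infty|$ which is a locally stable minimal hypersurface in $(M,g_\infty)$ with $\scH^{n-2}(\Sing(\Sigma_\infty))=0$, finitely many singularities, $\mbfF(|\Sigma|_g,|\Sigma_\infty|_{g_\infty})\le\delta$, and $\ind(\Sigma_\infty;g_\infty)\le\Lambda$ (index is lower semicontinuous under such convergence). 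A point requiring care is multiplicity: a priori $|\Sigma_j|$ could converge to $m|\Sigma_\infty|$ with $m\ge 2$; but then by Theorem \ref{Thm_Pre_SCAP under converg w multip}(iv) the limit would be forced to be singular while the capacity strictly drops, which is consistent — so to keep $\Sigma_\infty$ connected with multiplicity one I would instead argue that, after passing to a connected component and relabelling, the convergence is multiplicity one; this is exactly the setting in which $\cM^{k,\alpha}(M)$ was topologized, so $(g_\infty,\Sigma_\infty)\in\cM^{k,\alpha}(M)$.

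The remaining and genuinely delicate point is the third bullet of Definition \ref{Def_injrad, canonical neighb}: one must show that each $p\in\Sing(\Sigma)$ is ``tracked'' by a singular point $p'\in\Sing(\Sigma_\infty)\cap B^g(p,\injrad(g,\Sigma))$ with $\theta_{|\Sigma_\infty|}(p')=\theta_{|\Sigma|}(p)$. For each $j$ we are given $p_j'\in\Sing(\Sigma_j)\cap B^g(p,\injrad(g,\Sigma))$ with $\theta_{|\Sigma_j|}(p_j')=\theta_{|\Sigma|}(p)$; after passing to a subsequence $p_j'\to p'_\infty\in\overline{B^g(p,\injrad(g,\Sigma))}$. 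Here I would choose $\delta_0$ small enough (depending on $g,\Sigma,\Lambda$) that the $\mbfF$-closeness $\mbfF(|\Sigma|_g,|\Sigma_\infty|_{g_\infty})\le\delta$, combined with the curvature estimates on the smooth part, forces every singular point of $\Sigma_\infty$ (hence every limit of singular points $p_j'$) to lie at distance $<\injrad(g,\Sigma)$ from some — in fact exactly one — singular point of $\Sigma$, and distinct singularities of $\Sigma$ have disjoint such balls by definition of $\injrad(g,\Sigma)$; this is the standard ``singularities cannot escape to the regular part and cannot collide'' mechanism, provable via monotonicity and the fact that the density at $p_j'$ is bounded below by $\theta_1>1$ uniformly. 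Then I would show $p'_\infty\in\Sing(\Sigma_\infty)$: since $\theta_{|\Sigma_j|}(p_j')\ge\theta_1>1$ and density is upper semicontinuous under varifold convergence, $\theta_{|\Sigma_\infty|}(p'_\infty)\ge\theta_1>1$, so $p'_\infty$ cannot be a regular point. Finally, to get the exact density equality $\theta_{|\Sigma_\infty|}(p'_\infty)=\theta_{|\Sigma|}(p)$, I would invoke the discreteness of the set of densities of stable minimal hypercones in $\RR^8$ (equation \eqref{Equ_Pre_Density dicrete for SMC}): the densities $\theta_{|\Sigma_j|}(p_j')=\theta_{|\Sigma|}(p)$ are all equal to a fixed value in this discrete set, and by upper semicontinuity $\theta_{|\Sigma_\infty|}(p'_\infty)\le\theta_{|\Sigma|}(p)$, while a lower bound pinning it to the same discrete value comes from a standard gap/continuity argument (if the density dropped, the tangent cone at $p'_\infty$ would have strictly smaller density, but the $\mbfF$-convergence near $p_j'$ would then be incompatible with the almost-monotonicity at the fixed scale $\injrad(g,\Sigma)$, again for $\delta$ small). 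Here the main obstacle — and the step I expect to take the most work — is making the ``density cannot drop'' argument rigorous: it requires an effective, $\delta$-quantified version of upper semicontinuity of density together with the no-collision statement, which is where I would lean on Edelen's quantitative uniqueness of tangent cones (\cite[Theorem~6.3]{edelen_degeneration_2021}) discussed in Section \ref{Sec_Analysis on SMC} and on the discreteness \eqref{Equ_Pre_Density dicrete for SMC}.

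Having checked all three bullets, $(g_\infty,\Sigma_\infty)\in\cL^{k,\alpha}(g,\Sigma;\Lambda,\delta)$, so the subsequence converges within $\cL^{k,\alpha}$ in the $C^{k-1,\alpha}\times\mbfF$ topology; since every sequence has a convergent subsequence with limit in the set, $\cL^{k,\alpha}(g,\Sigma;\Lambda,\delta)$ is (sequentially, hence, being metrizable, topologically) compact. I would record $\delta_0(g,\Sigma,\Lambda,k,\alpha)$ as the smallest of the thresholds arising in (a) the no-collision/no-escape step and (b) the density-rigidity step; everything else is soft.
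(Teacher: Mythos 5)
Your proposal attempts a direct compactness argument for a fixed small $\delta$, while the paper runs a two-level diagonal contradiction: assume no $\delta_0$ works, so there are $\delta_j \searrow 0$ for which $\cL^{k,\alpha}(g,\Sigma;\Lambda,\delta_j)$ fails to be sequentially compact; for each $j$ extract a limit pair $(\bar g_j,\Gamma_j)\notin\cL^{k,\alpha}(g,\Sigma;\Lambda,\delta_j)$, conclude from the failure of the density-tracking bullet that $\theta(\mbfC_{p_j}\Gamma_j)>\theta(\mbfC_p\Sigma)$ for some $p_j\to p\in\Sing(\Sigma)$, and then let $j\to\infty$ so that $\Gamma_j\to\Sigma$ and upper semicontinuity of density together with the discreteness of the set of cone densities give a contradiction. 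The diagonal structure is what makes both the multiplicity-one question and the density-tracking question soft, because they are only checked in the limit $\delta_j\to 0$, where upper semicontinuity alone closes everything.

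There is a substantive error in your treatment of the density-tracking bullet. Upper semicontinuity of density under varifold convergence says that for $|\Sigma_j|\to|\Sigma_\infty|$ stationary and $p'_j\to p'_\infty$ one has
\[
  \theta_{|\Sigma_\infty|}(p'_\infty) \;\geq\; \limsup_{j\to\infty}\theta_{|\Sigma_j|}(p'_j)\;=\;\theta_{|\Sigma|}(p)\,,
\]
not $\le$ as you wrote. In other words the ``easy'' direction you claim (a lower bound on $\theta_{|\Sigma_\infty|}(p'_\infty)$) is exactly what upper semicontinuity gives for free, and the genuinely difficult direction for a fixed $\delta$ is the reverse inequality $\theta_{|\Sigma_\infty|}(p'_\infty)\le\theta_{|\Sigma|}(p)$, which is needed to conclude equality via discreteness. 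As written your ``gap/continuity'' paragraph assigns the wrong direction to the soft step and does not actually supply an argument for the hard step. You could repair this with a quantitative estimate at scale comparable to $\injrad(g,\Sigma)$ together with Lemma~\ref{Lem_Ana on SMC_Quanti Uniqueness of Tang Cone}, but this is precisely the extra work that the paper's $\delta_j\to 0$ contradiction scheme avoids.

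A second gap is the multiplicity argument. ``After passing to a connected component and relabelling, the convergence is multiplicity one'' is not a proof: if $|\Sigma_j|\to m|\Sigma_\infty|$ with $m\ge 2$, no choice of component of $\Sigma_j$ changes the multiplicity of the limit. For a fixed $\delta$, ruling out $m\ge 2$ requires a quantitative comparison of masses and densities against the fixed geometry of $\Sigma$; in the paper's proof it follows immediately from the connectedness of $\Sigma$ and upper semicontinuity of density once $\mbfF(|\Sigma|_g,\bar V_j)\le\delta_j\to 0$. Your appeal to Theorem~\ref{Thm_Pre_SCAP under converg w multip}(iv) is also a non sequitur here: that statement controls singular capacity under higher-multiplicity convergence, but says nothing that forces multiplicity one.

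To summarize: the overall plan (Arzel\`a--Ascoli plus Schoen--Simon/Sharp compactness, then verify the three bullets of Definition~\ref{Def_injrad, canonical neighb}) is reasonable and would work, but the direct proof at fixed $\delta$ is strictly harder than the paper's diagonal argument, you have the direction of upper semicontinuity reversed in the step where you most need it, and the multiplicity-one step is asserted rather than proved.
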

        \begin{proof}
            Suppose otherwise, there exists $\delta_j\searrow 0$ and for each $j\geq 1$, a sequence of pairs $\{(g_j^q, \Sigma_j^q)\}_{q\geq 1}$ in $\cL^{k,\alpha}(g, \Sigma; \Lambda, \delta_j)$ with no limit point in $\cL^{k,\alpha}(g, \Sigma; \Lambda, \delta_j)$. By compact embedding of $C^{k,\alpha}$ into $C^{k-1, \alpha}$ and \cite{schoen_regularity_1981}, after passing to a subsequence, when $q\to \infty$, 
            \[
                g_j^q \to \bar{g}_j \ \text{ in }C^{k-1, \alpha}, \ \ \  \Sigma_j^q \to \bar{V}_j \ \text{ in }\mbfF\text{-metric of varifolds,}    
            \]
            where $\bar{V}_j$ is a LSMH with multiplicity.  Moreover,  $\mbfF(|\Sigma|_g, \bar{V}_j) \leq \delta_j \to 0$ as $j\to \infty$. Hence by connectivity of $\Sigma$ and upper semi-continuity of density, for $j>>1$, $\bar{V}_j = |\Gamma_j|_{\bar{g}_j}$ for some LSMH $\Gamma_j \subset (M, \bar{g}_j)$ with index $\leq \Lambda$.

            By the contradiction assumption, $(\bar{g}_j, \Gamma_j)\notin \cL^k(g, \Sigma; \Lambda, \delta_j)$.
            Therefore, by Allard's regularity theorem \cite{allard_first_1972} and the upper semi-continuity property of density under the convergence $|\Sigma_j^q| \to |\Gamma_j|$, after passing to a subsequence, there exists $p\in \Sing(\Sigma)$, $p_j \in \Sing(\Gamma_j)$ such that 
            \[
                \theta(\mbfC_{p_j}\Gamma_j)>\theta(\mbfC_p\Sigma^q_j) = \theta(\mbfC_p\Sigma)\,, \quad p_j \to p\,,
            \]
            as $j\to \infty$.  Since by (\ref{Equ_Pre_Density dicrete for SMC}), the density of stable minimal hypercones form a discrete subset of $\RR$, we then have \[
                \limsup_{j\to \infty} \theta(\mbfC_{p_j}\Gamma_j) > \theta(\mbfC_p\Sigma),   \]
            which violates the upper semi-continuity of density under convergence $|\Gamma_j| \to |\Sigma|$.   
        \end{proof}
        
        The main reason to consider canonical neighborhoods is that the following Sard-Smale-type Theorem holds.
        \begin{Thm} \label{Thm_Generic Semi-nondeg_Loc}
            For every $(g, \Sigma)\in \cM^{k,\alpha}(M)$ and every $\Lambda>0$, there exists $\kappa_0=\kappa_0(g, \Sigma, \Lambda)>0$ such that 
            \begin{align*}
                \cG^{k, \alpha}(g, \Sigma; \Lambda, \kappa_0) := \{g'\in\  & \cG^{k,\alpha}(M): \text{ either }g'\notin \Pi(\cL^{k,\alpha}(g, \Sigma; \Lambda, \kappa_0)), \text{ or every }         \\
                                                                           & \Sigma' \text{ with } (g', \Sigma')\in \cL^{k, \alpha}(g, \Sigma; \Lambda, \kappa_0) \text{ is semi-nondegenerate} \}. 
            \end{align*}
            is an open dense subset of $\cG^{k,\alpha}(M)$. 
        \end{Thm}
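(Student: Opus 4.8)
The plan is to establish openness and density separately, fixing throughout $\kappa_0 = \kappa_0(g,\Sigma,\Lambda) > 0$ small enough for both the compactness of Lemma~\ref{Lem_Cptness for scL^k} and the cone decomposition of Theorem~\ref{Thm:cone_decomposition} to apply near $(g,\Sigma)$. Density is the substantive assertion; it follows the pattern of B.~White's structure theorem, made to work across the singular set by means of Edelen's finite parametrization.

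\emph{Openness.} If $g'\notin\Pi(\cL^{k,\alpha}(g,\Sigma;\Lambda,\kappa_0))$, then this image --- being the continuous image of a compact set (Lemma~\ref{Lem_Cptness for scL^k}) --- is closed, so a $C^{k,\alpha}$-neighborhood of $g'$ misses it and lies vacuously in the good set. Otherwise, I would argue that the set of metrics admitting a degenerate $\Sigma'\in\cL^{k,\alpha}$ over them is closed. If $g_j\to g'$ in $C^{k,\alpha}$ with $(g_j,\Sigma_j)\in\cL^{k,\alpha}(g,\Sigma;\Lambda,\kappa_0)$ and $\Sigma_j$ degenerate, compactness yields a subsequential limit $(g',\Sigma_\infty)\in\cL^{k,\alpha}(g,\Sigma;\Lambda,\kappa_0)$, and one shows $\Sigma_\infty$ is degenerate. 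The quickest route is to note that once $j$ is large, $(g_j,\Sigma_j)$ and $(g',\Sigma_\infty)$ lie in a common Banach-manifold chart $\cP_i$ from the density step below, on which $\ker D\pi_i\cong\Ker^+ L_{\Sigma',g'}$, so that upper semicontinuity of the nullity of a Fredholm map propagates degeneracy to the limit. Alternatively one passes to the limit directly on a $W^{1,2}$-normalized sequence of slower-growth Jacobi fields, using the uniform growth estimates of Section~\ref{Sec_Growth est} to rule out concentration near $\Sing(\Sigma_\infty)$, the continuity of the asymptotic spectrum (Section~\ref{Subsec_Stable MHC}) together with the density-matching clause of Definition~\ref{Def_injrad, canonical neighb} to identify the relevant tangent cones, and Lemma~\ref{Lem_gamma_gap >0} with Remark~\ref{Rem:slow_growth_1} to see the limit is again of slower growth. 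The one-sided case is handled on the $\ZZ_2$-equivariant double cover.

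\emph{Density.} It suffices to show that the bad set $\cB$ --- the metrics $g'\in\Pi(\cL^{k,\alpha}(g,\Sigma;\Lambda,\kappa_0))$ over which some $\Sigma'\in\cL^{k,\alpha}$ is degenerate --- has empty interior. Because the pairs in $\cL^{k,\alpha}(g,\Sigma;\Lambda,\kappa_0)$ have area and Morse index bounded in terms of $\Sigma$ and $\Lambda$, Theorem~\ref{Thm:cone_decomposition} supplies a \emph{finite} list of smooth models and a uniform bound $N$, so only finitely many combinatorial types of decomposition occur; for each type, the corresponding pairs $(g',\Sigma')$ are cut out by a $C^{k-1}$ equation in a finite-dimensional gluing parameter together with the metric. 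Unwrapping White's Sard--Smale argument --- the task of Section~\ref{Sec:Gen_Semi-nondeg} --- one obtains Banach manifolds $\cP_1,\dots,\cP_m$ whose union contains $\cL^{k,\alpha}(g,\Sigma;\Lambda,\kappa_0)$ and on which the restrictions $\pi_i := \Pi|_{\cP_i}$ are $C^1$ Fredholm maps of index $0$, carrying at each point a canonical isomorphism $\ker D\pi_i\cong\Ker^+ L_{\Sigma',g'}$ (the $\ZZ_2$-invariant part $(\Ker^+ L_{\hat{\Sigma}',\hat{g}'})^{\ZZ_2}$ when $\Sigma'$ is one-sided). Since each $\pi_i$ has index $0$, $g'$ is a regular value of $\pi_i$ at $(g',\Sigma')$ exactly when $\Sigma'$ is semi-nondegenerate, whence $\cB\subseteq\bigcup_i\pi_i(\Crit\pi_i)$. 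By the Sard--Smale theorem (applicable since $k\geq 4$), each $\pi_i(\Crit\pi_i)$ is meager, so $\cB$ is meager; as $\cG^{k,\alpha}(M)$ is Baire, $\cB$ has empty interior and the good set is in fact residual.

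The step I expect to be the main obstacle is the weak White structure theorem used above: the Fredholm property of the $\pi_i$ and the identification $\ker D\pi_i\cong\Ker^+ L_{\Sigma',g'}$. Away from $\Sing(\Sigma')$ this is White's graphical argument. Near a singular point $p$, one must prove that the Jacobi fields arising from one-parameter deformations of $\Sigma'$ inside a strong-cone region are precisely those of slower growth at $p$, i.e.\ of asymptotic rate at least $\gamma_2^+(\mbfC_p\Sigma')$ --- faster-growing modes being incompatible with the strong-cone-region normalization --- which relies on the asymptotic analysis of Jacobi functions on stable hypercones (Section~\ref{Sec_Analysis on SMC}), Lemma~\ref{Lem_Pre_Asymp Rate takes value in Gamma(C) and int by part}, and the growth estimates of Section~\ref{Sec_Growth est}. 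Dually, one must show the cokernel of $D\pi_i$ can be killed by a metric variation: for a nonzero slower-growth Jacobi field $u$, the pairing of $u$ against the linearization of $L_{\Sigma',g'}$ under a conformal perturbation $\dot g = f g'$ does not vanish identically --- the requisite integration by parts being licensed by Lemma~\ref{Lem_Pre_Asymp Rate takes value in Gamma(C) and int by part}(iii), and the nonvanishing by unique continuation, as in Theorem~\ref{Thm_Reg Deform Thm_OneTwo sided}.
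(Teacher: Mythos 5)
Your openness argument, in its ``alternative'' form, matches the paper's: Lemma~\ref{Lem_Cptness for Slower Growth Jac Fields} establishes precisely the compactness of $W^{1,2}$-normalized slower-growth Jacobi fields you invoke, using the growth estimates and continuity of the asymptotic spectrum that you cite. Your ``quickest route'' for openness and, more seriously, your entire density argument, however, rest on the existence of Banach manifolds $\cP_1,\dots,\cP_m$ covering $\cL^{k,\alpha}(g,\Sigma;\Lambda,\kappa_0)$ on which $\Pi$ restricts to a $C^1$ Fredholm map of index $0$ with $\ker D\pi_i\cong\Ker^+ L_{\Sigma',g'}$. This is exactly the structure the paper states it cannot obtain: immediately after Theorem~\ref{Thm_Generic Semi-nondeg_Loc} the text remarks that ``it's hard to expect any Banach manifold structure on $\cL^{k,\alpha}(g,\Sigma;\Lambda,\delta)$,'' and Section~\ref{Sec:Gen_Semi-nondeg} is designed precisely to circumvent it. You correctly flag the Fredholm property of the $\pi_i$ as ``the main obstacle,'' but the proposal then proceeds as if it were available, and the single sentence about the Edelen decomposition cutting out each type by ``a $C^{k-1}$ equation in a finite-dimensional gluing parameter together with the metric'' would require a full gluing theory for one-parameter families of singular stable minimal hypersurfaces (near each strong-cone region, matched across scales), which is not established in the paper and is the genuine analytic content missing here. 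Edelen's Theorem~\ref{Thm:cone_decomposition} supplies a quantitative covering and finitely many combinatorial types, but it does not by itself furnish a smooth moduli-space structure.

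The paper's density argument is instead finite-dimensional and Lipschitz, not Sard--Smale. It restricts to the top-nullity stratum $\cL^{k,\alpha}_{top}$, intersects with a finite-dimensional slice $\Pi^{-1}(\cF\cdot\bar g)$ where $\cF$ is a judiciously chosen $I$-dimensional space of conformal factors supported away from $\Sing(\Sigma)$, and then proves in Lemma~\ref{Lem_Loc Parametriz of scL^k} that this slice is bi-Lipschitz to a compact subset of $(\Ker^+ L_{\bar\Sigma,\bar g})^G\cong\RR^I$ --- no manifold structure required. Lemma~\ref{Lem_Tangent vectors to scL^k} then shows that every point of this compact set is a critical point of the Lipschitz projection to $\cF$ (in the weak sense of Lemma~\ref{Lem_Sard Thm}), because any differential of $\Pi$ along a convergent sequence would have to solve $L_{\Sigma,g}\hat u_\infty=\frac{n}{2}\nu(\hat f_\infty)$ with $\hat u_\infty$ of slower growth, forcing $\hat f_\infty=0$ by Lemma~\ref{Lem_Loc Parametriz of scL^k}(i). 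A Vitali-covering Sard lemma for Lipschitz maps (Lemma~\ref{Lem_Sard Thm}) then gives that $\Pi(\cL^{k,\alpha}_{top}\cap\Pi^{-1}(\cF\cdot\bar g))$ has $\scH^I$-measure zero in $\cF$, and an induction on $I=\dim(\Ker^+ L_{\Sigma,g})^G$ (possible by the upper-semicontinuity Corollary~\ref{Cor_Upper Semi-conti of dim Ker^+ L}) handles the lower strata. Your pairing/unique-continuation observation about killing the cokernel is indeed the idea behind Lemma~\ref{Lem_Loc Parametriz of scL^k}(i), so that part you have right; but to repair the proposal you would have to either replace the Banach-manifold Sard--Smale with the finite-dimensional Lipschitz Sard just described, or actually construct the moduli Banach manifolds, which is a substantially harder task than the theorem itself.
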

        Heuristically, one can view $\cG^{k,\alpha}(g, \Sigma; \Lambda, \kappa_0)$ as the set of regular values of $\Pi|_{\cL^{k, \alpha}(g, \Sigma; \Lambda, \kappa_0)}$. When we only focus on regular minimal hypersurfaces, such a Sard-Smale Theorem was proved in \cite{white_space_1991,white_bumpy_2017} by showing that $\cM^{k, \alpha}$ is a Banach manifold, and $\Pi$ is a Fredholm map with Fredholm index $0$.  Here, however, it's hard to expect any Banach manifold structure on $\cL^k(g, \Sigma; \Lambda, \delta)$. The way we prove Theorem \ref{Thm_Generic Semi-nondeg_Loc} is to unwrap the proof of infinite dimensional Sard-Smale Theorem by hand. More precisely, we need
        \begin{enumerate} [(i)]
            \item A controlled behavior of Jacobi fields of slower growth under convergence in $\cL^{k, \alpha}(g, \Sigma; \Lambda, \kappa_0)$;
            \item A good way to slice $\cG^{k,\alpha}(M)$ into union of finite dimensional subspaces $\set{\cF_{g', \Sigma'}}$ such that each $\Pi^{-1}(\cF_{g', \Sigma'})\cap \cL^{k,\alpha}(g, \Sigma; \Lambda, \kappa_0)$ is bi-Lipschitz to a compact subset in the finite dimensional vector space $\Ker^+L_{\Sigma', g'}$.
        \end{enumerate}
        Both of these rely on a good control on growth rate of certain functions (Jacobi fields or graphical functions of nearby LSMH) on $(g', \Sigma')\in \cL^{k,\alpha}(g, \Sigma; \Lambda, \kappa_0)$, where we make full use of the technical definition of $\cL^{k,\alpha}(g, \Sigma; \Lambda, \delta)$ and Jacobi fields of slower growth. The proof of Theorem \ref{Thm_Generic Semi-nondeg_Loc} is carried out in Section \ref{Sec:Gen_Semi-nondeg}.
        
        To derive a Sard-Smale type theorem on $\cM^{k,\alpha}(M)$, it suffices to show that it can be covered by countably many canonical neighborhoods.
        
        \begin{Thm} \label{Thm_Countable Decomp}
            Let $\kappa: \cM^{k, \alpha}(M)\times \RR_+\to (0, +\infty)$ be a positive function, not necessarily continuous. Then there exists a countable number of triples $\{(g_j, \Sigma_j; \Lambda_j)\}\in \cM^{k, \alpha}(M)\times \RR_+$ such that 
            \[
                \cM^{k,\alpha}(M) = \bigcup_{j\geq 1}\cL^{k,\alpha}(g_j, \Sigma_j; \Lambda_j, \kappa_j),   
            \]
            where $\kappa_j:= \kappa(g_j, \Sigma_j; \Lambda_j)$.
        \end{Thm}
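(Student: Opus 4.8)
The plan is to chop $\cM^{k,\alpha}(M)$ into countably many manageable pieces and to produce the cover on each piece by a soft Lindel\"of argument. First I would pass to bounded strata: for $(A,I,L)\in\NN^3$ set $\cM_{A,I,L}:=\set{(g,\Sigma)\in\cM^{k,\alpha}(M):\scH^n_g(\Sigma)\leq A,\ \ind(\Sigma)\leq I,\ \|g\|_{C^{k,\alpha}}\leq L}$, so that $\cM^{k,\alpha}(M)=\bigcup_{(A,I,L)\in\NN^3}\cM_{A,I,L}$. It then suffices to cover each $\cM_{A,I,L}$ by countably many canonical neighborhoods, a countable union of countable families being countable. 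The role of the bounds is that, by the compact embedding $C^{k,\alpha}\hookrightarrow C^{k-1,\alpha}$ and the compactness theory for locally stable minimal hypersurfaces in dimension eight \cite{schoen_regularity_1981,WangZH20_Deform}, $\cM_{A,I,L}$ is precompact in the metrizable topology $\tau$ of $C^{k-1,\alpha}$-convergence of metrics together with $\mbfF$-convergence of varifolds; in particular $\cM_{A,I,L}$, and every subspace of it, is Lindel\"of in $\tau$. The topology $\tau$ is coarser than the one on $\cM^{k,\alpha}(M)$, but this is irrelevant since only a set-theoretic cover is required.

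Second, I would refine each $\cM_{A,I,L}$ by \emph{singular type}. To $(g,\Sigma)$ associate the datum $\mathbf t(g,\Sigma)$ consisting of the finite multiset $\set{(\theta_{|\Sigma|}(p),\SCAP(\mbfC_p\Sigma)):p\in\Sing(\Sigma)}$, the integer $\ind(\Sigma)$, and the bit recording whether $\Sigma$ is two-sided or one-sided. Since $\Sing(\Sigma)$ is finite, the densities of stable minimal hypercones in $\RR^8$ form the discrete set (\ref{Equ_Pre_Density dicrete for SMC}), $\SCAP$ is $\NN$-valued, and $\ind$ is bounded on $\cM_{A,I,L}$, only countably many types $\mathbf t$ occur. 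Writing $\cM^{(\mathbf t)}:=\set{(g,\Sigma)\in\cM_{A,I,L}:\mathbf t(g,\Sigma)=\mathbf t}$, it suffices to cover each stratum $\cM^{(\mathbf t)}$ by countably many canonical neighborhoods centred in $\cM^{(\mathbf t)}$.

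Third, I would deduce the cover from the following stability statement (the real content, carried out in Section \ref{Sec:Count_Decomp}): for every $(g_0,\Sigma_0)\in\cM^{(\mathbf t)}$ there is $\epsilon_0=\epsilon_0(g_0,\Sigma_0)>0$ --- which I take $\leq\tfrac12\kappa(g_0,\Sigma_0;\Lambda_0)$, $\Lambda_0:=\max(L,I,1)$ --- so that any $(g,\Sigma)\in\cM^{(\mathbf t)}$ with $\|g-g_0\|_{C^{k-1,\alpha}}<\epsilon_0$ and $\mbfF(|\Sigma|_g,|\Sigma_0|_{g_0})<\epsilon_0$ satisfies the density-matching condition of Definition \ref{Def_injrad, canonical neighb} against $(g_0,\Sigma_0)$: for every $p\in\Sing(\Sigma_0)$ there is $p'\in\Sing(\Sigma)\cap B^{g_0}(p,\injrad(g_0,\Sigma_0))$ with $\theta_{|\Sigma|}(p')=\theta_{|\Sigma_0|}(p)$. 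Granting this, set $U(g_0,\Sigma_0):=\set{(g,\Sigma)\in\cM^{(\mathbf t)}:\|g-g_0\|_{C^{k-1,\alpha}}<\epsilon_0,\ \mbfF(|\Sigma|_g,|\Sigma_0|_{g_0})<\epsilon_0}$; the remaining defining inequalities of $\cL^{k,\alpha}(g_0,\Sigma_0;\Lambda_0,\kappa(g_0,\Sigma_0;\Lambda_0))$ (the $C^{k,\alpha}$-bound and the index bound) hold automatically for members of $\cM_{A,I,L}$, so $U(g_0,\Sigma_0)\subset\cL^{k,\alpha}(g_0,\Sigma_0;\Lambda_0,\kappa(g_0,\Sigma_0;\Lambda_0))$ and $(g_0,\Sigma_0)\in U(g_0,\Sigma_0)$. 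Thus $\set{U(g_0,\Sigma_0)}_{(g_0,\Sigma_0)\in\cM^{(\mathbf t)}}$ is a $\tau$-open cover of the Lindel\"of space $\cM^{(\mathbf t)}$; a countable subcover $\set{U(g_j,\Sigma_j)}_j$ yields the countable family $\set{(g_j,\Sigma_j;\Lambda_j)}_j$, $\Lambda_j=\max(L,I,1)$, whose canonical neighborhoods cover $\cM^{(\mathbf t)}$. Ranging over the countably many $(A,I,L)$ and $\mathbf t$ finishes the proof.

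I expect the stability statement of the third step to be the main obstacle, and I would prove it by contradiction and compactness, which is precisely where Edelen's analysis enters. A violating sequence $(g_j,\Sigma_j)\in\cM^{(\mathbf t)}\to(g_0,\Sigma_0)$ has constant index and sidedness (these are recorded in $\mathbf t$), so Theorem \ref{Thm_Pre_SCAP under converg w multip}(iii) applies. The key points are: (a) upper semicontinuity of density confines $\Sing(\Sigma_j)$, for $j\gg1$, to the union of the pairwise disjoint half-balls $B^{g_0}(p,\tfrac12\injrad(g_0,\Sigma_0))$, $p\in\Sing(\Sigma_0)$, since a singular point of $\Sigma_j$ drifting away would limit onto a regular point of $\Sigma_0$, where the density is $1<\theta_1$; (b) since $\mathbf t$ records the singular capacities, same-type surfaces have equal $\SCAP$, and feeding this, the upper semicontinuity of $\SCAP$ on each $B^{g_0}(p,\injrad(g_0,\Sigma_0))$, and the concentration of all singular mass in those balls into a pigeonhole argument forces exactly one singular point $q_j(p)$ of $\Sigma_j$ in each ball, with $q_j(p)\to p$; (c) upper semicontinuity of density and discreteness (\ref{Equ_Pre_Density dicrete for SMC}) give $\theta_{|\Sigma_j|}(q_j(p))\leq\theta_{|\Sigma_0|}(p)$, and since the density multisets of $\Sigma_j$ and $\Sigma_0$ coincide, a downward induction over the finitely many density levels upgrades this to equality for every $p$, contradicting the assumed failure. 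The genuinely delicate input --- excluding any collapsing or bunching of singularities along a fixed type, and pinning down that $\SCAP$-per-ball is continuous --- is governed by the quantitative uniqueness of tangent cones in dimension eight and by the cone decomposition (\cite[Theorem 6.3]{edelen_degeneration_2021} and Theorem \ref{Thm:cone_decomposition}), which also underlie the upper semicontinuity properties of $\SCAP$ quoted above; organizing the coarse-by-bounds and fine-by-type stratifications around this is the ``two-step decomposition scheme'' of Section \ref{Sec:Count_Decomp}.
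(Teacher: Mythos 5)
Your proof is correct and takes a genuinely different, and leaner, route than the paper's. The paper spends all of Section \ref{Sec:Count_Decomp} constructing an \emph{explicit} countable cover: it introduces tree representations of Edelen's cone decompositions, defines large-scale cone decompositions and ``intermediate canonical neighborhoods'' $\cL^{k,\alpha}_0$ cut out by $\beta$-closeness of tree representations, produces countability by a hand-built grid on the relevant parameter spaces (cones, centers, radii), and then proves sequential compactness of each $\cL^{k,\alpha}_0$ in order to extract a \emph{finite} refinement into genuine canonical neighborhoods. You replace the entire parametrization step by a soft Lindel\"of argument after stratifying by bounds $(A,I,L)$ and by the discrete invariant $\mathbf{t}$, and you replace the $\beta$-closeness-to-density-matching implication by a pigeonhole argument using upper semicontinuity of $\SCAP$ (Theorem \ref{Thm_Pre_SCAP under converg w multip}(iii)). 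Both arguments are sound; yours still leans on Edelen's quantitative uniqueness of tangent cones (which underlies the $\SCAP$ theory and Lemma \ref{Lem_Ana on SMC_Quanti Uniqueness of Tang Cone}), but it avoids the full cone-decomposition existence theorem (Theorem \ref{Thm:cone_decomposition}) and the bookkeeping of smooth models, which is a real simplification. What the paper's constructive approach buys in exchange is a concrete description of the covering pieces with geometric content and compactness, rather than an abstract second-countable cover.

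One place to be slightly more careful in writing up your stability claim: in step (b) you should state explicitly that, since $\Sigma_j$ and $\Sigma_0$ have the same multiset type, they have the same number $N$ of singular points and the same total $\SCAP$; then (a) plus $\SCAP$ upper semicontinuity on each ball $B^{g_0}(p,\injrad(g_0,\Sigma_0))$ force $\SCAP(\Sigma_j, B_p, g_j) = \SCAP(\Sigma_0, B_p, g_0)\geq 1$ for each $p$ and $j\gg1$, whence each ball contains at least one singularity and therefore, by counting, exactly one. For (c), the cleanest phrasing is not a downward induction but a sum: the termwise inequalities $\theta_{|\Sigma_j|}(q_j(p))\leq\theta_{|\Sigma_0|}(p)$ together with equality of the multiset sums (a finite sum of a discrete set of values) force equality at each $p$. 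You should also note that precompactness of $\cM_{A,I,L}$ in $\tau$ is used only to get separability (hence Lindel\"of) of its subspaces; the limit points need not represent minimal hypersurfaces or even metrics, which is harmless.
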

        Theorem \ref{Thm_Countable Decomp} will be proved in Section \ref{Sec:Count_Decomp}, based on decomposition arguments inspired by the work \cite{edelen_degeneration_2021}.
        
        A direct corollary of Theorem \ref{Thm_Generic Semi-nondeg_Loc} and \ref{Thm_Countable Decomp} is the genericity of semi-nondegeneracy.
        
        \begin{Cor} \label{Cor_Canon Neighb_Global generic semi-nondeg}
            Given an integer $k\geq 4$, $\alpha\in (0, 1)$ and a eight-dimensional closed smooth manifold $M^8$, let $\cG_0^{k,\alpha}(M)$ be the space of metrics $g\in \cG^{k,\alpha}(M)$ such that \begin{itemize}
                \item every regular connected minimal hypersurface $\Sigma$ under $g$ is \textbf{strongly nondegenerate}, i.e. either $\Sigma$ is two-sided and nondegenerate, or $\Sigma$ is one-sided and has a non-degenerate double cover;
                \item every connected LSMH under $g$ is semi-nondegenerate.
            \end{itemize}
            Then $\cG_0^{k,\alpha}(M)$ is a generic subset of $\cG^{k,\alpha}(M)$ in the Baire Category sense, i.e. $\cG_0^{k,\alpha}(M)$ contains a countable intersection of open dense subsets in $\cG^{k,\alpha}(M)$.
        \end{Cor}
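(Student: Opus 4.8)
The plan is to deduce the corollary from the two Sard–Smale-type results already stated, Theorem \ref{Thm_Generic Semi-nondeg_Loc} and Theorem \ref{Thm_Countable Decomp}, together with White's structure theorem (for the regular, strongly nondegenerate part). First I would fix the function $\kappa: \cM^{k,\alpha}(M)\times \RR_+ \to (0,+\infty)$ by setting $\kappa(g,\Sigma;\Lambda) := \min\{\delta_0(g,\Sigma,\Lambda,k,\alpha),\ \kappa_0(g,\Sigma,\Lambda)\}$, where $\delta_0$ is the compactness threshold from Lemma \ref{Lem_Cptness for scL^k} and $\kappa_0$ is the genericity constant from Theorem \ref{Thm_Generic Semi-nondeg_Loc}. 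Applying Theorem \ref{Thm_Countable Decomp} with this $\kappa$ yields a countable family $\{(g_j,\Sigma_j;\Lambda_j)\}_{j\geq 1}$ with $\cM^{k,\alpha}(M) = \bigcup_{j\geq 1}\cL^{k,\alpha}(g_j,\Sigma_j;\Lambda_j,\kappa_j)$, where $\kappa_j := \kappa(g_j,\Sigma_j;\Lambda_j)$. For each $j$, Theorem \ref{Thm_Generic Semi-nondeg_Loc} gives that
\[
    \cG^{k,\alpha}(g_j,\Sigma_j;\Lambda_j,\kappa_j)
\]
is open and dense in $\cG^{k,\alpha}(M)$.

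Next I would intersect. Set $\cG_1 := \bigcap_{j\geq 1} \cG^{k,\alpha}(g_j,\Sigma_j;\Lambda_j,\kappa_j)$, a countable intersection of open dense sets. I claim every $g \in \cG_1$ has the property that every connected LSMH $\Sigma$ under $g$ is semi-nondegenerate. Indeed, given such a $\Sigma$, the pair $(g,\Sigma)$ lies in $\cM^{k,\alpha}(M)$, hence in some $\cL^{k,\alpha}(g_j,\Sigma_j;\Lambda_j,\kappa_j)$; since $g\in \cG^{k,\alpha}(g_j,\Sigma_j;\Lambda_j,\kappa_j)$ and $g = \Pi(g,\Sigma)\in \Pi(\cL^{k,\alpha}(g_j,\Sigma_j;\Lambda_j,\kappa_j))$, the defining dichotomy forces $\Sigma$ to be semi-nondegenerate. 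For the first bullet (strong nondegeneracy of regular connected minimal hypersurfaces), I would invoke White's structure theorem directly: the bumpy-metric set $\cG^{k,\alpha}_{\mathrm{bumpy}}(M)$ is generic, and for one-sided regular hypersurfaces one uses the standard lift to the orientable double cover (as in \cite{white_bumpy_2017}) so that the corresponding set of metrics making all regular connected minimal hypersurfaces strongly nondegenerate is again a countable intersection of open dense sets; call it $\cG_2$. Then $\cG_0^{k,\alpha}(M) \supseteq \cG_1 \cap \cG_2$, which is a countable intersection of open dense subsets, proving genericity in the Baire sense.

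One routine point to check is that the choice $\kappa \leq \delta_0$ is actually harmless: it is only needed so that Lemma \ref{Lem_Cptness for scL^k} applies inside each canonical neighborhood in the decomposition, which is used in the proof of Theorem \ref{Thm_Generic Semi-nondeg_Loc} rather than here; in the present argument I only need that $\kappa_j \leq \kappa_0(g_j,\Sigma_j;\Lambda_j)$ so that Theorem \ref{Thm_Generic Semi-nondeg_Loc} applies verbatim to the parameters $(g_j,\Sigma_j;\Lambda_j,\kappa_j)$, which holds since $\cG^{k,\alpha}(g,\Sigma;\Lambda,\cdot)$ behaves monotonically and the theorem is stated for the specific constant $\kappa_0$—so strictly I should either take $\kappa_j = \kappa_0(g_j,\Sigma_j;\Lambda_j)$ exactly, or note that $\cL^{k,\alpha}$ is monotone in $\delta$ and openness/denseness is preserved under shrinking the neighborhood. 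I expect the main obstacle to be purely bookkeeping: making sure the countable family from Theorem \ref{Thm_Countable Decomp} is compatible with applying Theorem \ref{Thm_Generic Semi-nondeg_Loc} with the same parameters, and correctly handling the one-sided case in both bullets via the $\ZZ_2$-equivariant double cover so that semi-nondegeneracy downstairs corresponds to $(\Ker^+ L_{\hat\Sigma,\hat g})^{\ZZ_2} = \{0\}$ upstairs. No deep analysis is needed at this stage—everything hard has been pushed into the two cited theorems.
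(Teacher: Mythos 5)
Your proposal is correct and follows essentially the same route as the paper: apply Theorem \ref{Thm_Countable Decomp} with $\kappa=\kappa_0$ from Theorem \ref{Thm_Generic Semi-nondeg_Loc}, intersect the resulting countable family of open dense sets $\cG^{k,\alpha}(g_j,\Sigma_j;\Lambda_j,\kappa_j)$, and further intersect with White's bumpy-metric set to handle strong nondegeneracy of the regular hypersurfaces. One caveat on your hedge: shrinking $\kappa$ below $\kappa_0$ enlarges $\cG^{k,\alpha}(g,\Sigma;\Lambda,\kappa)$ (more metrics satisfy the defining dichotomy because $\Pi(\cL^{k,\alpha})$ shrinks), and a superset of an open dense set is dense but need not be open, so the clean choice is $\kappa_j = \kappa_0(g_j,\Sigma_j;\Lambda_j)$ exactly, as the paper takes.
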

        \begin{proof}
            In Theorem \ref{Thm_Countable Decomp}, by choosing $\kappa$ to be $\kappa_0$ in Theorem \ref{Thm_Generic Semi-nondeg_Loc}, we have 
            \[
                \cM^{k,\alpha}(M) = \bigcup_{j\geq 1}\cL^{k,\alpha}(g_j, \Sigma_j; \Lambda_j, \kappa_j),   
            \]
            where $\kappa_j = \kappa_0(g_j, \Sigma_j; \Lambda_j)$. For each $\cL^{k,\alpha}(g_j, \Sigma_j; \Lambda_j, \kappa_j)$, Theorem \ref{Thm_Generic Semi-nondeg_Loc} generates an open dense subset $\cG^{k, \alpha}(g_j, \Sigma_j; \Lambda_j, \kappa_j)$, and we can define
            \[
                \tilde{\cG}^{k,\alpha}(M) := \bigcap^\infty_{j = 1}\cG^{k, \alpha}(g_j, \Sigma_j; \Lambda_j, \kappa_j),
            \]
            which by definition is clearly generic.
            
            By White's structure theorem \cite{white_space_1991,white_bumpy_2017}, there exists a generic subset $\cG^{k, \alpha}_{\operatorname{bumpy}}(M)$ of $\cG^{k,\alpha}(M)$, such that every regular connected minimal hypersurface $\Sigma$ under $g$ is strongly nondegenerate.
            
            It suffices to show that $\tilde{\cG}^{k,\alpha}(M) \cap \cG^{k, \alpha}_{\operatorname{bumpy}}(M) \subset \cG_0^{k,\alpha}(M)$. Indeed, for any $g \in \tilde{\cG}^{k,\alpha}(M) \cap \cG^{k, \alpha}_{\operatorname{bumpy}}(M) \subset \cG_0^{k,\alpha}(M)$, and any LSMH $\Sigma \subset (M, g)$, there exists a $j \in \mathbb{N}^+$ such that $(g, \Sigma) \in \cL^{k,\alpha}(g_j, \Sigma_j; \Lambda_j, \kappa_j)$. Hence, by the fact that $g \in \tilde{\cG}^{k,\alpha}(M) \subset \cG^{k, \alpha}(g_j, \Sigma_j; \Lambda_j, \kappa_j)$, we have $\Sigma$ is semi-nondegenerate. If further $\Sigma$ is regular, then by the definition of $\cG^{k, \alpha}_{\operatorname{bumpy}}(M)$, it is also strongly non-degenerate. In summary, the inclusion holds and $\cG_0^{k,\alpha}(M)$ is also generic.
        \end{proof}

    \section{Proof of Main Theorem} \label{Sec_Pf of Main Thm}
        The goal of this section is to prove Theorem \ref{Thm_Main}. Recall that $\cG_0^{k,\alpha}(M)$ defined in Corollary \ref{Cor_Canon Neighb_Global generic semi-nondeg} is shown to be a generic subset of $\cG^{k, \alpha}(M)$.
        
        Now for $\Lambda >0$ and integer $I\geq 1$, we can define
        \begin{align}
            \begin{split}
                \cG^{k, \alpha}_{\Lambda, I}(M) := \big\{g\in \cG_0^{k, \alpha}(M):& \text{ every }\Sigma \text{ with } (g, \Sigma)\in \cM^{k, \alpha}(M), \ind(\Sigma)\leq I-1 \\
                & \text{ and }\|\Sigma\|_g(M)\leq \Lambda \text{ is regular}\big\}.
            \end{split}  \label{Equ_Def_cG_(Lambda, I)}
        \end{align}
        For simplicity, we may omit the superscript $k, \alpha$. For any $\Lambda > 0$, we also write 
        \begin{align}
            \cG_{\Lambda, 0}(M) := \cG^{k, \alpha}_0(M).  \label{Equ_Def_cG_(Lambda, 0)}
        \end{align}
        \begin{Lem} \label{Lem_cG_I open dense}
            For every $\Lambda>0$ and integer $I\geq 0$, $\cG_{\Lambda, I}(M)$ is relatively open and dense in $\cG_0^{k, \alpha}(M)$.
        \end{Lem}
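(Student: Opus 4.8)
The plan is to induct on $I$, with the convention $\cG_{\Lambda,0}(M)=\cG_0^{k,\alpha}(M)$ furnishing the trivial base case; since $\cG_{\Lambda,I}(M)\subseteq\cG_{\Lambda,I-1}(M)$ and relative openness and density pass through nested subsets, it is enough, assuming $\cG_{\Lambda,I-1}(M)$ is relatively open and dense in $\cG_0^{k,\alpha}(M)$, to prove the same for $\cG_{\Lambda,I}(M)$. \emph{Relative openness.} Suppose it fails: there are $g\in\cG_{\Lambda,I}(M)$ and $g_j\to g$ in $\cG_0^{k,\alpha}(M)$, each $g_j$ carrying a singular LSMH $\Sigma_j$ with $\ind(\Sigma_j)\le I-1$ and $\|\Sigma_j\|_{g_j}(M)\le\Lambda$. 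By the Schoen--Simon estimates \cite{schoen_regularity_1981} and the compactness of LSMH with uniformly bounded area and Morse index \cite{WangZH20_Deform}, after passing to a subsequence $|\Sigma_j|\to m|\Sigma_\infty|$ in $\mbfF$ with $\Sigma_\infty$ an LSMH in $(M,g)$ whose components all have index $\le I-1$ and total area $\le\Lambda$, and $m\ge 1$ an integer. Since $g\in\cG_{\Lambda,I}(M)$, $\Sigma_\infty$ is regular. If $m=1$, Allard's theorem together with smooth convergence on the regular part forces $\Sigma_j$ to be a smooth normal graph over $\Sigma_\infty$ for all large $j$, contradicting that $\Sigma_j$ is singular. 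If $m\ge 2$, the standard sheet-separation argument — normalizing the differences of the $m$ ordered graphical sheets of $\Sigma_j$ over $\Sigma_\infty$ and passing to the limit, the isolated point-singularities of $\Sigma_j$ being removable since they accumulate only where $\Sigma_\infty$ is smooth — produces a nonzero Jacobi field on $\Sigma_\infty$ (a $\ZZ_2$-invariant one on the double cover if $\Sigma_\infty$ is one-sided), so $\Sigma_\infty$ is degenerate; but $\Sigma_\infty$ is regular and $g\in\cG_0^{k,\alpha}(M)$, so $\Sigma_\infty$ is semi-nondegenerate, which for a regular hypersurface means nondegenerate (Remark \ref{Rem:slow_growth_1}) — contradiction. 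Hence $\cG_{\Lambda,I}(M)$ is relatively open.

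\emph{Relative density.} Given $g_0\in\cG_0^{k,\alpha}(M)$ and $\varepsilon>0$, use the inductive hypothesis to choose $g\in\cG_{\Lambda,I-1}(M)$ with $\|g-g_0\|_{C^{k,\alpha}}<\varepsilon/2$; it remains to find $g'\in\cG_{\Lambda,I}(M)$ with $\|g'-g\|_{C^{k,\alpha}}<\varepsilon/2$. Since $g\in\cG_{\Lambda,I-1}(M)$, every LSMH of index $\le I-2$ and area $\le\Lambda$ is already regular, so $g\in\cG_{\Lambda,I}(M)$ amounts to regularity of every member of the family $\cF$ of LSMH in $(M,g)$ of index exactly $I-1$ and area $\le\Lambda$. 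Combining the compactness above with the finiteness and integrality of $\SCAP$ on individual LSMH (Theorem \ref{Thm_Pre_SCAP under converg w multip}(i)--(ii)) and its upper semi-continuity along convergent index-fixed sequences (Theorem \ref{Thm_Pre_SCAP under converg w multip}(iii) for multiplicity one; Theorem \ref{Thm_Pre_SCAP under converg w multip}(iv) for multiplicity $\ge 2$, together with the sheet-separation argument to exclude higher-multiplicity degenerations onto regular limits under a metric in $\cG_0^{k,\alpha}(M)$), we obtain $Q:=\sup_{\Sigma\in\cF}\SCAP(\Sigma,M,g)<\infty$, and we induct on $Q$. If $Q=0$ then $g\in\cG_{\Lambda,I}(M)$. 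If $Q\ge 1$, the set $\cF_Q$ of maximizers is finite: for a sequence $\Sigma_j\in\cF_Q$ with $|\Sigma_j|\to m|\Sigma_\infty|$, the case $m\ge 2$ with $\Sing(\Sigma_\infty)\ne\emptyset$ is excluded by Theorem \ref{Thm_Pre_SCAP under converg w multip}(iv) (it would give $Q\le\SCAP(\Sigma_\infty)-1\le Q-1$), the case $m\ge 2$ with $\Sigma_\infty$ regular by sheet-separation and semi-nondegeneracy (Corollary \ref{Cor_Canon Neighb_Global generic semi-nondeg}), while for $m=1$ the preservation of $\SCAP$ in the limit places $(g,\Sigma_j)\in\cL^{k,\alpha}(g,\Sigma_\infty;\Lambda,\delta_j)$ with $\delta_j\searrow 0$, so by the growth estimate of Section \ref{Sec_Growth est} the normalized graphical functions of $\Sigma_j$ over $\Sigma_\infty$ subconverge to a nonzero Jacobi field of slower growth on $\Sigma_\infty$, contradicting Corollary \ref{Cor_Canon Neighb_Global generic semi-nondeg} unless $\Sigma_j=\Sigma_\infty$ for all large $j$. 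Thus $\cF_Q=\{\Sigma^{(1)},\dots,\Sigma^{(L)}\}$, each a singular semi-nondegenerate LSMH.

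Now apply Theorem \ref{Thm_Reg Deform Thm_OneTwo sided} to each $\Sigma^{(i)}$ and intersect the resulting open dense subsets of $C^{k,\alpha}(M)$ to get $f$ such that, along any $t_j\searrow 0$ and any LSMH $\Sigma_j'\subset(M,(1+t_jf)g)$ of index $I-1$ with $\mbfF(|\Sigma_j'|,|\Sigma^{(i)}|)\to 0$, a singularity near $\Sing(\Sigma^{(i)})$ is removed for infinitely many $j$; hence, for $t$ small, every LSMH $\Sigma'$ of index $I-1$ and area $\le\Lambda$ under $(1+tf)g$ that is $\mbfF$-close to some $\Sigma^{(i)}$ has $\SCAP(\Sigma',M,(1+tf)g)\le Q-1$. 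A further compactness argument (as in the openness proof, now with metrics $(1+t_jf)g\to g$, using Theorem \ref{Thm_Pre_SCAP under converg w multip}(iv) and sheet-separation) shows that for $t$ small every such $\Sigma'$ with $\SCAP\ge Q$ must be $\mbfF$-close to one of the $\Sigma^{(i)}$; combining, $\sup\{\SCAP(\Sigma',M,(1+tf)g):\Sigma'\ \text{LSMH},\ \ind=I-1,\ \text{area}\le\Lambda\}\le Q-1$, and $(1+tf)g$ still lies in the open set $\cG_{\Lambda,I-1}(M)$. Since $\cG_0^{k,\alpha}(M)$ is generic (Corollary \ref{Cor_Canon Neighb_Global generic semi-nondeg}), we perturb $(1+tf)g$ by an arbitrarily small amount into $\cG_0^{k,\alpha}(M)$, staying in $\cG_{\Lambda,I-1}(M)$ and preserving the bound $\le Q-1$ (an open condition, by the same compactness). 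Iterating $Q$ times yields $g'\in\cG_{\Lambda,I}(M)$ within $\varepsilon/2$ of $g$, closing both inductions; the stated relative openness and density in $\cG_0^{k,\alpha}(M)$ follow.

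\emph{Main obstacle.} The crux is the two uniformity assertions in the density step — finiteness of $\cF_Q$, and that one perturbation neutralizing the finitely many maximizers $\Sigma^{(1)},\dots,\Sigma^{(L)}$ prevents any LSMH in the family (in particular via a higher-multiplicity degeneration) from retaining singular capacity $Q$ under the perturbed metric. Both are handled by feeding the compactness of bounded-area, bounded-index LSMH into the strengthened upper semi-continuity of $\SCAP$ (Theorem \ref{Thm_Pre_SCAP under converg w multip}(iv), which supplies the decisive drop by one at multiplicities $\ge 2$) and into the generic semi-nondegeneracy of all LSMH under metrics in $\cG_0^{k,\alpha}(M)$ (Corollary \ref{Cor_Canon Neighb_Global generic semi-nondeg}), the latter invoked through the slower-growth Jacobi field estimates of Section \ref{Sec_Growth est}.
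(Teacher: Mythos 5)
Your overall architecture — double induction on $I$ and on the capacity $Q$, upper semi-continuity of $\SCAP$, finiteness of the maximizer set, and the perturbative decrease furnished by Theorem \ref{Thm_Reg Deform Thm_OneTwo sided} — coincides with the paper's (which packages the steps as Lemmas \ref{Lem_SCap(g; Lambda, I)<infty}, \ref{Lem_cM^(max) finite}, \ref{Lem_SCap(g; Lambda, I) strict decrease}); the openness argument is also essentially theirs. Two points in the density step need attention.

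First, in the finiteness of $\cF_Q$ for the $m=1$ case you route through $\cL^{k,\alpha}$-membership and the growth estimates of Section \ref{Sec_Growth est}, asserting that preservation of $\SCAP$ places $(g,\Sigma_j)$ in $\cL^{k,\alpha}(g,\Sigma_\infty;\Lambda,\delta_j)$. This is true, but the density-matching condition in Definition \ref{Def_injrad, canonical neighb} does not follow from equality of $\SCAP$ alone without an argument: one must observe that if a singularity of $\Sigma_\infty$ has no matching-density partner in $\Sigma_j$, the inductive definition (\ref{Equ_Def_SCAP}) forces $\limsup_j\SCAP(\Sigma_j)\le Q-1$, a contradiction. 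The paper sidesteps this entirely by applying Theorem \ref{Thm_Reg Deform Thm_OneTwo sided} with $t_j\equiv 0$ directly, which needs only $\mbfF$-convergence with multiplicity one and matching index, not $\cL^{k,\alpha}$-membership.

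Second, and more substantively, the final perturbation step has an ordering flaw. You first pick $t$ with $\SCAP((1+tf)g;\Lambda,I)\le Q-1$ and then perturb $(1+tf)g$ into $\cG_0^{k,\alpha}(M)$, asserting that $(1+tf)g$ ``still lies in the open set $\cG_{\Lambda,I-1}(M)$'' and that the bound $\le Q-1$ is an open condition. But $\cG_{\Lambda,I-1}(M)\subset\cG_0^{k,\alpha}(M)$ by construction, and nothing you have said places $(1+tf)g$ in $\cG_0^{k,\alpha}(M)$; the openness of $\cG_{\Lambda,I-1}(M)$ is only relative to $\cG_0^{k,\alpha}(M)$. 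Moreover, your semi-continuity argument for ``$\SCAP\le Q-1$ is open'' needs Cases III and IV of the paper's Lemma \ref{Lem_SCap(g; Lambda, I) strict decrease}, which use regularity and strong nondegeneracy of limits of lower index — properties available only inside $\cG_{\Lambda,I-1}(M)\cap\cG_0^{k,\alpha}(M)$. The paper avoids this circularity by perturbing inside the target set from the outset: it chooses $f_j\to f$ so that $g_j=(1+f_j/j)g$ already lies in the (open, dense) set $\cG_{\Lambda,I}(M)$, and then runs the contradiction argument along the sequence $(g_j,\Sigma_j)$ directly, so every limit is taken over metrics that already lie in $\cG_0^{k,\alpha}(M)$. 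Reordering your argument accordingly — perturb $f$ (hence the metric) into $\cG_{\Lambda,I-1}(M)$ first, then apply the compactness and Theorem \ref{Thm_Reg Deform Thm_OneTwo sided} — closes the gap.
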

        
        Given $g\in \cG_0^{k, \alpha}(M)$, $\Lambda>0$ and integer $I\geq 0$, let 
        \begin{align*}
            \cM(g; \Lambda, I)       & := \{(g, \Sigma)\in \cM^{k, \alpha}(M): \|\Sigma\|_g(M)\leq \Lambda, \ind(\Sigma) = I\}; \\
            \SCAP(g; \Lambda, I)     & := \begin{cases}
                \sup\{\SCAP(\Sigma, M, g): (g, \Sigma)\in \cM(g; \Lambda, I)\},\  & \text{ if }\cM(g; \Lambda, I)\neq \emptyset; \\
                0,\                                                               & \text{ if } \cM(g; \Lambda, I) = \emptyset ;
            \end{cases}                                                            \\
            \cM^{max}(g; \Lambda, I) & := \{(g, \Sigma)\in \cM(g; \Lambda, I): \SCAP(\Sigma, M, g) = \SCAP(g; \Lambda, I)\}.
        \end{align*}
        Note that by definition, for every $g\in \cG_{\Lambda, I}(M)$, 
        \begin{align}
            \SCAP(g; \Lambda, I) = 0 \  \text{ if and only if }\  g\in \cG_{\Lambda, I+1}(M).  \label{Equ_g in cG_(I+1) iff SCap(g; I) = 0}
        \end{align}
        
        \begin{Lem} \label{Lem_SCap(g; Lambda, I)<infty}
            Let $\Lambda>0$, integer $I\geq 0$ and $g\in \cG_{\Lambda, I}(M)$.  If $\{(g, \Sigma_j)\}_{j\geq 1}$ is a sequence in $\cM(g; \Lambda, I)$, then $|\Sigma_j|$ subconverges to $m|\Sigma_\infty|$, where 
            \begin{enumerate} [(i)]
                \item $(g, \Sigma_\infty)\in \cM(g; \Lambda, I)$;
                \item Either $m = 1$, or $m\geq 2$ and $I=0$.
            \end{enumerate}
            In particular,  $\SCAP(g; \Lambda, I)<+\infty$.
        \end{Lem}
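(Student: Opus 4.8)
The plan is to combine the compactness theory for locally stable minimal hypersurfaces with uniform area and index bounds (Schoen–Simon, together with \cite[Corollary 3.7]{WangZH20_Deform} on finiteness of index) with the upper semicontinuity properties of $\SCAP$ from Theorem \ref{Thm_Pre_SCAP under converg w multip}, and then exploit the hypothesis $g\in\cG_{\Lambda,I}(M)\subset\cG_0^{k,\alpha}(M)$ to rule out degenerate limits. First I would observe that since each $\Sigma_j$ is a connected LSMH in the fixed closed manifold $(M,g)$ with $\|\Sigma_j\|_g(M)\le\Lambda$ and $\ind(\Sigma_j)=I$, the Schoen–Simon compactness theorem \cite{schoen_regularity_1981} gives that, after passing to a subsequence, $|\Sigma_j|$ converges in the $\mathbf{F}$-metric to $m|\Sigma_\infty|$ for some connected LSMH $\Sigma_\infty\subset(M,g)$ and some integer $m\ge1$; lower semicontinuity of area under varifold convergence gives $m\,\|\Sigma_\infty\|_g(M)\le\Lambda$, so in particular $\|\Sigma_\infty\|_g(M)\le\Lambda$, while upper semicontinuity of the index (again \cite{WangZH20_Deform}) gives $\ind(\Sigma_\infty)\le I$. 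This is most of (i); the equality $\ind(\Sigma_\infty)=I$ will follow once we pin down $m$, as explained below.

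The heart of the argument is part (ii): ruling out $m\ge2$ when $I\ge1$. Suppose for contradiction that $m\ge2$ and $I\ge1$. The idea is that a higher-multiplicity limit of a sequence of minimal hypersurfaces with a uniform index bound produces a Jacobi field on the limit $\Sigma_\infty$ — roughly, the separation between sheets, after rescaling, converges to a nontrivial Jacobi field — and standard arguments (the one-sheeted/multi-sheeted dichotomy of Sharp \cite{sharp2017compactness} adapted to the singular setting, or the local stability structure from \cite{WangZH20_Deform}) force this Jacobi field to be of slower growth and to lie in the appropriate $\ZZ_2$-invariant subspace in the one-sided case. This would contradict semi-nondegeneracy of $\Sigma_\infty$, which holds because $g\in\cG_0^{k,\alpha}(M)$. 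More precisely: if $\Sigma_\infty$ is regular then multiplicity-$m\ge2$ convergence with a uniform index bound $I\ge1$ is impossible for a nondegenerate hypersurface by White's structure theorem together with the index-stability of min-max-type limits — this is essentially Zhou's multiplicity-one phenomenon, but here it comes for free from strong nondegeneracy since a nondegenerate two-sided (or one-sided with nondegenerate double cover) minimal hypersurface has no nontrivial Jacobi fields to absorb the extra sheets while keeping the index finite and equal to $I\ge1$. If $\Sigma_\infty$ is singular, then Theorem \ref{Thm_Pre_SCAP under converg w multip}(iv) directly says $\limsup_j\SCAP(\Sigma_j,M,g)\le\SCAP(\Sigma_\infty,M,g)-1$ for any multiplicity-$m\ge2$ limit with $\Sing(\Sigma_\infty)\ne\emptyset$ — but the real input we need is simply that such a limit would force a nontrivial Jacobi field of slower growth (in the $\ZZ_2$-invariant sense if one-sided), contradicting semi-nondegeneracy of $\Sigma_\infty$. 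So $m\ge2$ forces $I=0$, which is exactly (ii). Combined with the index inequality, when $m=1$ we get $\ind(\Sigma_\infty)=\ind(\Sigma_j)=I$ for $j$ large by the usual index-stability under multiplicity-one convergence, completing (i).

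Finally, the uniform bound $\SCAP(g;\Lambda,I)<+\infty$ follows by a standard compactness-and-contradiction argument on top of what we have just proved. If $\SCAP(g;\Lambda,I)=+\infty$, pick $(g,\Sigma_j)\in\cM(g;\Lambda,I)$ with $\SCAP(\Sigma_j,M,g)\to\infty$; by the above, after a subsequence $|\Sigma_j|\to m|\Sigma_\infty|$ with $(g,\Sigma_\infty)\in\cM(g;\Lambda,I)$. If $m=1$, then by Theorem \ref{Thm_Pre_SCAP under converg w multip}(iii) (with $U=M$) we get $\limsup_j\SCAP(\Sigma_j,M,g)\le\SCAP(\Sigma_\infty,M,g)<+\infty$ (finiteness of the right side is part (i)-(ii) of that theorem: $\Sigma_\infty$ has finitely many singularities, each contributing a finite amount), a contradiction; if $m\ge2$ then $I=0$ and we apply Theorem \ref{Thm_Pre_SCAP under converg w multip}(iv) (if $\Sigma_\infty$ is singular) or argue directly (if $\Sigma_\infty$ is regular, then $\Sigma_j$ is eventually regular by Allard/Schoen–Simon, so $\SCAP(\Sigma_j,M,g)=0$ eventually), again contradicting $\SCAP(\Sigma_j,M,g)\to\infty$. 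I expect the main obstacle to be the rigorous justification that a genuinely higher-multiplicity limit ($m\ge2$) with index exactly $I\ge1$ cannot occur over a semi-nondegenerate limit — i.e. carefully extracting the nontrivial Jacobi field of slower growth from the sheet-separation in the singular setting and checking it has the right growth and symmetry — but this is precisely the kind of statement packaged into Theorem \ref{Thm_Pre_SCAP under converg w multip} and Theorem \ref{Thm_Reg Deform Thm_OneTwo sided}, so the proof should reduce to invoking those.
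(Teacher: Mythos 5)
There are genuine gaps in your proposal, primarily in the argument for parts (i) and (ii). The core issue is that you do not exploit the two facts the paper's proof actually hinges on: the defining property of $\cG_{\Lambda,I}(M)$ that every LSMH of index $\leq I-1$ and area $\leq\Lambda$ is regular, and the sharper index inequality $m\,\ind(\Sigma_\infty)\leq I$ (not merely $\ind(\Sigma_\infty)\leq I$) from Sharp-type compactness.

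For (i), your claim that $\ind(\Sigma_\infty)=I$ follows ``by the usual index-stability under multiplicity-one convergence'' is not justified: the index is only lower semicontinuous under multiplicity-one convergence, so without knowing $\Sigma_\infty$ is nondegenerate you cannot rule out the index dropping in the limit. The paper's route is cleaner and unavoidable here: assume for contradiction $\ind(\Sigma_\infty)\leq I-1$; since $g\in\cG_{\Lambda,I}(M)$, this forces $\Sigma_\infty$ to be regular and strongly nondegenerate by the \emph{definition} of $\cG_{\Lambda,I}$, and then Sharp's compactness gives $m=1$ and $\Sigma_j\equiv\Sigma_\infty$ for $j\gg1$, contradicting $\ind(\Sigma_j)=I$. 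You only invoke the semi-nondegeneracy from $\cG_0^{k,\alpha}$, not the low-index regularity built into $\cG_{\Lambda,I}$; the latter is the crucial ingredient.

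For (ii), your proposed ``sheet-separation gives a slower-growth Jacobi field, contradicting semi-nondegeneracy'' argument cannot work in the singular case. A positive Jacobi field on $\Sigma_\infty$ (the kind that arises from $m\geq2$-sheeted convergence) satisfies $\cA\cR_p(u)\in\{\gamma_1^\pm(\mbfC_p\Sigma_\infty)\}$ at each singularity $p$ by Lemma \ref{Lem_Pre_Asymp Rate takes value in Gamma(C) and int by part}(ii), and since $\gamma_1^\pm<\gamma_2^+$ by Lemma \ref{Lem_gamma_gap >0}, such a Jacobi field is \emph{never} of slower growth. Semi-nondegeneracy only excludes Jacobi fields in $\Ker^+L_{\Sigma_\infty,g}$, so it provides no contradiction here; indeed multiplicity $\geq2$ with a singular limit is exactly the scenario permitted when $I=0$, so an argument that rules it out unconditionally would prove too much. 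The paper's actual argument for (ii) is a one-line arithmetic consequence: once (i) gives $\ind(\Sigma_\infty)=I$, the Sharp-type inequality $m\,\ind(\Sigma_\infty)\leq I$ (which you only recorded in its weaker $m=1$ form) yields $mI\leq I$, hence $m=1$ or $I=0$.

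Your treatment of the final assertion $\SCAP(g;\Lambda,I)<+\infty$ is essentially correct, though more elaborate than necessary: once (i) and (ii) are established, Theorem \ref{Thm_Pre_SCAP under converg w multip}(iii) applied to a subsequence already gives $\limsup_j\SCAP(\Sigma_j,M,g)\leq\SCAP(\Sigma_\infty,M,g)<+\infty$, which suffices.
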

        \begin{proof}
            By \cite{schoen_regularity_1981} and connectedness of $\Sigma_j$, $|\Sigma_j|$ $\mbfF$-subconverges to $m|\Sigma_\infty|$ for some connected LSMH $\Sigma_\infty \subset (M, g)$ and integer $m\geq 1$ with 
            \[
                m \|\Sigma_\infty\|_g(M) \leq \Lambda, \quad m\ind(\Sigma_\infty)\leq I\,.
            \]
            To see (i), suppose for contradiction $\ind(\Sigma_\infty)\leq I-1$, and hence $I\geq 1$. Since $g\in \cG_{\Lambda, I}(M)$, we know from definition that $\Sigma_\infty$ is regular and strongly nondegenerate.  Hence by \cite{sharp_compactness_2017}, $m = 1$ and $\Sigma_j\equiv \Sigma_\infty$ for $j>>1$.  This contradicts to the fact that $\ind(\Sigma_j) = I$. Therefore, 
            \[
                \ind(\Sigma_\infty) = I\,.
            \]
            (ii) also follows from \cite{sharp_compactness_2017}.  
            
            By Theorem \ref{Thm_Pre_SCAP under converg w multip}, for any $\{(g, \Sigma_j)\}_{j\geq 1}\subset \cM(g; \Lambda, I)$, up to a subsequence,
            \[
                \limsup_{j\to \infty} \SCAP(\Sigma_j, M, g) \leq \SCAP(\Sigma_\infty, M, g) <+\infty\,.
            \]
            Hence, we have $\SCAP(g; \Lambda, I)<+\infty$.
        \end{proof}
        
        \begin{Lem} \label{Lem_cM^(max) finite}
            For every $\Lambda>0$, integer $I\geq 0$ and every $g\in \cG_{\Lambda, I}(M)$, $\cM^{max}(g; \Lambda, I)$ is a finite set.
        \end{Lem}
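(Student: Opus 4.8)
The plan is a contradiction argument driven by the two upper semi-continuity statements for $\SCAP$ in Theorem \ref{Thm_Pre_SCAP under converg w multip} and by the singular analogue of White's structure theorem, Theorem \ref{Thm_Reg Deform Thm_OneTwo sided}. Assume $\cM^{max}(g;\Lambda, I)$ is infinite and choose pairwise distinct $(g, \Sigma_j) \in \cM^{max}(g;\Lambda, I)$. By Lemma \ref{Lem_SCap(g; Lambda, I)<infty} (Schoen--Simon compactness \cite{schoen_regularity_1981}), after passing to a subsequence $|\Sigma_j|$ $\mbfF$-converges to $m|\Sigma_\infty|$ with $(g, \Sigma_\infty) \in \cM(g;\Lambda, I)$ and either $m = 1$, or $m \geq 2$ and $I = 0$; dropping at most one term we may assume $\Sigma_j \neq \Sigma_\infty$ for every $j$. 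Throughout, $\SCAP(\Sigma_j, M, g) = \SCAP(g;\Lambda, I)$ is constant in $j$ and $\SCAP(\Sigma_\infty, M, g) \leq \SCAP(g;\Lambda, I)$ because $(g,\Sigma_\infty) \in \cM(g;\Lambda,I)$; the contradiction in each case will be that $\SCAP$ is forced to drop strictly along the subsequence.

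Suppose first that $\Sigma_\infty$ is regular. If $m = 1$, multiplicity-one $\mbfF$-convergence to a smooth embedded minimal hypersurface upgrades, by Allard's theorem \cite{allard_first_1972} and elliptic bootstrap, to smooth graphical convergence, so for $j \gg 1$ the surface $\Sigma_j$ is a small minimal graph over $\Sigma_\infty$; since $g \in \cG_0^{k,\alpha}(M)$ makes $\Sigma_\infty$ strongly nondegenerate, the implicit function theorem gives $\Sigma_j = \Sigma_\infty$ for $j \gg 1$, contradicting distinctness. If $m \geq 2$ (hence $I = 0$ and the $\Sigma_j$ are stable), then a two-sided $\Sigma_\infty$ is impossible, since renormalizing the separation of the sheets of $\Sigma_j$ over $\Sigma_\infty$ would produce a nontrivial nonnegative Jacobi field, against nondegeneracy; thus $\Sigma_\infty$ is one-sided, and by the multiplicity/sheeting analysis of Appendix \ref{Sec_App_Sing Cap} (the fact underlying Theorem \ref{Thm_Pre_SCAP under converg w multip}) together with the nondegeneracy of the double cover of $\Sigma_\infty$ and Sharp's compactness \cite{sharp_compactness_2017}, again $\Sigma_j = \Sigma_\infty$ for $j \gg 1$, a contradiction.

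Suppose now $\Sing(\Sigma_\infty) \neq \emptyset$. If $m \geq 2$ (so $I = 0$), Theorem \ref{Thm_Pre_SCAP under converg w multip}(iv) at once gives $\SCAP(g;\Lambda, 0) = \limsup_j \SCAP(\Sigma_j, M, g) \leq \SCAP(\Sigma_\infty, M, g) - 1 \leq \SCAP(g;\Lambda, 0) - 1$, which is absurd. If $m = 1$, then $|\Sigma_j| \to |\Sigma_\infty|$ with multiplicity one and $\ind(\Sigma_j) = \ind(\Sigma_\infty) = I$; since $g \in \cG_0^{k,\alpha}(M)$ the LSMH $\Sigma_\infty$ is semi-nondegenerate, so Theorem \ref{Thm_Reg Deform Thm_OneTwo sided} applies with $t_j \equiv 0$ (using $\Sigma_j \neq \Sigma_\infty$) and produces $p \in \Sing(\Sigma_\infty)$ together with a neighborhood $U_p$, which we shrink so that $U_p \cap \Sing(\Sigma_\infty) = \{p\}$, with $\Sing(\Sigma_j) \cap U_p = \emptyset$ for infinitely many $j$. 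Restricting to this subsequence (still $\mbfF$-converging to $\Sigma_\infty$, of one fixed sidedness) and choosing $U_p' \Subset U_p$ open with $p \in U_p'$ and $\partial U_p' \cap \Sing(\Sigma_\infty) = \emptyset$, the defining sum of $\SCAP$ is additive across $M = U_p' \sqcup \partial U_p' \sqcup (M \setminus \overline{U_p'})$, since neither $\Sigma_j$ nor $\Sigma_\infty$ has a singularity on $\partial U_p'$. Hence $\SCAP(\Sigma_j, M \setminus \overline{U_p'}, g) = \SCAP(g;\Lambda, I) - \SCAP(\Sigma_j, U_p', g) = \SCAP(g;\Lambda, I)$ for these $j$ (as $\Sing(\Sigma_j) \cap U_p' = \emptyset$), while $\SCAP(\Sigma_\infty, M \setminus \overline{U_p'}, g) = \SCAP(\Sigma_\infty, M, g) - \SCAP(\Sigma_\infty, U_p', g) \leq \SCAP(g;\Lambda, I) - \SCAP(\mbfC_p \Sigma_\infty)$; applying Theorem \ref{Thm_Pre_SCAP under converg w multip}(iii) on the open set $M \setminus \overline{U_p'}$ yields $\SCAP(g;\Lambda, I) \leq \SCAP(\Sigma_\infty, M \setminus \overline{U_p'}, g) \leq \SCAP(g;\Lambda, I) - \SCAP(\mbfC_p \Sigma_\infty) \leq \SCAP(g;\Lambda, I) - 1$, using $\SCAP(\mbfC_p\Sigma_\infty) \geq 1$ from Theorem \ref{Thm_Pre_SCAP under converg w multip}(i). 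Either way we reach a contradiction, so $\cM^{max}(g;\Lambda, I)$ is finite.

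The crux is the last case, $m = 1$ with $\Sing(\Sigma_\infty) \neq \emptyset$: the strict drop of $\SCAP$ there cannot come from soft compactness alone — one genuinely needs that at least one singularity of $\Sigma_\infty$ is resolved along the sequence — which is exactly why we must already know $g \in \cG_0^{k,\alpha}(M)$, so that $\Sigma_\infty$ is semi-nondegenerate and Theorem \ref{Thm_Reg Deform Thm_OneTwo sided} is applicable with the unperturbed metric $g_j \equiv g$. A secondary point to handle with care is the localization of $\SCAP$ across $\partial U_p'$ when $\Sigma_j$ is one-sided: one passes to the connected two-sided double cover, where the splitting of $\SCAP$ is genuinely additive and the same computation goes through verbatim.
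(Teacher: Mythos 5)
Your proof is correct and follows the paper's own argument essentially step-for-step: subconvergence to $m|\Sigma_\infty|$, elimination of the regular and higher-multiplicity-singular cases by strong nondegeneracy and Theorem~\ref{Thm_Pre_SCAP under converg w multip}(iv), and a strict $\SCAP$ drop in the remaining $m=1$, $\Sing(\Sigma_\infty)\neq\emptyset$ case via Theorem~\ref{Thm_Reg Deform Thm_OneTwo sided}. The paper states the final localization more tersely (citing Theorem~\ref{Thm_Pre_SCAP under converg w multip} directly for the drop), but your explicit additivity computation across $\partial U_p'$, and your caveat about passing to the double cover when $\Sigma_\infty$ is one-sided, is precisely what that citation amounts to.
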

        \begin{proof}
            Suppose otherwise, there exists a pairwise distinct sequence 
            \[
                \{(g, \Sigma_j)\}_{j\geq 1} \subset \cM^{max}(g; \Lambda, I)\,.
            \]
            By \cite{schoen_regularity_1981} and Lemma \ref{Lem_SCap(g; Lambda, I)<infty}, $|\Sigma_j|$ $\mbfF$-subconverges to $m|\Sigma_\infty|$ for some $(g, \Sigma_\infty)\in \cM(g; \Lambda, I)$.  And by the definition of $\cM^{max}$, we have 
            \begin{align}
                \SCAP(\Sigma_\infty, M, g) \leq \SCAP(g; \Lambda, I) = \SCAP(\Sigma_j, M, g).  \label{Equ_cM^(max) finite_SCap(Sigma_infty) leq SCap(Sigma_j)}
            \end{align}
            Then by Theorem \ref{Thm_Pre_SCAP under converg w multip}, either $m = 1$ or $\Sing(\Sigma_\infty) = \emptyset$. Since $g\in \cG_{\Lambda, I}(M) \subset \cG_0^{k, \alpha}(M)$, the latter case can't happen by the strong nondegeneracy of $\Sigma_\infty$. In other words, we have $m=1$.  
            
            Applying Theorem \ref{Thm_Reg Deform Thm_OneTwo sided} to $\Sigma_j \to \Sigma_\infty$, we know that $\exists\ p\in \Sing(\Sigma_\infty)$ near which $\Sing(\Sigma_j)$ has no singularity for infinitely many $j>>1$.  Then  by Theorem \ref{Thm_Pre_SCAP under converg w multip}, \[
                \liminf_{j\to \infty}\SCAP(\Sigma_j, M, g) \leq \SCAP(\Sigma_\infty, M, g) -1,   \]
            which contradicts to (\ref{Equ_cM^(max) finite_SCap(Sigma_infty) leq SCap(Sigma_j)}). 
        \end{proof}
        
        \begin{Lem} \label{Lem_SCap(g; Lambda, I) strict decrease}
            For every $\Lambda>0$, integer $I\geq 0$ and every $g\in \cG_{\Lambda, I}(M)\cap \Int(\Clos(\cG_{\Lambda, I}(M)))$ such that $\SCAP(g; \Lambda, I)\geq 1$, there exists a sequence of metrics $g_j\in \cG_{\Lambda, I}(M)\cap \Int(\Clos(\cG_{\Lambda, I}(M)))$, $j\geq 1$, such that $g_j \to g$ in $C^{k, \alpha}$ and 
            \begin{align}
                \SCAP(g_j; \Lambda, I) \leq \SCAP(g; \Lambda, I) - 1.  \label{Equ_SCap(g_j) leq SCap(g)-1}
            \end{align}
        \end{Lem}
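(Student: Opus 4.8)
The plan is to perturb $g$ \emph{within its conformal class} so as to destroy one tangent cone of each member of the finite family $\cM^{max}(g;\Lambda,I)$, and then to correct the metric by an arbitrarily small further perturbation so as to re-enter $\cG_{\Lambda,I}(M)$ without undoing the gain. Set $N:=\SCAP(g;\Lambda,I)\ge1$ and let $(g,\Sigma^1),\dots,(g,\Sigma^r)$ be the finitely many (Lemma~\ref{Lem_cM^(max) finite}) elements of $\cM^{max}(g;\Lambda,I)$; since $g\in\cG_0^{k,\alpha}(M)$ each $\Sigma^i$ is semi-nondegenerate, and since $\SCAP(\Sigma^i,M,g)=N\ge1$ each has $\Sing(\Sigma^i)\ne\emptyset$. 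Applying Theorem~\ref{Thm_Reg Deform Thm_OneTwo sided} to each $\Sigma^i$ produces open dense sets $\scF_i\subset C^{k,\alpha}(M)$, and I would fix $f\in\bigcap_{i=1}^r\scF_i$ once and for all. Throughout, for an arbitrary $C^{k,\alpha}$ metric $g'$ I write $\SCAP(g';\Lambda,I)$ for the supremum of $\SCAP(\Sigma,M,g')$ over connected LSMH $\Sigma\subset(M,g')$ with $\|\Sigma\|_{g'}(M)\le\Lambda$ and $\ind(\Sigma)=I$ (the same formula as in the text, which needs only that $g'$ be a metric).

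The first, and main, step is to show that there is $s_0>0$ with $\SCAP((1+sf)g;\Lambda,I)\le N-1$ for all $s\in(0,s_0)$. I would argue by contradiction: if this fails, there are $s_j\searrow0$ and connected LSMH $\Sigma_j\subset(M,g_j)$, $g_j:=(1+s_jf)g$, with $\|\Sigma_j\|_{g_j}(M)\le\Lambda$, $\ind(\Sigma_j)=I$ and $\SCAP(\Sigma_j,M,g_j)\ge N$. By \cite{schoen_regularity_1981} and \cite{sharp_compactness_2017} (with the converging metrics $g_j\to g$), after passing to a subsequence $|\Sigma_j|\to m|\Sigma_\infty|$ for a connected LSMH $\Sigma_\infty\subset(M,g)$ and integer $m\ge1$, with all $\Sigma_j$ of a fixed sidedness; and arguing exactly as in the proof of Lemma~\ref{Lem_SCap(g; Lambda, I)<infty} (which uses only $g\in\cG_{\Lambda,I}(M)\subset\cG_0^{k,\alpha}(M)$), either $m=1$ with $(g,\Sigma_\infty)\in\cM(g;\Lambda,I)$, or $m\ge2$ with $I=0$. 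In the first case Theorem~\ref{Thm_Pre_SCAP under converg w multip}(iii) gives $\SCAP(\Sigma_\infty,M,g)\ge\limsup_j\SCAP(\Sigma_j,M,g_j)\ge N$, which together with $(g,\Sigma_\infty)\in\cM(g;\Lambda,I)$ forces $\SCAP(\Sigma_\infty,M,g)=N$, so $\Sigma_\infty=\Sigma^{i_0}$ for some $i_0$. Now I would invoke Theorem~\ref{Thm_Reg Deform Thm_OneTwo sided} for $\Sigma^{i_0}$ with the family $g_j=(1+s_jf)g$ (i.e. $t_j=s_j$, $f_j\equiv f$, and $f\in\scF_{i_0}$) and the sequence $\Sigma_j$ (note $\ind(\Sigma_j)=I=\ind(\Sigma^{i_0})$ and $\mbfF(|\Sigma_j|_{g_j},|\Sigma^{i_0}|_g)\to0$): it produces $p\in\Sing(\Sigma^{i_0})$ and a small ball $B\ni p$ with $\overline B\cap\Sing(\Sigma^{i_0})=\{p\}$ such that $\Sing(\Sigma_j)\cap\overline B=\emptyset$ for infinitely many $j$. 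For those $j$, $\SCAP(\Sigma_j,M,g_j)=\SCAP(\Sigma_j,M\setminus\overline B,g_j)$ (no singularity of $\Sigma_j$ lies in $\overline B$), while Theorem~\ref{Thm_Pre_SCAP under converg w multip}(iii) applied with $U=M\setminus\overline B$ gives $\limsup_j\SCAP(\Sigma_j,M\setminus\overline B,g_j)\le\SCAP(\Sigma^{i_0},M\setminus\overline B,g)=N-\SCAP(\mbfC_p\Sigma^{i_0})\le N-1$, a contradiction. In the second case ($m\ge2$, $I=0$) one has $\Sing(\Sigma_\infty)\ne\emptyset$ — otherwise $\Sigma_\infty$ is regular of index $0$, hence strongly nondegenerate since $g\in\cG_0^{k,\alpha}(M)$, and \cite{sharp_compactness_2017} forces $m=1$ — so the improved semicontinuity Theorem~\ref{Thm_Pre_SCAP under converg w multip}(iv) gives $\limsup_j\SCAP(\Sigma_j,M,g_j)\le\SCAP(\Sigma_\infty,M,g)-1\le N-1$ (using $(g,\Sigma_\infty)\in\cM(g;\Lambda,0)$), again a contradiction.

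The remaining steps package this into the statement. A standard compactness-plus-implicit-function-theorem argument, using $g\in\cG_{\Lambda,I}(M)\cap\cG_0^{k,\alpha}(M)$ and \cite{sharp_compactness_2017}, produces $\delta_1>0$ such that, whenever $\|g'-g\|_{C^{k,\alpha}}<\delta_1$, every LSMH $\Gamma\subset(M,g')$ with $\ind(\Gamma)\le I-1$ and $\|\Gamma\|_{g'}(M)\le\Lambda$ is regular and strongly nondegenerate. Writing $\cV:=\{g':\|g'-g\|_{C^{k,\alpha}}<\delta_1\}$, I claim $\{g'\in\cV:\SCAP(g';\Lambda,I)\ge N\}$ is relatively closed in $\cV$: given $g'_k\to g'\in\cV$ and connected LSMH $\Sigma_k\subset(M,g'_k)$ with $\ind(\Sigma_k)=I$, $\|\Sigma_k\|_{g'_k}(M)\le\Lambda$, $\SCAP(\Sigma_k,M,g'_k)\ge N$, pass to $|\Sigma_k|\to m|\Sigma_\infty|$, $\Sigma_\infty\subset(M,g')$; the $\delta_1$-property rules out index drop, so either $m=1$ with $\ind(\Sigma_\infty)=I$ (then Theorem~\ref{Thm_Pre_SCAP under converg w multip}(iii) gives $\SCAP(g';\Lambda,I)\ge\SCAP(\Sigma_\infty,M,g')\ge N$) or $m\ge2$ with $I=0$ and $\Sing(\Sigma_\infty)\ne\emptyset$ (then Theorem~\ref{Thm_Pre_SCAP under converg w multip}(iv) gives $\SCAP(g';\Lambda,0)\ge\SCAP(\Sigma_\infty,M,g')\ge N+1$). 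Hence $\{g'\in\cV:\SCAP(g';\Lambda,I)\le N-1\}$ is relatively open in $\cV$. Finally, choosing $s_j\searrow0$ with $s_j<s_0$ and $(1+s_jf)g\in\cV$, the first step places $(1+s_jf)g$ in this relatively open set, so there is $\epsilon_j\in(0,1/j]$ with $B_{C^{k,\alpha}}((1+s_jf)g,\epsilon_j)\subset\cV$ on which $\SCAP(\cdot;\Lambda,I)\le N-1$; since $\cG_{\Lambda,I}(M)$ is dense in $\cG^{k,\alpha}(M)$ (Lemma~\ref{Lem_cG_I open dense} and the Baire property of $\cG^{k,\alpha}(M)$, which moreover gives $\Int(\Clos(\cG_{\Lambda,I}(M)))=\cG^{k,\alpha}(M)$), I would pick $g_j\in B_{C^{k,\alpha}}((1+s_jf)g,\epsilon_j)\cap\cG_{\Lambda,I}(M)$. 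Then $g_j\to g$ in $C^{k,\alpha}$, $g_j\in\cG_{\Lambda,I}(M)\cap\Int(\Clos(\cG_{\Lambda,I}(M)))$, and $\SCAP(g_j;\Lambda,I)\le N-1=\SCAP(g;\Lambda,I)-1$.

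The main obstacle is precisely the tension between these two steps: Theorem~\ref{Thm_Reg Deform Thm_OneTwo sided} only delivers a \emph{conformal} perturbation $(1+sf)g$, which generically will not lie in $\cG_{\Lambda,I}(M)$, so one cannot stop there — and to be allowed to slide it into $\cG_{\Lambda,I}(M)$ one must know that the singular-capacity bound of the first step is an \emph{open} condition on metrics near $g$. Establishing that openness is where $g\in\cG_{\Lambda,I}(M)$ enters decisively (it forces low-index limiting hypersurfaces to be regular and nondegenerate, which precludes an index drop that would otherwise break upper semicontinuity of $\SCAP$), and the multiplicity-$\ge2$ case — which, by compactness, can occur only when $I=0$ — is handled only by the improved semicontinuity estimate Theorem~\ref{Thm_Pre_SCAP under converg w multip}(iv). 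I expect the careful bookkeeping in the first step (the dichotomy, matching the hypotheses of Theorem~\ref{Thm_Reg Deform Thm_OneTwo sided}, and the localization of $\SCAP$ to $M\setminus\overline B$) to be the most delicate part to get exactly right.
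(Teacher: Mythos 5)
The proposal is correct and reaches the same conclusion by a mildly different route. The paper exploits a specific flexibility built into Theorem \ref{Thm_Reg Deform Thm_OneTwo sided}: the hypothesis there allows $g_j=(1+t_jf_j)g$ with $f_j\to f$ (not only $f_j\equiv f$), which the paper uses to \emph{directly} produce $g_j=(1+f_j/j)g\in\cG_{\Lambda,I}(M)\cap\Int(\Clos(\cG_{\Lambda,I}(M)))$ with $f_j\to f$, and then derive (\ref{Equ_SCap(g_j) leq SCap(g)-1}) for a subsequence by contradiction. You instead fix $f_j\equiv f$, first prove the bound for the entire conformal ray $(1+sf)g$, $s\in(0,s_0)$, and then insert an extra step — relative openness of $\{g':\SCAP(g';\Lambda,I)\le N-1\}$ in a $C^{k,\alpha}$-neighborhood of $g$ — in order to slide from $(1+sf)g$ to nearby metrics in $\cG_{\Lambda,I}(M)$. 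The two proofs hinge on the same dichotomy ($m=1$ with $\ind(\Sigma_\infty)=I$, versus $m\ge2$, $I=0$, $\Sing(\Sigma_\infty)\ne\emptyset$, the low-index cases being eliminated by $g\in\cG_{\Lambda,I}$ and \cite{sharp_compactness_2017}) and on Theorems \ref{Thm_Reg Deform Thm_OneTwo sided} and \ref{Thm_Pre_SCAP under converg w multip}; your added openness lemma is a routine compactness argument but is work the paper's formulation avoids. One correction: in the final step you should not cite Lemma \ref{Lem_cG_I open dense} (whose proof logically depends on the present lemma, so that would be circular), nor the claim $\Int(\Clos(\cG_{\Lambda,I}(M)))=\cG^{k,\alpha}(M)$ derived from it. The hypothesis $g\in\Int(\Clos(\cG_{\Lambda,I}(M)))$ already suffices: since this set is open and contains $g$, for $s_j$ small it contains a $C^{k,\alpha}$-ball around $(1+s_jf)g$, and being a subset of $\Clos(\cG_{\Lambda,I}(M))$ that ball meets $\cG_{\Lambda,I}(M)$; any such $g_j$ then lies in $\cG_{\Lambda,I}(M)\cap\Int(\Clos(\cG_{\Lambda,I}(M)))$, which is all you need.
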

        \begin{proof}
            By Lemma \ref{Lem_cM^(max) finite}, we can write 
            \[
                \cM^{max}(g; \Lambda, I) =: \{(g, \Sigma^{(1)}), (g, \Sigma^{(2)}), \dots, (g, \Sigma^{(q)})\}\,.
            \]
            For each $1\leq l\leq q$, let $\scF^{(l)}\subset C^{k, \alpha}(M)$ be the open dense subset associated to $(g, \Sigma^{(l)})$ given by Theorem \ref{Thm_Reg Deform Thm_OneTwo sided}.  Thus $\scF:= \bigcap_{l=1}^q \scF^{(l)} \neq \emptyset$.
            
            Take $f\in \scF$, and there exists a sequence of metrics 
            \[
                g_j = (1+ f_j/j)g \in \cG_{\Lambda, I}\cap \Int(\Clos(\cG_{\Lambda, I}(M)))
            \] such that $f_j\to f$ in $C^{k, \alpha}$ when $j\to \infty$.  We now claim that by passing to a subsequence, $g_j$ satisfies (\ref{Equ_SCap(g_j) leq SCap(g)-1}).
            
            To see this, suppose for contradiction that when $j>>1$, there exists $(g_j, \Sigma_j)\in \cM(g_j; \Lambda, I)$ such that 
            \begin{align}
                \SCAP(\Sigma_j, M, g_j) \geq \SCAP(g; \Lambda, I) \geq 1 \,.  \label{Equ_Induct_SCap(g_j) geq SCap(g)}
            \end{align}
            By \cite{schoen_regularity_1981} and the connectedness of $\Sigma_j$, $|\Sigma_j|$ $\mbfF$-subconverges to $m|\Sigma_\infty|$ for some connected LSMH $\Sigma_\infty \subset (M, g)$ and integer $m\geq 1$ with $m \|\Sigma_\infty\|_g(M) \leq \Lambda$ and $m\ind(\Sigma_\infty)\leq I$. 
            
            We shall deal with the following cases separately.\\
            
            \paragraph{\underline{\textit{Case I}}} $\ind(\Sigma_\infty) = I$ and $m=1$. Then $(g, \Sigma_\infty)\in \cM(g; \Lambda, I)$, and by Theorem \ref{Thm_Pre_SCAP under converg w multip} (iii) and (\ref{Equ_Induct_SCap(g_j) geq SCap(g)}), \[
                \SCAP(\Sigma_\infty, M, g) \geq \limsup_{j\to \infty} \SCAP(\Sigma_j, M, g_j) \geq \SCAP(g; \Lambda, I).  \]
            This means $(g, \Sigma_\infty)\in \cM^{max}(g; \Lambda, I)$. WLOG $\Sigma_\infty = \Sigma^{(1)}$.  But since $f\in \scF \subset \scF^{(1)}$, by the choice of $g_j$ and Theorem \ref{Thm_Reg Deform Thm_OneTwo sided} we have,
            \begin{align*}
                \SCAP(\Sigma_\infty, M, g) - 1 & \geq \limsup_{j\to \infty} \SCAP(\Sigma_j, M, g_j)     \\
                                               & = \SCAP(g; \Lambda, I) = \SCAP(\Sigma_\infty, M, g)\,,
            \end{align*}
            which is a contradiction.\\
            
            \paragraph{\underline{\textit{Case II}}} $\ind(\Sigma_\infty) = I$ and $m\geq 2$.  Then $I = 0$ and $(g, \Sigma_\infty)\in \cM(g; \Lambda, I)$.
            \begin{itemize}
                \item \underline{\textit{Case II (1)}}. $\Sing(\Sigma_\infty) = \emptyset$. Since $\Sigma_j$'s are stable minimal hypersurfaces, by \cite{schoen_regularity_1981}, for large enough $j$, $\Sigma_j$ is a multi-graph over $\Sigma_\infty$, and thus,
                      \[
                          \Sing(\Sigma_j) = \emptyset\,,
                      \]
                      contradicting (\ref{Equ_Induct_SCap(g_j) geq SCap(g)}).
                      
                \item \underline{\textit{Case II (2)}}. $\Sing(\Sigma_\infty) \neq \emptyset$. By Theorem \ref{Thm_Pre_SCAP under converg w multip} (iv) and (\ref{Equ_Induct_SCap(g_j) geq SCap(g)}) we have,
                      \begin{align*}
                          \SCAP(\Sigma_\infty, M, g) & \geq \limsup_{j\to \infty} \SCAP(\Sigma_j, M, g_j) + 1              \\
                                                     & \geq \SCAP(g; \Lambda, I) + 1 \geq \SCAP(\Sigma_\infty, M, g) +1\,,
                      \end{align*}
                      which is a contradiction.\\
            \end{itemize}
            
            \paragraph{\underline{\textit{Case III}}} $\ind(\Sigma_\infty) \leq I-1$ and $m = 1$.  Then $I\geq 1$ and by the definition of $g\in \cG_{\Lambda, I}(M)$, $\Sigma_\infty$ is both regular and strongly nondegenerate.  By Allard's regularity theorem \cite{allard_first_1972}, $\Sing(\Sigma_j) = \emptyset$ for $j>>1$, contradicting (\ref{Equ_Induct_SCap(g_j) geq SCap(g)}).\\
            
            \paragraph{\underline{\textit{Case IV}}} $\ind(\Sigma_\infty, g) \leq I-1$ and $m\geq 2$. Then again $I\geq 1$ and $\Sigma_\infty$ is regular and strongly nondegenerate.  By Sharp's compactness theorem \cite{sharp_compactness_2017}, either $\Sigma_\infty$ is two-sided and $\{\Sigma_j\}$ induces a bounded positive Jacobi field on $\Sigma_\infty$, or $\Sigma_\infty$ is one-sided and after passing to double covers, $\{\Sigma_j\}$ induces a bounded positive Jacobi field on the two-sided double cover $\hat{\Sigma}_\infty$ of $\Sigma_\infty$. Both violate the strong nondegeneracy of $\Sigma_\infty$.
        \end{proof}
        
        \begin{proof}[Proof of Lemma \ref{Lem_cG_I open dense}]
            To prove the relative openness, let's consider an arbitrary family of metrics $\{g_j\}_j \subset \cG_0^{k, \alpha}(M)\setminus \cG_{\Lambda, I}(M)$ such that 
            \[
                g_j\to g_\infty \in \cG_0^{k, \alpha}(M) \text{ in } C^{k,\alpha}\,.
            \] The choice of $g_j$ guarantees that there exists an LSMH $\Sigma_j \subset (M, g_j)$ with $\ind(\Sigma_j)\leq I-1$, $\vol(\Sigma_j)\leq \Lambda$ and $\Sing(\Sigma_j) \neq \emptyset$.  By Sharp's compactness again, $|\Sigma_j|$ $\mbfF$-subconverges to some LSMH with multiplicity $m|\Sigma_\infty|$ with area $\leq \Lambda$ and index $\leq I-1$. We now assert that $g_\infty\notin \cG_{\Lambda, I}(M)$, to conclude the proof of relative openness.  If $g_\infty \in \cG_{\Lambda, I}(M)$, by definition, $\Sigma_\infty$ is both regular and strongly nondegenerate. Hence $m=1$ and then by Allard's regularity theorem \cite{allard_first_1972}, for sufficiently large $j$, $\Sing(\Sigma_j) = \emptyset$, which contradicts the choice of $\Sigma_j$.
            
            To prove denseness, we shall do the induction in $I\geq 0$.
            
            \begin{itemize}
                \item Case $I = 0$ follows directly from (\ref{Equ_Def_cG_(Lambda, 0)}).
                \item Now suppose $I\geq 0$ such that $\cG_{\Lambda, I}(M)$ is dense in $\cG_0^{k, \alpha}(M)$, and then by Corollary \ref{Cor_Canon Neighb_Global generic semi-nondeg}, it's also dense in $\cG^{k, \alpha}(M)$.  To prove the denseness of $\cG_{\Lambda, I+1}(M)$, it suffices to show that every metric $g\in \cG_{\Lambda, I}(M)$ can be approximated by metrics in $\cG_{\Lambda, I+1}(M)$.
                      
                      Note that by inductive assumption, $\Clos(\cG_{\Lambda, I}(M)) = \cG^{k, \alpha}(M)$. By Lemma \ref{Lem_SCap(g; Lambda, I)<infty}, $\SCAP(g; \Lambda, I) < \infty$. For any $\epsilon > 0$, we can apply Lemma \ref{Lem_SCap(g; Lambda, I) strict decrease} finitely many times to find a metric $\bar{g}_{\epsilon}\in \cG_{\Lambda, I}(M)$ such that $\|g - g_j\|_{C^{k, \alpha}}< \epsilon$ and $\SCap(\bar{g}_j; \Lambda, I) = 0$.  By (\ref{Equ_g in cG_(I+1) iff SCap(g; I) = 0}), $\bar{g}_{\epsilon}\in \cG_{\Lambda, I+1}(M)$. In conclusion, the arbitrariness of $\epsilon$ implies that $\cG_{\Lambda, I+1}(M)$ is dense in $\cG_{\Lambda, I}(M)$ and thus dense in $\cG_0^{k, \alpha}(M)$.
            \end{itemize}
        \end{proof}

        \begin{proof}[Proof of Theorem \ref{Thm_Main}]
            By Corollary \ref{Cor_Canon Neighb_Global generic semi-nondeg} and Lemma \ref{Lem_cG_I open dense}, we know that \[
                \cG^{k, \alpha}_\infty(M) := \bigcap_{\text{integer } \Lambda>0, I\geq 0}\cG_{\Lambda, I}(M),   \]
            is a generic subset of $\cG^{k, \alpha}(M)$ in the Baire category sense.  By  (\ref{Equ_Def_cG_(Lambda, I)}), (\ref{Equ_Def_cG_(Lambda, 0)}) and the definition of $\cG_0^{k,\alpha}(M)$ in Corollary \ref{Cor_Canon Neighb_Global generic semi-nondeg}, for every $g\in \cG^{k, \alpha}_\infty(M)$, every LSMH $\Sigma \subset (M, g)$ is regular and strongly non-degenerate.
        \end{proof}

    \section{Applications in Almgren-Pitts min-max theory}\label{Sec_Appl}
        
        As mentioned in the Introduction, one obstruction to extend results of (Almgren-Pitts) min-max minimal hypersurfaces to higher dimension is the absence of an analogous White's structure theorem for singular minimal hypersurfaces in general. Fortunately, we have proved generic smoothness for embedded locally stable minimal hypersurfaces in an eight-dimensional closed manifold. This, in principle, suggests that White's structure theorem still holds in dimension 8.
        
        In this section, we shall sketch proofs of various properties of min-max minimal hypersurfaces in an eight-dimensional closed manifold, as applications of our main generic smoothness result. In the following, we will use $\mathcal{G}_{\mathrm{reg}}(M)$ to denote the set of all metrics where every embedded locally stable minimal hypersurface is smooth, and $\mathcal{G}_{\mathrm{reg}, \Lambda, I}$ to denote the set of all metrics where every embedded locally stable minimal hypersurface with both Morse index upper bound $I$ and area upper bound $\Lambda$ is smooth.
        
        \subsection{Generic Denseness}
            In 2017, using the Weyl law for volume spectra proved by Liokumovich-Marques-Neves \cite{liokumovich_weyl_2018}, Irie-Marques-Neves \cite{irie_density_2018} showed that for a generic metric $g$ on a closed Riemannian manifold with dimension between $3$ and $7$, the union of all closed, smooth, embedded minimal hypersurfaces is a dense subset in the ambient manifold. In 2019, the first author \cite{li_existence_2019} improved the density result and proved that the union of all min-max minimal hypersurfaces is also dense under the same dimensional conditions.
            
            The denseness result can be extended to dimension 8 with the aid of generic smoothness.
            
            \begin{Thm}[Density of min-max minimal hypersurfaces]
                For a generic metric on a closed manifold $M^8$, the union of all (smooth) minimal hypersurfaces generated from Almgren-Pitts min-max theory is dense in $M$. 
            \end{Thm}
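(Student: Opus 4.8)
The plan is to adapt the Weyl-law argument of Irie--Marques--Neves \cite{irie_density_2018} and its min-max refinement due to the first author \cite{li_existence_2019}, which becomes available in dimension eight precisely because Theorem~\ref{Thm_Main} supplies the regularity input that was automatic in lower dimensions: for a generic metric every locally stable minimal hypersurface---in particular every Almgren--Pitts min-max minimal hypersurface---is smooth and nondegenerate. First I would fix the generic subset $\cG^{*}\subset\cG^{k,\alpha}(M)$ provided by Theorem~\ref{Thm_Main}; for $g\in\cG^{*}$ every minimal hypersurface is smooth and nondegenerate, so $g$ is bumpy, and by the Almgren--Pitts construction together with the Marques--Neves index theory each width $\omega_p(M,g)$ is attained by a closed, smooth, embedded minimal hypersurface, with integer multiplicities on its components, each component nondegenerate.

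Next, fix a geodesic ball $B\subset M$ and let $\cG_B\subset\cG^{*}$ be the set of metrics admitting an Almgren--Pitts min-max minimal hypersurface that meets $B$; the heart of the proof is to show $\cG_B$ is open and dense in $\cG^{*}$. For openness, a smooth nondegenerate minimal hypersurface meeting $B$ persists, with nearby area, under a $C^{k,\alpha}$-small perturbation of the metric by the implicit function theorem, and continuity of $g\mapsto\omega_p(M,g)$ together with the structure of width-realizing varifolds keeps it realized as a sheet of a min-max minimal hypersurface; it still meets $B$. For density, take $g\in\cG^{*}$ with no min-max minimal hypersurface meeting $B$, pick a nonzero $h\geq 0$ supported in $B$, and set $g_t=g+t\varepsilon h$ for small $\varepsilon>0$ and $t\in[0,1]$. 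Each $t\mapsto\omega_p(M,g_t)$ is nondecreasing and Lipschitz, and by the Weyl law of Liokumovich--Marques--Neves \cite{liokumovich_weyl_2018} one has $\omega_p(M,g_t)=a(8)\,\vol(M,g_t)^{1/8}p^{7/8}+o(p^{7/8})$; since $\vol(M,g_1)>\vol(M,g_0)$, there is $p$ with $\omega_p(M,g_1)>\omega_p(M,g_0)$, hence $\frac{d}{dt}\omega_p(M,g_t)>0$ on a set of positive measure. At any such differentiability time $t_0$, the first-variation formula for widths yields a width-realizing varifold $V$ for $\omega_p(M,g_{t_0})$ with $\int\operatorname{tr}_{T_xV}h\,d\|V\|(x)>0$; since $h\geq 0$ is supported in $B$ this forces $\spt\|V\|\cap B\neq\emptyset$. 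Thus the $\varepsilon$-perturbation $g_{t_0}$ of $g$ carries a width-realizing varifold meeting $B$, and a further arbitrarily small perturbation moves the metric into $\cG^{*}$ while, by the openness just discussed, preserving a min-max minimal hypersurface meeting $B$. Hence $\cG_B$ is dense.

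Finally I would take a countable basis $\{B_i\}_i$ of $M$; then $\bigcap_i\cG_{B_i}$ is a countable intersection of open dense subsets of $\cG^{*}$, hence comeager in $\cG^{k,\alpha}(M)$, and every metric in it has the property that each $B_i$ meets some Almgren--Pitts min-max minimal hypersurface, so their union is dense. The main obstacle is exactly the interface between the non-generic line $t\mapsto g_t$---along which the width-realizing object is only a stationary integral varifold, a priori singular at isolated points---and the genericity hypothesis of Theorem~\ref{Thm_Main}: one must upgrade the varifold-level conclusion of the Weyl-law step to a smooth nondegenerate minimal hypersurface that survives a subsequent perturbation back into $\cG^{*}$, and it is here that Theorem~\ref{Thm_Main} together with the persistence and openness argument carries the weight. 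The Weyl law and the first-variation formula for $\omega_p$ themselves hold in all dimensions and require no change.
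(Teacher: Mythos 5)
Your proposal takes a genuinely different route from the paper's. The paper's proof is terse precisely because it is a reduction: \cite{li_existence_2019} already showed that density of the min-max hypersurfaces $\mathcal{APR}(M,g)$ for generic metrics follows from the countability of locally stable minimal hypersurfaces with optimal regularity (Conjecture 4.2 there); the paper then simply observes that Theorem~\ref{Thm_Main} together with Sharp's compactness \cite{sharp_compactness_2017} supplies that countability. You instead re-derive the density from scratch via the Weyl law and the first-variation formula for widths, in the style of Marques--Neves--Song. That is a legitimate strategy and not the one the paper pursues.

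However, there is a genuine gap at your final perturbation step---one you flag yourself as ``the main obstacle'' but do not close, and it is not cosmetic. You obtain a width-realizing varifold $V$ at $g_{t_0}$ meeting $B$, but $g_{t_0}$ need not lie in $\cG^{*}$: a residual subset of $\cG^{k,\alpha}(M)$ can miss the entire segment $\{g_t: t\in(0,1]\}$, so $V$ may be singular and degenerate. The openness you established presupposes a smooth \emph{nondegenerate} minimal hypersurface for a metric already in $\cG^{*}$, so it yields no persistence at $g_{t_0}$; and even were $V$ smooth and nondegenerate, the deformed hypersurface $\Sigma'$ at a nearby $g'\in\cG^{*}$ would merely be $g'$-minimal, with nothing forcing it to realize $\omega_p(M,g')$---the width of $g'$ could be attained by a different varifold avoiding $B$. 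This is exactly the difficulty the countability hypothesis (Conjecture 4.2 of \cite{li_existence_2019}, now proved via Theorem~\ref{Thm_Main} and Sharp's compactness) is designed to circumvent: if every min-max hypersurface of $g_t$ avoided $B$, it would be $g_0$-minimal with unchanged area, so $t\mapsto\omega_p(M,g_t)$ would be a continuous function taking values in a fixed countable set and hence constant, contradicting the Weyl law---no differentiability and no tracking of a particular realizing varifold across a non-generic slice is ever required. To close your gap you would need to either reproduce that intermediate-value variant of the argument, or carry out the full openness-through-nearby-metric analysis of \cite{li_existence_2019}.
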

            \begin{proof}[Sketch of proof]
                Near the end of \cite{li_existence_2019}, it is noted that the generic density of all min-max minimal hypersurfaces with optimal regularity $\mathcal{APR}(M, g)$ follows from Conjecture 4.2 therein. 
                
                Theorem \ref{Thm_Main} together with Sharp's compactness theorem \cite{sharp_compactness_2017} implies that for generic metric $g$ on $M^8$, there are only countably many embedded locally stable minimal hypersurfaces, i.e., countably many singular minimal hypersurfaces with optimal regularity. This confirms Conjecture 4.2, and thus, the generic density result holds.
            \end{proof}

        \subsection{Multiplicity One}
            
            In \cite{marques_morse_2016}, F. C. Marques and A. Neves conjectured that for bumpy metrics, min-max minimal hypersurfaces should have multiplicity exactly one, and they proved this for the first width minimal hypersurface. This multiplicity one conjecture was confirmed by X. Zhou \cite{zhou_multiplicity_2019} for dimensions $3 \leq n+1 \leq 7$, where he applied the min-max construction for hypersurfaces with prescribed mean curvature developed by J. Zhu and himself \cite{zhou_existence_2018}.
            
            Let's denote by $\mathcal{G}_{\mathrm{bumpy}}(M)$ all the metrics where every immersed smooth minimal hypersurface is non-degenerate. B. White has proved that it is also generic in the Baire sense \cite{white_space_1991,white_bumpy_2017}.
            
            \begin{Thm}[Multiplicity one]
                Given a closed manifold $M^8$, for any metric $g \in \mathcal{G}_{\mathrm{reg}}(M) \cap \mathcal{G}_{\mathrm{bumpy}}(M)$ and any $k \in \mathbb{N}$, there exists a disjoint collection of smooth, connected, closed, embedded, two-sided minimal hypersurfaces $\set{\Sigma^k_i: i = 1, \cdots, l_k}$, such that
                \[
                    \omega_k(M, g) = \sum^{l_k}_{i = 1} \vol_g(\Sigma^k_i)\,,\quad \sum^{l_k}_{i = 1} \ind(\Sigma^k_i) \leq k\,.
                \]
                where $\omega_k(M, g)$ is the volume spectrum.
            \end{Thm}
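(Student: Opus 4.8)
The plan is to follow X. Zhou's proof of the multiplicity one conjecture \cite{zhou_multiplicity_2019}, substituting the interior regularity inputs that are only available for $3 \le n+1 \le 7$ by their dimension $8$ counterparts: Theorem \ref{Thm_Main} and Sharp's compactness theorem \cite{sharp_compactness_2017}, the latter applicable in dimension $8$ because stationary and prescribed-mean-curvature hypersurfaces then have at worst isolated singularities. Throughout, fix $g \in \mathcal{G}_{\mathrm{reg}}(M) \cap \mathcal{G}_{\mathrm{bumpy}}(M)$.

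First I would run the Zhou--Zhu prescribed-mean-curvature (PMC) min-max \cite{zhou_existence_2018} for a positive prescription $h \in C^\infty(M)$. For the $k$-parameter min-max value $\omega_k^h(M, g)$ of the weighted area functional $\mathcal{A}^h$ it produces a critical varifold which, by the regularity theory for (almost) stable PMC hypersurfaces and $n+1 = 8$, is an embedded hypersurface $\Sigma_h$ with isolated singularities; because $h$ has a fixed sign, the first-variation and maximum-principle argument forces $\Sigma_h$ to have multiplicity one, so that
\[
    \mathcal{A}^h(\Sigma_h) = \omega_k^h(M, g), \qquad \ind_{\mathcal{A}^h}(\Sigma_h) \le k,
\]
where $\ind_{\mathcal{A}^h}$ is the index of the second variation of $\mathcal{A}^h$. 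Using that $g$ is bumpy, one may moreover choose $h$ so that $\Sigma_h$ is nondegenerate for $\mathcal{A}^h$, which is what lets one realize --- rather than merely bound --- the min-max value, exactly as in \cite{zhou_multiplicity_2019}.

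Next I would send $h = h_j \searrow 0$ in $C^\infty$. The $\Sigma_{h_j}$ have uniformly bounded area and mean curvature vectors $h_j \nu_j$ for a global unit normal $\nu_j$, so by Sharp's compactness theorem a subsequence satisfies $|\Sigma_{h_j}| \to m|\Gamma|$ as varifolds for some minimal hypersurface $\Gamma$; as a finite-index minimal hypersurface in a closed manifold it is an LSMH, with $\ind(\Gamma) \le \liminf_j \ind_{\mathcal{A}^{h_j}}(\Sigma_{h_j}) \le k$. By Theorem \ref{Thm_Main}, $\Gamma$ is regular, and being a multiplicity one limit of the two-sided $\Sigma_{h_j}$ it is two-sided. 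The sign of $h_j$ then rules out $m \ge 2$: each $\Sigma_{h_j}$ bounds an open set $\Omega_j$ with $h_j$ its mean curvature about the outward normal, so two sheets of $\Sigma_{h_j}$ converging to a single regular sheet of $\Gamma$ would carry mean curvatures of opposite sign about a common normal while both tending to $0$, which is impossible; this is the obstruction of \cite[\S 3]{zhou_multiplicity_2019}, giving $m = 1$. Finally $\omega_k^{h_j}(M, g) \to \omega_k(M, g)$ and, since $m = 1$, $\mathcal{A}^{h_j}(\Sigma_{h_j}) = \vol_g(\Sigma_{h_j}) - \int_{\Omega_j} h_j \to \vol_g(\Gamma)$, whence $\vol_g(\Gamma) = \omega_k(M, g)$. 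Decomposing $\Gamma$ into its connected components $\{\Sigma_i^k\}_{i=1}^{l_k}$ --- each smooth, closed, embedded, two-sided and minimal --- produces the required family, with $\sum_{i=1}^{l_k} \ind(\Sigma_i^k) = \ind(\Gamma) \le k$.

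The hard part will be the first step: carrying out the Zhou--Zhu PMC min-max and its regularity theory in dimension $8$, where the PMC hypersurfaces themselves can be singular. What makes this manageable is that the \emph{limit} $\Gamma$ of the second step is a minimal LSMH, so Theorem \ref{Thm_Main} applies to it directly, while the multiplicity and index analysis only uses the $\Sigma_{h_j}$ near the regular part of $\Gamma$, where the convergence is smooth with bounded curvature. The remaining dimension-sensitive ingredients --- Sharp's compactness and White's bumpy-metrics theorem --- are already available for $n+1 = 8$ once one knows that the relevant limit minimal hypersurfaces are smooth.
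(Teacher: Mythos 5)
Your proposal takes the same overall route as the paper's sketch: run the Zhou--Zhu PMC min-max, use the sign of the prescription $h$ to force multiplicity one, send $h_j \searrow 0$, apply Sharp's compactness and Theorem \ref{Thm_Main} to the minimal limit. However, you elide one dimension-sensitive ingredient that constitutes a genuine gap, and omit a secondary regularity reference that your argument implicitly depends on.

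The gap is the Morse index upper bound $\ind_{\mathcal{A}^{h_j}}(\Sigma_{h_j}) \le k$ for the PMC min-max hypersurfaces. In Zhou's proof for $3 \le n+1 \le 7$, this bound is extracted from the Marques--Neves index machinery \cite{marques_morse_2016}, which needs the PMC min-max hypersurfaces to be smooth (the ``good pairs'' / canonical family construction). In dimension $8$ the $\Sigma_{h_j}$ are only known to be embedded with isolated singularities, so that machinery does not apply directly. This is precisely the obstruction the paper flags as ``the lack of good pairs for singular PMC hypersurfaces,'' and it is resolved there by invoking the constrained min-max theory of \cite{li_improvedMorseindex_2020} rather than the good-pairs argument. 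You assert the index bound without addressing this, and without some substitute for the good-pairs step the chain $\ind(\Gamma) \le \liminf_j \ind_{\mathcal{A}^{h_j}}(\Sigma_{h_j}) \le k$ is unsupported.

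The secondary omission: you attribute the regularity of $\Sigma_h$ to the Zhou--Zhu PMC min-max \cite{zhou_existence_2018}, but that paper establishes regularity only for $3 \le n+1 \le 7$. The statement that the critical PMC varifold in dimension $8$ is an embedded hypersurface with isolated singularities requires the regularity theory for stable PMC integral varifolds of Bellettini--Wickramasekera \cite{bellettiniStablePrescribedmeancurvatureIntegral2020}, which the paper cites explicitly. The rest of your outline --- choosing $h$ generically so that every minimal hypersurface is nondegenerate (possible because $g \in \mathcal{G}_{\mathrm{reg}}(M) \cap \mathcal{G}_{\mathrm{bumpy}}(M)$), the sign-of-$h$ argument for multiplicity one in the $h \to 0$ limit, and the application of Theorem \ref{Thm_Main} to the minimal LSMH limit --- matches the paper's proof sketch.
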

            \begin{proof}[Sketch of proof]
                This essentially follows the proof of \cite[Theorem~5.2]{zhou_multiplicity_2019}, and it suffices to show that we can construct suitable min-max PMC hypersurfaces to approximate min-max minimal hypersurfaces as in the proof of \cite[Theorem~4.1]{zhou_multiplicity_2019}.
                
                First, since every minimal hypersurface in $(M, g)$ is smooth and nondegenerate, we can also find good $h \in \mathcal{S}(g)$ as in Lemma 4.2 therein.
                
                Then, the existence and regularity of $h$-PMC hypersurfaces with optimal regularity follows from \cite{zhou_existence_2018} together with the regularity theory by Bellettini-Wickramasekera \cite{bellettiniStablePrescribedmeancurvatureIntegral2020}.
                
                Finally, we can obtain the Morse index upper bound from the constrained min-max theory in \cite{li_improvedMorseindex_2020}. This resolves the difficulty of the lack of good pairs for singular PMC hypersurfaces as in Section 3.3 therein.
            \end{proof}
        
        \subsection{Morse Index}
            In \cite{marques_morse_2021}, Marques-Neves shows that for bumpy metrics on $M^{n+1} (3 \leq n+1 \leq 7)$, the Morse index of a $p$-width min-max minimal hypersurface of multiplicity one exactly equals to $p$. This together with X. Zhou's affirmation of multiplicity one conjecture gives a fine geometric description of min-max minimal hypersurfaces.
            
            \begin{Thm}[Morse index estimates]
                Given a closed manifold $M^8$, for any metric $g \in \mathcal{G}_{\mathrm{reg}}(M) \cap \mathcal{G}_{\mathrm{bumpy}}(M)$ and any $k \in \mathbb{N}$, there exists a disjoint collection of smooth, connected, closed, embedded, two-sided minimal hypersurfaces $\set{\Sigma^k_i: i = 1, \cdots, l_k}$, such that
                \[
                    \omega_k(M, g) = \sum^{l_k}_{i = 1} \vol_g(\Sigma^k_i)\,,\quad \sum^{l_k}_{i = 1} \ind(\Sigma^k_i) = k\,.
                \]
                where $\omega_k(M, g)$ is the volume spectrum.
            \end{Thm}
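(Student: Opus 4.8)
The plan is to run the Lusternik--Schnirelmann-type argument of Marques--Neves \cite{marques_morse_2021} in dimension $8$. The identity $\omega_k(M, g) = \sum_{i=1}^{l_k}\vol_g(\Sigma^k_i)$ together with the upper bound $\sum_{i=1}^{l_k}\ind(\Sigma^k_i)\leq k$ is already supplied by the preceding Multiplicity One theorem (which produces the collection $\set{\Sigma^k_i}$), so the entire task reduces to establishing the matching lower bound $\sum_{i=1}^{l_k}\ind(\Sigma^k_i)\geq k$. For this I would adopt the deformation scheme of \cite{marques_morse_2021} essentially verbatim; the only new point is that one must certify that every minimal hypersurface the scheme encounters is smooth and nondegenerate, which in dimensions $3\leq n+1\leq 7$ is automatic but in dimension $8$ is exactly where Theorem \ref{Thm_Main} is invoked.

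First I would fix $\Lambda = \omega_k(M, g) + 1$ and consider the embedded locally stable minimal hypersurfaces with area $\leq\Lambda$ and Morse index $\leq k$. Since $g\in\mathcal{G}_{\mathrm{reg}}(M)$ (indeed $g\in\mathcal{G}_{\mathrm{reg},\Lambda,k}$), Theorem \ref{Thm_Main} forces all of them to be smooth; bumpiness of $g$ upgrades this to nondegeneracy; and Sharp's compactness theorem \cite{sharp_compactness_2017}, applicable precisely because no singular limits can now occur, shows that there are only finitely many. Every min-max minimal hypersurface associated to $\omega_k(M, g)$ is locally stable and, by the Marques--Neves index estimate underlying the upper bound, has index $\leq k$, so it belongs to this finite family. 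Hence the whole deformation takes place within a fixed finite collection of smooth nondegenerate minimal hypersurfaces --- exactly the configuration for which the arguments of \cite{marques_morse_2021} were written.

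Next I would reproduce the deformation step: near each smooth nondegenerate minimal hypersurface one has the standard area-decreasing deformation along the negative cone of the Jacobi operator, and if $\sum_{i=1}^{l_k}\ind(\Sigma^k_i)<k$ one patches these together to strictly lower the maximum of a nearly optimal $k$-sweepout, contradicting the definition of $\omega_k(M, g)$. This is where the combinatorial bookkeeping of \cite{marques_morse_2021} enters, and it is dimension-independent. Combining the two inequalities yields $\sum_{i=1}^{l_k}\ind(\Sigma^k_i)=k$, as desired.

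The main obstacle is the one just flagged: guaranteeing that the sweepout deformation never runs into a singular (or merely degenerate) minimal hypersurface, and that the index bounds arising along the way stay $\leq k$ so that one remains inside the class $\mathcal{G}_{\mathrm{reg},\Lambda,k}$ where Theorem \ref{Thm_Main} applies. Once this bookkeeping is verified, everything else is a dimension-free repetition of the Marques--Neves machinery, so I expect the final write-up to be a short reduction rather than a genuinely new argument.
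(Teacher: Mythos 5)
Your proposal captures the paper's intent: the lower bound $\sum\ind(\Sigma^k_i)\geq k$ is the only new content beyond the Multiplicity One theorem, and the Marques--Neves deformation machinery from \cite{marques_morse_2021} carries over to dimension $8$ once Theorem \ref{Thm_Main} (together with bumpiness) guarantees that every min-max minimal hypersurface encountered is smooth and nondegenerate, so that Sharp-type compactness applies. The paper's sketch is more surgical about \emph{where} the new input enters: it isolates Proposition 8.5 of \cite{marques_morse_2021} — the openness-and-density statement about the set of metrics $\mathcal{U}_{p,\alpha}$ used in the Lusternik--Schnirelmann argument — as the lone step requiring modification, and the fix is simply to replace $\mathcal{U}_{p,\alpha}$ by $\mathcal{U}_{p,\alpha}\cap\mathcal{G}_{\mathrm{reg},p,\alpha}(M)$; density follows because both sets are residual, and openness of the intersection is checked via Allard's regularity theorem. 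Your sketch instead phrases the modification as a global bookkeeping condition ("certify that every minimal hypersurface the scheme encounters is smooth and nondegenerate"), which is morally equivalent but does not pinpoint that the actual technical work is re-establishing a genericity statement for a modified set of metrics. Both routes are correct; the paper's version is closer to a drop-in patch of the original Marques--Neves argument, while yours recapitulates the deformation step at a higher level, which is a reasonable (if slightly less surgical) reduction.
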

            \begin{proof}[Sketch of proof]
                Most of the proofs in \cite{marques_morse_2021} can also work in our setting, and it suffices to prove Proposition 8.5 therein for $M^8$.
                
                Indeed, one only needs to replace $\mathcal{U}_{p, \alpha}$ by $\mathcal{U}_{p, \alpha} \cap \mathcal{G}_{\mathrm{reg}, p, \alpha}(M)$. Then the denseness follows from the fact that both $\mathcal{U}_{p, \alpha}$ and $\mathcal{G}_{\mathrm{reg}, p, \alpha}(M)$ are residual. It is also easy to prove the openness of $\mathcal{U}_{p, \alpha} \cap \mathcal{G}_{\mathrm{reg}, p, \alpha}(M)$ with the aid of Allard's regularity theorem \cite{allard_first_1972}. Hence, Propostion 8.5 therein still holds for $M^8$.
            \end{proof}

        \subsection{Geometric Complication}
            In \cite{chodosh_minimal_2019}, Chodosh-Mantoulidis shows that for bumpy metrics $M^{n+1} (3 \leq n+1 \leq 7)$, there exists a sequence of connected minimal hypersurfaces whose areas tend to infinity. Later in \cite{li_existence_2020}, we improve their results and show that the sequence can have unbounded Morse indices simultaneously. With the generic full regularity in dimension $8$, we can also extend the result to dimension $8$ via the same arguments.

            \begin{Thm}[Main Theorem]
                Given any closed Riemannian manifold $M^8$ with a generic metric $g \in \mathcal{G}_{\mathrm{reg}}(M) \cap \mathcal{G}_{\mathrm{bumpy}}(M)$, there exists a sequence of connected closed embedded two-sided minimal hypersurfaces $\set{\Gamma_i}$ such that
                \begin{equation}
                    \lim_{i \rightarrow \infty} \mathrm{Area}(\Gamma_i) = +\infty\,, \quad \lim_{i \rightarrow \infty} \mathrm{index}(\Gamma_i) = +\infty\,.
                \end{equation}
            \end{Thm}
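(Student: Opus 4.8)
The plan is to run the argument of \cite{chodosh_minimal_2019} and \cite{li_existence_2020} essentially verbatim, feeding in the dimension-eight inputs assembled earlier in this section in place of the low-dimensional regularity theory. Fix $g \in \mathcal{G}_{\mathrm{reg}}(M) \cap \mathcal{G}_{\mathrm{bumpy}}(M)$. The three facts that powered those arguments in dimensions $3 \le n+1 \le 7$ were: (1) the Almgren--Pitts $p$-width $\omega_p(M,g)$ is attained by a smooth embedded minimal hypersurface; (2) for such $g$ this hypersurface has multiplicity one and total Morse index exactly $p$; and (3) a compactness statement: for fixed $\Lambda, I$ there are only finitely many connected, closed, embedded, two-sided minimal hypersurfaces with $\mathrm{Area} \le \Lambda$ and $\mathrm{index} \le I$. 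In dimension eight all three are now available: (1) is Theorem~\ref{Thm_Main} (every min-max, hence locally stable, minimal hypersurface is regular and nondegenerate); (2) is the multiplicity-one theorem and the Morse index theorem proved above in this section; and (3) follows, exactly as in the proof of the density theorem above, from Theorem~\ref{Thm_Main} combined with Sharp's compactness theorem \cite{sharp_compactness_2017}.

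With these in hand the scheme is as follows. First, by the Weyl law for the volume spectrum \cite{liokumovich_weyl_2018} one has $\omega_p(M,g) \to +\infty$ as $p \to \infty$, with the (sublinear, polynomial) growth rate $\omega_p \sim c(8)\,\vol_g(M)^{1/8}\,p^{7/8}$; by (1)--(2) the $p$-width is realized by a smooth min-max hypersurface which we write as a disjoint union $\Sigma_p = \Gamma_{1,p} \sqcup \cdots \sqcup \Gamma_{N_p,p}$ of connected, two-sided, nondegenerate minimal hypersurfaces with $\sum_j \mathrm{Area}(\Gamma_{j,p}) = \omega_p(M,g)$ and $\sum_j \mathrm{index}(\Gamma_{j,p}) = p$. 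Next I would argue by contradiction: if no sequence $\{\Gamma_i\}$ of connected minimal hypersurfaces has both $\mathrm{Area}(\Gamma_i) \to \infty$ and $\mathrm{index}(\Gamma_i) \to \infty$, then a pigeonhole extraction produces constants $\Lambda_0, I_0 < \infty$ such that every connected minimal hypersurface $\Gamma$ in $(M,g)$ satisfies $\mathrm{Area}(\Gamma) \le \Lambda_0$ or $\mathrm{index}(\Gamma) \le I_0$. Feeding this dichotomy, together with the finiteness statement (3), the two displayed identities for $\Sigma_p$, and the sublinear growth of $\omega_p$, into the bookkeeping argument of \cite{li_existence_2020} on the components $\Gamma_{j,p}$ confines the family $\{\Gamma_{j,p}\}_{p,j}$ to a fixed finite collection of connected minimal hypersurfaces, whence $\omega_p(M,g) \le \sum_{\text{finitely many}}\mathrm{Area} < \infty$ for all $p$, contradicting the Weyl law. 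This produces the desired sequence, whose members are automatically smooth by Theorem~\ref{Thm_Main}.

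I expect the only genuinely new point, compared with dimensions $\le 7$, to be the a priori possibility that the min-max hypersurfaces $\Sigma_p$ or the limits appearing when one invokes Sharp's compactness are singular; this is precisely the obstruction removed by Theorem~\ref{Thm_Main} together with the multiplicity-one and Morse index theorems of this section, so the combinatorial core of \cite{li_existence_2020} — the accounting for areas and indices of the components $\Gamma_{j,p}$ — should transfer without modification, and transcribing it faithfully is the only step requiring care. Should the precise extraction in \cite{li_existence_2020} invoke additional structure of the widths (for instance a lower Morse index bound for $\Sigma_p$, or the Lusternik--Schnirelmann-type inequalities for $\{\omega_p\}$ of \cite{marques_morse_2021}), those ingredients are likewise dimension-independent and remain valid on $M^8$.
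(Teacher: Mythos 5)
Your proposal takes the same route as the paper: both defer to the arguments of \cite{chodosh_minimal_2019} and \cite{li_existence_2020} after noting that the dimension-eight inputs (generic regularity, multiplicity one, the Morse index identity, and the Sharp-type finiteness of bounded-area, bounded-index hypersurfaces) are supplied by the preceding results of this section. One small slip worth fixing: the Weyl law on $M^8$ reads $\omega_p(M,g)\sim a(7)\,\vol_g(M)^{7/8}\,p^{1/8}$, i.e.\ the exponents are the reverse of what you wrote, though since the bookkeeping only uses $\omega_p\to\infty$ together with sublinearity in $p$ this does not affect the argument.
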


\part{Technical Work} \label{Part_Tech}

    \section{Analysis on Stable Minimal Hypercones} \label{Sec_Analysis on SMC}
        As in Section \ref{Subsec_Stable MHC}, we let $\mbfC\subset \RR^{8}$ be a stable minimal hypercone with cross section $S := \mbfC\cap \SSp^7$ which is a regular minimal hypersurface in $\SSp^7$. $\mbfC$ will be parametrized by $(0, +\infty)\times S \to \mbfC$, $(r, \omega)\mapsto r\omega$ as in (\ref{Equ_Pre_Parametrize minimal hypercone}).
        
        To begin with, we discuss the following growth rate monotonicity formula as an application of the decomposition (\ref{Equ_Pre_Decomp Jac into homogen Jac field}), which was in fact first introduced in \cite{simonAsymptoticsClassNonLinear1983} in a different form.
        
        \begin{Lem} \label{Lem_Ana on SMC_Growth Rate Monoton}
            For $\sigma>0$, $\Lambda>0$, there exists $K = K(\sigma, \Lambda)>2$ with the following property.  
            
            For $\gamma\in \RR$ and any stable hypercone $\mbfC\in \scC_\Lambda$ satifying 
            \[
                \dist_\RR (\gamma, \Gamma(\mbfC)\cup \{-(n-2)/2\}) \geq \sigma\,,
            \] suppose that $u\in C^2_{loc}(A(K^{-3}, 1))\cap L^2(A(K^{-3}, 1))$ solves $L_\mbfC u = 0$, where $A(r, s):= \AAa(\mathbf{0}, r, s)\cap \mbfC$. Then its growth rate \[
                J_K^\gamma (u; r):= \int_{A(K^{-1}r, r)} u^2\cdot |x|^{-2\gamma - n} d\|\mbfC\|\,,  \]
            satisfies \[
                J_K^\gamma(u; K^{-2}) - 2 J_K^\gamma(u; K^{-1}) + J_K^\gamma(u; 1) \geq 0\,.   \]
            Moreover, the equality holds if and only if $u\equiv 0$.
        \end{Lem}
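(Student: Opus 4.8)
The plan is to separate variables on the link $S := \mbfC \cap \SSp^7$, turning $L_\mbfC u = 0$ into a decoupled family of Euler ODEs, and then to reduce the claimed inequality to a mode‑by‑mode nonnegativity statement provable by an explicit $2\times 2$ computation. First I would expand $u$ on a.e.\ sphere $\{|x|=s\}\cap\mbfC$ in the $L^2(S)$‑orthonormal eigenfunctions $\varphi_j$ of $-(\Delta_S+|A_S|^2)$, writing $u(s,\omega)=\sum_{j\ge1}u_j(s)\varphi_j(\omega)$. Projecting the equation, each $u_j$ is smooth on $(K^{-3},1)$ and solves $u_j''+\tfrac{n-1}{s}u_j'-\tfrac{\mu_j}{s^2}u_j=0$, so $u_j(s)=a_j^+s^{\gamma_j^+}+a_j^-s^{\gamma_j^-}$ when $\mu_j>-(n-2)^2/4$, and $u_j(s)=(a_j^++a_j^-\log s)s^{-(n-2)/2}$ in the exceptional case $\mu_j=-(n-2)^2/4$ (possible only for $j=1$). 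By Tonelli and orthonormality, $J_K^\gamma(u;r)=\sum_{j\ge1}f_j(r)$ with $f_j(r):=\int_{K^{-1}r}^{r}s^{-2\gamma-1}u_j(s)^2\,ds$, the sum finite since $u\in L^2(A(K^{-3},1))$. Hence it suffices to show $f_j(K^{-2})-2f_j(K^{-1})+f_j(1)\ge0$ for each $j$, with equality only when $u_j\equiv0$.

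For a nonexceptional mode, expanding $u_j(s)^2$ into the three monomials $s^{2\gamma_j^+}$, $s^{\gamma_j^++\gamma_j^-}=s^{-(n-2)}$, $s^{2\gamma_j^-}$ and integrating gives $f_j(r)=(a_j^+)^2 I_j^+ r^{2\delta_j^+}+2a_j^+a_j^- I_j^0 r^{2\delta_j^0}+(a_j^-)^2 I_j^- r^{2\delta_j^-}$, where $\delta_j^\pm:=\gamma_j^\pm-\gamma$, $\delta_j^0:=-\tfrac{n-2}{2}-\gamma=\tfrac12(\delta_j^++\delta_j^-)$, and $I_j^\bullet=\tfrac{1-K^{-2\delta_j^\bullet}}{2\delta_j^\bullet}$. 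Using $\log 1=0$ and $\log(K^{-m})=-m\log K$, a direct computation of the second difference yields $f_j(K^{-2})-2f_j(K^{-1})+f_j(1)=\langle M_j(a_j^+,a_j^-),(a_j^+,a_j^-)\rangle$, where $M_j$ is the symmetric matrix with diagonal entries $\psi(\delta_j^\pm)$ and off‑diagonal entry $\psi(\delta_j^0)$, with $\psi(\delta):=(1-K^{-2\delta})^3/(2\delta)$. Since $\psi(\delta)>0$ for every $\delta\ne0$, the hypothesis $\dist_\RR(\gamma,\Gamma(\mbfC))\ge\sigma$ makes the diagonal entries positive, and $M_j$ is positive semidefinite if and only if $\psi(\delta_j^+)\psi(\delta_j^-)\ge\psi(\delta_j^0)^2$, with $\delta_j^0$ the average of $\delta_j^\pm$.

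This cross‑term inequality is where the choice $K=K(\sigma,\Lambda)$ enters. The hypotheses give $|\delta_j^+|,|\delta_j^-|,|\delta_j^0|\ge\sigma$ (the last from $\dist_\RR(\gamma,\{-(n-2)/2\})\ge\sigma$), uniformly over $\mbfC\in\scC_\Lambda$. When $\delta_j^+,\delta_j^-$ have the same sign, $\delta_j^0$ lies between them on one of the half‑lines $\{\delta\ge\sigma\}$, $\{\delta\le-\sigma\}$, and the inequality follows once $\log\psi$ is convex there; one checks $(\log\psi)''(\delta)=\delta^{-2}+\mathrm{(term\ bounded\ by\ }C(\log K)^2K^{-2\sigma}\mathrm)$ on $\{|\delta|\ge\sigma\}$, which is positive for $K$ large. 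When $\delta_j^+,\delta_j^-$ have opposite signs, a direct estimate shows $\psi(\delta_j^+)\psi(\delta_j^-)/\psi(\delta_j^0)^2\to\infty$ as $K\to\infty$ (the factor $\psi$ of the negative of $\delta_j^\pm$ grows like a fixed positive power of $K$, faster than $\psi(\delta_j^0)^2$), uniformly for $|\delta_j^\pm|\ge\sigma$. In both cases the inequality is strict unless $\delta_j^+=\delta_j^-$, i.e.\ unless $\mu_j=-(n-2)^2/4$; so for nonexceptional modes $M_j$ is positive definite and the second difference of $f_j$ vanishes only when $a_j^+=a_j^-=0$.

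For the exceptional mode $j=1$ (only when $\mu_1=-(n-2)^2/4$, so $\delta_1^+=\delta_1^-=\delta_1^0\ne0$), one writes $f_1(r)=r^{2\delta_1^0}Q_r(a_1^+,a_1^-)$ with $Q_r$ a quadratic form whose coefficients are polynomials in $\log r$ coming from integrals of the type $\int_{K^{-1}}^1(a_1^++a_1^-\log s)^2 s^{2\delta_1^0-1}\,ds$; evaluating at $r=1,K^{-1},K^{-2}$ and taking the second difference again gives a symmetric $2\times2$ form in $(a_1^+,a_1^-)$, which a short estimate shows is positive definite once $K$ is large relative to $1/\sigma$. Summing over $j$ then gives $J_K^\gamma(u;K^{-2})-2J_K^\gamma(u;K^{-1})+J_K^\gamma(u;1)\ge0$, with equality forcing every $u_j\equiv0$, hence $u\equiv0$. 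I expect the main obstacle to be precisely the cross‑term estimate $\psi(\delta_j^+)\psi(\delta_j^-)\ge\psi(\delta_j^0)^2$ and its uniformity in $\scC_\Lambda$ and in $j$ — in particular the regime where $\mbfC$ nearly degenerates so that $\delta_1^+-\delta_1^-\to0$, where the nonexceptional estimate weakens and must be matched with the exceptional‑mode computation; this is the content deferred to Appendix~\ref{Sec_App_Proof Growth Rate Mon}.
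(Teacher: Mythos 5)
Your proposal follows the same route as the paper: decompose $u$ into spherical‑harmonic modes on the link, reduce the claimed second‑difference inequality to a mode‑by‑mode positive‑semidefiniteness statement for a $2\times2$ quadratic form in $(a_j^+,a_j^-)$, and split into the generic case $\gamma_j^+\neq\gamma_j^-$ and the exceptional bottom mode $\gamma_1^+=\gamma_1^-=-(n-2)/2$ — this is precisely the content of Lemmas~\ref{Lem_App_Growth rate mon alpha-beta} and~\ref{Lem_App_Growth rate mon alpha-log} in Appendix~\ref{Sec_App_Proof Growth Rate Mon}. The only difference is in how you certify the discriminant inequality $\psi(\delta_j^+)\psi(\delta_j^-)\ge\psi(\delta_j^0)^2$ (log‑convexity of $\psi$ on each half‑line plus a separate opposite‑sign estimate, versus the paper's direct polynomial expansion of $\Delta(K;\alpha,\beta)$ and leading‑term dominance), but this is a cosmetic variation, not a genuinely distinct argument.
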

        This Lemma is a refinement of \cite[Lemma 2.6]{WangZH20_Deform}. The proof is explicit but kind of tedious, so is left to Appendix \ref{Sec_App_Proof Growth Rate Mon}.
        
        \begin{Cor}[Perturbed version of Lemma \ref{Lem_Ana on SMC_Growth Rate Monoton}] \label{Cor_Ana on SMC_Growth Rate Monoton with Perturb}
            For $\sigma>0$ and $\Lambda>0$, let $K(\sigma, \Lambda)>2$ be the same as in Lemma \ref{Lem_Ana on SMC_Growth Rate Monoton}.  Then there exists $\varepsilon(\sigma, \Lambda)>0$ small enough satisfying the following property.
            
            For $\gamma \in (-n+1, 1)$ and any stable hypercone $\mbfC\in \scC_\Lambda$ satifying 
            \[
                \dist_\RR (\gamma, \Gamma(\mbfC)\cup \{-(n-2)/2\}) \geq \sigma\,,
            \] suppose that $0\neq u\in W^{1,2}_{loc}(A(K^{-3}, 1))\cap L^2(A(K^{-3}, 1))$ is a weak solution to 
            \begin{align}
                \operatorname{div}_\mbfC (\nabla_\mbfC u + \vec{b}_0(x)) + |A_\mbfC|^2 u + b_1(x) = 0,  \label{Equ_Pre_Pert Jac equ}
            \end{align}
            where $\vec{b}_0, b_1$ satisfy the following estimates almost everywhere,
            \begin{align}
                |\vec{b}_0|(x) + |b_1|(x) \leq \varepsilon (|u|(x) + |\nabla u|(x)),  \label{Equ_Pre_Small error in pert Jac equ}
            \end{align}
            on $A(K^{-3}, 1)$.   Then, \[
                J_K^\gamma(u; K^{-2}) - 2(1+\varepsilon) J_K^\gamma(u; K^{-1}) + J_K^\gamma(u; 1) > 0.   \]
        \end{Cor}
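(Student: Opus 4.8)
The plan is to argue by contradiction and compactness, reducing everything to the unperturbed Lemma~\ref{Lem_Ana on SMC_Growth Rate Monoton}; throughout, $K = K(\sigma,\Lambda)$ is the constant from that lemma. Suppose the corollary fails. Then for each $j\geq 1$ there are a stable hypercone $\mbfC_j\in\scC_\Lambda$, an exponent $\gamma_j\in(-n+1,1)$ with $\dist_\RR(\gamma_j, \Gamma(\mbfC_j)\cup\{-(n-2)/2\})\geq\sigma$, error terms $\vec{b}_0^{(j)}, b_1^{(j)}$ with $|\vec{b}_0^{(j)}|+|b_1^{(j)}|\leq j^{-1}(|u_j|+|\nabla u_j|)$, and a nonzero weak solution $u_j\in W^{1,2}_{loc}(A(K^{-3},1))\cap L^2(A(K^{-3},1))$ of (\ref{Equ_Pre_Pert Jac equ}), such that
\[
    J_K^{\gamma_j}(u_j; K^{-2}) - 2(1+j^{-1})\,J_K^{\gamma_j}(u_j; K^{-1}) + J_K^{\gamma_j}(u_j; 1) \leq 0.
\]
Since the annuli $A(K^{-3},K^{-2})$, $A(K^{-2},K^{-1})$, $A(K^{-1},1)$ partition $A(K^{-3},1)$ up to a null set, all three $J$'s vanishing would force $u_j\equiv 0$; hence the displayed inequality forces $J_K^{\gamma_j}(u_j;K^{-1})>0$, and after rescaling we may normalize $J_K^{\gamma_j}(u_j;K^{-1})=1$. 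As $\gamma_j$ stays in the bounded interval $(-n+1,1)$, the weight $|x|^{-2\gamma_j-n}$ is comparable to $1$ on $A(K^{-3},1)$ with constants depending only on $n$ and $K$, so the displayed inequality together with nonnegativity of the outer two terms gives a uniform bound $\int_{A(K^{-3},1)} u_j^2\, d\|\mbfC_j\| \leq C$.

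Next I would run the standard Caccioppoli estimate: testing the weak form of (\ref{Equ_Pre_Pert Jac equ}) with $\phi^2 u_j$ for a cutoff $\phi$ supported in $A(K^{-3},1)$, using $|A_{\mbfC_j}|^2\leq C$ on $A(K^{-3},1)$ (uniform over $\scC_\Lambda$ by compactness) and absorbing the first-order error $j^{-1}|\nabla u_j|$ into the left-hand side, which is legitimate once $j^{-1}$ is small, one obtains $\int_{A(K^{-3}+\delta,\,1-\delta)} |\nabla u_j|^2\, d\|\mbfC_j\|\leq C(\delta)$ for every $\delta>0$. Passing to a subsequence, by compactness of $\scC_\Lambda$ we get $\mbfC_j\to\mbfC_\infty\in\scC_\Lambda$ in the $\mbfF$-metric, with smooth graphical convergence away from the vertex; $\gamma_j\to\gamma_\infty\in[-n+1,1]$; and by the continuity of the asymptotic spectrum (Section~\ref{Subsec_Stable MHC}) together with the fact that only uniformly finitely many $\gamma_i^\pm(\mbfC_j)$ can lie in any fixed bounded window (the eigenvalues $\mu_i$ tending to $+\infty$ uniformly over the compact set $\scC_\Lambda$), the gap survives in the limit: $\dist_\RR(\gamma_\infty, \Gamma(\mbfC_\infty)\cup\{-(n-2)/2\})\geq\sigma$. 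Transplanting $u_j$ to $\mbfC_\infty$ via the graphical parametrizations, the interior energy bound plus Rellich's theorem yield (after a further subsequence) $u_j\rightharpoonup u_\infty$ weakly in $W^{1,2}_{loc}$ and strongly in $L^2_{loc}$ on $A(K^{-3},1)$, and weakly in $L^2$ on the whole annulus. Since the middle annulus $A(K^{-2},K^{-1})$ is compactly contained in $A(K^{-3},1)$ and the weights converge uniformly, $J_K^{\gamma_\infty}(u_\infty;K^{-1}) = \lim_j J_K^{\gamma_j}(u_j;K^{-1}) = 1$, so $u_\infty\not\equiv 0$; and since $j^{-1}\to 0$ while $\|u_j\|_{W^{1,2}_{loc}}$ is bounded, the error terms vanish in $L^2_{loc}$, so $u_\infty$ solves $L_{\mbfC_\infty}u_\infty = 0$ weakly, hence classically by elliptic regularity, on $A(K^{-3},1)$, with $u_\infty\in L^2(A(K^{-3},1))$.

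Finally, Lemma~\ref{Lem_Ana on SMC_Growth Rate Monoton} applied to $\mbfC_\infty$, $\gamma_\infty$, $u_\infty$ with the same $K$ gives, since $u_\infty\not\equiv 0$,
\[
    J_K^{\gamma_\infty}(u_\infty; K^{-2}) - 2\,J_K^{\gamma_\infty}(u_\infty; K^{-1}) + J_K^{\gamma_\infty}(u_\infty; 1) > 0.
\]
On the other hand, weak $L^2$ lower semicontinuity (with the weights converging uniformly and the change-of-variables Jacobians tending to $1$) gives $J_K^{\gamma_\infty}(u_\infty;K^{-2}) \leq \liminf_j J_K^{\gamma_j}(u_j;K^{-2})$ and the analogous bound at radius $1$, while $J_K^{\gamma_\infty}(u_\infty;K^{-1}) = \lim_j J_K^{\gamma_j}(u_j;K^{-1})$. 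Combining with the contradiction hypothesis, which after normalization reads $J_K^{\gamma_j}(u_j;K^{-2}) - 2J_K^{\gamma_j}(u_j;K^{-1}) + J_K^{\gamma_j}(u_j;1) \leq 2j^{-1}\to 0$, forces the left-hand side of the last display to be $\leq 0$, a contradiction. The main obstacle is the bookkeeping in this limit: two of the three annuli touch $\partial A(K^{-3},1)$, so only one-sided ($\liminf$) control is available there — which is why the normalization is placed on the middle annulus, the one compactly inside, to guarantee $u_\infty\neq 0$ — and one must also verify that the spectral gap condition on $\gamma$ is stable under $\mbfC_j\to\mbfC_\infty$.
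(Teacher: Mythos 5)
Your proof is correct and takes essentially the same route as the paper's: a contradiction-by-compactness argument normalizing $J_K^{\gamma_j}(u_j;K^{-1})=1$, a Caccioppoli-type energy bound to run Rellich, passage to the limit in $\mbfC_j$, $\gamma_j$, $u_j$, and an appeal to Lemma~\ref{Lem_Ana on SMC_Growth Rate Monoton} for the limiting $u_\infty$. The bookkeeping points you flag explicitly (positivity of the middle $J$ before normalizing, survival of the spectral gap under $\mbfC_j\to\mbfC_\infty$, and the fact that only the compactly contained middle annulus gives an equality while the outer two give a liminf-type inequality by weak $L^2$ lower semicontinuity) are exactly what makes the paper's terse "and then (6.10) implies" valid.
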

        \begin{Rem} \label{Rem_Pre_Jac field equ on MH near cone}
            When $\Sigma := \graph_\mbfC(\phi)\cap \AAa(\mathbf{0}, K^{-3}/2, 2)$ is a minimal hypersurface under the metric $g$ parametrized by the cone $\mbfC$, with \[
                \|\phi\|_{C^2_*} + \|g-g_{\eucl}\|_{C^2} \leq \epsilon,
            \]
            then by Appendix \ref{Sec_App_Geom of Minimal Graph}, any Jacobi field $u$ on $\Sigma$, viewed as a function over $\mbfC$ under this parametrization, solves an equation of form (\ref{Equ_Pre_Pert Jac equ}) with \[
                |\vec{b}_0| + |b_1| \leq \Psi(\epsilon|\mbfC, K)\cdot (|u| + |\nabla u|).   \]
            where $\Psi(\epsilon|\mbfC, K)$ is a constant tending to $0$ when $\mbfC, K$ is fixed and $\epsilon\to 0$.
        \end{Rem}
        \begin{proof}[Proof of Corollary \ref{Cor_Ana on SMC_Growth Rate Monoton with Perturb}]
            First note that by multiplying (\ref{Equ_Pre_Pert Jac equ}) with $u\cdot \eta^2$ and integration by parts ($\eta$ is some smooth cut-off function) and using (\ref{Equ_Pre_Small error in pert Jac equ}) with $\varepsilon< 1/2$, we have for every domain $\Omega\subset\subset A(K^{-3}, 1)$, 
            \begin{align}
                \int_{\Omega} |\nabla_\mbfC u|^2 d\|\mbfC\| \leq C(n, K, \Omega)\int_{A(K^{-3}, 1)}(1+|A_\mbfC|^2)u^2.  \label{Equ_Pre_Energy est for pert Jac equ}
            \end{align}
            Now to prove the corollary, suppose for contradiction that there exist $\sigma>0$, $\Lambda>0$, a sequence of $\mbfC_j\in \scC_\Lambda$, a sequence of real number $\set{\gamma_j}\subset (-n+1, 1)$ with 
            \[
                \dist_\RR(\gamma_j, \Gamma(\mbfC_j)\cup\{-(n-2)/2\})\geq \sigma\,,
            \] and a sequence of nonzero function $u_j\in W^{1,2}_{loc}(A(K^{-3},1)) \cap L^2(A(K^{-3}, 1))$ satisfying (\ref{Equ_Pre_Pert Jac equ}) and (\ref{Equ_Pre_Small error in pert Jac equ}) with $\mbfC = \mbfC_j$ and $\varepsilon = 1/j$, but 
            \begin{align}
                J_K^{\gamma_j}(u_j, K^{-2}) - 2(1+\frac{1}{j})J_K^{\gamma_j}(u_j, K^{-1}) + J_K^{\gamma_j}(u_j, 1) \leq 0.  \label{Equ_Pre_Contrad assump for growth rate mon}
            \end{align}
            By a renormalization of $u_j$, WLOG, we can assume $J_K^{\gamma_j}(u_j, K^{-1}) = 1$. By the $\mbfF$-compactness of $\scC_\Lambda$, we can assume by passing to a subsequence of $j\to \infty$ that $\mbfC_j$ will $\mbfF$-converge to $\mbfC_\infty\in \scC_\Lambda$, and $\gamma_j\to \gamma_\infty$.  And then by the discussion in Section \ref{Subsec_Stable MHC}, we have 
            \[
                \dist_\RR(\gamma_\infty, \Gamma(\mbfC_\infty)\cup\{-(n-2)/2\})\geq \sigma\,.
            \]
            
            By (\ref{Equ_Pre_Energy est for pert Jac equ}), (\ref{Equ_Pre_Contrad assump for growth rate mon}) and Reilly compactness, we have $u_j$ subconverges to $u_\infty\in W^{1,2}_{loc}(A(K^{-3}, 1))\cap L^2(A(K^{-3}, 1))$ in $L^2_{loc}(\AAa(\mathbf{0}, K^{-3}, 1))$ with 
            \[
                J_K^{\gamma_\infty}(u_\infty, K^{-1}) = 1\,,
            \] and 
            \[
                (\Delta_{\mbfC_\infty} + |A_{\mbfC_\infty}|^2)u_\infty = 0
            \]
            on $A(K^{-3}, 1)$, but (\ref{Equ_Pre_Contrad assump for growth rate mon}) then implies 
            \[
                J_K^{\gamma_\infty}(u_\infty, K^{-2}) - 2J_K^{\gamma_\infty}(u_\infty, K^{-1}) + J_K^{\gamma_\infty}(u_\infty, 1) \leq 0.
            \]
            This contradicts Lemma \ref{Lem_Ana on SMC_Growth Rate Monoton}.
        \end{proof}
        
        The reason that we care about Jacobi fields on $\mbfC$ is that they are models of minimal hypersurfaces lying near $\mbfC$.  Later in this paper, we shall utilize the following result.
        \begin{Lem} \label{Lem_Ana on SMC_AR_infty(C+x; C) = 0}
            Let $\mbfC\subset \RR^{n+1}$ be a regular minimal hypercone and $\mathbf{x}\neq \mathbf{0}$. Then there exists some $R=R(\mbfC, \mathbf{x})>0$ and $u\in C^\infty(\mbfC)$ such that \[
                (\mbfC+\mathbf{x} )\setminus \BB_R = \graph_\mbfC(u)\setminus \BB_R,    \]
            and $\cA\cR_\infty(\mbfC + \mathbf{x}; \mbfC) = 0$, where
            \begin{align*}
                \cA\cR_\infty(\mbfC+ \mathbf{x}; \mbfC) & := \cA\cR_\infty(u; \mbfC)\\
                                                        & := \inf\{\gamma: \limsup_{R\to +\infty} \int_{\AAa(\mathbf{0}, R, 2R)\cap \mbfC}u^2\cdot |x|^{-2\gamma - n} d\|\mbfC\| = 0\}.     
            \end{align*}
        \end{Lem}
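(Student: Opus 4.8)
The plan is to realize $\mbfC+\mathbf{x}$ as a normal graph over $\mbfC$ in the region far from the origin, identify the leading angular profile of the graph function, and then read off $\cA\cR_\infty$ directly from the definition. First I would produce the graph: for $R$ large, $\mbfC+\mathbf{x}$ is a smooth embedded minimal hypersurface outside $\BB_R$ (its only possible singular point being $\mathbf{x}$). Rescaling, $\eta_{\mathbf 0,r}(\mbfC+\mathbf{x})=\mbfC+r^{-1}\mathbf{x}$, which is $C^\infty$-close to $\mbfC$ on the fixed annulus $\AAa(\mathbf 0,1/2,2)$ with closeness tending to $0$ as $r\to\infty$; hence on that annulus $\mbfC+r^{-1}\mathbf{x}$ is the normal graph over $\mbfC$ of a smooth function of size $O(r^{-1}|\mathbf{x}|)$ with matching derivative bounds (exactly the sort of estimate recorded in Appendix~\ref{Sec_App_Geom of Minimal Graph}). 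Unscaling and gluing these representations over the dyadic scales $r=2^k$ — they agree on overlaps since the normal graph representation of a $C^1$-close sheet is unique — yields $u\in C^\infty(\mbfC\setminus\BB_R)$ (extended arbitrarily to $\mbfC$, which is irrelevant for $\cA\cR_\infty$) with $(\mbfC+\mathbf{x})\setminus\BB_R=\graph_\mbfC(u)\setminus\BB_R$ and
\[
|u(x)|\le C(\mbfC,\mathbf{x}),\qquad |x|\,|\nabla_\mbfC u(x)|+|x|^2\,|\nabla^2_\mbfC u(x)|\le C(\mbfC,\mathbf{x})\qquad(|x|\ge R).
\]

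Next I would compute the leading term. On the fixed regular piece $\mbfC\cap\AAa(\mathbf 0,1/2,2)$, let $g_t$ denote the normal graph function over $\mbfC$ of $\mbfC+t\mathbf{x}$ for $|t|$ small; it is smooth in $(\,\cdot\,,t)$ with $g_0\equiv0$. Differentiating the identity ``$p+g_t(p)\nu(p)-t\mathbf{x}\in\mbfC$'' in $t$ at $t=0$ and pairing with $\nu(p)$ gives $\partial_t g_t|_{t=0}(p)=\langle\mathbf{x},\nu(p)\rangle$, since the $t$-derivative of the corresponding point of $\mbfC$ is tangent; hence $g_t=t\,\langle\mathbf{x},\nu\rangle+O(t^2)$ in $C^0$ there. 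Using the scaling identity $u(r\omega)=r\,g_{r^{-1}}(\omega)$ (from uniqueness of the normal graph and $0$-homogeneity of $\nu$), this gives, uniformly in $\omega\in S=\mbfC\cap\SSp^{n}$,
\[
u(r\omega)=\langle\mathbf{x},\nu(\omega)\rangle+O(r^{-1})=:h(\omega)+O(r^{-1})\qquad(r\to\infty).
\]
Since $\mbfC$ is nontrivial, $h\not\equiv0$: if $\langle\mathbf{x},\nu\rangle\equiv0$ then the constant field $\mathbf{x}$ is everywhere tangent to $\mbfC$, so for $q\in\mbfC\setminus\RR\mathbf{x}$ the line $\{q+t\mathbf{x}\}$ lies in $\mbfC$, whence $\overline{\mbfC}$ is invariant under translation by $\RR\mathbf{x}$; such a minimal hypercone splits off a line and, having isolated singularity in $\RR^{8}$, must be a hyperplane.

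Finally I would evaluate $\cA\cR_\infty(u;\mbfC)$. For $\gamma>0$, boundedness of $u$ and $\|\mbfC\|(\AAa(\mathbf 0,R,2R))=c_\mbfC R^{n}$ give $\int_{\AAa(\mathbf 0,R,2R)\cap\mbfC}u^2|x|^{-2\gamma-n}\,d\|\mbfC\|\le C\,R^{-2\gamma}\to 0$, so $\cA\cR_\infty(u)\le0$. Conversely, in polar coordinates on $\mbfC$,
\[
\int_{\AAa(\mathbf 0,R,2R)\cap\mbfC}u^2|x|^{-n}\,d\|\mbfC\|=\int_R^{2R}\rho^{-1}\Big(\int_S u(\rho\omega)^2\,d\sigma(\omega)\Big)d\rho\ \longrightarrow\ (\log 2)\,\|h\|_{L^2(S)}^2>0
\]
as $R\to\infty$, by the uniform convergence $u(\rho\omega)\to h(\omega)$; hence $0$ does not lie in the defining set and $\cA\cR_\infty(u)\ge0$, so $\cA\cR_\infty(\mbfC+\mathbf{x};\mbfC)=\cA\cR_\infty(u;\mbfC)=0$. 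I expect the main obstacle to be the careful execution of the first step — gluing the dyadic graph representations into a single $u$ with the uniform bounds above — together with the first-variation identity $\partial_t g_t|_{t=0}=\langle\mathbf{x},\nu\rangle$, which carries all the geometric content; once these are in place the asymptotic-rate bookkeeping is routine, and much of the graph analysis can be quoted from Appendix~\ref{Sec_App_Geom of Minimal Graph}.
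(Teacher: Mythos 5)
Your proof is correct and follows essentially the same route as the paper: both rescale to reduce to a small translation $\mathbf{x}/R$ of $\mbfC$, both identify the leading term of the graph function as $\langle\mathbf{x},\nu_\mbfC\rangle$ with an $O(|x|^{-1})$ error, and both deduce $\cA\cR_\infty=0$ from the non-vanishing of this degree-$0$ homogeneous function (which in turn comes from the cone not splitting). The only cosmetic difference is that the paper obtains the linearization $u_{\mathbf{x}}(q)=\langle\mathbf{x},\nu_\mbfC(q)\rangle+O(|\mathbf{x}|^2)$ by writing $u_{\mathbf{x}}$ via the nearest-point projection onto $\mbfC$, whereas you differentiate the implicit graph relation $p+g_t(p)\nu(p)-t\mathbf{x}\in\mbfC$ in $t$; you also spell out the final polar-coordinate computation of $\cA\cR_\infty$, which the paper leaves as "one can check."
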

        \begin{proof}
            First by implicit function theorem, there exists $\delta(\mbfC)\in (0, 1)$ such that for every $\mathbf{x}\in \RR^{n+1}$ with $|\mathbf{x}|\leq \delta(\mbfC)$, the map \[
                \Pi_{\mathbf{x}}: \mbfC\cap \AAa(\mathbf{0}, 1/4, 4) \to \mbfC, \ \ \ p\mapsto \text{nearest point projection of }p+\mathbf{x} \text{ onto }\mbfC,   \]
            is a diffeomorphism onto its image, and its image contains $\AAa(\mathbf{0}, 1/2,3)\cap \mbfC$. 
            Also, if write $u_{\mathbf{x}}$ be the graphical function of $\mbfC+\mathbf{x}$ over $\mbfC$ in $\AAa(\mathbf{0}, 1/2,3)$, then we know that \[
                u_{\mathbf{x}}(q) = \langle \Pi^{-1}_{\mathbf{x}}(q) -q +\mathbf{x}, \nu_\mbfC(q)\rangle,\ \ \ \forall q\in \AAa(\mathbf{0}, 1, 2).   \]
            And since $\langle\Pi^{-1}_{\mathbf{x}}(q) - q, \nu_\mbfC(q)\rangle = O(|\mathbf{x}|^2)$, we have, 
            \begin{align}
                |u_{\mathbf{x}}(q) - \langle \mathbf{x}, \nu_\mbfC(q)\rangle| \leq C(\mbfC)\cdot|\mathbf{x}|^2.  \label{Equ_Pre_Graph func transl cone model on transl Jac field}
            \end{align}
            
            Now for a general $\mathbf{x}\in \RR^{n+1}\setminus \mathbf{0}$, since $(\mbfC + \mathbf{x})/R = \mbfC + \mathbf{x}/R$, we know that $\mbfC+\mathbf{x}$ is graphical over $\mbfC$ outside the ball $\BB_{|\mathbf{x}|/\delta(\mbfC)}$, and the graphical function satisfies $u_{\mathbf{x}}(p) = R u_{\mathbf{x}/R}(p/R)$.  Hence, for every $|p|> |\mathbf{x}|/\delta(\mbfC)$ we have, 
            \begin{align}
                |u_{\mathbf{x}}(p) - \langle \mathbf{x}, \nu_\mbfC(p) \rangle| = R\cdot|u_{\mathbf{x}/R}(p/R) - \langle \mathbf{x}/R, \nu_\mbfC(p/R) \rangle| \leq C(\mbfC)\cdot|\mathbf{x}|^2/|p|,  \label{Equ_Pre_Graph func transl cone - transl Jac field is h.o.t}
            \end{align}
            where the last inequality follows by taking $R = |p|$ and applying (\ref{Equ_Pre_Graph func transl cone model on transl Jac field}).
            
            Also, observe that $\langle \mathbf{x}, \nu_\mbfC \rangle$ is a homogeneous function on $\mbfC$ of degree $0$, which is not identically $0$ since otherwise, $\mbfC$ splits in $\mathbf{x}$-direction and is not of isolated singularity.  By combining this with (\ref{Equ_Pre_Graph func transl cone - transl Jac field is h.o.t}), one can check that $\cA\cR_\infty(u_{\mathbf{x}}; \mbfC) = 0$ from the definition.
        \end{proof}
        
        For a general minimal hypersurface $\Sigma$ asymptotic to $\mbfC$ near infinity in $\RR^{n+1}$, it follows from \cite[Section 4]{WangZH20_Deform} that $\cA\cR_\infty(\Sigma; \mbfC)\in \{-\infty\}\cup\Gamma(\mbfC)\cap [-\infty, 1]$.  See more results in this line in \cite[Section 4]{WangZH20_Deform}.

        In \cite[Theorem~6.3]{edelen_degeneration_2021}, Edelen proved a quantitative version of the uniqueness of tangent cones for minimal hypersurfaces. We summarize this result below with a different formulation.
        \begin{Lem} \label{Lem_Ana on SMC_Quanti Uniqueness of Tang Cone}
            Given $\epsilon>0$, integer $I\geq 0$ and $\mbfC$ a stable minimal hypercone in $\RR^{n+1 = 8}$, there exists $\delta = \delta(\epsilon, \mbfC, I)>0$ with the following property.
            
            Let $g$ be a $C^4$ metric on $\BB_4^{n+1}$, $V\in \cR_n(\BB_4)$ be an stationary integral varifold supported on LSMH under $g$ with $\ind(\spt(V), g)\leq I$, and let $r\in (0, 1/16)$. Suppose: 
            \begin{enumerate} [(a)]
                \item $\|g-g_{\eucl}\|_{C^4}\leq \delta$;
                \item $\theta_{V}(\mathbf{0}, 2)\leq \theta_{V}(\mathbf{0}, r) + \delta$;
                \item There exists $s\in (2r, 1/2)$ and $u\in C^2(\mbfC)$ with $\|u\|_{C_*^2}\leq \delta$ such that \[
                          V\llcorner A^g(\mathbf{0}; s, 2s) = |\graph_{\mbfC}(u)|\llcorner A^g(\mathbf{0}; s, 2s).   \]
            \end{enumerate}
            Then, there exists $\tilde{u}\in C^2(\mbfC)$ with $\|\tilde{u}\|_{C^2_*}\leq \epsilon$ such that \[ 
                V\llcorner A^g(\mathbf{0}; 2r, 1) = |\graph_{\mbfC}(\tilde u)|\llcorner A^g(\mathbf{0}; 2r, 1).   \]
        \end{Lem}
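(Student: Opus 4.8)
The plan is to deduce the lemma from the quantitative uniqueness of tangent cones, \cite[Theorem~6.3]{edelen_degeneration_2021}; since that result is phrased in a somewhat different language, I will mainly explain the translation of the hypotheses and the shape of the argument.

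The first step is to extract from hypotheses (a) and (b) that $V$ is uniformly close, across all scales in $[2r, 1]$, to a cone of density $\theta_{\mbfC}(\mathbf{0})$. Indeed, the monotonicity formula for stationary integral varifolds in the $C^4$-close metric $g$ differs from the Euclidean one by $\Psi(\delta)$, so (b) forces the density ratio $\rho \mapsto \theta_{V,g}(\mathbf{0}, \rho)$ to be $\Psi(\delta | \mbfC, I)$-pinched on $[r, 2]$, and hence the total conical defect $\int_{A^g(\mathbf{0}; r, 2)} |x^\perp|^2 |x|^{-n-2}\, d\|V\|$ is $\Psi(\delta)$-small. Feeding this, together with the index bound $\ind(\spt(V), g) \leq I$, into the compactness theory for locally stable minimal hypersurfaces (Schoen--Simon \cite{schoen_regularity_1981}, Sharp \cite{sharp_compactness_2017}) and Allard's regularity theorem \cite{allard_first_1972}, one obtains that on every dyadic annulus inside $A^g(\mathbf{0}; 2r, 1)$ the varifold $V$ is a single graph, with small $C^2_*$-norm, over \emph{some} stable minimal hypercone whose density at the origin is within $\Psi(\delta)$ of $\theta_{\mbfC}(\mathbf{0})$; and hypothesis (c) pins this cone to be $\mbfC$ itself on the annulus at scale $s$.

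The heart of the proof is then to show that the cone is $\mbfC$ (to within $\Psi(\delta)$) at \emph{every} scale in $[2r, 1]$, equivalently that the graphical function $u$ of $V$ over $\mbfC$ is defined and has $\|u\|_{C^2_*} \leq \Psi(\delta)$ on each such annulus. This is where a naive estimate fails, since the number of scales between $2r$ and $1$ is of order $\log(1/r)$ and unbounded, so one cannot afford to lose a fixed multiplicative constant per scale. The remedy is the growth-rate (three-annulus) monotonicity: by Remark \ref{Rem_Pre_Jac field equ on MH near cone}, $u$ solves a perturbed Jacobi equation of the form (\ref{Equ_Pre_Pert Jac equ}) with error controlled by $\Psi(\|u\|_{C^2_*} + \|g - g_{\eucl}\|_{C^2}) \cdot (|u| + |\nabla u|)$; choosing the exponent $\gamma$ in the spectral gap $(\gamma_1^+(\mbfC), \gamma_2^+(\mbfC))$ — which has definite width $\sigma = \sigma(\mbfC) > 0$ by Lemma \ref{Lem_gamma_gap >0} — Corollary \ref{Cor_Ana on SMC_Growth Rate Monoton with Perturb} applies to the scale-invariant energies $J_K^\gamma(u; \cdot)$ and yields a genuine three-annulus inequality. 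Propagating it outward (toward scale $1$) and inward (toward scale $2r$) from the reference scale $s$, and using the conical-defect budget from (b) to rule out growth, one concludes that $J_K^\gamma(u; \cdot)$, and hence by interior elliptic estimates and Allard's theorem also $\|u\|_{C^2_*}$, is $\leq \Psi(\delta)$ on every annulus in $[2r, 1]$.

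Finally, choosing $\delta = \delta(\epsilon, \mbfC, I)$ so that every occurrence of $\Psi(\delta)$ above is $\leq \epsilon$, one patches the graphs on consecutive dyadic annuli into a single $\tilde u \in C^2(\mbfC)$ on $A^g(\mathbf{0}; 2r, 1)$ — consecutive representations agree on their overlaps by uniqueness of the nearest-point projection onto $\mbfC$ — with $\|\tilde u\|_{C^2_*} \leq \epsilon$ and $V \llcorner A^g(\mathbf{0}; 2r, 1) = |\graph_{\mbfC}(\tilde u)| \llcorner A^g(\mathbf{0}; 2r, 1)$. I expect the main obstacle to be exactly this step: converting the scale-by-scale three-annulus inequality together with the density pinching into a bound uniform over the unbounded range of scales, i.e.\ the Łojasiewicz-type mechanism of Simon adapted by Edelen. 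A secondary technical point is that all constants must be kept uniform as $g$ ranges over a $C^4$-neighborhood of $g_{\eucl}$ (and as $\mbfC$ is perturbed, which is relevant when the lemma is later applied along a convergent sequence of cones), which is handled by the $\mbfF$-compactness statements recalled in Section \ref{Subsec_Stable MHC}.
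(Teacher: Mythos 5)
Your high-level plan — verify the hypotheses of Edelen's \cite[Theorem~6.3]{edelen_degeneration_2021} and then invoke it — matches the paper's, but your verification has a genuine gap at the multiplicity-one step. You assert that on every dyadic annulus in $[2r,1]$ the varifold is a \emph{single-sheeted} graph over some stable cone. Hypotheses (a), (b) together with compactness and Allard only yield $\mbfF_{\BB_2}(\eta_{\mathbf{0},t}{}_\sharp V, m_t|\mbfC_t|) \leq \Psi(\delta|\mbfC, I)$ for some $m_t\in\NN$ and $\mbfC_t\in\scC$, and nothing a priori forces $m_t = 1$: the sheet count is an integer parameter that could in principle jump between scales (for instance if $\theta_\mbfC(\mathbf{0}) = 2\theta_{\mbfC'}(\mathbf{0})$ for some smaller stable cone $\mbfC'$). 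The paper closes this by comparing adjacent scales — $\mbfF_{\BB_2}(m_{2t}|\mbfC_{2t}|, m_t|\mbfC_t|)\leq 3\Psi(\delta|\mbfC, I)$, using the scaling estimate $\mbfF_{\BB_2}(\eta_{\mathbf{0},r}{}_\sharp V_1, \eta_{\mathbf{0},r}{}_\sharp V_2)\leq r^{-1}\mbfF_{\BB_2}(V_1,V_2)$ — against the definite gap $\mbfF_{\BB_2}(|\mbfC_1|, m|\mbfC_2|)\geq\epsilon(\Lambda)$ for $m\geq 2$ from \cite[Lemma~4.4]{liwang2020generic}, anchored at $(\mbfC_s, m_s)=(\mbfC,1)$ supplied by hypothesis~(c). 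This forces $m_t\equiv 1$ once $\delta$ is small, and you should supply that step.

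Once the $(\mbfC,1,\Psi(\delta|\mbfC, I))$-cone-region structure is established, Edelen's theorem applies directly and the proof is finished; the three-annulus propagation in your second paragraph is not part of the paper's argument and would not close on its own. Convexity of $l\mapsto J_K^\gamma(u;K^{-l})$ from Corollary~\ref{Cor_Ana on SMC_Growth Rate Monoton with Perturb}, combined with an a priori conical-defect budget from (b) and smallness at the single reference scale $s$, bounds the sequence but does not deliver the uniform $\Psi(\delta)$ decay across the unbounded range of scales in $[2r,1]$; that is exactly the \L{}ojasiewicz--Simon content of Edelen's theorem, which you yourself flag at the end as the missing piece. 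Citing Edelen makes this re-derivation both unnecessary and, as sketched, incomplete.
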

        \begin{proof}
            It suffices to check the condition in \cite[Theorem 6.3]{edelen_degeneration_2021} holds, i.e. $V\llcorner A^g(\mathbf{0}; 8r, 1)$ is a $(\mbfC, 1, \Psi(\delta|\mbfC, I))$-cone region. Denote for simplicity $V_t:= \eta_{\mathbf{0}, t}{}_\sharp V$.
            
            First of all, by (a) (b) and volume monotonicity, for every $t\in (8r, 1)$, there exists $m_t\in \NN$ and $\mbfC_t\in \scC$ such that 
            \begin{align}
                \mbfF_{\BB_2}(V_t, m_t|\mbfC_t|)\leq \Psi(\delta|\mbfC, I)\,, \label{Equ_Edelen Lem_V_t almost conic}
            \end{align}
            And by (c), one can choose $(\mbfC_s, m_s) = (\mbfC, 1)$. Since for every $r\in (0, 1)$ and every varifolds $V_1, V_2$, \[
              \mbfF_{\BB_2}(\eta_{\mathbf{0}, r}{}_\sharp V_1, \eta_{\mathbf{0}, r}{}_\sharp V_2) \leq \mbfF_{\BB_2}(V_1, V_2)\cdot r^{-1}\,,\,,
            \]
            we then have for every $t\in (8r, 1/2)$,
            \begin{align}
                \begin{split}
                    \mbfF_{\BB_2}(m_{2t}|\mbfC_{2t}|, m_t|\mbfC_t|) & \leq \mbfF_{\BB_2}(m_{2t}|\mbfC_{2t}|, V_t) + \mbfF_{\BB_2}(m_t|\mbfC_t|, V_t) \\
                    & \leq 2\mbfF_{\BB_2}(m_{2t}|\mbfC_{2t}|, V_{2t}) + \mbfF_{\BB_2}(m_t|\mbfC_t|, V_t) \\
                    & \leq 3\Psi(\delta|\mbfC, I).
                \end{split} \label{Equ_Edelen Lem_Compare cone in differ scal}
            \end{align}
            By \cite[Lemma 4.4]{liwang2020generic}, for every $\mbfC_1, \mbfC_2\in \scC_\Lambda$ and every integer $m\geq 2$, we have \[
             \mbfF_{\BB_2}(|\mbfC_1|, m|\mbfC_2|) \geq \epsilon(\Lambda)\,,   \]
            Hence by taking $\delta<<1$, $m_s = 1$ and (\ref{Equ_Edelen Lem_Compare cone in differ scal}) guarantees that $m_t = 1$ for every $t\in (8r, 1)$. Then by Allard regularity \cite{allard_first_1972}, (\ref{Equ_Edelen Lem_V_t almost conic}) implies that $V\llcorner A^g(\mathbf{0}; t/8, t)$ is a one-sheet $\Psi(\delta|\mbfC, I)$-graph over $\mbfC_t$. Edelen's Theorem thus applies by taking $\delta\leq \delta(\epsilon, \mbfC, I)<<1$.
        \end{proof}

        \begin{Rem}
            Using a similar argument as \cite{edelen_degeneration_2021}, one can actually drop the bounded-index assumption in Lemma \ref{Lem_Ana on SMC_Quanti Uniqueness of Tang Cone}, but this is out of the scope of this paper.
        \end{Rem}
        
        This in particular implies the following result of convergence in all scales.
        \begin{Cor} \label{Cor_Converg in all Scales}
            Let $\{\Sigma_j\}_{j\geq 1}$ be a sequence of two-sided LSMH in $(\BB^8_4, g_j)$ containing $\mathbf{0}$, with $\ind(\Sigma_j, g_j)\leq \Lambda$, $\|\Sigma_j\|(\BB_4)\leq \Lambda$. Suppose when $j\to \infty$, 
            \begin{enumerate} [(a)]
                \item $\|g_j - g_{\eucl}\|_{C^4}\to 0$;
                \item $\theta_{|\Sigma_j|}(\mathbf{0}, 2) - \theta_{|\Sigma_j|}(\mathbf{0}) \to 0$.
            \end{enumerate}
            Then there exists some stable minimal hypercone $\mbfC\in \scC_{4^n\Lambda}$ such that after passing to a subsequence, 
            \begin{align*}
                \mbfF_{\BB_3}(|\Sigma_j|, |\mbfC|) \to 0, &  & \mbfF_{\BB_3}(|\mbfC_{\mathbf{0}}\Sigma_j|, |\mbfC|) \to 0,
            \end{align*}
            where $\mbfC_{\mathbf{0}}\Sigma_j$ is the tangent cone of $\Sigma_j$ at $\mathbf{0}$.
        \end{Cor}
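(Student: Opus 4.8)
The plan is to reconcile two a priori unrelated cones --- the one obtained from the large-scale subsequential limit of $\{|\Sigma_j|\}$ and the one obtained from the small-scale limits $\mbfC_{\mathbf{0}}\Sigma_j$ --- using the quantitative uniqueness of tangent cones, Lemma~\ref{Lem_Ana on SMC_Quanti Uniqueness of Tang Cone}. First I would extract the limit: by the compactness theory for stable minimal hypersurfaces \cite{schoen_regularity_1981}, using $\ind(\Sigma_j,g_j)\le\Lambda$, $\|\Sigma_j\|(\BB_4)\le\Lambda$ and $g_j\to g_{\eucl}$ in $C^4$, after passing to a subsequence $|\Sigma_j|$ converges in the $\mbfF$-metric on $\BB_3$ to a stationary integral varifold $V$ whose support is a stable minimal hypersurface with isolated singularities; since $g_j\to g_{\eucl}$, $V$ is stationary for $g_{\eucl}$, so the exact Euclidean monotonicity formula holds for $V$ at $\mathbf{0}$. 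Combining hypothesis (b) with the $g_j$-monotonicity of $r\mapsto\theta_{|\Sigma_j|}(\mathbf{0},r)$ and the convergence of density ratios at a.e. radius shows $r\mapsto\theta_V(\mathbf{0},r)$ is constant on $(0,2)$, whence by the rigidity case of monotonicity $V\llcorner\BB_2=m|\mbfC|$ for an integer $m\ge1$ and a stable minimal hypercone $\mbfC$, which is regular by the isolated-singularity property and which satisfies $m\,\theta_{\mbfC}(\mathbf{0})\le 4^n\Lambda$ (adjusting the dimensional constant as needed), so $\mbfC\in\scC_{4^n\Lambda}$.

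Next I would treat the tangent cones. Since $\mathbf{0}\in\Sigma_j$, the density $\theta_{|\Sigma_j|}(\mathbf{0})=\theta_{\mbfC_{\mathbf{0}}\Sigma_j}(\mathbf{0})$ lies in the discrete set $(\ref{Equ_Pre_Density dicrete for SMC})$ and is uniformly bounded, while hypothesis (b) together with the previous step forces $\theta_{|\Sigma_j|}(\mathbf{0})\to m\,\theta_{\mbfC}(\mathbf{0})$; by discreteness, after a further subsequence $\theta_{|\Sigma_j|}(\mathbf{0})$ equals a fixed constant. The cones $\mbfC_{\mathbf{0}}\Sigma_j$ then lie in a fixed $\mbfF$-compact subset of $\scC$, so after a subsequence $\mbfF_{\BB_3}(|\mbfC_{\mathbf{0}}\Sigma_j|,|\mbfC'|)\to0$ for some stable minimal hypercone $\mbfC'$.

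The crux is to identify $\mbfC'=\mbfC$ (and incidentally $m=1$) by bridging all scales with Lemma~\ref{Lem_Ana on SMC_Quanti Uniqueness of Tang Cone}. For each large $j$ I would pick a scale $s_j\searrow0$ at which $\eta_{\mathbf{0},s_j}(\Sigma_j)$ is $\mbfF$-close to $\mbfC_{\mathbf{0}}\Sigma_j$, hence to $\mbfC'$; Allard's theorem \cite{allard_first_1972} together with the exclusion of multiplicity $\ge2$ via \cite[Lemma~4.4]{liwang2020generic} makes $\Sigma_j$ a one-sheet graph over $\mbfC'$, with small $C^2_*$-norm, on an annulus $A^{g_j}(\mathbf{0};s_j,2s_j)$; Lemma~\ref{Lem_Ana on SMC_Quanti Uniqueness of Tang Cone} then upgrades this to a uniformly small $C^2_*$-graph over $\mbfC'$ on the whole annulus $A^{g_j}(\mathbf{0};2r,1)$ for every fixed small $r>0$. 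Letting $j\to\infty$ and $r\to0$ along a diagonal sequence gives $\mbfF_{\BB_1}(|\Sigma_j|,|\mbfC'|)\to0$, so $V$ restricted near $\mathbf{0}$ equals $|\mbfC'|$, forcing $m=1$ and $\mbfC=\mbfC'$; together with the second step this yields the two claimed convergences. I expect this last step to be the main obstacle: one must produce the intermediate scale $s_j$ uniformly in $j$ and verify the $C^2_*$-smallness hypothesis of Lemma~\ref{Lem_Ana on SMC_Quanti Uniqueness of Tang Cone} there --- which needs Allard regularity relative to the curved metrics $g_j$ and the exclusion of higher multiplicity --- and then carry out the $r\to0$, $j\to\infty$ diagonalization compatibly. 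The remaining steps are routine consequences of compactness, the monotonicity formula, and the discreteness of cone densities.
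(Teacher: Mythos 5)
Your proposal is correct and the engine is the same as the paper's --- the quantitative uniqueness of tangent cones, Lemma~\ref{Lem_Ana on SMC_Quanti Uniqueness of Tang Cone} --- but you take a longer route than the printed proof. The paper does not bother first extracting the large-scale limit $V$ and identifying it as a cone $m|\mbfC|$ via monotonicity rigidity, nor does it separately pass to a limit $\mbfC'$ of the tangent cones and then reconcile the two. It goes directly to what you call the crux: since tangent cones at singular points of an LSMH in dimension $8$ are multiplicity one (Ilmanen), the cones $\mbfC_{\mathbf{0}}\Sigma_j$ lie in the $\mbfF$-compact set $\scC_{4^n\Lambda}$ and subconverge to some $\mbfC$; by Allard this convergence is $C^2$ away from $\mathbf{0}$, so at a suitable small scale $s_j$ (provided by the definition of tangent cone) each $\Sigma_j$ is a small $C^2_*$-graph over $\mbfC$ on an annulus; hypothesis (b) of the Corollary supplies the near-constant density-ratio assumption of Lemma~\ref{Lem_Ana on SMC_Quanti Uniqueness of Tang Cone}; and the lemma propagates graphicality to $A^{g_j}(\mathbf{0};2r,1)$ for every fixed $r$, so $|\Sigma_j|\to|\mbfC|$ after a diagonal argument. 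In other words, your steps~1 and~2 are not logically needed: your step~3 alone already produces $V=|\mbfC'|$ with $m=1$, which makes the earlier identification $V\llcorner\BB_2=m|\mbfC|$ and the separate extraction of $\mbfC'$ redundant. (Incidentally, your claim in step~1 that $V\llcorner\BB_2=m|\mbfC|$ is slightly imprecise if the cross-section of the cone were disconnected with unequal multiplicities, but since step~3 ultimately shows $V=|\mbfC'|$ this is immaterial.) The one ingredient you leave implicit that the paper cites explicitly is Ilmanen's multiplicity-one theorem for tangent cones, which is what justifies working with $\mbfC_{\mathbf{0}}\Sigma_j$ as a multiplicity-one element of $\scC_{4^n\Lambda}$; you should name it. On the other hand you are more careful than the paper about verifying the hypotheses of Lemma~\ref{Lem_Ana on SMC_Quanti Uniqueness of Tang Cone}, in particular the choice of intermediate scale $s_j$ and the $C^2_*$-smallness of the graph there, which the paper leaves to the reader.
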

        \begin{proof}
            First recall that by \cite[Theorem B]{ilmanen_strong_1996}, tangent cones at a singular point of a LSMH are all of multiplicity $1$.  Denote for simplicity by $\mbfC_j\in \scC_{4^n\Lambda}$ the tangent cone of $\Sigma_j$ at $\mathbf{0}$, and suppose when $j\to \infty$, $\mbfC_j$ $\mbfF$-converges to some $\mbfC\in \scC_{4^n \Lambda}$. By Allard's regularity theorem \cite{allard_first_1972}, the convergence is in $C^2$ sense away from $\mathbf{0}$.  Hence by Lemma \ref{Lem_Ana on SMC_Quanti Uniqueness of Tang Cone}, $|\Sigma_j|$ also $\mbfF$-subconverges to $|\mbfC|$.
        \end{proof}

    \section{Growth Estimate of Graphical Functions} \label{Sec_Growth est}
        Let $(N^8, g)$ be a Riemannian manifold, not necessarily complete. For $i=0, 1$, suppose $\Sigma^i$ is a hypersurface in $N$ such that $\Clos(\Sigma^i) = \partial U^i$, where $U^i\subset N$ is an open subset.  Let $\nu^i$ be the unit normal field of $\Sigma^i$ under metric $g$, which points away from $U^i$.
        
        Define the \textbf{graphical function} over $\Sigma^0$ of $\Sigma^1$ under metric $g$ as follows.  For every $x\in \Sigma^0$,
        \begin{align} 
            G^{\Sigma^1}_{\Sigma^0, g} (x):= \begin{cases}
                \sup\{t\geq 0: \exp^g_x(s\nu^0(x))\in U^1 \text{ for every }s\in [0, t] \},           & \ \text{if }x\in U^1;    \\
                \inf\{t\leq 0: \exp^g_x(s\nu^0(x))\in N\setminus U^1 \text{ for every }s\in [t, 0]\}, & \ \text{if }x\notin U^1.
            \end{cases} \label{Equ_Growth est_Graph Function in general}
        \end{align}
        \begin{Rem} \label{Rem_Growth est_Graph Function}
            We abuse the notation here since the definition of $G^{\Sigma^1}_{\Sigma^0, g}$ relies also on the choice of $U^i$ and $N$. In applications, these are naturally chosen and there will be no ambiguity.
            
            Heuristically, $\Sigma^1$ is the graph of $G^{\Sigma^1}_{\Sigma^0, g}$ over $\Sigma^0$.  In fact, this is the case when $\Sigma^i$ are smooth hypersurfaces and are $C^2$ close.  In general however, $G^{\Sigma^1}_{\Sigma^0, g}$ need not be even $C^0$ but only $L^\infty$ along $\Sigma^0$.
        \end{Rem}

        The goal for this section is to control the behavior of graphical functions of LSMH. We first prove several local estimates, that is, for LSMH in $N = \BB_2^{n+1 = 8}$ which is close to a minimal hypercone.   Then using these estimates, we study the globally induced Jacobi fields associated to a converging sequence of LSMH.
        
    \subsection{Local Growth Estimate}
        In this subsection, the ambient manifold $N$ to define graphical function is fixed to be $\BB_2^{n+1 = 8}$.  When two connected LSMH $\Sigma^0$, $\Sigma^1$ in $\BB_2$ are sufficiently close, and $\Sigma^0 = \partial U^0$ for some $U^0\subset N$, there's a unique choice of domain $U^1$ bounded by $\Sigma^1$ in $N$ such that $U^0\Delta U^1$ is also small.  We shall fix this choice of $U^i$ in defining the graphical functions.
    
    \begin{Lem}[$L^2$ growth estimate] \label{Lem_L^2 Growth Estimate}
        Let $\Lambda>0$, $0<\sigma<\gamma_{\gap, \Lambda}/2$, where $\gamma_{\gap, \Lambda}$ is given by Lemma \ref{Lem_gamma_gap >0}.   There exists $\delta_1 = \delta_1(\sigma, \Lambda)\in (0,1)$ and $K = K(\sigma, \Lambda)>2$ with the following property,
        
        Let $\Sigma^i = \partial U^i$ ($i=0,1$) be LSMH in $(\BB_2, g)$ with area $\leq \Lambda$ and index $\leq \Lambda$. Assume the following holds for $\delta = \delta_1$,
            \begin{enumerate} [(a)]
                \item $\|g-g_{\eucl}\|_{C^4}\leq \delta$; $\|u\|_{L^2(A^g(\mathbf{0}, 1/2,1), \|\Sigma^0\|_g)} \leq \delta$, where $u:= G^{\Sigma^1}_{\Sigma^0, g} \in L^\infty(\Sigma^0)$;
                \item For $i = 0, 1$, there exists $x^i\in \Sing(\Sigma^i)\cap \BB_{1/2}$ such that $x^0 = \mathbf{0}$ and
                      \begin{align*}
                          \theta_{|\Sigma^i|}(x^i, 1) - \theta_{|\Sigma^i|}(x^i) \leq \delta, & \  & \theta_{|\Sigma^1|}(x^1) = \theta_{|\Sigma^0|}(x^0).
                      \end{align*}
            \end{enumerate}
            Then,
            \begin{enumerate} [(i)]
                \item $\Sigma^1\cap A^g(\mathbf{0}, K^{-3}, 3/2) = \graph_{\Sigma^0, g}(u)\cap A^g(\mathbf{0}, K^{-3}, 3/2)$; $x^1\in B^g_{K^{-3}}(\mathbf{0})$ and \[
                          \|u\|_{C^2_*, A^g(\mathbf{0}, K^{-2}, 1)} \leq C(\sigma, \Lambda)\cdot \|u\|_{L^2(\|\Sigma^0\|_g)}.   \]
                \item We have $L^2$-growth estimate for $u$,
                      \begin{align}
                          J^{\gamma_2^+ - \sigma}_K(u; K^{-1}) \leq J^{\gamma_2^+ - \sigma}_K(u; 1); \label{Equ_L^2 Growth Est at Scal 1}
                      \end{align}
                      where $\gamma_2^+ := \gamma_2^+(\mbfC_\mathbf{0}\Sigma^0)$;
                      \begin{align}
                          \begin{split}
                              J^\gamma_K(u; r)^2 & := \|u\|^2_{L^2_\gamma(A^g(\mathbf{0}, K^{-1}r, r); \|\Sigma^0\|_g)}\\
                              & := \int_{A^g(\mathbf{0}, K^{-1}r, r)} u^2(x) \cdot |\dist_g(\mathbf{0}, x)|^{-2\gamma - n}\ d\|\Sigma^0\|_g(x)     
                          \end{split}  \label{Equ_Def J^sigma_K(u, r)}
                      \end{align}
            \end{enumerate}
    \end{Lem}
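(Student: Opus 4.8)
The plan is to prove both assertions by a single contradiction/compactness argument, choosing the constant $K=K(\sigma,\Lambda)$ to be (a suitable multiple of) the one furnished by Lemma \ref{Lem_Ana on SMC_Growth Rate Monoton} and Corollary \ref{Cor_Ana on SMC_Growth Rate Monoton with Perturb}, applied with the area bound enlarged to absorb density ratios on annuli. Suppose no $\delta_1$ works: there are $\delta_j\searrow 0$ and pairs $(\Sigma^0_j,\Sigma^1_j,g_j)$ satisfying (a), (b) with $\delta=\delta_j$ but violating (i) or (ii). By the compactness theory of \cite{schoen_regularity_1981} and Corollary \ref{Cor_Converg in all Scales} (hypothesis (b) forces the density drop of $\Sigma^0_j$ on $\BB_1$ to vanish), after passing to a subsequence $|\Sigma^0_j|$ and $|\mbfC_\mathbf{0}\Sigma^0_j|$ both $\mbfF$-converge to one and the same stable minimal hypercone $\mbfC$; feeding in hypothesis (a) (the smallness of $u_j:=G^{\Sigma^1_j}_{\Sigma^0_j,g_j}$ on $A^{g_j}(\mathbf{0},1/2,1)$) together with the matching‑density condition in (b), the same holds for $\Sigma^1_j$, and moreover $x^1_j\to\mathbf{0}$ while $\mbfC_{x^1_j}\Sigma^1_j\to\mbfC$; away from $\mathbf{0}$ all convergences are in $C^2_{\mathrm{loc}}$.

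\emph{Part (i).} Applying the quantitative uniqueness of tangent cones (Lemma \ref{Lem_Ana on SMC_Quanti Uniqueness of Tang Cone}) at $\mathbf{0}$ for $\Sigma^0_j$ and at $x^1_j$ for $\Sigma^1_j$, for $j$ large both hypersurfaces are graphs over their tangent cones $\mbfC^0_j:=\mbfC_\mathbf{0}\Sigma^0_j$ and $\mbfC^1_j:=\mbfC_{x^1_j}\Sigma^1_j$ with $C^2_*$-norms tending to $0$, on the punctured balls all the way down to the vertices. Composing these two graph representations on the overlap — which, since $|x^1_j|\to 0$ and $\mbfC^0_j,\mbfC^1_j\to\mbfC$, contains $A^{g_j}(\mathbf{0},K^{-3},3/2)$ — exhibits $\Sigma^1_j$ as $\graph_{\Sigma^0_j,g_j}(u_j)$ there, with $x^1_j\in B^{g_j}(\mathbf{0},K^{-3})$; this contradicts the failure of the first assertion in (i). By Remark \ref{Rem_Pre_Jac field equ on MH near cone} and Appendix \ref{Sec_App_Geom of Minimal Graph}, $u_j$ (read as a function on $\mbfC^0_j$) solves a perturbed Jacobi equation of the form (\ref{Equ_Pre_Pert Jac equ}) with error $\le\Psi(\delta_j\,|\,\Lambda,K)\,(|u_j|+|\nabla u_j|)$, uniformly over the $\mbfF$-compact family of cones; scale‑invariant interior elliptic estimates on $A^g(\mathbf{0},K^{-2},1)$ together with a unique continuation argument for this equation then give $\|u_j\|_{C^2_*,A^g(\mathbf{0},K^{-2},1)}\le C(\sigma,\Lambda)\,\|u_j\|_{L^2}$, which is the estimate in (i).

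\emph{Part (ii).} It remains to treat the case in which (i) holds but (ii) fails. The key point is that $u_j$ solves the perturbed Jacobi equation on the \emph{large} annulus $A^{g_j}(\mathbf{0},r_j,3/2)$ with inner radius $r_j\to 0$ comparable to $|x^1_j|$, with error $\le\Psi(\delta_j)\to 0$, and is $C^2_*$-small there; in particular $|u_j(x)|\le\Psi(\delta_j)\,\dist_{g_j}(x,\mathbf{0})$ on that annulus. Set $\gamma_j:=\gamma_2^+(\mbfC^0_j)-\sigma$; since $\sigma<\gamma_{\gap,\Lambda}/2$ and $\gamma_1^-(\mbfC)\le-(n-2)/2\le\gamma_1^+(\mbfC)<\gamma_2^+(\mbfC)$ with the families $\{\gamma_j^\pm\}$ monotone, one checks $\gamma_j\in(-n+1,1)$ and $\dist_\RR(\gamma_j,\Gamma(\mbfC^0_j)\cup\{-(n-2)/2\})\ge\sigma$, so Corollary \ref{Cor_Ana on SMC_Growth Rate Monoton with Perturb} applies to $u_j$ at every dyadic scale $\lambda=K^{-k}$ between $\sim r_j$ and $1$. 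Writing $F(\lambda):=J^{\gamma_j}_K(u_j;\lambda)$, this says the inward increments $F(K^{-k})-F(K^{-k+1})$ are strictly increasing in $k$. If (ii) fails, $F(K^{-1})>F(1)$, so these increments stay $\ge F(K^{-1})-F(1)>0$, and iterating down to the smallest admissible scale $\sim r_j$ forces $F$ there to be of order $k_j\cdot\big(F(K^{-1})-F(1)\big)$ with $k_j\sim\log(1/r_j)\to\infty$, whereas the vertex bound shows $F$ at that scale is at most $C\,\Psi(\delta_j)^2\,r_j^{\,2(1-\gamma_j)}\to 0$. When the violation of (ii) is definite relative to the size of $u_j$ this is already a contradiction; the remaining marginal case is handled by normalizing $u_j$ so as to extract a \emph{nonzero} Jacobi field $u_\infty$ on $\mbfC$ as a blow‑up limit (the choice of normalization, and the nonvanishing of $u_\infty$, use the $C^2_*$-estimates above and rescaling at the scale where the normalized $C^2_*$-norm is attained). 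The vertex bound passes to the limit as $|u_\infty(x)|\le C\dist(x,\mathbf{0})$ on whichever punctured ball or punctured cone $u_\infty$ lives on, so $u_\infty$ has asymptotic rate $\ge 1$ at its inner vertex; hence by Lemma \ref{Lem_Ana on SMC_Growth Rate Monoton} the map $\lambda\mapsto J^{\gamma_\infty}_K(u_\infty;\lambda)$ is a nonnegative, strictly convex function of $\log\lambda$ vanishing as $\lambda\to 0$, therefore strictly increasing, so $J^{\gamma_\infty}_K(u_\infty;K^{-1})<J^{\gamma_\infty}_K(u_\infty;1)$ — contradicting the limit of the failed inequality in (ii).

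The main obstacle is Part (ii): the growth‑rate monotonicity (Corollary \ref{Cor_Ana on SMC_Growth Rate Monoton with Perturb}) together with the elementary convexity argument immediately rules out a definite violation of (ii), but excluding a marginal violation requires producing a good blow‑up limit of the normalized graphical functions near the coalescing singular points $\mathbf{0}$ and $x^1_j$; this is exactly where the quantitative uniqueness of tangent cones near \emph{both} singular points, the matching‑density hypothesis in (b), and careful tracking of the errors $\Psi(\delta_j)$ all enter, and it is the technical heart of the estimate.
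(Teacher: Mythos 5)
Part (i) of your proposal is essentially the paper's argument: compactness, Corollary \ref{Cor_Converg in all Scales}, and Lemma \ref{Lem_Ana on SMC_Quanti Uniqueness of Tang Cone} combine to give the graph decomposition and the $C^2_*$ estimate. The first half of Part (ii) is also correct: if (ii) fails and the graph decomposition persisted to arbitrarily small scales, then the growth monotonicity from Corollary \ref{Cor_Ana on SMC_Growth Rate Monoton with Perturb} would force $J^{\gamma_j}_K(u_j;K^{-l})$ to increase in $l$, contradicting the decay coming from the $C^2_*$ bound; this is exactly how the paper proves that the stopping scale $l_j$ (defined via the $C^2_*$ threshold) is finite.

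The gap is in your ``marginal case.'' You want to normalize $u_j$, blow up at the worst scale, and obtain a nonzero Jacobi field $u_\infty$ on $\mbfC$ that carries a vertex bound $|u_\infty(x)|\le C\,\mathrm{dist}(x,\mathbf 0)$, so that $\cA\cR(u_\infty)\ge 1$ and the convexity of $\lambda\mapsto J^{\gamma}_K(u_\infty;\lambda)$ yields a contradiction. But the blow‑up at the scale $K^{-l_j}$ — which is exactly where the graph structure degenerates and where the contradiction must live — does not produce a Jacobi field. At that scale the second singular point $x^1_j$ is at comparable distance, so the natural rescaling $\hat u_j(x)=K^{l_j}u_j(K^{-l_j}x)$ converges (outside a large ball) to $G^{\mbfC-\mathbf x_\infty}_{\mbfC,g_{\eucl}}$ with $\mathbf x_\infty\ne\mathbf 0$, i.e.\ the graphical function of a \emph{translated cone}, which is a minimal‑surface‑equation solution, not a solution of $L_{\mbfC}u=0$, and it does not extend across the vertex at all — it is only defined on $\mbfC\setminus\BB_R$. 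Consequently there is no vertex to take an asymptotic rate at, and the bound $|u_\infty|\le C|x|$ has no meaning for this limit; moreover, if you instead normalize by the $L^2$ or $C^2_*$ size of $u_j$ (which tends to $0$), the pointwise vertex bound $|u_j(x)|\le\varepsilon\,\mathrm{dist}(x,\mathbf 0)$ gets divided by a vanishing quantity and is destroyed.

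What actually closes the argument, and what is missing from your proposal, is Lemma \ref{Lem_Ana on SMC_AR_infty(C+x; C) = 0}: the asymptotic rate \emph{at infinity} of the graphical function of a translated cone over $\mbfC$ is exactly $0$ (because its leading term is $\langle\mathbf x,\nu_{\mbfC}\rangle$, a degree‑$0$ Jacobi field), whereas passing the growth‑rate monotonicity (\ref{Equ_J(l) monotone in 1<l<l_j}) through the $K^{-l_j}$ blow‑up forces $\cA\cR_\infty(\hat u_\infty;\mbfC)\le\gamma_2^+-\sigma<0$. The contradiction is therefore between the growth estimate at \emph{infinity} of the rescaled limit and the asymptotics of the translated‑cone graph, not between a vertex bound $\cA\cR\ge 1$ and the growth estimate. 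Your convexity‑of‑$J$ argument is only a reformulation of the step showing $l_j<\infty$; without the translated‑cone asymptotics you cannot derive the final contradiction.
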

    The proof shares some similarity as the proof of \cite[Lemma 4.11]{WangZH20_Deform}.  First note that by its definition we have, 
    \begin{align}
        C(K, \gamma)^{-1}\leq \frac{J^\gamma_K(u; r)r^{\gamma + n/2}}{\|u\|_{L^2(A^g(\mathbf{0}, K^{-1}r, r); \|\Sigma^0\|_g)}} \leq C(K, \gamma).  \label{Equ_J equiv to L^2 with r^power}
    \end{align}
    Thus, (\ref{Equ_L^2 Growth Est at Scal 1}) provides a growth rate bound between different scales.
    \begin{proof}
        Take $K = K(\sigma, \Lambda)>2$ as given by Lemma \ref{Lem_Ana on SMC_Growth Rate Monoton}.
        
        Suppose for contradiction, there exists a family of LSMH $\Sigma_j^i \subset (\BB_2, g_j)$ ($i= 0, 1$, $j\geq 1$) satisfying (a) and (b) with area $\leq \Lambda$, index $\leq \Lambda$, $\delta = 1/j$, $g_j \to g_{\eucl}$ in $C^4$ but either (i) or (ii) fails. Let $u_j:= G^{\Sigma^1_j}_{\Sigma^0_j, g_j}$. By unique continuation of LSMH, WLOG $u_j$ is not identically $0$ on $A^{g_j}(\mathbf{0}, 1/2, 1)$.
        
        When $j\to \infty$, by \cite{sharp_compactness_2017} and assumption (a), $|\Sigma_j^i|$ subconverges to an LSMH with multiplicity $V_\infty$, independent of $i = 0, 1$. Moreover, by assumption (b) and Corollary \ref{Cor_Converg in all Scales}, $V_\infty = |\mbfC|\llcorner \BB_2$ for some stable minimal hypercone $\mbfC\in \scC_\Lambda$. Hence for sufficiently large $j$, (i) holds by Allard's regularity theorem \cite{allard_first_1972} and classical elliptic estimates. By the contradiction assumption, (\ref{Equ_L^2 Growth Est at Scal 1}) fails for $u_j$.
        
        Let $\varepsilon = \varepsilon(\sigma, \Lambda)>0$ to be determined later; 
        \begin{align}
            \begin{split}
                l_j  := \sup\big\{ l\in \mathbb{N}^+: &\ (\Sigma_j^1\Delta \graph_{\Sigma_j^0}(u_j))\cap A^g(\mathbf{0}, K^{-l-1}, 1) = \emptyset;\\
                &\ \|u_j\|_{C^2_*(A^g(\mathbf{0}, K^{-l-1}, 1)\cap \Sigma^0_j)} \leq \varepsilon \big\}.  
            \end{split}  \label{Equ_Def l_j}
        \end{align}
        By Allard'sregularity theorem, $l_j\to +\infty$ as $j\to \infty$ ($l_j$ are possibly $+\infty$ themselves).  
        By Lemma \ref{Lem_Ana on SMC_Quanti Uniqueness of Tang Cone}, there exists $\epsilon_j\searrow 0$ such that for every $s\in (0,1)$, $\eta_{\mathbf{0},s}(\Sigma_j^0)$ is $C^2$-$\epsilon_j$ close to $\mbfC$ in $A^{g_j}(\mathbf{0}, 1/4, 1)$. Therefore, we can choose $\varepsilon(\sigma, \Lambda)$ small enough such that Corollary \ref{Cor_Ana on SMC_Growth Rate Monoton with Perturb}, after rescaling, applies to $u_j$ with $\gamma := \gamma_2^+(\mbfC_\mathbf{0}\Sigma_j^0)-\sigma$ in each $A^g(\mathbf{0}, K^{-l-1}, K^{-l+2})$, $2\leq l\leq l_j$.  This provides,
        \begin{align}
            J^{\gamma_2^+ - \sigma}_K(u_j; K^{-l}) > J^{\gamma_2^+ - \sigma}_K(u_j; K^{-l+1}), \ \ \ \forall 1\leq l< l_j-1. \label{Equ_J(l) monotone in 1<l<l_j}
        \end{align}
        Also by the definition (\ref{Equ_Def l_j}) of $l_j$ and (\ref{Equ_J equiv to L^2 with r^power}), we have,
        \begin{align}
            J^{\gamma_2^+ - \sigma}_K(u_j; K^{-l}) \leq C(K, \Lambda)\epsilon\cdot K^{l (\gamma_2^+ - \sigma -1)}, \ \ \ \forall 1\leq l< l_j -1. \label{Equ_J(l) decay rate in l}
        \end{align}
        Note that by $\mu_2 \leq 0$, we have $\gamma_2^+ -\sigma -1 <0$.  Hence (\ref{Equ_J(l) monotone in 1<l<l_j}) and (\ref{Equ_J(l) decay rate in l}) together implies $l_j < +\infty$.  
        
        Now consider the renormalized sequences $\hat{\Sigma}^i_j:= \eta_{\mathbf{0},K^{-l_j}}(\Sigma_j^i)$, $i = 0, 1$. By assumption (b) and Corollary \ref{Cor_Converg in all Scales}, up to a subsequence, $|\hat{\Sigma}^0_j|\to |\mbfC|$ and $|\hat{\Sigma}^1_j|\to |\mbfC - \mathbf{x}_\infty|$ for some $\mathbf{x}_\infty \in \BB_1$.  Moreover, by the definition (\ref{Equ_Def l_j}) of $l_j$ and unique continuation of LSMH, we must have $\mathbf{x}_\infty \neq \mathbf{0}$.  
        
        On the other hand, by Allard's Regularity Theorem \cite{allard_first_1972}, $\hat{u}_j(x):= K^{l_j}u_j(K^{-l_j}x)$ $C^1$-subconverges to $\hat{u}_\infty := G_{\mbfC, g_{\eucl}}^{\mbfC - \mathbf{x}_\infty}$ outside a large ball.  Thus by (\ref{Equ_J(l) monotone in 1<l<l_j}), we have \[
        \cA\cR_\infty(\hat{u}_\infty; \mbfC) \leq \gamma_2^+ -\sigma \leq -\sigma <0\,.  \]
        This contradicts Lemma \ref{Lem_Ana on SMC_AR_infty(C+x; C) = 0}.
    \end{proof}
    
    By applying Lemma \ref{Lem_L^2 Growth Estimate} to all suitable scales with a bootstrap argument, we get,
    \begin{Cor} \label{Cor_L^2 Growth Est}
        Let $0<\sigma<\gamma_{\gap, \Lambda}$, $\sigma' := \gamma_{\gap, \Lambda}-\sigma$. Let $K = K(\sigma, \Lambda)>2$, $\delta_1=\delta_1(\sigma, \Lambda)>0$ be in Lemma \ref{Lem_L^2 Growth Estimate}.  There exists $\delta_2(\sigma, \Lambda)\in (0, \delta_1)$ with the following property.
        
        Suppose $\Sigma^i\ (i=0, 1), g, \gamma_2^+$ be in Lemma \ref{Lem_L^2 Growth Estimate} with (a) (b) holds for $\delta\leq \delta_2(\sigma, \Lambda)$ and $u:= G^{\Sigma^1}_{\Sigma^0, g}\in L^2(\Sigma^0)$. Let 
        \[
            r_u:= \|u\|_{L^2(A^g(\mathbf{0}, K^{-1}, 1); \|\Sigma^0\|_g)}^{2/(n-\sigma')}.
        \]
        Then,
        \begin{enumerate}[(i)]
            \item $u\in C^2(\Sigma^0\cap A^g(\mathbf{0}, r_u/2, 3/2))$ and \[
            \graph_{\Sigma^0, g}(u)\cap A^g(\mathbf{0}, r_u/2, 3/2) = \Sigma^1\cap A^g(\mathbf{0}, r_u/2, 3/2)\,;  \]
            \item For every $r \in [r_u, 1]$, we have 
                  \[
                      \|u\|_{C^2_*, A^g(\mathbf{0}, r, 1)} \leq C(\sigma, \Lambda)\|u\|_{L^2(A^g(\mathbf{0}, K^{-1}, 1);\|\Sigma^0\|_g)}\cdot r^{\gamma_2^+ - \sigma - 1} \leq C(\sigma, \Lambda) r_u^{\sigma'/2}\,.  
                  \]
        \end{enumerate}
    \end{Cor}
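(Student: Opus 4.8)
The plan is to iterate Lemma~\ref{Lem_L^2 Growth Estimate} over the geometric sequence of scales $s = K^{-l}$, $l = 0,1,2,\dots$, and to assemble the one-scale estimates (\ref{Equ_L^2 Growth Est at Scal 1}) into a genuine decay statement down to the cut-off radius $r_u$. Write $\mbfC := \mbfC_{\mathbf{0}}\Sigma^0$; the tangent cone is scale-invariant, so $\gamma_2^+ := \gamma_2^+(\mbfC)$ is the same at every scale, and by Lemma~\ref{Lem_gamma_gap >0} and the remark after it $\gamma_2^+ \ge \gamma_1^+(\mbfC) + \gamma_{\gap, \Lambda} \ge -(n-2)/2 + \gamma_{\gap, \Lambda}$, while $\mu_2 \le 0$ forces $\gamma_2^+ \le 0$. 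Hence $\gamma_2^+ - \sigma - 1 < 0$ and, with $\sigma' = \gamma_{\gap, \Lambda} - \sigma$,
\[
    (\gamma_2^+ - \sigma - 1) + \frac{n - \sigma'}{2} \;\ge\; \frac{\sigma'}{2} \;>\; 0 ;
\]
this is the only place $\sigma < \gamma_{\gap, \Lambda}$ is used, and it is precisely what makes $r_u$ the right stopping scale. For $s \in (0,1]$ set $\Sigma^i_s := \eta_{\mathbf{0},s}(\Sigma^i)$, let $g_s$ be the rescaled metric, and $u_s(x) := s^{-1}u(sx)$; a direct change of variables gives the exact identities
\[
    \|u_s\|_{L^2(A^{g_s}(\mathbf{0},a,b);\,\|\Sigma^0_s\|)} = s^{-1-n/2}\|u\|_{L^2(A^{g}(\mathbf{0},sa,sb);\,\|\Sigma^0\|)}, \qquad J^{\gamma_2^+-\sigma}_K(u_s;r) = s^{\gamma_2^+-\sigma-1}\,J^{\gamma_2^+-\sigma}_K(u;sr),
\]
together with the scale-invariance of the $C^2_*$-norm.

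The core of the argument is an induction on $l \ge 0$: for every $l$ with $K^{-l} \ge r_u$, the hypotheses (a), (b) of Lemma~\ref{Lem_L^2 Growth Estimate} hold for $(\Sigma^0_{K^{-l}}, \Sigma^1_{K^{-l}})$ in $(\BB_2, g_{K^{-l}})$, so (\ref{Equ_L^2 Growth Est at Scal 1}) and the rescaling identity give $J^{\gamma_2^+-\sigma}_K(u; K^{-l-1}) \le J^{\gamma_2^+-\sigma}_K(u; K^{-l})$, and telescoping yields $J^{\gamma_2^+-\sigma}_K(u; K^{-l}) \le J^{\gamma_2^+-\sigma}_K(u; 1) \asymp \|u\|_{L^2(A^g(\mathbf{0},K^{-1},1))}$ for all such $l$. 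In the inductive step: the metric bound only improves under rescaling ($\|g_{K^{-l}} - g_{\eucl}\|_{C^4} \le C\delta_2 \le \delta_1$); Lemma~\ref{Lem_L^2 Growth Estimate}(i) at the previous scale already writes $\Sigma^1$ as $\graph_{\Sigma^0, g}(u)$ on an annulus containing $A^g(\mathbf{0}, K^{-l}/2, K^{-l})$, so $u_{K^{-l}}$ is the graphical function on $A^{g_{K^{-l}}}(\mathbf{0}, 1/2, 1)$ and, by the rescaling identity, (\ref{Equ_J equiv to L^2 with r^power}), the inductive $J$-bound, and $\|u\|_{L^2(A^g(\mathbf{0},K^{-1},1))} = r_u^{(n-\sigma')/2}$,
\[
    \|u_{K^{-l}}\|_{L^2(A^{g_{K^{-l}}}(\mathbf{0},1/2,1))} \le C(K^{-l})^{\gamma_2^+-\sigma-1}\|u\|_{L^2(A^g(\mathbf{0},K^{-1},1))} \le C r_u^{(\gamma_2^+-\sigma-1)+(n-\sigma')/2} \le C r_u^{\sigma'/2} \le \delta_1
\]
provided $\delta_2(\sigma,\Lambda)$ is chosen small enough (using $K^{-l}\ge r_u$, $\gamma_2^+-\sigma-1<0$, and the arithmetic inequality); this gives (a). For (b), the density drop $\theta_{|\Sigma^i|}(x^i, K^{-l}) - \theta_{|\Sigma^i|}(x^i) \le \delta_1$ follows from the original (b) and the almost-monotonicity of density in the $C^4$-small metric $g$ (error $\le C(\Lambda)\delta_2 \le \delta_1$), while $x^0 = \mathbf{0}$ stays centered and $x^1 \in B^g_{K^{-l}K^{-3}}(\mathbf{0})$ is exactly the conclusion of Lemma~\ref{Lem_L^2 Growth Estimate}(i) at the previous scale, so $x^i \in \Sing(\Sigma^i_{K^{-l}}) \cap \BB_{1/2}$.

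Granting the telescoped bound, part~(i) follows because Lemma~\ref{Lem_L^2 Growth Estimate}(i) at each scale $K^{-l} \ge r_u$ gives $\Sigma^1 = \graph_{\Sigma^0, g}(u)$ on $A^g(\mathbf{0}, K^{-3-l}, \tfrac{3}{2} K^{-l})$; these annuli overlap (since $K^{-2} < \tfrac{1}{2}$) and their union contains $A^g(\mathbf{0}, r_u/2, 3/2)$, on which $u$ is then $C^2$. For part~(ii), rescaling the $C^2_*$-estimate of Lemma~\ref{Lem_L^2 Growth Estimate}(i) back to scale $s = K^{-l}$ gives $\|u\|_{C^2_*,\,A^g(\mathbf{0}, K^{-2-l}, K^{-l})} \le C(K^{-l})^{\gamma_2^+-\sigma-1}\|u\|_{L^2(A^g(\mathbf{0},K^{-1},1))}$; covering $[r_u,1]$ by these annuli and taking the largest admissible $l$ (so $K^{-l}\asymp r$) gives the first inequality of (ii), and substituting $r \ge r_u$, $\|u\|_{L^2(A^g(\mathbf{0},K^{-1},1))} = r_u^{(n-\sigma')/2}$, and the arithmetic inequality gives the second.

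I expect the main obstacle to be the closing of the induction — checking that the smallness hypothesis~(a) really does propagate down to scale $r_u$ rather than stalling at a larger scale. This hinges on the exponent $(\gamma_2^+-\sigma-1)+(n-\sigma')/2$ being nonnegative (it equals $\sigma'/2$ in the extremal case $\gamma_1^+ = -(n-2)/2$, $\gamma_2^+-\gamma_1^+ = \gamma_{\gap,\Lambda}$), so the geometric blow-up factor $(K^{-l})^{\gamma_2^+-\sigma-1}$ picked up at scale $K^{-l}$ is exactly compensated by the $L^2$-smallness $r_u^{(n-\sigma')/2}$ once $K^{-l}\ge r_u$, and on choosing $\delta_2$ so that all accumulated constants, the metric error, and the density almost-monotonicity error remain below $\delta_1$ uniformly in $l$. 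A minor additional point is to check that rescaling inflates the area bound only by a controlled factor, so that Lemma~\ref{Lem_L^2 Growth Estimate} may be applied throughout with essentially the same $\Lambda$.
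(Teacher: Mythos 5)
Your proof is correct and follows essentially the same approach as the paper: iterate Lemma~\ref{Lem_L^2 Growth Estimate} over the scales $K^{-l}$ down to $r_u$, telescope the $J$-estimates, and use the arithmetic inequality $(\gamma_2^+-\sigma-1)+(n-\sigma')/2 \ge \sigma'/2$ to close the loop and keep the rescaled graphical function below the threshold $\delta_1$. The only cosmetic difference is that the paper phrases the propagation of the smallness hypothesis as a proof by contradiction via the stopping scale $l_0$ (if $K^{-l_0}>r_u$, derive a contradiction with the maximality of $l_0$), whereas you run a direct forward induction — logically equivalent.
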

    \begin{proof}
        Assume that $u$ is not identically $0$. Then by unique continuation of LSMH, $r_u>0$.
        
        For each $\tau \in (0, 1]$, let $g_\tau:= \tau^{-2}\eta_{\mathbf{0},\tau}^*g$, $\Sigma^i_\tau := \eta_{\mathbf{0},\tau}^{-1}(\Sigma^i)$, $x^i_\tau:= \eta_{\mathbf{0}, \tau}^{-1}(x^i)$, $i=0,1$.  By the volume monotoncity formula for LSMH, we can choose $\delta_1'(\sigma, \Lambda)\in (0,\delta_1)$ small such that if (a), (b) hold for $\delta\leq \delta_1'$, then for every $\tau\in (0,1]$, we have 
        \begin{align*}
            \|g_\tau - g_{\eucl}\|_{C^3(\BB_2)}\leq \delta_1, & \  & \theta_{|\Sigma^i_\tau|}(x_\tau^i, 1) - \theta_{|\Sigma^i_\tau|}(x^i_\tau) \leq \delta_1.
        \end{align*}
        
        Let 
        \begin{align*}
            l_0 := \sup\big\{l\in \mathbb{N}^+: & \|u\|_{L^2(A^g(\mathbf{0}, \tau/K,\tau); \|\Sigma^0\|_g)}\leq \delta_1' \cdot \tau^{1+ n/2}, \\
                                                & \ \forall \tau = K^{-m}\in [K^{-l+1}, 1],\ m \text{ integer } \big\}.     
        \end{align*}
        \textbf{Claim.} By taking $\delta_2(\sigma, \Lambda)\in (0, \delta_1')$ small enough, we have $r_u\geq K^{-l_0}$ provided (a) and (b) hold for $\delta<\delta_2$.

        \noindent \textit{Proof of the Claim.} WLOG assume that $l_0<+\infty$. Choose $\delta_2(\sigma, \Lambda)\in (0,\delta_1')$ small enough such that \[
            \|u\|_{L^2(A^g(\mathbf{0}, K^{-1}, 1); \|\Sigma^0\|_g)} < \varepsilon,  \]
        where $\varepsilon = \varepsilon(\sigma, \Lambda)<1$ small enough, which will be determined later.
        
        By definition of $l_0$ and Lemma \ref{Lem_L^2 Growth Estimate}, inductively on $l$ we have, $\forall 1\leq l\leq l_0$, \[
            J_K^{\gamma_2^+ -\sigma}(u, K^{-l})\leq J_K^{\gamma_2^+ - \sigma}(u, 1)\leq C(\sigma, \Lambda)\|u\|_{L^2(A^g(\mathbf{0}, K^{-1}, 1);\|\Sigma^0\|_g)}.   \]
        Hence combined with (\ref{Equ_J equiv to L^2 with r^power}) we have for every $1\leq l\leq l_0$, 
        \begin{align}
            \|u\|_{L^2(A^g(\mathbf{0}, K^{-l-1}, K^{-l}))}\leq C(\sigma, \Lambda)\|u\|_{L^2(A^g(\mathbf{0}, K^{-1}, 1); \|\Sigma^0\|_g)}\cdot K^{-l(\gamma_2^+ - \sigma + n/2)}.  \label{Equ_u L^2 growth r^gamma_2^+ - sigma + n/2}
        \end{align}
        Therefore if $K^{-l_0}> r_u$, then we must have, 
        \begin{align}
            \begin{split}
                \|u\|_{L^2(A^g(\mathbf{0}, K^{-l_0-1}, K^{-l_0}))}
                & \leq C(\sigma, \Lambda)r_u^{\sigma'/2} \cdot r_u^{n/2 - \sigma'}\cdot K^{-l_0(\gamma_2^+ -\sigma + n/2)} \\
                & \leq C(\sigma, \Lambda)r_u^{\sigma'/2} \cdot K^{-l_0(n + \gamma_2^+ -\sigma -\sigma')} \\
                & \leq \bar{C}(\sigma, \Lambda)r_u^{\sigma'/2} \cdot K^{-l_0(1+ n/2)},
            \end{split}  \label{Equ_u L^2 growth K^-l(1+n/2)}
        \end{align}
        where the first inequality follows from (\ref{Equ_u L^2 growth r^gamma_2^+ - sigma + n/2}) and the definition of $r_u$, the second inequality follows by $K^{-l_0}>r_u$ and $\sigma'<\gamma_{\gap, \Lambda}< (n-2)/2$, the third inequality follows by the fact that $\sigma+\sigma' = \gamma_{\gap, \Lambda}$ and that 
        \begin{align}
            \frac{n-2}{2} + \gamma_2^+ - \gamma_{\gap, \Lambda} \geq -\gamma_1^+ + \gamma_2^+ -\gamma_{\gap, \Lambda}\geq 0.  \label{Equ_(n-2)/2 + gamma_2^+ - gamma_gap > 0}            
        \end{align}
        
        Choose $\varepsilon(\sigma, \Lambda)$ small enough such that $\bar{C}(\sigma, \Lambda)\varepsilon^{\sigma'/(n-\sigma')}\leq \delta_1'/2$. (\ref{Equ_u L^2 growth K^-l(1+n/2)}) contradicts the choice of $l_0$.  This concludes the proof of the Claim.$\hfill\blacksquare$
        
        Now by the Claim and Lemma \ref{Lem_L^2 Growth Estimate}, (i) holds; (ii) also holds by combining the estimate (\ref{Equ_u L^2 growth r^gamma_2^+ - sigma + n/2}) and (\ref{Equ_(n-2)/2 + gamma_2^+ - gamma_gap > 0}).
    \end{proof}
    
    For later applications, we also need to compare the graphical functions of different hypersurfaces.  This is also derived from Lemma \ref{Lem_L^2 Growth Estimate}.
    \begin{Cor} \label{Cor_L^2 Growth est for pair}
        Let $\Sigma^i = \partial U^i$ ($i=0,1, 2$) be LSMH in $(\BB_2^8, g)$ with area $\leq \Lambda$ and index $\leq \Lambda$. Assume
        \begin{enumerate} [(a)]
        \item $\|g-g_{\eucl}\|_{C^4}\leq \delta$,  $\|u^{(i)}\|_{L^2(A^g(\mathbf{0},1/2, 1); \|\Sigma^0\|_g)} \leq \delta$, where $u^{(i)}:= G_{\Sigma^0, g}^{\Sigma^i} \in L^\infty(\Sigma^0)$, $i = 1,2$;
        \item For $i = 0, 1, 2$, there exists $x^i\in \Sing(\Sigma^i)\cap \BB_{1/2}$ such that $x^0 = \mathbf{0}$ and
          \begin{align*}
           & \theta_g(x^i, 1; \|\Sigma^i\|_g) - \theta(x^i; \|\Sigma^i\|_g) \leq \delta;                   \\
           & \theta(x^0; \|\Sigma^0\|_g) = \theta(x^1; \|\Sigma^1\|_g) = \theta(x^2; \|\Sigma^2\|_g). 
          \end{align*}
        \end{enumerate}
        Let $0<\sigma<\gamma_{\gap, \Lambda}/2$, $\sigma' := \gamma_{\gap, \Lambda} - \sigma$.  Then for $\delta\leq \delta_3(\sigma, \Lambda)$ and for $r\in (40 \sup\{r_{u^{(1)}}, r_{u^{(2)}}\}, 1/4)$, we have \[
        \|u^{(1)} - u^{(2)}\|_{L^2(A^g(\mathbf{0},r/2, r); \|\Sigma^0\|_g)} \leq C(\sigma, \Lambda)\|G^{\Sigma^2}_{\Sigma^1, g}\|_{L^2(\|\Sigma^1\|_g)}\cdot r^{n/2 + \gamma_2^+(\mbfC_{x^1}\Sigma^1) - \sigma}.   \]
    \end{Cor}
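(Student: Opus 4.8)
The plan is to recognize $u^{(1)}-u^{(2)}$ as the graphical function $w:=G^{\Sigma^2}_{\Sigma^1,g}$ of $\Sigma^2$ over $\Sigma^1$ — up to a reparametrization that distorts weighted $L^2$ norms by a factor $1+o_{\delta_3}(1)$ — and then apply the decay estimate of Corollary \ref{Cor_L^2 Growth Est} to the pair $(\Sigma^1,\Sigma^2)$ \emph{centered at the singular point $x^1\in\Sing(\Sigma^1)$} rather than at $\mathbf 0$. We may assume $w\not\equiv 0$, as otherwise $\Sigma^1=\Sigma^2$ and the inequality is trivial.

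\emph{Step 1 (locating the singularities).} Applying Corollary \ref{Cor_L^2 Growth Est} to the pairs $(\Sigma^0,\Sigma^1)$ and $(\Sigma^0,\Sigma^2)$ — legitimate since $\sigma<\gamma_{\gap,\Lambda}/2<\gamma_{\gap,\Lambda}$, for $\delta_3\le\delta_2(\sigma,\Lambda)$ — each $\Sigma^i$ ($i=1,2$) agrees with $\graph_{\Sigma^0,g}(u^{(i)})$ on $A^g(\mathbf 0,r_{u^{(i)}}/2,3/2)$, with $\|u^{(i)}\|_{C^2_*,A^g(\mathbf 0,r,3/2)}\le C(\sigma,\Lambda)\|u^{(i)}\|_{L^2(A^g(\mathbf 0,K^{-1},1))}\,r^{\gamma_2^+-\sigma-1}$ for $r\ge r_{u^{(i)}}$. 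Since $\Sigma^0$ is regular on that annulus while $x^i\in\Sing(\Sigma^i)\cap\BB_{1/2}$, we conclude $\dist_g(x^i,\mathbf 0)<r_{u^{(i)}}/2$; in particular $x^1,x^2\in\BB_{K^{-3}}$ and $\dist_g(x^1,x^2)<\sup\{r_{u^{(1)}},r_{u^{(2)}}\}$. Shrinking $\delta_3$ we may also assume $\sup\{r_{u^{(1)}},r_{u^{(2)}}\}\le\delta_3^{2/(n-\sigma')}<1/160$, so the asserted range for $r$ is nonempty.

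\emph{Step 2 (change of parametrization).} Fix $r$ in the given range. By Step 1 both $\Sigma^1$ and $\Sigma^2$ are $C^2$ graphs over $\Sigma^0$ on $A^g(\mathbf 0,r/40,3/2)$ with $C^2_*$ norm $\le C(\sigma,\Lambda)\delta_3$, so for $\delta_3$ small $\Sigma^2$ is also a $C^2$ graph of $w$ over $\Sigma^1$ there. Using the explicit estimates of Appendix \ref{Sec_App_Geom of Minimal Graph}: the map sending a point of $\Sigma^0$ to its foot point on $\Sigma^1$ along $\nu^0$ is a $C^1$ diffeomorphism with differential within $C(\sigma,\Lambda)\delta_3$ of the identity, the measures $\|\Sigma^0\|_g$ and $\|\Sigma^1\|_g$ are mutually comparable, and along this map $u^{(1)}-u^{(2)}$ corresponds to $w$ up to a factor $1+C(\sigma,\Lambda)\delta_3$. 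Since $\dist_g(x^1,\mathbf 0)<r_{u^{(1)}}/2<r/80$, we have $A^g(\mathbf 0,r/2,r)\subset A^g(x^1,r/3,2r)$, hence
\[
  \|u^{(1)}-u^{(2)}\|_{L^2(A^g(\mathbf 0,r/2,r);\|\Sigma^0\|_g)}\le C(\sigma,\Lambda)\,\|w\|_{L^2(A^g(x^1,r/3,2r);\|\Sigma^1\|_g)}.
\]
The same comparison at scale $\simeq 1$, together with the $C^2_*$ bounds of Step 1 (which control $u^{(i)}$ on $A^g(\mathbf 0,K^{-1}/2,3/2)$ by $C(\sigma,\Lambda)\|u^{(i)}\|_{L^2(A^g(\mathbf 0,K^{-1},1))}$), gives $r_w\le C(\sigma,\Lambda)\sup\{r_{u^{(1)}},r_{u^{(2)}}\}$; absorbing this constant into $\delta_3$ via the crude bound $r_{u^{(i)}}\le\delta_3^{2/(n-\sigma')}$, we may assume $r>3r_w$.

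\emph{Step 3 (decay for $w$ and conclusion).} Apply Corollary \ref{Cor_L^2 Growth Est} to the pair $(\Sigma^1,\Sigma^2)$ with $x^1$ in the role of $\mathbf 0$ — a harmless recentering since $x^1\in\BB_{K^{-3}}$ (e.g. after rescaling by $\eta_{x^1,1/2}$, so that $B^g(x^1,1)$ becomes $\BB_2$). The hypotheses (a), (b) of Lemma \ref{Lem_L^2 Growth Estimate} hold: $\|g-g_{\eucl}\|_{C^4}\le\delta_3$, the $C^2_*$ bounds of Step 1 make $\Sigma^1,\Sigma^2$ both $C(\sigma,\Lambda)\delta_3$-close to a fixed stable hypercone on $A^g(x^1,1/2,1)$ so $\|w\|_{L^2(A^g(x^1,1/2,1))}\le\delta_2$, and $x^2\in\Sing(\Sigma^2)\cap B^g(x^1,1/2)$ with $\theta_{|\Sigma^2|}(x^2)=\theta_{|\Sigma^1|}(x^1)$ while the density drops at $x^1,x^2$ are $\le\delta_3$ by volume monotonicity (both points lie within $\BB_{K^{-3}}$ of $\mathbf 0$). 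Writing $\gamma_2^+:=\gamma_2^+(\mbfC_{x^1}\Sigma^1)$, part (ii) gives, for every $\rho\in[r_w,1]$,
\[
  \|w\|_{C^2_*,A^g(x^1,\rho,1)}\le C(\sigma,\Lambda)\,\|w\|_{L^2(A^g(x^1,K^{-1},1);\|\Sigma^1\|_g)}\,\rho^{\gamma_2^+-\sigma-1}.
\]
Since $r_\cS(\cdot;M,g,\Sigma^1)\simeq\dist_g(\cdot,x^1)$ on $A^g(x^1,\rho/2,\rho)$, this converts to $\|w\|_{L^2(A^g(x^1,\rho/2,\rho);\|\Sigma^1\|_g)}\le C(\sigma,\Lambda)\,\|w\|_{L^2(A^g(x^1,K^{-1},1))}\,\rho^{n/2+\gamma_2^+-\sigma}$; summing over the $O(1)$ dyadic scales between $r/3$ and $2r$ (legitimate as $r_w\le r/3$, and note $n/2+\gamma_2^+-\sigma\ge 1-\sigma>0$), and using $\|w\|_{L^2(A^g(x^1,K^{-1},1))}\le\|G^{\Sigma^2}_{\Sigma^1,g}\|_{L^2(\|\Sigma^1\|_g)}$, yields
\[
  \|w\|_{L^2(A^g(x^1,r/3,2r);\|\Sigma^1\|_g)}\le C(\sigma,\Lambda)\,\|G^{\Sigma^2}_{\Sigma^1,g}\|_{L^2(\|\Sigma^1\|_g)}\,r^{n/2+\gamma_2^+-\sigma}.
\]
Combining with the displayed inequality of Step 2 proves the estimate. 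The main obstacle is the scale bookkeeping in Steps 2–3: showing $r_w\lesssim\sup\{r_{u^{(1)}},r_{u^{(2)}}\}$ so the asserted range of $r$ lies inside the validity interval of Corollary \ref{Cor_L^2 Growth Est} after recentering at $x^1$, and verifying that passing between graphs over $\Sigma^0$ and over $\Sigma^1$ distorts the weighted $L^2$ norms only by a factor $1+o_{\delta_3}(1)$; the rest is a direct application of the local growth estimates already in hand.
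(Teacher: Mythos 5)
Your proposal is correct and rests on the same two pillars the paper uses: (1) compare $u^{(1)}-u^{(2)}$ on $\Sigma^0$ with the graphical function $w=G^{\Sigma^2}_{\Sigma^1,g}$ on $\Sigma^1$, and (2) apply Corollary~\ref{Cor_L^2 Growth Est}(ii) to the pair $(\Sigma^1,\Sigma^2)$ recentered at $x^1$. The difference is purely in the plumbing between the two pillars. The paper's chain is $L^2\to L^\infty\to L^\infty\to C^2_*\to L^2$: it first bounds the $L^2$ norm by the sup norm via the volume bound for $\Sigma^0$ on the annulus, then compares $\sup|u^{(2)}-u^{(1)}|$ with $\sup|w|$ using the elementary triangle-inequality Lemma~\ref{Lem_Comparing G_Sigma^1, g^Sigma^2 and u^2 - u^1} (stated immediately after the corollary precisely for this purpose), and then invokes $r_\cS\simeq r$ together with Corollary~\ref{Cor_L^2 Growth Est}(ii). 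You instead compare the weighted $L^2$ norms directly, via the $C^1$-close foot-point diffeomorphism $\Sigma^0\to\Sigma^1$ and comparability of the area measures, and then sum the $L^2$ decay estimate over dyadic annuli. Both routes work; the paper's pass through $L^\infty$ sidesteps any Jacobian bookkeeping and is a bit shorter, while your version makes more explicit (in Step 1) the localization $x^1,x^2\in\BB_{K^{-3}}$ and the bound $r_w\lesssim\sup\{r_{u^{(1)}},r_{u^{(2)}}\}$, which the paper uses implicitly when writing $A^g(\mathbf{0},r/4,2r)\subset A^g(x^1,r/10,1)$.
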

    \begin{proof}
        Denote for simplicity, $v:= G_{\Sigma^1, g}^{\Sigma^2} \in L^\infty(\Sigma^1)$;  We have,
        \begin{align*}
            \|u^{(2)} - u^{(1)}\|_{L^2(A^g(\mathbf{0}, r/2, r))} 
             & \leq C(\Lambda)r^{n/2}\cdot \sup_{A^g(\mathbf{0}, r/2, r)\cap \Sigma^0}|u^{(2)}- u^{(1)}|                                     \\
             & \leq C(\Lambda) r^{n/2}\cdot \sup_{A^g(\mathbf{0}, r/4, 2r)\cap \Sigma^1}|v|                                                  \\
             & \leq C(\Lambda) r^{n/2 + 1}\cdot \|v\|_{C^2_*, A^g(x^1, r/10, 1)\cap \Sigma^1}                                                \\
             & \leq C(\Lambda, \sigma) r^{n/2 + 1}\cdot r^{\gamma_2^+(\mbfC_{x^1}\Sigma^1) - \sigma -1} \|v\|_{L^2(\|\Sigma^1\|_g)}.            
        \end{align*}
        Here, the first inequality follows by volume upper bound for minimal hypersurface $\Sigma^0$ in $A^g(\mathbf{0}, r/2, r)$; The second inequality, after taking $\delta$ sufficiently small, is given by the following geometric Lemma \ref{Lem_Comparing G_Sigma^1, g^Sigma^2 and u^2 - u^1} comparing $G_{\Sigma^1, g}^{\Sigma^2}$ with $|u^{2} - u^{1}|$; The third inequality follows directly from definition of $C^2_*$ norm and that $A^g(\mathbf{0},r/4, 2r)\subset A^g(x^1; r/10, 1)$; The last inequality is given by Corollary \ref{Cor_L^2 Growth Est} (ii). 
    \end{proof}
    
    \begin{Lem} \label{Lem_Comparing G_Sigma^1, g^Sigma^2 and u^2 - u^1}
        There is a universal constant $\varepsilon_0$ with the following property. Let $\Sigma^i = \partial U^i$ ($i= 0, 1, 2$) be connected $C^2$ hypersurfaces in $\BB_1$. Suppose $g$ is a $C^2$ metric on $\BB_1$ such that
            \begin{enumerate} [(a)]
                \item $|A_{\Sigma^0}|_g \leq \varepsilon_0$ and $\mathbf{0}\in \Sigma^0$; $\|g- g_{\eucl}\|_{C^2}\leq \varepsilon_0$;
                \item $\Sigma^i = \graph_{\Sigma^0}(u^{(i)})$ with $\|u^{(i)}\|_{C^2}\leq \varepsilon_0$, $i=1,2$.
            \end{enumerate}
            Then we have \[
                |u^{(2)}(\mathbf{0}) - u^{(1)}(\mathbf{0}) | \leq 2\sup_{\BB_{1/4}}|G_{\Sigma^1, g}^{\Sigma^2}| \leq 4\sup_{\BB_{1/2}}|u^{(2)} - u^{(1)}|.   \]
    \end{Lem}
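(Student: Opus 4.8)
The plan is to show that, on the relevant scale, $G_{\Sigma^1,g}^{\Sigma^2}$ and $u^{(2)}-u^{(1)}$ are both comparable — with multiplicative error $1+O(\varepsilon_0)$ — to the honest distance $\dist_g(\cdot,\Sigma^2)$, and to deduce the two inequalities from this. First I would collect the elementary consequences of (a) and (b): after restricting to $\BB_{3/4}$, the hypersurfaces $\Sigma^0,\Sigma^1,\Sigma^2$ are graphs over the hyperplane $T_{\mathbf 0}\Sigma^0$ with $C^2$-norm $\leq C\varepsilon_0$ (with $g$ being $C^2$-close to $g_\eucl$); the normal exponential map of $\Sigma^0$ is a diffeomorphism on a tube of radius $\geq 1/8$ about $\Sigma^0\cap\BB_{3/4}$, so the nearest-point projection $\pi^0$ onto $\Sigma^0$ is well defined there; $\Sigma^1$ lies in the $\varepsilon_0$-tube, and $\pi^0|_{\Sigma^1}$ is the inverse of $x\mapsto y^1(x):=\exp^g_x(u^{(1)}(x)\nu^0(x))$, moving points by at most $\varepsilon_0$; and the unit normals of $\Sigma^0,\Sigma^1,\Sigma^2$, as well as the tangent directions of the $\nu^0$-geodesics, all coincide up to $C\varepsilon_0$ at nearby points (standard graph-over-hypersurface estimates, cf. Appendix \ref{Sec_App_Geom of Minimal Graph}).

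Next comes the key comparison. Fix $z\in\Sigma^1$, set $x:=\pi^0(z)$, and note that $y^2(x):=\exp^g_x(u^{(2)}(x)\nu^0(x))\in\Sigma^2$ lies on the same $\nu^0$-geodesic $s\mapsto\exp^g_x(s\nu^0(x))$ as $z$, whence $\dist_g(z,y^2(x))=|u^{(2)}(x)-u^{(1)}(x)|$. Since $\Sigma^2$ is a $C^2$-small graph, within $\BB_{3/4}$ this geodesic segment meets $\Sigma^2$ only at $y^2(x)$, and the signed distance function $d=d_{\Sigma^2}$ to $\Sigma^2$ satisfies $|\nabla d|=1$ with $\nabla d$ within $C\varepsilon_0$ of $\pm\nu^2$ near $z$; consequently $\langle\nabla d,\dot\gamma\rangle$ and $\langle\nabla d,\nu^1(z)\rangle$ are both $\geq 1-C\varepsilon_0$ in absolute value along the short segments that enter the argument (here $\dot\gamma$ is the tangent of the above $\nu^0$-geodesic). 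Integrating $d$ along the segment $z\to y^2(x)$ gives $\dist_g(z,\Sigma^2)=|d(z)|\geq(1-C\varepsilon_0)|u^{(2)}(x)-u^{(1)}(x)|$, and following the $\nu^1$-ray issuing from $z$ — which by the consistent choice of the domains $U^i$ points toward $\Sigma^2$ and first meets it at the unique nearby sheet — gives $|G_{\Sigma^1,g}^{\Sigma^2}(z)|\leq\dist_g(z,\Sigma^2)/(1-C\varepsilon_0)$. Combined with the trivial bounds $\dist_g(z,\Sigma^2)\leq\min\{|G_{\Sigma^1,g}^{\Sigma^2}(z)|,\,|u^{(2)}(x)-u^{(1)}(x)|\}$ this yields, for a universal $C$ and all $z\in\Sigma^1\cap\BB_{3/4}$ with $x=\pi^0(z)$,
\[
    (1-C\varepsilon_0)\,\bigl|u^{(2)}(x)-u^{(1)}(x)\bigr|\ \leq\ \bigl|G_{\Sigma^1,g}^{\Sigma^2}(z)\bigr|\ \leq\ (1-C\varepsilon_0)^{-1}\,\bigl|u^{(2)}(x)-u^{(1)}(x)\bigr|.
\]

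Finally I would fix $\varepsilon_0$ so small that $1-C\varepsilon_0\geq 1/2$ and read off the Lemma. For the first inequality, apply the display to $z=y^1(\mathbf 0)$: then $x=\mathbf 0$ and $z\in B^g_{\varepsilon_0}(\mathbf 0)\subset\BB_{1/4}$, so $|u^{(2)}(\mathbf 0)-u^{(1)}(\mathbf 0)|\leq 2|G_{\Sigma^1,g}^{\Sigma^2}(z)|\leq 2\sup_{\BB_{1/4}}|G_{\Sigma^1,g}^{\Sigma^2}|$. For the second, any $z\in\Sigma^1\cap\BB_{1/4}$ has $x=\pi^0(z)\in\Sigma^0\cap\BB_{1/2}$ and $|G_{\Sigma^1,g}^{\Sigma^2}(z)|\leq 2|u^{(2)}(x)-u^{(1)}(x)|\leq 2\sup_{\BB_{1/2}}|u^{(2)}-u^{(1)}|\leq 4\sup_{\BB_{1/2}}|u^{(2)}-u^{(1)}|$; taking the supremum over $z$ concludes. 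The substance here is light; the main obstacle is bookkeeping: one must pin down the choice of $U^1,U^2$ (using that $U^0\Delta U^i$ is small) so that the $\nu^1$-ray from $\Sigma^1$ genuinely heads toward $\Sigma^2$, $G_{\Sigma^1,g}^{\Sigma^2}$ is attained at the nearby sheet rather than a distant one, and its sign matches that of $u^{(2)}-u^{(1)}$; and one must carry all the almost-flat/almost-Euclidean estimates — closeness of the three normal fields, $|\nabla d_{\Sigma^2}|=1$ with $\nabla d_{\Sigma^2}$ close to $\pm\nu^2$, and the $\nu^0$- and $\nu^1$-geodesics remaining nearly straight over lengths $O(\varepsilon_0)$ — keeping every error term linear in $\varepsilon_0$ with a universal constant.
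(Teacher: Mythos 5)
Your proposal is correct and, while far more detailed, follows essentially the same route as the paper, whose entire proof of this lemma is the single sentence ``This follows directly from the triangle inequality.'' The one genuine subtlety you flag — pinning down $U^1, U^2$ so that the $\nu^1$-ray heads toward the nearby sheet of $\Sigma^2$ with the correct sign — is settled by the convention fixed in the text immediately preceding Lemma \ref{Lem_L^2 Growth Estimate} (the $U^i$ are chosen so that $U^0\Delta U^i$ is small), and with that convention in place your comparison through $\dist_g(\cdot,\Sigma^2)$ and the signed distance function goes through.
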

    \begin{proof}
        This follows directly from the triangle inequality.
    \end{proof}

    \subsection{Induced Jacobi Field for Pairs}
        Let $\Sigma$ be a connected two-sided LSMH in a closed manifold $(M^8, g)$, with $\nu=\nu_{\Sigma, g}$ to be a global unit normal field. We also fix an integer $k\geq 4$ and $\alpha\in (0, 1)$.
        
        Since near each singularity, $\Sigma$ is a $C^{1,\alpha}$ diffeomorphic to a minimal hypercone restricted in unit ball, we know that there exists an open subset $\cN\subset M$ being a sufficiently small neighborhood of $\Clos(\Sigma)$ such that $\cN\setminus \Clos(\Sigma)$ has two connected component $\cN_+ \sqcup \cN_-$ and $\nu$ points into $\cN_+$.  
        Moreover, for every $(g', \Sigma')\in \cM^{k, \alpha}(M)$ close enough to $(g, \Sigma)$, we know that $\Clos(\Sigma') \subset \cN$ and that there is a unique connected component $\cN'_-$ of $\cN\setminus \Clos{\Sigma'}$ such that 
        \begin{align*}
            \cN'_- \Delta \cN_- \subset\subset \cN.
        \end{align*}
        
        In this subsection, to define a graphical function as in (\ref{Equ_Growth est_Graph Function in general}), the ambient manifold $N$ is chosen to be $\cN$. Let $\delta_4 = \delta_4(g, \Sigma, \Lambda)>0$ such that for every $(g', \Sigma')\in \cL^{k, \alpha}(g, \Sigma; \Lambda, \delta_4)$, $\Clos(\Sigma')\subset \cN$ and bounds a unique $\cN_-$ as above.  When defining graphical functions between any two elements $(g^1, \Sigma^1), (g^2, \Sigma^2)\in \cL^{k, \alpha}(g, \Sigma; \Lambda, \delta_4)$ as in (\ref{Equ_Growth est_Graph Function in general}), we always choose $U^i:= \cN^i_-$.
        
        We define a semi-metric on $\cL^{k,\alpha}(g, \Sigma; \Lambda, \delta_4)$ by \[
            \mbfD^\cL[(g_1,\Sigma_1), (g_2, \Sigma_2)]:= \|g_1-g_2\|_{L^\infty(M)} + \|G^{\Sigma^2}_{\Sigma_1, g_1}\|_{L^2(\|\Sigma_1\|_{g_1})} + \|G^{\Sigma^1}_{\Sigma_2, g_2}\|_{L^2(\|\Sigma_2\|_{g_2})}.   \]
        Here, norms of tensors are taken with respect to metric $g$ which we have fixed at the beginning. By \cite{schoen_regularity_1981}, $(g_j, \Sigma_j)$ converges to $(g_\infty, \Sigma_\infty)$ in $\cL^{k,\alpha}$ if and only if $\mbfD^\cL[(g_j, \Sigma_j), (g_\infty, \Sigma_\infty)]\to 0$ and $\|g_j - g_\infty\|_{C^{k-1,\alpha}(M)}\to 0$ when $j\to \infty$.
        
        The following Lemma is an application of Corollary \ref{Cor_L^2 Growth est for pair}.
        \begin{Lem} \label{Lem_Tangent vectors to scL^k}
            Let $r\in (0, \injrad(g, \Sigma))$.  Suppose for $1\leq j< \infty$,
            \begin{enumerate}[(a)]
                \item $(g_j^0, \Sigma_j^0) \neq (g_j^1, \Sigma_j^1)\in \cL^{k,\alpha}(g, \Sigma; \Lambda, \delta_1)$ are distinct pairs, and $(\bar{g}_j, \bar{\Sigma}_j) \in \cL^{k,\alpha}(g, \Sigma; \Lambda, \delta_1)$ such that when $j\to \infty$, $(g^i_j, \Sigma^i_j) \to (g, \Sigma)$ in $\cL^{k,\alpha}$, $i = 0, 1$ and $(\bar{g}_j, \bar{\Sigma}_j)\to (g, \Sigma)$ also in $\cL^{k,\alpha}$;
                \item For $i=0, 1$, $g_j^i = (1+t_jf_j^i)\bar{g}_j$, where $t_j\searrow 0$, $f_j^i\in C^{k,\alpha}(M)$ with uniformly bounded $C^{k,\alpha}$ norms, such that $\spt(f_j^i)\cap B^g(\Sing(\Sigma), r) = \emptyset$, and that either $f_j^0\equiv f_j^1$ (in which case we set $f_\infty \equiv 0$), or $f_j^1 - f_j^0\to f_\infty$ in $C^{k,\alpha}(M)$ satisfying that $\nu_{\Sigma, g}(f_\infty)$ is not identically $0$ along $\Sigma$.
            \end{enumerate}
            Let $d_j:= \mbfD^\cL [(g^0_j, \Sigma_j^0), (g^1_j, \Sigma_j^1)] >0$ and $u^{(i)}_j:= G^{\Sigma_j^i}_{\bar{\Sigma}_j, \bar{g}_j}\in  L^2(\bar{\Sigma}_j)$.  Then we have, after passing to a subsequence of $j\to \infty$,
            \begin{align*}
                (u^{(1)}_j - u^{(0)}_j)/ d_j \to \hat{u}_\infty  \ \text{ in }C^2_{loc}(\Sigma); & \  & t_j(f_j^1 - f_j^0)/ d_j \to \hat{f}_\infty \ \text{ in }C^2(M),
            \end{align*}
            where $\hat{f}_\infty \in \RR_{\geq 0}\langle f_\infty\rangle$, $\hat{u}_\infty$ is a nonzero solution of \[
                L_{\Sigma, g} \hat{u}_\infty = \frac{n}{2}\nu_{\Sigma, g}(\hat{f}_\infty);   \]
            Moreover, $\hat{u}_\infty$ satisfies $\cA\cR_p(\hat{u}_\infty) \geq \gamma_2^+(\mbfC_p\Sigma)$ for every $p\in \Sing(\Sigma)$.  In other words, $\hat{u}_\infty$ is a function of slower growth.

        \end{Lem}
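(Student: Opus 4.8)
The plan is to normalize everything by $d_j := \mbfD^\cL[(g_j^0,\Sigma_j^0),(g_j^1,\Sigma_j^1)]>0$ and pass to subsequential limits. Write $w_j := u_j^{(1)}-u_j^{(0)}$ and $\phi_j := t_j(f_j^1-f_j^0)$, so $g_j^1 = (1+\phi_j+O(t_j^2))g_j^0$; since $(g_j^i,\Sigma_j^i)\to(g,\Sigma)$ in $\cL^{k,\alpha}$ we have $d_j\to 0$ and, for $j$ large, all pairs lie in $\cL^{k,\alpha}(g,\Sigma;\Lambda,\delta)$ for any prescribed $\delta>0$. I would first dispose of $\phi_j/d_j$: when $f_j^0\equiv f_j^1$ it is identically zero and $\hat f_\infty:=0$; otherwise hypothesis (b) forces $f_j^1-f_j^0\to f_\infty\neq 0$ in $C^{k,\alpha}$, so $d_j\geq\|g_j^0-g_j^1\|_{L^\infty}=\|\phi_j\bar g_j\|_{L^\infty}\geq c_0t_j$ for $j$ large, whence $\phi_j/d_j$ is $C^{k,\alpha}$-bounded and subconverges in $C^2(M)$ to $\hat f_\infty=(\lim_j t_j/d_j)f_\infty\in\RR_{\geq 0}\langle f_\infty\rangle$.

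Next I would produce $\hat u_\infty$ on the regular part. On any compact $\Omega\subset\subset\Sigma$ the functions $u_j^{(0)},u_j^{(1)}$ are $C^{2,\alpha}$-small solutions of the minimal surface equation for normal graphs over $\bar\Sigma_j$ in the metrics $g_j^0,g_j^1$; subtracting, $w_j$ solves a uniformly elliptic equation $\cL_j w_j=H_j$ on $\Omega$ whose coefficients converge to those of $L_{\Sigma,g}$ and whose inhomogeneous term $H_j$ is the $g_j^1$-mean curvature of $\Sigma_j^0$. By the conformal change-of-mean-curvature formula $H_j=\tfrac n2\nu(\phi_j)+(\text{higher order})$, so $\|H_j\|_{C^{0,\alpha}(\Omega)}\leq C(\Omega)d_j$ (trivially, with $H_j\equiv 0$, when $f_j^0\equiv f_j^1$, and using $d_j\gtrsim t_j$ otherwise). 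Lemma~\ref{Lem_Comparing G_Sigma^1, g^Sigma^2 and u^2 - u^1}, applied on nested compact sets, gives a two-sided comparison between $w_j$ and the graphical functions $G^{\Sigma_j^1}_{\Sigma_j^0,\bar g_j}$, $G^{\Sigma_j^0}_{\Sigma_j^1,\bar g_j}$ entering $\mbfD^\cL$, so $\|w_j\|_{L^2(\Omega)}\leq C(\Omega)d_j$. Interior Schauder estimates then bound $\|w_j/d_j\|_{C^{2,\alpha}}$ on compact subsets of $\Sigma$, and a diagonal argument extracts a subsequence with $w_j/d_j\to\hat u_\infty$ in $C^2_{loc}(\Sigma)$; passing to the limit in $\cL_j(w_j/d_j)=H_j/d_j$ yields $L_{\Sigma,g}\hat u_\infty=\tfrac n2\nu_{\Sigma,g}(\hat f_\infty)$.

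For the slower-growth property I would invoke Corollary~\ref{Cor_L^2 Growth est for pair}. Fix $\sigma\in(0,\gamma_{\gap,\Lambda}/2)$; for $j$ large the triple $(\bar\Sigma_j,\Sigma_j^0,\Sigma_j^1)$ satisfies its hypotheses with $\delta=\delta_3(\sigma,\Lambda)$, and since $r_{u_j^{(i)}}\to 0$ its conclusion applies to all $r$ in any fixed interval $[r_0,1/4]$: near each $p\in\Sing(\Sigma)$,
\[
 \|w_j\|_{L^2(A^{\bar g_j}(p;r/2,r))}\leq C(\sigma,\Lambda)\big\|G^{\Sigma_j^1}_{\Sigma_j^0}\big\|_{L^2}\,r^{\,n/2+\gamma_2^+(\mbfC_p\Sigma)-\sigma}\leq C(\sigma,\Lambda)\,d_j\,r^{\,n/2+\gamma_2^+(\mbfC_p\Sigma)-\sigma},
\]
where I use that the singular points of $\Sigma_j^0$ matched to $p$ (built into the definition of $\cL^{k,\alpha}$) have tangent cones converging to $\mbfC_p\Sigma$, so their $\gamma_2^+$ exceeds $\gamma_2^+(\mbfC_p\Sigma)-\sigma$ eventually. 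Dividing by $d_j$ and letting $j\to\infty$ (the annuli lie in the regular part, where $C^2_{loc}$-convergence holds) gives $\int_{A(p;r/2,r)}\hat u_\infty^2\leq Cr^{\,n+2\gamma_2^+(\mbfC_p\Sigma)-2\sigma}$ for all small $r$, hence $\cA\cR_p(\hat u_\infty)\geq\gamma_2^+(\mbfC_p\Sigma)-\sigma$; letting $\sigma\downarrow 0$, $\hat u_\infty$ is of slower growth.

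It remains to check $\hat u_\infty\not\equiv 0$. If $\hat u_\infty\equiv 0$, then $w_j=o(d_j)$ in $C^2_{loc}$; combining the two-sided comparison of Lemma~\ref{Lem_Comparing G_Sigma^1, g^Sigma^2 and u^2 - u^1} on the bulk of $\Sigma$ with the decay estimate above (which keeps the $L^2$-mass of $w_j$ inside a shrinking neighbourhood of $\Sing(\Sigma)$) forces $\|G^{\Sigma_j^1}_{\Sigma_j^0,g_j^0}\|_{L^2}+\|G^{\Sigma_j^0}_{\Sigma_j^1,g_j^1}\|_{L^2}=o(d_j)$, so $\|\phi_j\bar g_j\|_{L^\infty}=(1-o(1))d_j$. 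Then $f_j^0\not\equiv f_j^1$, $\lim_j t_j/d_j>0$, and $\hat f_\infty$ is a positive multiple of $f_\infty$; by hypothesis (b) $\nu_{\Sigma,g}(\hat f_\infty)\not\equiv 0$, so the limit equation reads $L_{\Sigma,g}\hat u_\infty=\tfrac n2\nu_{\Sigma,g}(\hat f_\infty)\not\equiv 0$, contradicting $\hat u_\infty\equiv 0$. The main obstacle throughout is the control of $w_j$ near $\Sing(\Sigma)$: it must decay fast enough there to yield a limit of slower growth, yet not so fast that the $d_j$-normalized limit collapses — precisely the role of Corollary~\ref{Cor_L^2 Growth est for pair} (hence of the growth-rate monotonicity Lemma~\ref{Lem_Ana on SMC_Growth Rate Monoton}); a further subtlety is that $d_j$ is a joint quantity, so uniform normalization of $w_j/d_j$ relies on the comparison Lemma~\ref{Lem_Comparing G_Sigma^1, g^Sigma^2 and u^2 - u^1} and on separating the two regimes $f_j^0\equiv f_j^1$ versus $f_j^0\not\equiv f_j^1$ when bounding the error term $H_j$.
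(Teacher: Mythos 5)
Your proposal follows essentially the same blueprint as the paper: normalize by $d_j$, get interior compactness of $w_j/d_j$ from the graphical-function comparison and elliptic estimates, obtain the slower-growth bound from Corollary~\ref{Cor_L^2 Growth est for pair}, and argue nontriviality by contradiction, reducing to the case $\hat f_\infty\neq 0$. The slower-growth step, the linearization of the minimal surface operator (which the paper packages as Theorem~\ref{Thm_Append_MSE for graphs}), and the treatment of $\phi_j/d_j$ are all handled correctly.

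There is, however, a genuine gap in the nontriviality argument. You claim that $w_j/d_j\to 0$ in $C^2_{loc}(\Sigma)$, combined with the annular decay estimate on $w_j$, forces $\|G^{\Sigma_j^1}_{\Sigma_j^0,g_j^0}\|_{L^2}+\|G^{\Sigma_j^0}_{\Sigma_j^1,g_j^1}\|_{L^2}=o(d_j)$. But these are $L^2$-norms over \emph{all} of $\Sigma_j^0$ (resp. $\Sigma_j^1$), including the regions $B^{g_j^i}(\Sing(\Sigma_j^i),r_j')$ with $r_j'\sim 40\max_i r_{u_j^{(i)}}$, where Corollary~\ref{Cor_L^2 Growth est for pair} (and hence your bound on $w_j$) gives no information, and where Lemma~\ref{Lem_Comparing G_Sigma^1, g^Sigma^2 and u^2 - u^1} is inapplicable because the surfaces are no longer simultaneously graphical over $\bar\Sigma_j$. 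Since you have no control on how fast $r_{u_j^{(i)}}\to 0$ relative to $d_j$, the near-singular mass of the graphical functions could a priori be of order $d_j$. The paper closes exactly this hole (in the proof of the Claim, Case~II, estimates~(\ref{Equ_|u_j^(0,1)|< O(tilde(d)_j)})--(\ref{Equ_|u_j^(1,0)|< O(tilde(d)_j)})) by applying Corollary~\ref{Cor_L^2 Growth Est}\emph{(ii)} \emph{directly} to $u_j^{(0,1)}:=G^{\Sigma_j^1}_{\Sigma_j^0,g_j^0}$ and $u_j^{(1,0)}:=G^{\Sigma_j^0}_{\Sigma_j^1,g_j^1}$, viewed as graphs over $\Sigma_j^0$, $\Sigma_j^1$: this gives pointwise decay $|u_j^{(0,1)}|\lesssim\tilde d_j\,\rho^{-n/2+1}$ down to the scale $s_j$, below which the volume estimate $\|\Sigma_j^0\|(B(\Sing,s_j))^{1/2}\lesssim\tilde d_j^{\sigma'/(n-\sigma')}$ takes over, yielding $\|u_j^{(0,1)}\|_{L^2(\Sigma_j^0)}=O(\tilde d_j)$ with $\tilde d_j$ the bulk $L^2$-norm. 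Only with this extra input does the contradiction with the definition of $d_j$ go through. Your write-up should invoke Corollary~\ref{Cor_L^2 Growth Est} on the graphical functions themselves at this step, not only Corollary~\ref{Cor_L^2 Growth est for pair} on $w_j$.

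A smaller point: Lemma~\ref{Lem_Comparing G_Sigma^1, g^Sigma^2 and u^2 - u^1} is a pointwise (sup-norm) comparison, whereas $d_j$ is built from $L^2$-norms; to get the uniform $L^\infty$ bound on $w_j/d_j$ on compacts of $\Sigma$ needed for interior Schauder, you should say explicitly that on fixed compacts away from $\Sing(\Sigma)$ the graphical functions satisfy elliptic equations with uniformly bounded coefficients, so their sup-norms there are controlled by their $L^2$-norms, hence by $d_j$. This is standard but currently implicit.
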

        \begin{proof}
            Fix $\sigma\in (0, \gamma_{\gap, \Lambda}/2)$.  By Corollary \ref{Cor_L^2 Growth Est} and \ref{Cor_L^2 Growth est for pair}, there exists $r_0(\sigma,g,\Sigma, \Lambda)\in (0, r/4)$ and $r_j\searrow 0$ such that 
            \begin{enumerate} [(i)]
                \item For $i= 0, 1$, $\Sigma_j^i \setminus B^{\bar{g}_j}(\Sing(\bar{\Sigma}_j), r_j) = \graph_{\bar{\Sigma}_j, \bar{g}_j}(u_j^{(i)}) \setminus B^{\bar{g}_j}(\Sing(\bar{\Sigma}_j), r_j)$, and when $j\to \infty$, \[
                          \|u_j^{(i)}\|_{C^2_*, \bar{\Sigma}_j\setminus B^{\bar{g}_j}(\Sing(\bar{\Sigma}_j), {r_j})} \to 0.   \]
                \item For every $p\in \Sing(\bar{\Sigma}_j)$ and every $\tau\in [2r_j, 2r_0]$, let $p^0\in \Sing(\Sigma_j^0)\cap B^{g_j^0}(p, r_j)$. Then $w_j:= u_j^{(1)} - u_j^{(0)}$ satisfies
                      \begin{align}
                          \|w_j\|_{L^2(A^{\bar{g}_j}(p, \tau/2, \tau); \|\bar{\Sigma}_j\|_{\bar{g}_j})} \leq C(g, \Sigma, \Lambda, \sigma, r_0)d_j\cdot \tau^{n/2 + \gamma_2^+(\mbfC_{p^0}\Sigma_j^0) - \sigma}.   \label{Equ_Graph Diff Bd |w_j|_A_tau < d_j tau^n/2+gamma_2^+}
                      \end{align}
            \end{enumerate}
            
            Also by Theorem \ref{Thm_Append_MSE for graphs}, for $j>>1$, $w_j$ satisfies the following equation on $\bar{\Sigma}_j \setminus B^{\bar{g}_j}(\Sing(\bar{\Sigma}_j), {r_j})$
            \begin{align}
                -L_{\bar{\Sigma}_j, \bar{g}_j} w_j + \frac{n}{2}t_j\nu_{\bar{\Sigma}_j, \bar{g}_j}(f_j^1 - f_j^0) = \mathrm{div}_{\bar{\Sigma}_j, \bar{g}_j}(\cE^1_j) + \frac{1}{ \rho_j}\cE^2_j ,   \label{Equ_Apprx minimal surface equa of graph diff w_j}.  
            \end{align}
            where $\rho_j$ be the distant function under $\bar{g}_j$ to $\Sing(\bar{\Sigma})$, $\cE^1_j$ and $\cE^2_j$ are error terms with pointwise estimate,
            \begin{align}
                \begin{split}
                    |\cE^1_j|, |\cE^2_j| \leq C(\Sigma, g)
                    & \cdot\big[ \sum_{i = 0}^1 (\frac{|u_j^{(i)}|}{\rho_j} + |\nabla u_j^{(i)}| + t_j\sum_{l=0}^2|\nabla^l_M f_j^i|) \big]  \\
                    & \cdot \big(\frac{|w_j|}{\rho_j} + |\nabla w_j| + t_j\sum_{l=0}^2|\nabla^l_M (f_j^1 - f_j^0)| \big).
                \end{split} \label{Equ_Est on Error of Apprx MS Equ of w_j}
            \end{align}
            
            Note that if $f_j^0\neq f_j^1$, then by definition, for $j>>1$,
            \begin{align}
                d_j \geq \|g_j^1 - g_j^0\|_{L^\infty} \geq C(g)t_j\cdot\|f_\infty\|_{L^\infty}.   \label{Equ_d_j > t_j|f_infty|}
            \end{align}
            Thus by assumption (b), after passing to a subsequence, we have, \[
                \frac{1}{d_j}\cdot t_j(f_j^1 - f_j^0) \to \hat{f}_\infty\in \RR_{\geq 0}\langle f_\infty \rangle \ \ \text{ in }C^k(M).    \]
            While if $f_j^0\equiv f_j^1$, then (\ref{Equ_d_j > t_j|f_infty|}) and the convergence above hold automatically.
            
            Let $d_j' := \|w_j\|_{L^2(\bar{\Sigma}_j\setminus B^{\bar{g}_j}(\Sing(\bar{\Sigma}_j), r_0); \|\bar{\Sigma}_j\|_{\bar{g}_j})}$.
            \begin{claim}\label{claim:dj_d'j}
                $\liminf_{j\to \infty} d_j / d_j' \in (0, +\infty)$.
            \end{claim} 
            
            Once the claim is established, recall by (\ref{Equ_Graph Diff Bd |w_j|_A_tau < d_j tau^n/2+gamma_2^+}), (\ref{Equ_d_j > t_j|f_infty|}) and elliptic estimate for (\ref{Equ_Apprx minimal surface equa of graph diff w_j}), up to a subsequence, we have for $j>>1$, \[
                \|w_j\|_{C^2, \bar{\Sigma}_j\setminus B^{\bar{g}_j}(\Sing(\bar{\Sigma}_j), 2r_0)} \leq C(g, \Sigma, r_0) (d_j' + d_j + t_j\|f_j^1 - f_j^0\|_{C^2(M)}).   \]
            Thus combined with (\ref{Equ_Graph Diff Bd |w_j|_A_tau < d_j tau^n/2+gamma_2^+}), (\ref{Equ_d_j > t_j|f_infty|}) and the Claim, we have, after passing to a subsequence of $j\to\infty$, $w_j/d_j \to \hat{u}_\infty$ in $C^2_{loc}(\Sigma)$.
            By (\ref{Equ_Apprx minimal surface equa of graph diff w_j}), (\ref{Equ_Est on Error of Apprx MS Equ of w_j}) and the Claim, we know that $\hat{u}_\infty \neq 0$ and satisfies \[
                -L_{\Sigma, g} \hat{u}_\infty + \frac{n}{2}\nu_{\Sigma, g}(\hat{f}_\infty) = 0.   \]
            Also by (\ref{Equ_Graph Diff Bd |w_j|_A_tau < d_j tau^n/2+gamma_2^+}) and Corollary \ref{Cor_Converg in all Scales}, for every $p\in \Sing(\Sigma)$ and every $\tau\in (0, r_0)$, \[
                \|\hat{u}_\infty\|_{L^2(A^g(p; \tau/2, \tau); \|\Sigma\|_g)} \leq C\tau^{n/2 + \gamma_2^+(\mbfC_p\Sigma) -\sigma}.   \]
            By definition of $\cA\cR_p$, this means $\cA\cR_p(\hat{u}_\infty)\geq \gamma_2^+(\mbfC_p\Sigma)-\sigma$.  Therefore, by Lemma \ref{Lem_Pre_Asymp Rate takes value in Gamma(C) and int by part} we have, \[
                \cA\cR_p(\hat{u}_\infty)\geq \gamma_2^+(\mbfC_p\Sigma).  
            \]

            \noindent \textit{Proof of Claim \ref{claim:dj_d'j}.} Suppose otherwise, $c:=\liminf_{j\to \infty }d_j /d_j'$ is either $0$ or $+\infty$.
            
            \noindent \underline{\textit{Case I.}} If $c = 0$, then using (\ref{Equ_Graph Diff Bd |w_j|_A_tau < d_j tau^n/2+gamma_2^+}), (\ref{Equ_Apprx minimal surface equa of graph diff w_j}), (\ref{Equ_Est on Error of Apprx MS Equ of w_j}) and (\ref{Equ_d_j > t_j|f_infty|}), we can apply the same argument as above to obtain, after passing to a subsequence of $j\to \infty$,
            \begin{align*}
                \frac{w_j}{d_j'} \to \tilde{w}_\infty\ \ \text{ in }C^2_{loc}(\Sigma), \ \ \ \frac{t_j(f_j^1 - f_j^0)}{d_j'} \to 0\ \ \text{ in } C^k(M)\,.
            \end{align*}
            And $-L_{\Sigma, g}\tilde{w}_\infty = 0$, $\|\tilde{w}_\infty\|_{L^2(\Sigma\setminus B^g(\Sing(\Sigma)), r_0)} = 1$. But by (\ref{Equ_Graph Diff Bd |w_j|_A_tau < d_j tau^n/2+gamma_2^+}), $\tilde{w}_\infty \equiv 0$ on $\Sigma\cap B^g(\Sing(\Sigma), 2r_0)$. This violates the unique continuation property for solutions to elliptic equations. \\
            
            \noindent \underline{\textit{Case II.}} If $c = +\infty$, then again using (\ref{Equ_Graph Diff Bd |w_j|_A_tau < d_j tau^n/2+gamma_2^+}), (\ref{Equ_Apprx minimal surface equa of graph diff w_j}), (\ref{Equ_Est on Error of Apprx MS Equ of w_j}) and (\ref{Equ_d_j > t_j|f_infty|}),  after passing to a subsequence of $j\to \infty$, we have 
            \[
                \frac{w_j}{d_j} \to \tilde{w}_\infty \ \ \text{ in }C^2_{loc}(\Sigma), \ \ \ \frac{t_j(f_j^1 - f_j^0)}{d_j} \to \hat{f}_\infty\,.
            \]
            And $-L_{\Sigma, g}\tilde{w}_\infty + \frac{n}{2}\nu_{\Sigma, g}(\hat{f}_\infty) = 0$. But $\tilde{w}_\infty\equiv 0$ on $\Sigma\setminus B^g(\Sing(\Sigma), r_0)$, and by the choice of $r_0$, $\spt(\hat{f}_\infty)\cap B^g(\Sing(\Sigma), 2r_0) = \emptyset$. Hence by unique continuation property,  $\tilde{w}_\infty \equiv 0$, and then since $\nu_{\Sigma, g}(f_\infty)$ is not identically $0$, we must have $\hat{f}_\infty \equiv 0$, in other words,
            \begin{align}
                \|g_j^1 - g_j^0\|_{L^\infty} / d_j \to 0.  \label{Equ_|g_j^1 -g_j^0| /d_j to 0}
            \end{align}
            We now show that this contradicts to the definition of $d_j$.
            
            Denote for simplicity 
            \begin{align*}
                 & \ u_j^{(0, 1)} := G_{\Sigma_j^0, g_j^0}^{\Sigma_j^1} \in L^\infty(\Sigma_j^0),\ \ \ u_j^{(1,0)}:= G_{\Sigma_j^1, g_j^1}^{\Sigma_j^0} \in L^\infty(\Sigma_j^1),                     \\
                 & \ \tilde{d}_j := \|u_j^{(0,1)}\|_{L^2(\Sigma_j^0\setminus B^{g_j^0}(\Sing(\Sigma_j^0)), {r_0/K})} + \|u_j^{(1,0)}\|_{L^2(\Sigma_j^1\setminus B^{g_j^1}(\Sing(\Sigma_j^1)), {r_0/K})},
            \end{align*}
            where $K = K(\sigma, \Lambda)>2$ is given by Lemma \ref{Lem_Ana on SMC_Growth Rate Monoton}.
            First of all, by Lemma \ref{Lem_Comparing G_Sigma^1, g^Sigma^2 and u^2 - u^1} and the fact that $w_j/d_j$ subconverges to $0$ in $C^2_{loc}(\Sigma)$, we know that $u_j^{(0,1)}/d_j$ and $u_j^{(1,0)}/d_j$ both subconverges to $0$ in $C^2_{loc}(\Sigma)$. In particular, 
            \begin{align}
                \liminf_{j\to \infty} \tilde{d}_j /d_j = 0.   \label{Equ_tilde(d)_j/d_j to 0}
            \end{align}
            Also when $j>>1$, by Corollary \ref{Cor_L^2 Growth Est} (ii), on $\Sigma_j^0\setminus B^{g_j^0}(\Sing(\Sigma_j^0), {s_j})$, $u_j^{(0,1)}$ is $C^2$ and pointwisely, 
            \[
                |u_j^{(0,1)}| \leq C(\Sigma, g, \Lambda, \sigma, r_0)\tilde{d}_j\cdot \rho_{j, 0}^{-n/2 + 1},
            \]
            where $s_j \leq C(\Sigma, g, r_0)\tilde{d}_j^{\ 2/(n-\sigma')}$, and $\rho_{j, 0}$ denotes the distance function to $\Sing(\Sigma_j^0)$ with respect to $g_j^0$. Here we are using the fact that $\gamma_2^+ -\sigma \geq \gamma_1^+\geq -n/2 + 1$. 
            In particular, 
            \begin{align}
                \begin{split}
                    \|u_j^{(0,1)}\|_{L^2(\Sigma_j^0)} 
                    & \leq \tilde{d}_j + \|u_j^{(0, 1)}\|_{L^2(A^{g_j^0}(\Sing(\Sigma_j^0)), s_j, r_0)} + \|u_j^{(0, 1)}\|_{L^2(B^{g_j^0}(\Sing(\Sigma_j^0)), s_j)} \\
                    &\leq \tilde{d}_j + C(\Sigma, g, \Lambda, \sigma, r_0)\tilde{d}_j\cdot\|\rho_{j, 0}^{-n/2 + 1}\|_{L^2(A^{g_j^0}(\Sing(\Sigma_j^0)), {s_j, r_0})} \\
                    &\ \ + C(g)\|\Sigma_j^0\|(B^{g_j^0}(\Sing(\Sigma_j^0)), {s_j})^{1/2} \\
                    & \leq \tilde{d}_j\Big( 1 + C(\Sigma, g, \Lambda, \sigma, r_0) + C(g, \Sigma, \Lambda) \tilde{d}_j^{\ \sigma'/(n-\sigma')} \Big)  = O(\tilde{d}_j).
                \end{split} \label{Equ_|u_j^(0,1)|< O(tilde(d)_j)}
            \end{align}
            Similarly, 
            \begin{align}
                \|u_j^{(1,0)}\|_{L^2(\Sigma_j^1)} \leq O(\tilde{d}_j).  \label{Equ_|u_j^(1,0)|< O(tilde(d)_j)}
            \end{align}
            Together with (\ref{Equ_tilde(d)_j/d_j to 0}) and (\ref{Equ_|g_j^1 -g_j^0| /d_j to 0}), (\ref{Equ_|u_j^(0,1)|< O(tilde(d)_j)}) and (\ref{Equ_|u_j^(1,0)|< O(tilde(d)_j)}) contract the definition of $d_j$. This finishes the proof of Claim. $\hfill\blacksquare$

        \end{proof}

    \section{Proof of Theorem \ref{Thm_Generic Semi-nondeg_Loc}}\label{Sec:Gen_Semi-nondeg}
        The goal of this section is to prove Theorem \ref{Thm_Generic Semi-nondeg_Loc}.  
        To deal with one-sided and two-sided cases simultaneously, we first slightly extend the notions of canonical neighborhoods $\cL^{k,\alpha}$ and Jacobi fields of slower growth to manifolds with symmetry.
        
        \begin{Def} \label{Def_G-inv}
            Let $G\subset \text{Diff}(M)$ be a discrete subgroup of the diffeomorphism group of $M$. We call a two-sided LSMH $\Sigma \subset (M, g)$ \textbf{$G$-invariant} if for every $\gamma\in G$, we have \[
                \gamma^* g = g, \ \ \ \gamma(\Sigma) = \Sigma.   \]
            
            Given a $G$-invariant two-sided LSMH $\Sigma \subset (M, g)$ with a unit normal field $\nu$, there is an induced $G$-representation on $C^0_{loc}(\Sigma)$ given by 
            \begin{align*}
                (\gamma\cdot f)(x) & := \langle \frac{d}{dt}\Big|_{t=0} \gamma(\exp^g_y(t\cdot f(y)\nu(y))), \nu(\gamma(y)) \rangle_g\Big|_{y=\gamma^{-1}(x)} \\
                                   & = f(\gamma^{-1}(x))\cdot \langle \gamma_* \nu(\gamma^{-1}(x)), \nu(x)\rangle_g,
            \end{align*}
            for every $\gamma\in G$, $f\in C^0_{loc}(\Sigma)$, $x\in \Sigma$.  Under this representation, for every subset $\cK\subset C^0_{loc}(\Sigma)$, let $\cK^G$ be the set of all $G$-invariant elements in $\cK$, i.e. \[
                \cK^G := \{f\in \cK: \gamma\cdot f = f, \forall \gamma\in G\}.   \]
            
            We call a $G$-invariant two-sided LSMH $\Sigma \subset (M, g)$ \textbf{$G$-semi-nondegenerate} if $(\Ker^+ L_{\Sigma, g})^G = \{\mathbf{0}\}$.
            We also use the notation 
            \[
                \cL^{k,\alpha}(g, \Sigma; \Lambda, \delta)^G := \{(g', \Sigma')\in \cL^{k,\alpha}(g, \Sigma; \Lambda, \delta): \gamma^*g'= g', \gamma(\Sigma')=\Sigma', \forall \gamma\in G\}.   
            \]
        \end{Def}
        \begin{Rem} \label{Rem_one-sided, G-inv, induced rep}
            Throughout this section, unless otherwise stated, $G < \mathrm{Diff}(M)$ is fixed and $\Sigma \subset (M, g)$ is a connected $G$-invariant two-sided LSMH.
            \begin{enumerate}[(i)]
                \item Under the induced $G$-representation on $C^0_{loc}(\Sigma)$, we have, for  every $\gamma\in G$ and every $f\in C^0_{loc}(\Sigma)$, \[
                          \gamma(\graph_\Sigma (f)) = \graph_\Sigma(\gamma\cdot f).   \]
                \item If $\Sigma_0 \subset (M_0, g_0)$ be a connected one-sided LSMH, let $\pi: \hat{M}_0 \to M_0$ be the double cover given by Section \ref{Sec_Prelim} such that $\hat{\Sigma}_0 := \pi^{-1}(\Sigma_0) \subset (\hat{M_0}, \hat{g}_0 := \pi^* g_0)$ is a connected two-sided LSMH. This makes $\hat{\Sigma}_0\subset (\hat{M}_0, \hat{g}_0)$ a $\ZZ_2$-invariant LSMH, where $\ZZ_2$ acts on $\hat{M}_0$ by deck transformation.  It's not hard to verify that 
                \[
                      \cL^{k, \alpha}(\hat{g}_0, \hat{\Sigma}_0; \Lambda, \delta)^{\ZZ_2} \subset \{(\pi^* g', \pi^{-1}(\Sigma')): (g', \Sigma') \in \cL^{k, \alpha}(g_0, \Sigma_0; \Lambda, \delta)\} \subset \cL^{k,\alpha}(\hat{g}_0, \hat{\Sigma}_0; 2\Lambda, 2\delta)^{\ZZ_2}
                \]
            \end{enumerate}
        \end{Rem}
        
        Note that functions of slower growth are preserved under convergence in $(\cL^{k,\alpha})^G$.
        \begin{Lem}[Compactness of Jacobi fields of slower growth] \label{Lem_Cptness for Slower Growth Jac Fields}
            Let $\Sigma \subset (M, g)$ be a connected $G$-invariant two-sided LSMH, and $\delta_0= \delta_0(g, \Sigma, \Lambda)>0$ be as in Lemma \ref{Lem_Cptness for scL^k}.   
            Suppose that there exists a sequence $\set{(g_j, \Sigma_j)}^\infty_{j = 1} \subset \cL^{k,\alpha}(g, \Sigma; \Lambda, \delta_0)^G$ satisfying
            \begin{enumerate} [(a)]
                \item $(g_j, \Sigma_j) \to (g_\infty, \Sigma_\infty) \in \cL^{k,\alpha}(g, \Sigma; \Lambda, \delta_0)^G$ in $\cL^{k,\alpha}$;
                \item There exists $0\neq u_j \in (\Ker^+ L_{\Sigma_j, g_j})^G$ a $G$-invariant Jacobi field of slower growth on $\Sigma_j$ with $\|u_j\|_{L^2(\Sigma_j)} = 1$.
            \end{enumerate}
            Then $u_j$ subconverges to some $G$-invariant Jacobi field of slower growth $u_\infty$ in $C^2_{loc}(\Sigma_\infty)$, with $\|u_\infty\|_{L^2(\Sigma_\infty)} = 1$.  
        \end{Lem}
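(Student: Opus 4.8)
The plan is a blow-up/compactness argument in the spirit of Lemma~\ref{Lem_L^2 Growth Estimate} and Corollary~\ref{Cor_L^2 Growth Est}, applied now directly to the Jacobi fields $u_j$: away from the singular set the limit is extracted by interior elliptic estimates, and near each singularity the $L^2$-mass of $u_j$ is controlled by a \emph{uniform} decay estimate coming from the perturbed growth-rate monotonicity, Corollary~\ref{Cor_Ana on SMC_Growth Rate Monoton with Perturb}. Fix $\sigma\in(0,\tfrac12\min\{1,\gamma_{\gap,\Lambda}\})$. By Corollary~\ref{Cor_Converg in all Scales} (applied after rescaling about each singular point) together with the discreteness of densities~(\ref{Equ_Pre_Density dicrete for SMC}), for $j$ large each $p\in\Sing(\Sigma_\infty)$ is the limit of a unique $p_j\in\Sing(\Sigma_j)$ with $\theta_{|\Sigma_j|}(p_j)=\theta_{|\Sigma_\infty|}(p)$ and $\mbfC_{p_j}\Sigma_j\to\mbfC_p\Sigma_\infty$ in $\mbfF$, hence $\gamma_2^+(\mbfC_{p_j}\Sigma_j)\to\gamma_2^+(\mbfC_p\Sigma_\infty)$; moreover, using Lemma~\ref{Lem_Ana on SMC_Quanti Uniqueness of Tang Cone} there is a fixed $r_0=r_0(g,\Sigma,\Lambda,\sigma)>0$ so that for $j$ large and \emph{every} scale $s\le r_0$, $\Sigma_j$ rescaled by $s$ about $p_j$ is graphical over $\mbfC_{p_j}\Sigma_j$ on $\AAa(\mathbf{0},1/4,1)$ with $C^2_*$-norm $\to 0$.

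First I would handle the region away from the singular set. For any compact $L\subset\subset\Sigma_\infty$ (recall $\Sigma_\infty=\Reg(\Sigma_\infty)$) we have $L\subset\Sigma_j$ for $j$ large, the coefficients of $L_{\Sigma_j,g_j}$ converge in $C^0_{loc}(L)$ to those of $L_{\Sigma_\infty,g_\infty}$, and De~Giorgi--Nash--Moser together with Schauder estimates give $\|u_j\|_{C^2(L)}\le C(L)\|u_j\|_{L^2(\Sigma_j)}=C(L)$. A diagonal Arzel\`a--Ascoli argument then produces a subsequence with $u_j\to u_\infty$ in $C^2_{loc}(\Sigma_\infty)$, where $L_{\Sigma_\infty,g_\infty}u_\infty=0$; and since $\gamma^*g_j=g_j$, $\gamma(\Sigma_j)=\Sigma_j$, and the induced $G$-action on $C^0_{loc}$ is continuous under the convergence, $u_\infty$ is $G$-invariant.

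The main step is a uniform $L^2$-decay estimate near each singularity: I claim there is $C$, independent of $j$, with
\[
\|u_j\|_{L^2(A^{g_j}(p_j,\tau/2,\tau);\,\|\Sigma_j\|_{g_j})}\ \le\ C\,\tau^{\,n/2+\gamma_2^+(\mbfC_{p_j}\Sigma_j)-\sigma}\qquad(0<\tau<r_0,\ j\gg 1).
\]
Writing $u_j$ over the cone $\mbfC_{p_j}\Sigma_j$ via the graphical parametrization of $\Sigma_j$ near $p_j$, Remark~\ref{Rem_Pre_Jac field equ on MH near cone} shows $u_j$ solves a perturbed Jacobi equation of type~(\ref{Equ_Pre_Pert Jac equ}) with error $\le\varepsilon(|u|+|\nabla u|)$ and $\varepsilon\to0$ as $r_0\to0$, $j\to\infty$, uniformly across all scales $\le r_0$. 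Choosing $r_0$ small and $j$ large so $\varepsilon<\varepsilon(\sigma,\Lambda)$, Corollary~\ref{Cor_Ana on SMC_Growth Rate Monoton with Perturb} applies with $\gamma:=\gamma_2^+(\mbfC_{p_j}\Sigma_j)-\sigma$ — which lies in $(-n+1,1)$ and at distance $\ge\sigma$ from $\Gamma(\mbfC_{p_j}\Sigma_j)\cup\{-(n-2)/2\}$, by the choice of $\sigma$ and Lemma~\ref{Lem_gamma_gap >0} — on every annulus $A(p_j;K^{-l-3}r_0,K^{-l}r_0)$; this yields that $a_l:=J^{\gamma}_K(u_j;K^{-l}r_0)$ satisfies $a_{l+2}+a_l\ge 2(1+\varepsilon)a_{l+1}$, hence $(a_{l+1}-a_l)$ is nondecreasing in $l$. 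Since $u_j\in(\Ker^+ L_{\Sigma_j,g_j})^G$ has $\cA\cR_{p_j}(u_j)\ge\gamma_2^+(\mbfC_{p_j}\Sigma_j)>\gamma$, one has $a_l\to 0$ as $l\to\infty$, forcing $a_{l+1}-a_l\le 0$ for all $l$; therefore $a_l\le a_0=J^{\gamma}_K(u_j;r_0)\le C(r_0)\|u_j\|_{L^2(A^{g_j}(p_j,K^{-1}r_0,r_0))}\le C(r_0)$, and the claim follows from the equivalence~(\ref{Equ_J equiv to L^2 with r^power}) and $K>2$.

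Finally I would combine the two parts. Summing the decay estimate over the dyadic-in-$K$ annuli and the finitely many singular points, and using $n/2+\gamma_2^+(\mbfC_p\Sigma_\infty)-\sigma> n/2+\gamma_1^+-\sigma\ge 1-\sigma>0$ so that the geometric series converges, gives $\int_{B^{g_j}(\Sing(\Sigma_j),r_0)}u_j^2\,d\|\Sigma_j\|_{g_j}\le C\,r_0^{\,2-2\sigma}$, which tends to $0$ as $r_0\to 0$ \emph{uniformly in $j$}. Together with $u_j\to u_\infty$ in $C^2_{loc}$ and $\|\Sigma_j\|_{g_j}\rightharpoonup\|\Sigma_\infty\|_{g_\infty}$, this shows no $L^2$-mass escapes into the singular set, so $\|u_\infty\|_{L^2(\Sigma_\infty)}=\lim_j\|u_j\|_{L^2(\Sigma_j)}=1$; in particular $u_\infty\not\equiv 0$. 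Passing to the limit in the decay estimate (Fatou) gives $\cA\cR_p(u_\infty)\ge\gamma_2^+(\mbfC_p\Sigma_\infty)-\sigma$ for every $p\in\Sing(\Sigma_\infty)$; letting $\sigma\downarrow 0$ and invoking Lemma~\ref{Lem_Pre_Asymp Rate takes value in Gamma(C) and int by part} — no element of $\Gamma(\mbfC_p\Sigma_\infty)$ lies in $(\gamma_2^+-\sigma,\gamma_2^+)$ once $\sigma<\gamma_{\gap,\Lambda}/2$ — upgrades this to $\cA\cR_p(u_\infty)\ge\gamma_2^+(\mbfC_p\Sigma_\infty)$, so that $u_\infty$ is a $G$-invariant Jacobi field of slower growth (it has finite energy by Remark~\ref{Rem:slow_growth_1}, and $L_{\Sigma_\infty,g_\infty}u_\infty=0$ extends weakly across the singular set by a cutoff argument, the singular set having vanishing $W^{1,2}$-capacity). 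The hard part is the uniform near-singularity decay: one must make the perturbed-Jacobi-over-cone picture hold uniformly at \emph{all} small scales along the sequence — with the model cones $\mbfC_{p_j}\Sigma_j$ themselves varying — and use the slow-growth hypothesis precisely as the boundary condition $a_l\to 0$ that propagates the monotonicity bound from scale $0$ up to scale $r_0$.
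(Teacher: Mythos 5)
Your proposal is correct and follows essentially the same approach as the paper's proof: interior elliptic estimates give subconvergence $u_j\to u_\infty$ away from $\Sing(\Sigma_\infty)$, then the perturbed growth-rate monotonicity (Corollary~\ref{Cor_Ana on SMC_Growth Rate Monoton with Perturb}), applied in the graphical parametrization of $\Sigma_j$ over the converging model cones $\mbfC_{p_j}\Sigma_j$ (with the uniformity in scale coming from Lemma~\ref{Lem_Ana on SMC_Quanti Uniqueness of Tang Cone}), gives the $j$-uniform $L^2$ decay near each singularity, with the slow-growth hypothesis providing the boundary condition $J^\gamma\to 0$ that propagates the discrete monotonicity from scale $0$ up. Your bookkeeping ($a_{l+1}-a_l$ nondecreasing together with $a_l\to 0$ forcing $a_l\le a_0$) is the same computation as the paper's inductive phrasing. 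One minor remark: you cannot literally ``let $\sigma\downarrow 0$'' at the end, since $K,r_0,s_0$ were all chosen depending on the fixed $\sigma$; but the very next clause you write — invoking Lemma~\ref{Lem_Pre_Asymp Rate takes value in Gamma(C) and int by part} together with the gap $\Gamma(\mbfC_p\Sigma_\infty)\cap(\gamma_2^+-\sigma,\gamma_2^+)=\emptyset$ — already upgrades $\cA\cR_p(u_\infty)\ge\gamma_2^+-\sigma$ to $\cA\cR_p(u_\infty)\ge\gamma_2^+$ at the fixed $\sigma$, so the ``$\sigma\downarrow 0$'' is unnecessary and should simply be deleted.
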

        \begin{proof}
            Fix $\sigma\in (0, \gamma_{\gap, \Lambda}/2)$, where $\gamma_{\gap, \Lambda}$ is defined in Lemma \ref{Lem_gamma_gap >0}. Let $K = K(\sigma, \Lambda)>2$ be given by Lemma \ref{Lem_Ana on SMC_Growth Rate Monoton}.
            
            By standard elliptic estimates, for every $r>0$, there exist an integer $j(r) > 0$ and a constant $C(\Sigma, g, r) > 0$ such that when $j\geq j(r)$, we have 
            \[
                \|u_j\|_{C^3(\Sigma_j\setminus B^g(\Sing(\Sigma), r))} \leq C(\Sigma, g, r)\,.
            \]
            Hence $u_j$ subconverges to some Jacobi field $u_\infty$ in $C^2_{loc}(\Sigma)$. Since all $u_j$'s are $G$-invariant, so is $u_\infty$. 
            
            The major effort here is to show $\|u_\infty\|_{L^2(\|\Sigma\|_g)} = 1$ and that $u_\infty$ is of slower growth.
            
            First notice that for every $p\in \Sing(\Sigma)$, let $p_j\in \Sing(\Sigma_j)$ such that $p_j\to p$ as $j\to \infty$, then by the definition of $\cL^{k,\alpha}$ convergence and the monotonicity formula of volume ratio, we have 
            \begin{equation}
                \limsup_{r\to 0} \limsup_{j\to \infty} \theta_{|\Sigma_j|}(p_j, r) - \theta_{|\Sigma_j|}(p_j) = 0\,. \label{Eqn:ratio_limit}
            \end{equation}
            Hence by Lemma \ref{Lem_Ana on SMC_Quanti Uniqueness of Tang Cone}, we can choose $r_0 = r_0(\Sigma_\infty, g_\infty)\in (0, \injrad(g, \Sigma))$ such that for $j>>1$ and every $p_j\in \Sing(\Sigma_j)$, we can parametrize $\Sigma_j\cap \BB_{r_0}$ by $\mbfC_{p_j}\Sigma_j$ using \[
                F_j(x):= x + \phi_j(x)\nu_j(x),   \]
            where $\nu_j$ be the unit normal field of $\mbfC_{p_j}\Sigma_j$, i.e. 
            \[
                \Sigma_j \cap \BB_{r_0} = \graph_{\mbfC_{p_j}\Sigma_j}(\phi_j)\,.
            \]
            Furthermore, $\phi_j \in C^2_{loc}(\mbfC_{p_j}\Sigma_j\cap \BB_{r_0})$ is the graphical function with estimates
            \begin{align}
                \|\phi_j\|_{C^2_*, \BB_s} \leq \Psi(s|g_\infty, \Sigma_\infty), \ \ \ \forall s\in (0, r_0),\ \forall j\geq j_s= j(s, g_\infty, \Sigma_\infty)\,.  \label{Equ_graphical func phi_j est in B_s}
            \end{align}
            Here $\Psi(s|g_\infty, \Sigma_\infty)$ is some positive constant depending only on $s, g_\infty, \Sigma_\infty$, increasing in $s$ and tending to $0$ when $s\searrow 0$. We emphasis that the right hand side does not depend on $j$ provided $j$ is sufficiently large. In addition, for every function $u_j$ on $\Sigma_j$, we let $\tilde{u}_j := u_j\circ F_j$. 
            
            By (\ref{Equ_graphical func phi_j est in B_s}) and Remark \ref{Rem_Pre_Jac field equ on MH near cone}, there exists $s_0 = s_0(g_\infty, \Sigma_\infty)\in (0, r_0)$ such that when $j\geq j_{s_0}$, Corollary \ref{Cor_Ana on SMC_Growth Rate Monoton with Perturb} applies to $\tilde{u}_j$ on each $\AAa(\mathbf{0}, K^{-3}s, s)\cap \mbfC_{p_j}\Sigma_j$ and we obtain 
            \begin{align}
                J^{\gamma_j}_{j, K}(\tilde{u}_j; K^{-2} s) -2J^{\gamma_j}_{j, K}(\tilde{u}_j; K^{-1} s) + J^{\gamma_j}_{j, K}(\tilde{u}_j; s) > 0  \label{Equ_Growth Monont for tilde(u_j)}
            \end{align}
            for every $s\in (0, s_0)$. Here $\gamma_j:= \gamma_2^+(\mbfC_{p_j}\Sigma_j)- \sigma$ and \[
                J_{j, K}^\gamma(v; \tau):= \int_{\AAa(\mathbf{0}, K^{-1}\tau, \tau)} v(x)^2|x|^{-n-2\gamma}\ d\|\mbfC_{p_j}\Sigma_j\|.   \]
            But since $u_j$ is of slower growth at $p_j$, and $\gamma_j < \gamma^+_2(\mbfC_{p_j}\Sigma_j) \leq \cA\cR_{p_j}(u_j)$, we know that
            \[
                \lim_{s\to 0} J^{\gamma_j}_{j, K}(\tilde{u}_j; s) = 0\,.
            \]
            Thus for any $N \in \mathbb{N}^+$, there is some $N_0 > N$ and $s = s_0\cdot K^{-N_0}$ such that 
            \[
                J^{\gamma_j}_{j, K}(\tilde{u}_j; K^{-1}s) < J^{\gamma_j}_{j, K}(\tilde{u}_j; s).
            \]
            And then inductively, (\ref{Equ_Growth Monont for tilde(u_j)}) implies, \[
                J^{\gamma_j}_{j, K}(\tilde{u}_j; K^{-l}s_0) < J^{\gamma_j}_{j, K}(\tilde{u}_j; s_0), \ \ \ \forall l\geq 1,\ \forall j\geq j_{s_0}.   \]
            By taking sum over $l$ and change the parametrization, this implies (recall $\|u_j\|_{L^2(\|\Sigma_j\|_{g_j})} = 1$),  
            \begin{align}
                \|u_j\|_{L^2(B^{g_j} (p_j, \tau);\|\Sigma_j\|_{g_j})} \leq C(\Sigma, g, s_0)\tau^{n/2 + \gamma_2^+(\mbfC_{p_j}\Sigma_j) -\sigma}, \ \ \ \forall \tau\in (0, s_0).   \label{Equ_growth bd for unit Jac field}
            \end{align}
            The key is that the coefficients on RHS does not depend on $j$ and $\tau$. 
            
            \begin{claim}\label{Claim:Lem_Cptness for Slower Growth Jac Fields}
                If $p_j\to p\in \Sing(\Sigma)$, then up to a subsequence, 
                \[
                    \gamma_2^+(\mbfC_{p_j}\Sigma_j)\to \gamma_2^+(\mbfC_p\Sigma)\,.
                \]
            \end{claim}
            \noindent \textit{Proof of Claim \ref{Claim:Lem_Cptness for Slower Growth Jac Fields}}
                It suffices to show that $\mbfC_{p_j}\Sigma_j$ $\mbfF$-subconverges to $\mbfC_p\Sigma$.
                
                By the fact that $\Sigma_j$ $\mbfF$-converges to $\Sigma$ and (\ref{Eqn:ratio_limit}), there exists a sequence $\set{r_k}$ with $r_k \rightarrow 0$ such that
                \[
                    \eta_{p_{j_k}, r_k} (\Sigma_{j_k}) \rightarrow \mbfC_p\Sigma
                \]
                in $\BB_3$, and furthermore,
                \[
                    \lim_k \theta_{|\eta_{p_{j_k}, r_k} (\Sigma_{j_k})|}(p_{j_k}, 2) - \theta_{|\eta_{p_{j_k}, r_k} (\Sigma_{j_k})|}(p_{j_k}) = 0
                \]
                
                Therefore, by Corollary \ref{Cor_Converg in all Scales}, we know that $\mbfC_{p_j}\Sigma_j$ $\mbfF$-converges to $\mbfC_p\Sigma$. $\hfill\blacksquare$

            Now, since 
            \[
                \gamma_2^+(\mbfC_{p_j}\Sigma_j) - \sigma >\gamma_1^+(\mbfC_{p_j}\Sigma_j) \geq -\frac{n-2}{2}\,,
            \] 
            (\ref{Equ_growth bd for unit Jac field}) then guarantees that there is no concentration of $L^2$ norm for $\{u_j\}_{j\geq 1}$ near $\Sing(\Sigma)$, i.e. 
            \[
                \limsup_{\tau\to 0}\limsup_{j\to \infty} \|u_j\|_{L^2(B^g(\Sing(\Sigma_j)), \tau)} = 0\,,   
            \] 
            and thus $\|u_\infty\|_{L^2} = 1$.  It also implies that for every $p\in \Sing(\Sigma)$,  
            \[
                \cA\cR_p(u_\infty) \geq \gamma_2^+(\mbfC_p\Sigma) -\sigma\,.
            \]
            Then by Lemma \ref{Lem_Pre_Asymp Rate takes value in Gamma(C) and int by part}, $u_\infty$ is of slower growth.
        \end{proof}
        
        \begin{Cor} \label{Cor_Upper Semi-conti of dim Ker^+ L}
            There exists $\kappa_1(g,\Sigma, \Lambda, G)\in (0, \delta_0)$ such that for every $(g', \Sigma')\in \cL^k(g, \Sigma; \Lambda, \kappa_1)^G$, we have $\dim (\Ker^+ L_{\Sigma', g'})^G \leq \dim (\Ker^+ L_{\Sigma, g})^G$.
        \end{Cor}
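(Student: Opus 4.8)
The plan is to run a standard upper-semicontinuity argument for the dimension of the kernel, powered by the compactness result Lemma \ref{Lem_Cptness for Slower Growth Jac Fields}. Set $d := \dim(\Ker^+ L_{\Sigma, g})^G$; this is finite, since by Remark \ref{Rem:slow_growth_1} every Jacobi field of slower growth lies in $W^{1,2}(\Sigma)$, and the space of finite-energy Jacobi fields on an LSMH in a closed manifold is finite-dimensional by \cite{WangZH20_Deform}. Suppose no $\kappa_1 \in (0, \delta_0)$ with the asserted property exists. Then for each $j \ge 1$ there are $\kappa_j \in (0, \delta_0)$ with $\kappa_j \searrow 0$ and a pair $(g_j, \Sigma_j) \in \cL^{k,\alpha}(g, \Sigma; \Lambda, \kappa_j)^G$ with $\dim(\Ker^+ L_{\Sigma_j, g_j})^G \ge d+1$. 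Since $\|g_j - g\|_{C^{k-1,\alpha}} \le \kappa_j \to 0$ and $\mbfF(|\Sigma|_g, |\Sigma_j|_{g_j}) \le \kappa_j \to 0$, the usual regularity-and-compactness package (Lemma \ref{Lem_Cptness for scL^k}, \cite{schoen_regularity_1981}, and the density-matching condition built into the definition of $\cL^{k,\alpha}$) shows, after passing to a subsequence, that $(g_j, \Sigma_j) \to (g, \Sigma)$ in $\cL^{k,\alpha}$; the limit varifold must be $|\Sigma|_g$ with multiplicity one by connectedness of $\Sigma$ and upper semicontinuity of density.

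For each $j$ pick an $L^2(\|\Sigma_j\|_{g_j})$-orthonormal family $u_j^{(1)}, \dots, u_j^{(d+1)}$ spanning a $(d+1)$-dimensional subspace of $(\Ker^+ L_{\Sigma_j, g_j})^G$. Applying Lemma \ref{Lem_Cptness for Slower Growth Jac Fields} to each sequence $\{u_j^{(a)}\}_j$ in turn and diagonalizing, after a further subsequence $u_j^{(a)} \to u_\infty^{(a)}$ in $C^2_{loc}(\Sigma)$ for every $a$, where $u_\infty^{(a)} \in (\Ker^+ L_{\Sigma, g})^G$ and $\|u_\infty^{(a)}\|_{L^2(\|\Sigma\|_g)} = 1$. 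The key additional input is extracted from the proof of that lemma: the growth bound $\|u_j^{(a)}\|_{L^2(B^g(p_j, \tau); \|\Sigma_j\|_{g_j})} \le C(\Sigma, g, \Lambda, \sigma)\, \tau^{n/2 + \gamma_2^+(\mbfC_{p_j}\Sigma_j) - \sigma}$ near each singular point $p_j \in \Sing(\Sigma_j)$, with a constant depending on none of $j$, $\tau$, $a$, or the particular Jacobi field. Since $\gamma_2^+(\mbfC_{p_j}\Sigma_j) - \sigma > \gamma_1^+(\mbfC_{p_j}\Sigma_j) \ge -(n-2)/2$, this yields a uniform absence of $L^2$-concentration near $\Sing(\Sigma)$, simultaneously for all $a = 1, \dots, d+1$.

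I would then conclude orthonormality of the limits as follows. Given $\epsilon > 0$, the uniform bound furnishes $\tau_0 > 0$ with $\|u_j^{(a)}\|^2_{L^2(B^g(\Sing(\Sigma_j), \tau_0); \|\Sigma_j\|_{g_j})} < \epsilon$ for all $a$ and all large $j$; on the fixed set $\Sigma \setminus B^g(\Sing(\Sigma), \tau_0)$, the $C^2_{loc}$ convergence $u_j^{(a)} \to u_\infty^{(a)}$ together with $\|\Sigma_j\|_{g_j} \to \|\Sigma\|_g$ as Radon measures gives convergence of the truncated inner products. Letting $j \to \infty$ and then $\epsilon \to 0$ yields $\langle u_\infty^{(a)}, u_\infty^{(b)}\rangle_{L^2(\|\Sigma\|_g)} = \delta_{ab}$, so $\{u_\infty^{(a)}\}_{a=1}^{d+1}$ is a linearly independent family in $(\Ker^+ L_{\Sigma, g})^G$, contradicting $d = \dim(\Ker^+ L_{\Sigma, g})^G$. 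This contradiction establishes the existence of $\kappa_1$.

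The subsequence extractions and the limit of the metrics are routine, and even the membership of $u_\infty^{(a)}$ in $(\Ker^+ L_{\Sigma, g})^G$ is already delivered by Lemma \ref{Lem_Cptness for Slower Growth Jac Fields}. The one point needing genuine care is the last step — propagating $L^2$-orthonormality through the limit so that linear independence survives — and for this it is essential that the growth bound near the singular set from the proof of Lemma \ref{Lem_Cptness for Slower Growth Jac Fields} holds with a constant uniform over the $d+1$ Jacobi-field families, not merely for a single prescribed sequence. Granting that (which is visible from the proof, where the constant only depends on $\Sigma, g, \Lambda, \sigma$), the argument goes through.
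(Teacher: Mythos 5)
Your argument is correct and coincides with the paper's proof: extract an $L^2$-orthonormal family of $I+1$ slower-growth Jacobi fields from a contradicting sequence, pass to $C^2_{loc}$ limits via Lemma~\ref{Lem_Cptness for Slower Growth Jac Fields}, and use the uniform absence of $L^2$-concentration near $\Sing(\Sigma)$ to preserve orthonormality in the limit. Your caveat that the uniform growth bound is extracted from the lemma's proof rather than its statement is accurate (the paper glosses over this); a variant that stays strictly within the lemma's stated conclusion is to apply it to the unit-norm combinations $(u_j^{(a)} \pm u_j^{(b)})/\sqrt{2}$, whose limits having unit $L^2$-norm already forces $\langle u_\infty^{(a)}, u_\infty^{(b)}\rangle = 0$.
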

        \begin{proof}
            Suppose otherwise, there exist $(g_j, \Sigma_j)\to (g, \Sigma)$ in $\cL^{k,\alpha}(g, \Sigma; \Lambda, \delta_0)^G$ with \[
                \dim (\Ker^+ L_{\Sigma_j, g_j})^G > \dim (\Ker^+ L_{\Sigma, g})^G =: I.   \]
            For each $j\geq 1$, let $u_j^{(1)}, ..., u_j^{(I+1)}$ be an orthonormal class in $(\Ker^+ L_{\Sigma_j, g_j})^G$, i.e. $\langle u_j^{(s)}, u_j^{(t)} \rangle_{L^2(\|\Sigma_j\|_{g_j})} = \delta^{st}$, $1\leq s, t\leq I+1$.
            
            By Lemma \ref{Lem_Cptness for Slower Growth Jac Fields}, for each $1\leq s\leq I+1$, $u_j^{(s)}$ subconverges to some slower growth Jacobi field $u^{(s)}$ in $C^2_{loc}(\Sigma)$, and 
            \[
                \limsup_{r\to 0}\limsup_{j\to \infty} \|u_j^{(s)}\|_{L^2(B^g(\Sing(\Sigma), r); \|\Sigma_j\|_{g_j})} = 0.   
            \]
            Hence $\{u^{(s)}\}_{1\leq s\leq I+1} \subset (\Ker^+ L_{\Sigma, g})^G$ are also orthonormal, contradicting that $\dim (\Ker^+ L_{\Sigma, g})^G = I$.
        \end{proof}
        
        Let 
        \begin{align*}
            \cL^{k,\alpha}_{top}(g, \Sigma; \Lambda, \delta)^G:= \{(g', \Sigma')\in \cL^{k,\alpha}(g, \Sigma; \Lambda, \delta)^G:\ & \dim( \Ker^+ L_{\Sigma', g'})^G \\
            = &\ \dim (\Ker^+ L_{\Sigma, g})^G\}\,.
        \end{align*}
        Lemma \ref{Lem_Cptness for Slower Growth Jac Fields} and Corollary \ref{Cor_Upper Semi-conti of dim Ker^+ L} implies that $\cL^{k,\alpha}_{top}(g, \Sigma; \Lambda, \delta)^G$ is closed in $\cL^{k,\alpha}(g, \Sigma; \Lambda, \delta)^G$ for $\delta\leq \kappa_1$.
        
        Let $I:= \dim (\Ker^+ L_{\Sigma, g})^G < \infty$. For $(\bar{g}, \bar{\Sigma})\in \cL^{k,\alpha}_{top}(g, \Sigma; \Lambda, \kappa_1)^G$, let $\pi^{L^2, G}_{\bar{\Sigma}, \bar{g}}: L^2(\bar{\Sigma}) \to (\Ker^+ L_{\bar{\Sigma}, \bar{g}})^G$ be the $L^2$-orthogonal projection to the finite dimensional subspace. The following Lemma guarantees that we can parametrize slices of $\cL^{k,\alpha}_{top}$ by compact subset of $(\Ker^+ L)^G$.
        
        \begin{Lem} \label{Lem_Loc Parametriz of scL^k}
            Let $\kappa_1$ be as in Corollary \ref{Cor_Upper Semi-conti of dim Ker^+ L}. Then there exist constants $\kappa_2(g, \Sigma, \Lambda, G)\in (0, \kappa_1)$, $r_0(g, \Sigma, \Lambda, G)>0$ and an $I$-dimensional linear subspace $\cF\subset C^{k,\alpha}_c(M\setminus B^g(\Sing(\Sigma), {10r_0}))^G$ also depending only on $\Sigma, g, \Lambda, G$ with the following property.
            \begin{enumerate}[(i)]
                \item For every $(\bar{g}, \bar{\Sigma})\in \cL^{k,\alpha}_{top}(g, \Sigma; \Lambda, \kappa_2)^G$, the map
                      \begin{align}
                          \cF \to (\Ker^+ L_{\bar{\Sigma}, \bar{g}})^G,\ \ \ f\mapsto \pi^{L^2, G}_{\bar{\Sigma}, \bar{g}}(\nu_{\bar{\Sigma}, \bar{g}} (f)),  \label{Equ_Map cF to Ker^+ L}
                      \end{align}
                      is a linear isomorphism with biLipschitz constant $\leq C(g, \Sigma, \Lambda, k, \alpha, G)$.
                      
                      In particular, for every nonzero $f\in \cF$, there is no $G$-invariant solution of slower growth $v$ to $L_{\bar{\Sigma}, \bar{g}} v = \nu_{\bar{\Sigma}, \bar{g}}(f)$.
                \item Denote for simplicity $\cF\cdot \bar{g}:= \{(1+f)\bar{g}: f\in \cF\}$.  For every $(\bar{g}, \bar{\Sigma})\in \cL^{k,\alpha}_{top}(g, \Sigma; \Lambda, \kappa_2)^G$, the map
                      \begin{align*}
                          P_{\bar{g}, \bar{\Sigma}}: \cL^{k,\alpha}_{top}(g, \Sigma; \Lambda, \kappa_2)^G\cap \Pi^{-1}(\cF \cdot\bar{g}) & \to (\Ker^+ L_{\bar{\Sigma}, \bar{g}})^G,                                                                                                 \\
                          (g', \Sigma')                                                                                                  & \mapsto  \pi_{\bar{\Sigma}, \bar{g}}^{L^2, G}\big( G^{\Sigma'}_{\bar{\Sigma}, \bar{g}}\cdot \zeta_{\bar{\Sigma}, \bar{g}, r_0}\big),     
                      \end{align*}
                      is bi-Lipschitz onto its image with bi-Lipschitz constant $\leq C(g, \Sigma, \Lambda, G)$ and thus, injective, where 
                      \begin{align}
                          \zeta_{\bar{\Sigma}, \bar{g}, r_0}(x):= \zeta(\dist_{\bar{g}}(x, \Sing(\bar{\Sigma}))/r_0)   \label{Equ_Def eta_{Sigma, g, r_0}}
                      \end{align}
                      and $\zeta\in C^\infty(\RR; [0,1])$ is a cut-off function such that $\zeta \equiv 0$ on $(-\infty, 1]$ and $\zeta \equiv 1$ on $[2, +\infty)$.
            \end{enumerate}
        \end{Lem}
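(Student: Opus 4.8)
# Proof Proposal for Lemma \ref{Lem_Loc Parametriz of scL^k}

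The plan is to construct $\cF$ first, then establish (i) by a continuity/compactness argument, and finally deduce (ii) from (i) together with the local growth estimates already proved. To build $\cF$: fix a basis $v^{(1)},\dots,v^{(I)}$ of $(\Ker^+ L_{\Sigma,g})^G$. Since each $v^{(s)}$ has slower growth, hence finite energy (Remark \ref{Rem:slow_growth_1}), and since $\cA\cR_p(v^{(s)})\geq \gamma_2^+(\mbfC_p\Sigma) > \gamma_1^+(\mbfC_p\Sigma)$ at every singular point, the $v^{(s)}$ cannot all vanish on the complement of a small neighborhood of $\Sing(\Sigma)$ by unique continuation. More to the point, I would first show that for $r_0$ small enough, the restriction map $(\Ker^+L_{\Sigma,g})^G \to L^2(\Sigma\setminus B^g(\Sing(\Sigma),r_0))$ is injective (again unique continuation), so one can pick $G$-invariant functions $f^{(1)},\dots,f^{(I)}\in C^{k,\alpha}_c(M\setminus B^g(\Sing(\Sigma),10r_0))^G$ such that the $I\times I$ matrix $\big(\langle \nu_{\Sigma,g}(f^{(s)}), v^{(t)}\rangle_{L^2}\big)_{s,t}$ is nonsingular; set $\cF := \mathrm{span}\{f^{(s)}\}$. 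One can arrange $G$-invariance by averaging over the finite group $G$ and using that $\nu_{\Sigma,g}(\gamma\cdot f) = \gamma\cdot \nu_{\Sigma,g}(f)$ in the induced representation.

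For part (i), the map $f\mapsto \pi^{L^2,G}_{\bar\Sigma,\bar g}(\nu_{\bar\Sigma,\bar g}(f))$ is manifestly linear in $f$, and both sides are $I$-dimensional on $\cL^{k,\alpha}_{top}$. So it suffices to prove it is injective with a uniform lower bound on the smallest singular value, i.e.\ $\|\pi^{L^2,G}_{\bar\Sigma,\bar g}(\nu_{\bar\Sigma,\bar g}(f))\|_{L^2}\geq c\|f\|_{C^{k,\alpha}}$ for all $f\in\cF$, uniformly over $(\bar g,\bar\Sigma)\in \cL^{k,\alpha}_{top}(g,\Sigma;\Lambda,\kappa_2)^G$. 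Suppose not: there are $(g_j,\Sigma_j)\to$ some limit in $\cL^{k,\alpha}_{top}$ (using Lemma \ref{Lem_Cptness for scL^k}, shrinking $\kappa_2\le\kappa_1$) and $f_j\in\cF$, $\|f_j\|=1$, with $\pi^{L^2,G}_{\Sigma_j,g_j}(\nu_{\Sigma_j,g_j}(f_j))\to 0$ in $L^2$. Since $\cF$ is finite-dimensional, $f_j\to f_\infty\in\cF$, $\|f_\infty\|=1$. By Lemma \ref{Lem_Cptness for Slower Growth Jac Fields}, any orthonormal basis of $(\Ker^+L_{\Sigma_j,g_j})^G$ subconverges to an orthonormal basis of $(\Ker^+L_{\Sigma_\infty,g_\infty})^G$ in $C^2_{loc}$ (the dimensions agree on $\cL^{k,\alpha}_{top}$), hence $\pi^{L^2,G}_{\Sigma_j,g_j}(\nu_{\Sigma_j,g_j}(f_j))\to \pi^{L^2,G}_{\Sigma_\infty,g_\infty}(\nu_{\Sigma_\infty,g_\infty}(f_\infty))$, which must then be zero. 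So $\nu_{\Sigma_\infty,g_\infty}(f_\infty)\perp (\Ker^+L_{\Sigma_\infty,g_\infty})^G$, meaning $L_{\Sigma_\infty,g_\infty}v = \nu_{\Sigma_\infty,g_\infty}(f_\infty)$ admits a $G$-invariant solution $v$ of slower growth (this is where one uses the Fredholm-theoretic fact that for data orthogonal to $\Ker^+$ one can solve within the class of slower growth — essentially self-adjointness of $L$ restricted to the slow-growth class, via the integration-by-parts in Lemma \ref{Lem_Pre_Asymp Rate takes value in Gamma(C) and int by part}(iii)). But then applying Lemma \ref{Lem_Tangent vectors to scL^k} with $g_j^1 = (1+t_jf_\infty)g_j^0$, $\bar g_j = g_j^0$ (or rather with the appropriate family approximating this variation), the induced slow-growth Jacobi field $\hat u_\infty$ would satisfy $L_{\Sigma_\infty,g_\infty}\hat u_\infty = \tfrac n2\nu_{\Sigma_\infty,g_\infty}(\hat f_\infty)$ with $\hat f_\infty\in\RR_{\ge 0}\langle f_\infty\rangle$, consistent, but the existence of such a slow-growth solution contradicts the construction of $\cF$ at the base point $(g,\Sigma)$ only if $(\Sigma_\infty,g_\infty)=(\Sigma,g)$ — so in general I need $\cF$ chosen so that the pairing matrix stays nonsingular on the whole neighborhood, which follows from continuity of $\pi^{L^2,G}$ (Lemma \ref{Lem_Cptness for Slower Growth Jac Fields}) after shrinking $\kappa_2$. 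This closes (i).

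For part (ii): $P_{\bar g,\bar\Sigma}$ is built from the graphical function $G^{\Sigma'}_{\bar\Sigma,\bar g}$ cut off away from $\Sing(\bar\Sigma)$ by $\zeta_{\bar\Sigma,\bar g,r_0}$ and then $L^2$-projected. Since $\Pi^{-1}(\cF\cdot\bar g)$ restricts metrics to the finite-dimensional family $\{(1+f)\bar g:f\in\cF\}$, the domain is genuinely finite-dimensional-ish and I can test bi-Lipschitz bounds on sequences. Given two nearby pairs $(g^0_j,\Sigma^0_j)\neq(g^1_j,\Sigma^1_j)$ in this slice, both converging to the same limit, apply Lemma \ref{Lem_Tangent vectors to scL^k} with these as the two pairs and $\bar g_j$ an auxiliary base: normalizing by $d_j = \mbfD^\cL[(g^0_j,\Sigma^0_j),(g^1_j,\Sigma^1_j)]$, the rescaled differences $(u^{(1)}_j - u^{(0)}_j)/d_j$ converge in $C^2_{loc}$ to a nonzero slow-growth $\hat u_\infty$ solving $L\hat u_\infty = \tfrac n2\nu(\hat f_\infty)$, and $t_j(f^1_j-f^0_j)/d_j\to\hat f_\infty$. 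The cutoff $\zeta$ kills nothing essential because slow-growth functions have no $L^2$-mass concentrating at $\Sing$ (the growth bound $\cA\cR_p\ge\gamma_2^+$ and $\gamma_2^+ - \gamma_1^+ = \gamma_{\gap}>0$, via Corollary \ref{Cor_L^2 Growth Est}), so $\pi^{L^2,G}_{\bar\Sigma,\bar g}(G^{\Sigma^1}_{\bar\Sigma,\bar g}\zeta) - \pi^{L^2,G}_{\bar\Sigma,\bar g}(G^{\Sigma^0}_{\bar\Sigma,\bar g}\zeta)$, divided by $d_j$, converges to $\pi^{L^2,G}_{\Sigma_\infty,g_\infty}(\hat u_\infty)$. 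If this projection vanished, then $\hat u_\infty\perp (\Ker^+ L)^G$, so $\hat u_\infty\in(\Ker^+L)^G$ would force $\hat u_\infty=0$ (orthogonal to itself), contradicting $\hat u_\infty\ne 0$; actually $\hat u_\infty$ need not be a Jacobi field when $\hat f_\infty\ne 0$, but then part (i) says no slow-growth solution of $Lv=\nu(\hat f_\infty)$ exists for $0\ne\hat f_\infty\in\cF$ unless... — rather, part (i) says precisely $\pi^{L^2,G}(\nu(\hat f_\infty))\ne 0$, hence applying $\pi^{L^2,G}$ to $L\hat u_\infty = \tfrac n2\nu(\hat f_\infty)$ and using that $\pi^{L^2,G}\circ L$ annihilates slow-growth functions (integration by parts, self-adjointness) gives $0 = \tfrac n2\pi^{L^2,G}(\nu(\hat f_\infty))\ne 0$ unless $\hat f_\infty = 0$, and in the $\hat f_\infty=0$ case $\hat u_\infty$ is a genuine nonzero slow-growth Jacobi field so $\pi^{L^2,G}(\hat u_\infty)=\hat u_\infty\ne 0$. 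Either way the limiting image is nonzero, so $\|P_{\bar g,\bar\Sigma}(g^1_j,\Sigma^1_j)-P_{\bar g,\bar\Sigma}(g^0_j,\Sigma^0_j)\|_{L^2}\ge c\,d_j$ for $j\gg1$, giving the uniform lower bi-Lipschitz bound; the upper bound is routine from the estimates of Corollary \ref{Cor_L^2 Growth Est}. The main obstacle is the bookkeeping in part (i)/(ii) that ties the non-existence of slow-growth solutions to the Fredholm alternative for $L$ on the slow-growth class — i.e.\ making precise that $\pi^{L^2,G}\circ L$ kills slow-growth functions and that orthogonality to $(\Ker^+L)^G$ is equivalent to solvability within the slow-growth class — which requires the integration-by-parts of Lemma \ref{Lem_Pre_Asymp Rate takes value in Gamma(C) and int by part}(iii) plus a Fredholm argument for the operator $L$ acting between the appropriate weighted spaces, and carrying all constants uniformly over the compact family $\cL^{k,\alpha}_{top}(g,\Sigma;\Lambda,\kappa_2)^G$.
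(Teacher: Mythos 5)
Your overall strategy is the same as the paper's: construct $\cF$ by pushing a basis of $(\Ker^+L_{\Sigma,g})^G$ into metric variations, prove (i) by compactness of $\cL^{k,\alpha}_{top}$ plus Lemma \ref{Lem_Cptness for Slower Growth Jac Fields}, and prove (ii) by contradiction via Lemma \ref{Lem_Tangent vectors to scL^k}. But there is a genuine gap at the end of (ii), and it traces back to how you set up $r_0$ and $\cF$.

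In the paper, $r_0$ is chosen \emph{specifically} so that the composed map $v\mapsto \pi^{L^2,G}_{\Sigma,g}(\zeta_{\Sigma,g,r_0}\cdot v)$ is an isomorphism of $(\Ker^+L_{\Sigma,g})^G$ onto itself, and then $\cF$ is built by setting $\nu_{\Sigma,g}(f_i)=\zeta_{\Sigma,g,r_0}\cdot u_i$ for an orthonormal basis $\{u_i\}$. That specific property of $r_0$ is exactly what closes the contradiction in (ii): when you reduce to the case $\hat f_\infty=0$ and get a nonzero $\hat u_\infty\in(\Ker^+L_{\Sigma,g})^G$, the limit of $\pi^{L^2,G}_{\bar\Sigma_j,\bar g_j}\big((u^{(1)}_j-u^{(0)}_j)\zeta_j\big)/d_j$ is $\pi^{L^2,G}_{\Sigma,g}(\hat u_\infty\cdot\zeta_{\Sigma,g,r_0})$, \emph{with the cutoff}, and this is nonzero precisely because of the way $r_0$ was chosen. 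You instead write $\pi^{L^2,G}(\hat u_\infty)=\hat u_\infty\neq 0$, dropping the cutoff, and cover this with ``the cutoff kills nothing essential.'' That is a real gap: without the explicit choice of $r_0$, you have no a priori lower bound preventing $\pi^{L^2,G}(\hat u_\infty\zeta_{r_0})$ from vanishing. The fix is exactly the paper's: pick $r_0$ first so that cutoff-then-project is invertible (possible since $\hat u_\infty\cdot(1-\zeta_{r_0})\to 0$ in $L^2$ as $r_0\to 0$ for slower-growth functions, uniformly over a basis of the finite-dimensional kernel), and then build $\cF$ accordingly. This also answers why $\cF$ is chosen to be supported outside $B^g(\Sing(\Sigma),10r_0)$: it is forced by the choice of the cutoff scale.

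Two smaller remarks. First, in the ``in particular'' clause of (i) you invoke a Fredholm alternative (orthogonality of $\nu(f_\infty)$ to $\Ker^+$ implies solvability in the slow-growth class) and then detour through Lemma \ref{Lem_Tangent vectors to scL^k}; none of this is needed. The direction you actually need is the converse and is elementary: if $L_{\bar\Sigma,\bar g}v=\nu_{\bar\Sigma,\bar g}(f)$ has a $G$-invariant slower-growth solution $v$, then pairing with any $w\in(\Ker^+L_{\bar\Sigma,\bar g})^G$ and integrating by parts (Lemma \ref{Lem_Pre_Asymp Rate takes value in Gamma(C) and int by part}(iii), using that both $v$ and $w$ have asymptotic rate $\geq\gamma_2^+>\gamma_1^+\geq -(n-2)/2$) gives $\langle\nu(f),w\rangle_{L^2}=\langle Lv,w\rangle_{L^2}=\langle v,Lw\rangle_{L^2}=0$, so $\pi^{L^2,G}(\nu(f))=0$, hence $f=0$ by the isomorphism. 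No Fredholm solvability needed, and no appeal to Lemma \ref{Lem_Tangent vectors to scL^k} in part (i). Second, for the ``Lipschitz from above'' direction in (ii), the paper rules out $\|\pi^{L^2,G}((u^{(1)}_j-u^{(0)}_j)\zeta_j)\|\geq j\,d_j$ simply because Lemma \ref{Lem_Tangent vectors to scL^k} shows $(u^{(1)}_j-u^{(0)}_j)/d_j$ converges in $C^2_{loc}$ and $\zeta_j$ is supported away from $\Sing(\Sigma)$, so the left side divided by $d_j$ is bounded; this is cleaner than invoking Corollary \ref{Cor_L^2 Growth Est} again.

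So: same route, but your proof as written has a real gap at the end of (ii) that is repaired only by making the choice of $r_0$ (and, correspondingly, $\cF$) precise, as the paper does.
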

        
        \begin{Rem} \label{Rem_Metric Equi on cF}
            In this lemma, we choose an $L^2(M)$-metric on $\cF$. Since $\cF$ is of finite dimensional, this metric is equivalent to the $C^{k,\alpha}(M)$ metric.  The semi-metric we use on $\cL^{k,\alpha}_{top}$ is $\mbfD^\cL$.
        \end{Rem}
        
        \begin{proof} Fix a cut-off function $\zeta\in C^\infty(\RR; [0,1])$ such that $\zeta \equiv 0$ on $(-\infty, 1]$ and $\zeta \equiv 1$ on $[2, +\infty)$.
            
            To prove (i), we start with specifying the choice of $r_0$ and $\cF$. Take $r_0(g, \Sigma, \zeta)\in (0, \injrad(g, \Sigma)/40)$ small enough such that \[
                (\Ker^+ L_{\Sigma, g})^G \to (\Ker^+ L_{\Sigma, g})^G, \ \ \ v \mapsto \pi^{L^2, G}_{\Sigma, g}(\zeta_{\Sigma, g, r_0}\cdot v)   \]
            is an isomorphism, where $\zeta_{\Sigma, g, r_0}$ is defined in (\ref{Equ_Def eta_{Sigma, g, r_0}}). 
            Take $u_1, ..., u_I$ be an $L^2$-orthonormal basis of $(\Ker^+ L_{\Sigma, g})^G$ and choose $f_i\in C^{k,\alpha}_c(M\setminus \Sing(\Sigma))^G$ such that $\nu_{\Sigma, g}(f_i) = \zeta_{\Sigma, g, r_0}\cdot u_i$ along $\Sigma$. We let 
            \[
                \cF:= \mathrm{span}_\RR\langle f_i: 1\leq i\leq I\rangle\,.
            \]
            
            To prove (\ref{Equ_Map cF to Ker^+ L}) is an isomorphism, since $\dim (\Ker^+ L_{\bar{\Sigma}, \bar{g}})^G=\dim \cF$, it suffices to verify that for small $\delta$, for every $f\in \cF$ and every $(\bar{\Sigma},\bar{g})\in \cL^{k,\alpha}_{top}(g, \Sigma; \Lambda, \delta)^G$, \[
                \|\pi^{L^2, G}_{\bar{\Sigma}, \bar{g}}(\nu_{\bar{\Sigma},\bar{g}}(f))\|_{L^2(\bar{\Sigma})} \geq (L+1)^{-1}\|f\|_{L^2(M)},   \]
            where $L$ is the Lipschitz constant of the inverse of $\pi^{L^2, G}_{\Sigma, g}\comp\nu_{\Sigma, g}\big|_\cF$ on $(\Ker^+ L_{\Sigma, g})^G$.
            
            Suppose otherwise, there exists $(g_j, \Sigma_j) \to (g, \Sigma)$ in $\cL^{k,\alpha}_{top}(g, \Sigma; \Lambda, \kappa_1)^G$ and $L^2(M)$-unit $f_j\in \cF$ such that $v_j:= \pi^{L^2, G}_{\Sigma_j, g_j}(\nu_{\Sigma_j, g_j}(f_j))$ has $L^2(\Sigma_j)$-norm $< (L+1)^{-1}$.  Suppose WLOG $f_j \to f_\infty \in \cF$ in $C^{k,\alpha}(M)$ and by Lemma \ref{Lem_Cptness for Slower Growth Jac Fields}, $v_j$ subconverges to $v_\infty \in (\Ker^+ L_{\Sigma, g})^G$ in $C^2_{loc}(\Sigma)$, where by Fatou's Lemma $\|v_\infty\|_{L^2(\Sigma)}\leq (L+1)^{-1}$. Also by Lemma \ref{Lem_Cptness for Slower Growth Jac Fields}, orthonormal bases of $(\Ker^+ L_{\Sigma_j, g_j})^G$ subconverge to an orthonormal basis of $(\Ker^+ L_{\Sigma, g})^G$. Thus 
            \[
                \pi^{L^2, G}_{\Sigma, g}(\nu_{\Sigma, g}(f_\infty)) = \lim_{j\to \infty} v_j = v_\infty,
            \]
            which contradicts the choice of $L$.
            
            To prove the second part, it suffices to show that \textit{if a slower growth function $v$ solves $L_{\bar{\Sigma}, \bar{g}} v = \nu_{\bar{\Sigma}, \bar{g}}(f)$ for some $f\in \cF$, then $f\equiv 0$}. Multiply by elements of $(\Ker^+ L_{\bar{\Sigma}, \bar{g}})^G$ on both sides of the equation and integrate over $\bar{\Sigma}$. By Lemma \ref{Lem_Pre_Asymp Rate takes value in Gamma(C) and int by part} (iii), we can use integration by parts to derive
            \[
                \nu_{\bar{\Sigma}, \bar{g}}(f) \perp (\Ker^+ L_{\bar{\Sigma}, \bar{g}})^G,\text{ i.e. } \pi^{L^2, G}_{\bar{\Sigma}, \bar{g}}(\nu_{\bar{\Sigma}, \bar{g}}(f)) = 0\,,
            \] which implies $f\equiv 0$ by the isomorphism (\ref{Equ_Map cF to Ker^+ L}).
            
            To prove (ii), we also argue by contradiction. Suppose $P_{\bar{g}_j, \bar{\Sigma}_j}$ is not uniformly bi-Lipschitz for $(\bar{g}_j, \bar{\Sigma}_j)\in \cL^{k,\alpha}_{top}(g, \Sigma; \Lambda, \kappa_1)^G$ with $(\bar{g}_j, \bar{\Sigma}_j)\to (g, \Sigma)$ in $\cL^{k,\alpha}$. Then there exists a sequence of pairs $(g_j^0, \Sigma_j^0), (g_j^1, \Sigma_j^1)\to (g, \Sigma)$ in $\cL^{k,\alpha}_{top}(g, \Sigma; \Lambda, \delta)^G$ such that 
            \begin{enumerate} [(a)]
                \item $g_j^i = (1+f_j^i)\bar{g}_j$, $i=0, 1$, $f_j^i\in \cF$;
                \item One of the following holds,
                      \begin{align}
                          \text{either } & \  \|\pi^{L^2, G}_{\bar{\Sigma}_j, \bar{g}_j}((u_j^{(1)}-u_j^{(0)})\zeta_j)\|_{L^2(\bar{\Sigma}_j)} \leq \frac{d_j}{j}, \label{Equ_Either Large Lip inverse} \\  
                          \text{or }     & \ \|\pi^{L^2, G}_{\bar{\Sigma}_j, \bar{g}_j}((u_j^{(1)}-u_j^{(0)})\zeta_j)\|_{L^2(\bar{\Sigma}_j)} \geq j\cdot d_j; \label{Equ_Or Large Lip}
                      \end{align}
                      where $u_j^{(i)} := G_{\bar{\Sigma}_j, \bar{g}_j}^{\Sigma_j^i}$ is $G$-invariant; $\zeta_j:= \zeta_{\bar{\Sigma}_j, \bar{g}_j, r_0}$, $d_j:= \mbfD^{\cL}((g_j^0, \Sigma_j^0), (g_j^1, \Sigma_j^1))$.
            \end{enumerate}
            Since $\cF$ is of finite dimension and (\ref{Equ_Map cF to Ker^+ L}) is an isomorphism, Lemma \ref{Lem_Tangent vectors to scL^k} applies to conclude that when $j\to \infty$, $(f_j^1 - f_j^0)/d_j$ subconverges in $C^{k,\alpha}(M)$ to some $\hat{f}_\infty \in \cF$; $(u_j^{(1)}-u_j^{(0)})/d_j$ subconverges to some non-zero $G$-invariant slower growth $\hat{u}_\infty$ in $C^2_{loc}$, solving 
            \begin{align}
                L_{\Sigma, g} \hat{u}_\infty = \frac{n}{2}\nu_{\Sigma, g}(\hat{f}_\infty).   \label{Equ_Tang vec of scL cap cF solves L u = c nu(f)}
            \end{align}
            And since $\zeta_j$ is supported away from $\Sing(\Sigma)$, by the convergence above and Lemma \ref{Lem_Cptness for Slower Growth Jac Fields}, we know that (\ref{Equ_Or Large Lip}) can not hold for $j>>1$.  
            
            Also, by (\ref{Equ_Tang vec of scL cap cF solves L u = c nu(f)}) and (i), we derive $\hat{f}_\infty = 0$, in other words, $\mathbf{0}\neq \hat{u}_\infty \in (\Ker^+ L_{\Sigma, g})^G$. 
            Then since $\zeta_j(u_j^{(1)}-u_j^{(0)})/d_j$ subconverges in $L^2(\Sigma)$ to $\hat{u}_\infty\cdot \zeta_{\Sigma, g, r_0}$, by (\ref{Equ_Either Large Lip inverse}), $\pi^{L^2, G}_{\Sigma, g}(\hat{u}_\infty\cdot \zeta_{\Sigma, g, r_0}) = 0$. This violates the choice of $r_0$ in (i).   
        \end{proof}

        Having established this, we can prove Theorem \ref{Thm_Generic Semi-nondeg_Loc} by unwrapping the proof of Sard Theorem.  Let us start with the following abstract finite dimensional version of the theorem. 
        
        Let $Z\subset \RR^n$ be a compact subset; $f:Z \to \RR^n$ be a Lipschitz map. we call $x\in Z$ a \textbf{critical point} of $f$ if there exists some proper linear subspace $T_x \subset \RR^n$ such that for every sequence of distinct points $Z\ni x_j \to x$, if $(f(x_j)-f(x))/|x_j - x| \to v$ in $\RR^n$, then $v\in T_x$.  Let $\Crit_f$ be the set of critical points of $f$ on $Z$.  
        \begin{Lem} [Sard-Smale-type Theorem for Lipschitz Maps] \label{Lem_Sard Thm}
            Let $Z\subset \RR^n$ be a compact subset and $f:Z \to \RR^n$ be a Lipschitz map.   Then we have \[
                \scH^n(f(\Crit_f)) = 0.  \]
        \end{Lem}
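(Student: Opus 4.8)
### Proof proposal for Lemma \ref{Lem_Sard Thm}

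The plan is to imitate the classical proof of the Sard–Smale theorem, but for a merely Lipschitz map $f : Z \to \RR^n$ defined on a compact set $Z \subset \RR^n$, using the ``linearized image at a critical point lies in a proper subspace'' hypothesis to run a covering argument. First I would fix $\varepsilon > 0$ and exploit the defining property of $\Crit_f$: for each $x \in \Crit_f$ there is a proper subspace $T_x \subsetneq \RR^n$ such that every limiting difference quotient $(f(x_j) - f(x))/|x_j - x|$ lies in $T_x$. I want to upgrade this pointwise statement to a quantitative local one: for each $x \in \Crit_f$ there is a radius $r_x > 0$ so that $f(Z \cap B(x, r))$ is contained in an $\varepsilon r$-neighborhood of the affine slab $f(x) + T_x$ intersected with $B(f(x), Lr)$, for all $r \leq r_x$, where $L = \Lip(f)$. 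This follows by contradiction: if not, there is a sequence $r_m \to 0$ and points $x_m \in Z \cap B(x, r_m)$ with $\dist(f(x_m), f(x) + T_x) > \varepsilon r_m$; since $|f(x_m) - f(x)| \leq L |x_m - x| \leq L r_m$, after rescaling by $1/|x_m - x|$ and passing to a subsequence the normalized difference quotients converge to some unit vector $v$, which by hypothesis must lie in $T_x$ — but the distance lower bound forces $\dist(v, T_x) \geq \varepsilon / L > 0$ in the limit (after checking $|x_m - x|$ and $r_m$ are comparable, which one arranges by taking $x_m$ realizing the violation at the largest relevant scale, or by a standard dyadic argument), a contradiction.

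Next I would estimate the measure of the image of one such ball. Since $T_x$ is a proper subspace, $\dim T_x \leq n - 1$, so the set $\big(f(x) + T_x + B(0, \varepsilon r)\big) \cap B(f(x), Lr)$ is contained in a slab of thickness $2\varepsilon r$ and diameter $\leq 2Lr$, hence has Lebesgue measure at most $C_n \, (\varepsilon r)\,(Lr)^{n-1} = C_n L^{n-1} \varepsilon \, r^n$. Thus
\[
    \scH^n\big(f(Z \cap B(x, r))\big) \leq C_n L^{n-1} \varepsilon \, r^n, \qquad \forall\, r \leq r_x.
\]
Now cover $\Crit_f$: by the Vitali covering lemma (in the $5r$-form), from the family $\{B(x, r) : x \in \Crit_f,\ r \leq r_x\}$ extract a countable disjoint subfamily $\{B(x_i, r_i)\}$ with $\Crit_f \subset \bigcup_i B(x_i, 5 r_i)$. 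Disjointness inside a fixed bounded neighborhood of the compact set $Z$ gives $\sum_i r_i^n \leq C_n \, \omega_n^{-1} \vol\big(B(Z, 1)\big) =: C(Z, n) < \infty$. Applying the single-ball estimate to each $B(x_i, 5 r_i)$ (enlarging $r_x$-thresholds harmlessly, or just noting $5r_i \leq r_{x_i}$ after shrinking the initial $r_x$ by a factor $5$) yields
\[
    \scH^n\big(f(\Crit_f)\big) \leq \sum_i \scH^n\big(f(Z \cap B(x_i, 5 r_i))\big) \leq C_n L^{n-1} \varepsilon \sum_i (5 r_i)^n \leq C(Z, n, L)\, \varepsilon.
\]
Letting $\varepsilon \to 0$ gives $\scH^n(f(\Crit_f)) = 0$.

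The main obstacle I anticipate is the first step — turning the pointwise limiting-difference-quotient condition into the uniform slab containment on a whole ball $B(x, r)$ with a single threshold $r_x$. The subtlety is that the hypothesis only controls limits along sequences converging to $x$, so one must be careful that the contradiction sequence $x_m \to x$ can be chosen with $|x_m - x|$ comparable to the scale $r_m$ at which the violation occurs; the clean way is to define, for each dyadic scale $2^{-k}$, the ``worst'' point in the annulus $B(x, 2^{-k}) \setminus B(x, 2^{-k-1})$ and run the compactness argument on that sequence, so that $|x_m - x| \in [2^{-k_m-1}, 2^{-k_m}]$ is automatically comparable to $r_m = 2^{-k_m}$. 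Everything else — the slab volume bound, Vitali, and the packing estimate from compactness of $Z$ — is routine. One should also remark that $\Crit_f$ itself need not be measurable a priori, but this does not matter since $\scH^n$ is an outer measure and the covering argument bounds the outer measure of $f(\Crit_f)$ directly.
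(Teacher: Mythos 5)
Your proposal is correct and follows essentially the same route as the paper: fix $\varepsilon>0$, show that around each $x\in\Crit_f$ one can find arbitrarily small balls whose image has measure $\leq C\varepsilon$ times the measure of the ball, and conclude by Vitali plus a packing estimate from compactness. What you have done is fill in the one step that the paper leaves as an assertion, namely that ``the definition of $\Crit_f$ guarantees that $\cB_x$ contains balls with arbitrarily small radius'': your slab-containment lemma plus the volume bound $C_nL^{n-1}\varepsilon r^n$ for a slab of thickness $2\varepsilon r$ over a subspace of dimension $\leq n-1$ is exactly what that claim needs. The Vitali step and the bound $\sum_i r_i^n\lesssim \vol(B(Z,1))$ are identical to the paper's.

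One remark on the ``main obstacle'' you anticipate: the comparability of $|x_m-x|$ and $r_m$ is in fact automatic and you do not need the dyadic worst-point device. If $x_m\in Z\cap B(x,r_m)$ violates the slab containment, i.e.\ $\dist(f(x_m),f(x)+T_x)>\varepsilon r_m$, then since $f(x)\in f(x)+T_x$ one has $L|x_m-x|\geq|f(x_m)-f(x)|\geq\dist(f(x_m),f(x)+T_x)>\varepsilon r_m$, hence $(\varepsilon/L)r_m<|x_m-x|\leq r_m$. In particular $x_m\neq x$, and $\dist\bigl((f(x_m)-f(x))/|x_m-x|,\,T_x\bigr)>\varepsilon r_m/|x_m-x|\geq\varepsilon$, so no subsequential limit of the difference quotients can lie in $T_x$ — the contradiction is immediate. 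So the argument is cleaner than you feared, and the proof is complete as you have it.
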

        \begin{proof}
            Fix some $\epsilon>0$. For every $x\in \Crit_f$, let $\cB_x$ be the collection of balls $B$ in $\RR^n$ centered at $x$ with radius $\leq 1$ and such that $\scH^n(f(5B\cap Z))\leq \epsilon\scH^n(B)$, where $5B$ is the ball with same center and three times radius of $B$.  The definition of $\Crit_f$ guarantees that $\cB_x$ contains balls with arbitrarily small radius.
            
            By the Vitali covering lemma, there exists a disjoint family of balls $\{B_j\}_{j\geq 1}\subset \bigcup_{x\in \Crit_f} \cB_x$ such that $\Crit_f \subset \bigcup_{j\geq 1} 5B_j$.
            Thus, \[
                \scH^n(f(\Crit_f)) \leq \sum_{j\geq 1}\scH^n (f(5B_j\cap Z)) \leq \epsilon\sum_{j\geq 1}\scH^n(B_j) \leq \epsilon \scH^n(\BB_1(Z)).   \]
            Take $\epsilon\to 0$, we derive $\scH^n(f(\Crit_f)) = 0$.
        \end{proof}
        
        \begin{Thm} \label{Thm_Loc Generic G-inv Semi-nondeg}
            Let $G < \text{Diff}(M)$ be a fixed finite subgroup, $\Sigma \subset (M, g)$ be a connected $G$-invariant two-sided LSMH, $\Lambda>0$, and $\kappa_2 = \kappa_2(g, \Sigma, \Lambda, G)>0$ be given by Lemma \ref{Lem_Loc Parametriz of scL^k}.   Then for every $\delta\in (0, \kappa_2)$, 
            \begin{align}
                \begin{split}
                    \cG(g, \Sigma; \Lambda, \delta)^G:= \big\{g'\in \cG^{k, \alpha}(M)^G:  \text{either }g'\notin \Pi(\cL^{k,\alpha}(g, \Sigma; \Lambda, \delta)^G)& ;\\
                    \text{or }\forall (g', \Sigma')\in \cL^{k,\alpha}(g, \Sigma; \Lambda, \delta)^G & , \\
                    \text{ we have }(\Ker^+ L_{\Sigma', g'})^G = \set{0}& . \big\}  
                \end{split}  \label{Equ_Def_scG(g, Sigma; Lambda, delta)}
            \end{align}
            is both open and dense in $\cG^{k,\alpha}(M)^G:= \{g \in \cG^{k,\alpha}(M): \gamma^* g = g, \forall \gamma\in G\}$.
        \end{Thm}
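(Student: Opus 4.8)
I write $I := \dim(\Ker^+ L_{\Sigma,g})^G$, which is finite by Remark \ref{Rem:slow_growth_1}, and I argue by downward induction on $I$. If $I=0$, then for $\delta<\kappa_2<\kappa_1$ Corollary \ref{Cor_Upper Semi-conti of dim Ker^+ L} forces $(\Ker^+ L_{\Sigma',g'})^G=\{0\}$ for every $(g',\Sigma')\in\cL^{k,\alpha}(g,\Sigma;\Lambda,\delta)^G$, so $\cG(g,\Sigma;\Lambda,\delta)^G=\cG^{k,\alpha}(M)^G$ and there is nothing to prove; assume $I\ge1$. For openness it suffices to show $\cB:=\cG^{k,\alpha}(M)^G\setminus\cG(g,\Sigma;\Lambda,\delta)^G$ is closed: if $g_j\to g_\infty$ in $\cG^{k,\alpha}(M)^G$ with each $g_j\in\cB$, pick $(g_j,\Sigma_j)\in\cL^{k,\alpha}(g,\Sigma;\Lambda,\delta)^G$ and a unit $u_j\in(\Ker^+ L_{\Sigma_j,g_j})^G\setminus\{0\}$; Lemma \ref{Lem_Cptness for scL^k} produces a subsequential limit $(g_\infty,\Sigma_\infty)$ in the same canonical neighborhood, and Lemma \ref{Lem_Cptness for Slower Growth Jac Fields} produces a unit limit $u_\infty\in(\Ker^+ L_{\Sigma_\infty,g_\infty})^G$, so $g_\infty\in\cB$. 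The same two lemmas show that for each integer $m\ge1$ the stratum $\{(g',\Sigma')\in\cL^{k,\alpha}(g,\Sigma;\Lambda,\delta)^G:\dim(\Ker^+ L_{\Sigma',g'})^G\ge m\}$ is closed in the compact set $\cL^{k,\alpha}(g,\Sigma;\Lambda,\delta)^G$, a fact used below.

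\textbf{Density, the top stratum (the heart).} The key claim is that $F_0:=\Pi(\cL^{k,\alpha}_{top}(g,\Sigma;\Lambda,\delta)^G)$ is closed and nowhere dense; it is closed since $\cL^{k,\alpha}_{top}(g,\Sigma;\Lambda,\delta)^G$ is a closed subset of a compact set, hence compact, and $\Pi$ is continuous. Suppose $F_0$ had nonempty interior; pick $g^*$ in it with a realizer $(g^*,\Sigma^*)\in\cL^{k,\alpha}_{top}(g,\Sigma;\Lambda,\delta)^G\subset\cL^{k,\alpha}_{top}(g,\Sigma;\Lambda,\kappa_2)^G$ and apply Lemma \ref{Lem_Loc Parametriz of scL^k} with base point $(\bar g,\bar\Sigma)=(g^*,\Sigma^*)$. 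By part (ii), $P_{g^*,\Sigma^*}$ is a $\mbfD^\cL$-bi-Lipschitz homeomorphism from $\cL^{k,\alpha}_{top}(g,\Sigma;\Lambda,\kappa_2)^G\cap\Pi^{-1}(\cF\cdot g^*)$ onto a compact set $Z\subset(\Ker^+ L_{\Sigma^*,g^*})^G\cong\RR^I$, and post-composing $P_{g^*,\Sigma^*}^{-1}$ with the assignment sending a pair to the $\cF$-coordinate of its metric gives a Lipschitz map $f:Z\to\cF\cong\RR^I$. I then claim $\Crit_f=Z$. Fix $z_0\in Z$ with $(g'_0,\Sigma'_0):=P_{g^*,\Sigma^*}^{-1}(z_0)$ and distinct $z_j\to z_0$ with $(f(z_j)-f(z_0))/|z_j-z_0|\to w\in\cF$; set $d_j:=\mbfD^\cL[(g'_0,\Sigma'_0),(g'_j,\Sigma'_j)]$, comparable to $|z_j-z_0|$ by the bi-Lipschitz bound, and rerun the blow-up of Lemma \ref{Lem_Tangent vectors to scL^k} with base LSMH $(g'_0,\Sigma'_0)$ and normalization $d_j$, exactly as in the proof of Lemma \ref{Lem_Loc Parametriz of scL^k}(ii). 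This yields a subsequence along which $(f(z_j)-f(z_0))/d_j\to\hat f_\infty\in\cF$ proportional to $w$ and $(u^{(1)}_j-u^{(0)}_j)/d_j\to\hat u_\infty$ in $C^2_{loc}$, where $\hat u_\infty\ne0$ is $G$-invariant, of slower growth, and solves $L_{\Sigma'_0,g'_0}\hat u_\infty=\tfrac{n}{2}\nu_{\Sigma'_0,g'_0}(\hat f_\infty)$. Since $\tfrac{n}{2}\hat f_\infty\in\cF$, Lemma \ref{Lem_Loc Parametriz of scL^k}(i) (applied with base point $(g'_0,\Sigma'_0)\in\cL^{k,\alpha}_{top}(g,\Sigma;\Lambda,\kappa_2)^G$) forces $\hat f_\infty=0$, hence $w=0$: every difference-quotient direction at $z_0$ lies in the proper subspace $\{0\}\subsetneq\RR^I$, i.e.\ $z_0\in\Crit_f$. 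Lemma \ref{Lem_Sard Thm} now gives $\scH^I(f(Z))=0$. On the other hand, if $F_0$ had interior then $F_0\cap(\cF\cdot g^*)$ would have nonempty interior in the $I$-dimensional slice $\cF\cdot g^*$, and $F_0\cap(\cF\cdot g^*)\subset\{(1+h)g^*:h\in f(Z)\}$ because any realizer in $\cL^{k,\alpha}_{top}(g,\Sigma;\Lambda,\delta)^G$ over a metric of $\cF\cdot g^*$ lies in the domain of $P_{g^*,\Sigma^*}$; this contradicts $\scH^I(f(Z))=0$. Hence $F_0$ is nowhere dense.

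\textbf{Density, lower strata.} For the remaining strata I use the downward induction. Having removed the closed nowhere-dense $F_0$, over every metric in $\cG^{k,\alpha}(M)^G\setminus F_0$ each bad realizer in $\cL^{k,\alpha}(g,\Sigma;\Lambda,\delta)^G$ has kernel dimension in $[1,I-1]$. The inductive hypothesis (Theorem \ref{Thm_Loc Generic G-inv Semi-nondeg} for a base LSMH of smaller $G$-kernel dimension) applies to each such realizer $(g',\Sigma')$: its metric $g'$ avoids the open dense set $\cG(g',\Sigma';\Lambda,\delta')^G$, $0<\delta'<\kappa_2(g',\Sigma',\Lambda,G)$, only on a closed nowhere-dense set $\cB_{g',\Sigma'}\ni g'$. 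Using that the strata $\{\dim(\Ker^+)^G\ge m\}$ are closed — so the set of bad realizers over any metric ball has compact closure in $\cL^{k,\alpha}(g,\Sigma;\Lambda,\delta)^G$, hence is separable — I would extract a countable subfamily of these $\cB_{g',\Sigma'}$ whose union contains the bad set over that ball; Baire's theorem then provides, in any given ball, a metric lying in $\cG(g,\Sigma;\Lambda,\delta)^G$. Combined with the perturbation off $F_0$ from the previous step, this gives density and closes the induction. (Taking $G$ trivial for two-sided $\Sigma$ and $G=\ZZ_2$ on the double cover for one-sided $\Sigma$, via Remark \ref{Rem_one-sided, G-inv, induced rep}, then yields Theorem \ref{Thm_Generic Semi-nondeg_Loc}.)

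\textbf{Main obstacle.} The substance of the proof is the identity $\Crit_f=Z$: it converts the analytic non-surjectivity of Lemma \ref{Lem_Loc Parametriz of scL^k}(i) — that $L_{\Sigma',g'}v=\nu_{\Sigma',g'}(f)$ admits no $G$-invariant slower-growth solution for $0\ne f\in\cF$ — into the differential-topological statement that the parametrizing map $f$ is everywhere critical, and this conversion hinges on the uniform blow-up analysis of Lemma \ref{Lem_Tangent vectors to scL^k}, whose delicate input is the growth control of the induced Jacobi field near $\Sing(\Sigma')$; this is exactly where the pseudo-neighborhood structure built around density matching at singular points, and the notion of slower growth, enter. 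The secondary difficulty is the assembly of the lower strata: since $\cL^{k,\alpha}(g,\Sigma;\Lambda,\delta)^G$ is only a pseudo-neighborhood, extracting a countable family of canonical neighborhoods whose bad-metric sets cover the lower-stratum bad set requires the stratum-closedness from the compactness lemmas and some bookkeeping.
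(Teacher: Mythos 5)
Your \emph{openness} argument and your treatment of the \emph{top stratum} are correct and essentially reproduce the paper's: showing $\Crit_{\tilde\Pi}=\cZ$ by feeding the blow-up of Lemma~\ref{Lem_Tangent vectors to scL^k} into Lemma~\ref{Lem_Loc Parametriz of scL^k}(i), then applying the Lipschitz Sard Lemma~\ref{Lem_Sard Thm} on the slice $\cF\cdot g^*$, is precisely the paper's Claim in that proof. (Whether one phrases the conclusion as ``$F_0=\Pi(\cL^{k,\alpha}_{top})$ is nowhere dense'' or as ``for every $\bar g$ there is an approximating sequence $g_j\in\cF\cdot\bar g\setminus F_0$'' is cosmetic.)

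Your handling of the \emph{lower strata} is where the argument breaks. You propose to cover all bad metrics in a ball by countably many closed nowhere-dense sets $\cB_{g',\Sigma'}=\cG^{k,\alpha}(M)^G\setminus\cG(g',\Sigma';\Lambda,\delta')^G$, invoking Baire, and you justify the existence of a countable subfamily by the separability of the set of bad realizers inside the compact $\cL^{k,\alpha}(g,\Sigma;\Lambda,\delta)^G$. This step does not go through. Membership $g^{**}\in\cB_{g^{(n)},\Sigma^{(n)}}$ requires a bad realizer over $g^{**}$ to lie in $\cL^{k,\alpha}(g^{(n)},\Sigma^{(n)};\Lambda,\delta_n)^G$, which imposes the density-matching condition of Definition~\ref{Def_injrad, canonical neighb} relative to $\Sing(\Sigma^{(n)})$. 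But $\cL^{k,\alpha}$ is only a \emph{pseudo}-neighborhood: if $(g^{(n)},\Sigma^{(n)})\to(g^{**},\Sigma^{**})$ and $\Sigma^{(n)}$ has ``extra'' singularities that desingularize in the limit, then $(g^{**},\Sigma^{**})\notin\cL^{k,\alpha}(g^{(n)},\Sigma^{(n)};\Lambda,\delta_n)^G$ for \emph{any} $\delta_n>0$, so a dense countable subfamily of realizers need not produce a covering. Getting a countable cover of pairs by canonical pseudo-neighborhoods is exactly the content of Theorem~\ref{Thm_Countable Decomp} and requires the tree/cone-decomposition machinery of Section~\ref{Sec:Count_Decomp}; it is not a ``bookkeeping'' consequence of separability.

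The paper sidesteps this by a two-step perturbation that keeps everything \emph{finite}. Having produced $g_j\in\cF\cdot\bar g\setminus\Pi(\cL^{k,\alpha}_{top})$ with $g_j\to\bar g$, one fixes $j$ and works only over the compact fiber $\Pi^{-1}(g_j)\cap\cL^{k,\alpha}(g,\Sigma;\Lambda,\delta)^G$: every realizer there has $\dim(\Ker^+)^G\le I-1$, the inductive hypothesis gives for each an open dense $\cG(g_j,\Sigma^{(l)};\Lambda,\delta_{g_j,\Sigma^{(l)}})^G$, and since each $\cL^{k,\alpha}(g_j,\Sigma^{(l)};\cdot)$ contains a relatively open neighborhood of $(g_j,\Sigma^{(l)})$ in $\cL^{k,\alpha}(g,\Sigma;\Lambda,\delta)^G$, compactness of the fiber yields a \emph{finite} subcover $\{\cL^{k,\alpha}(g_j,\Sigma^{(l)};\cdot)\}_{l\le q}$. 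One then takes $g'_m\to g_j$ in the finite intersection $\cG_j=\bigcap_{l\le q}\cG(g_j,\Sigma^{(l)};\cdot)$ and shows, by a compactness-contradiction inside $\cL^{k,\alpha}(g,\Sigma;\Lambda,\delta)^G$, that $g'_m\in\cG(g,\Sigma;\Lambda,\delta)^G$ for $m\gg1$. If you want to keep your ``nowhere dense $F_0$'' phrasing, you still need to replace the Baire step by this fiberwise finite-cover plus compactness argument.
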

        \begin{proof}
            We shall prove it inductively on $I:= \dim (\Ker^+ L_{\Sigma, g})^G\geq 0$.  Note that by Corollary \ref{Cor_Upper Semi-conti of dim Ker^+ L}, when $I = 0$, we always have $\cG(g, \Sigma; \Lambda, \delta)^G = \cG^{k,\alpha}(M)^G$, which is definitely open and dense.  Now consider $I\geq 1$.\\
            
            \noindent   \textit{Openness.} If $g_j\notin \cG(g, \Sigma; \Lambda, \delta)^G$ for $1\leq j<+\infty$ and $g_j\to g_\infty$ in $\cG^{k,\alpha}(M)^G$, then by definition, there exists $(g_j, \Sigma_j)\in \cL^{k,\alpha}(g, \Sigma; \Lambda, \delta)^G$ with a non-trivial $G$-invariant Jacobi field of slower growth $u_j \in (\Ker^+ L_{\Sigma_j, g_j})^G$.  
            By the compactness Lemma \ref{Lem_Cptness for scL^k}, up to a subsequence, $(g_j, \Sigma_j)\to (g_\infty, \Sigma_\infty)$ in $\cL^{k,\alpha}(g, \Sigma; \Lambda, \delta)^G$, and by Lemma \ref{Lem_Cptness for Slower Growth Jac Fields}, after renormalization, $u_j$ subconverges to some non-trivial $G$-invariant slower growth Jacobi field $u_\infty\in (\Ker^+ L_{\Sigma_\infty, g_\infty})^G$, which implies $g_\infty \notin \cG(g, \Sigma; \Lambda, \delta)^G$. Therefore, $\cG(g, \Sigma; \Lambda, \delta)^G$ is an open subset.\\
            
            \noindent   \textit{Denseness.} By induction, we can assume that the denseness has been established for every pair $(g, \Sigma)$ with $\dim (\Ker^+ L_{\Sigma, g})^G \leq I-1$. The goal now is to prove it for $I$($\geq 1$).
            \begin{claim}\label{claim:exist_seq}
                For each $\bar{g}\in \cG^{k,\alpha}(M)^G\setminus \cG(g, \Sigma; \Lambda, \delta)^G$, there exists a sequence of metrics $g_j \in \cF\cdot \bar{g}\setminus \Pi(\cL^{k,\alpha}_{top}(g, \Sigma; \Lambda, \delta)^G)$ such that $g_j\to \bar{g}$ in $C^{k,\alpha}(M)$ when $j\to \infty$.
            \end{claim}
            
            We first finish the proof of denseness assuming this claim. It suffices to show that each $g_j$ in the claim can be approximated by metrics in $\cG(g, \Sigma; \Lambda, \delta)^G$. 
            
            Let $j\geq 1$ be fixed. Suppose WLOG that $g_j\notin \cG(g, \Sigma; \Lambda, \delta)^G$. Hence $\Pi^{-1}(g_j)\cap \cL^k(g, \Sigma; \Lambda, \delta)^G\neq \emptyset$. 
            By definition of $(\cL^{k,\alpha}_{top})^G$ and Corollary \ref{Cor_Upper Semi-conti of dim Ker^+ L} we have, for every $(g_j, \Sigma')\in \cL^{k,\alpha}(g, \Sigma; \Lambda, \delta)^G$, $\dim (\Ker^+ L_{\Sigma', g_j})^G \leq I-1$. Hence by the induction assumption, there exists $\delta_{g_j, \Sigma'} = \delta(g_j, \Sigma', \Lambda)>0$ such that $\cG(g_j, \Sigma'; \Lambda, \delta_{g_j, \Sigma'})^G$ is open and dense in $\cG^{k,\alpha}(M)^G$. 
            Since $\cL^{k,\alpha}(g_j, \Sigma'; \Lambda, \delta_{g_j, \Sigma'})^G$ contains an open neighborhood of $(g_j, \Sigma')$ in $\cL^{k,\alpha}(g, \Sigma; \Lambda, \delta)^G$, and by Lemma \ref{Lem_Cptness for scL^k}, $\Pi^{-1}(g_j) \cap \cL^{k,\alpha}(g, \Sigma; \Lambda, \delta)^G$ is compact, we thus know that there exists a finite sequence of pairs $(g_j, \Sigma^{(1)})$, $(g_j, \Sigma^{(2)})$, $\dots$, $(g_j, \Sigma^{(q)}) \in \Pi^{-1}(g_j)\cap \cL^{k,\alpha}(g, \Sigma; \Lambda, \delta)^G$, $1\leq q<+\infty$, such that 
            \begin{align}
                \bigcup_{l=1}^q \cL^{k,\alpha}(g_j, \Sigma^{(l)}; \Lambda, \delta_{g_j, \Sigma^{(l)}})^G \supset \Pi^{-1}(g_j) \cap \cL^{k,\alpha}(g, \Sigma; \Lambda, \delta)^G.  \label{Equ_Pi^-1(g_j) covered by smaller nullity neighb}
            \end{align}
            Since $\cG_j := \bigcap_{l=1}^q \cG(g_j, \Sigma^{(l)}; \Lambda, \delta_{g_j, \Sigma^{(l)}})^G$ is still open and dense in $\cG^(M)^G$, there exists a sequence $\{g'_m\}_{m\geq 1} \subset \cG_j$ such that $g'_m \to g_j$ in $C^{k,\alpha}$ as $m\to \infty$. 
            
            We now verify that $g'_m \in \cG(g, \Sigma; \Lambda, \delta)^G$ for $m>>1$.
            Suppose otherwise, for infinitely many $m$, there exists $(g'_m, \Sigma'_m)\in \cL^{k,\alpha}(g, \Sigma; \Lambda, \delta)^G$ with $(\Ker^+ L_{\Sigma'_m, g'_m})^G \neq \set{0}$. By Lemma \ref{Lem_Cptness for scL^k}, up to a subsequence of $m\to \infty$, $(g'_m, \Sigma'_m)\to (g_j, \Sigma')$ in $\cL^{k,\alpha}(g, \Sigma; \Lambda, \delta)^G$. By (\ref{Equ_Pi^-1(g_j) covered by smaller nullity neighb}), WLOG $(g_j, \Sigma')\in \cL^{k,\alpha}(g_j, \Sigma^{(1)}; \Lambda, \delta_{g_j, \Sigma^{(1)}})^G$, and so does $(g'_m, \Sigma'_m)$ for $m>>1$. But since $g'_m \in \cG_j \subset \cG(g_j, \Sigma^{(1)}; \Lambda, \delta_{g_j, \Sigma^{(1)}})^G$, by its definition (\ref{Equ_Def_scG(g, Sigma; Lambda, delta)}), we must have $(\Ker^+ L_{\Sigma'_m, g'_m})^G = \set{0}$ for those $m$, which contradicts the choice of $(g'_m, \Sigma'_m)$.  This concludes the proof of denseness.

            \noindent\textit{Proof of Claim \ref{claim:exist_seq}.} Suppose WLOG that $(\bar{g}, \bar{\Sigma})\in \cL^{k,\alpha}_{top}(g, \Sigma; \Lambda, \delta)^G$, otherwise just taking $g_j \equiv \bar{g}$ proves the Claim. 
            
            Recall by Lemma \ref{Lem_Loc Parametriz of scL^k}, $\cL^k_{top}(g, \Sigma; \Lambda, \delta)^G\cap \Pi^{-1}(\cF \cdot \bar{g})$ is bi-Lipschitzly embedded as a compact subset of finite dimensional Euclidean space $(\Ker^+ L_{\bar{\Sigma},\bar{g}})^G$. We denote for simplicity $\cZ:= P_{\bar{g}, \bar{\Sigma}}(\cL^k_{top}(g, \Sigma; \Lambda, \delta)^G\cap \Pi^{-1}(\cF\cdot \bar{g}))$.  Consider the projection map, \[
                \tilde{\Pi}: \cZ \cong \cL^k_{top}(g, \Sigma; \Lambda, \delta)^G\cap \Pi^{-1}(\cF\cdot \bar{g}) \overset{\Pi}{\to} \cF\cdot \bar{g}\cong \cF.   \]
            More precisely, $\tilde{\Pi}$ is a Lipschitz map between (compact subset of) vector spaces of same dimension, given by $\tilde{\Pi}(P_{\bar{g}, \bar{\Sigma}}((1+f)\bar{g}, \Sigma')) = f$. 

            We shall show that $\cZ$ consists of critical points in Claim \ref{claim:Z_subset_crit}. Then by Lemma \ref{Lem_Sard Thm}, we see that $\Pi(\cL^{k,\alpha}_{top}(g, \Sigma; \Lambda, \delta)^G)$ is a compact subset of $I$-dimensional measure $0$ in $\cF\cdot\bar{g}\cong \cF$.  Therefore, we can find $g_j \in \cF\cdot \bar{g}\setminus \Pi(\cL^{k,\alpha}_{top}(g, \Sigma; \Lambda, \delta)^G)$ to approximate $\bar{g}$, and thus, finish the proof of Claim \ref{claim:exist_seq}. $\hfill\blacksquare$
            
            \begin{claim}\label{claim:Z_subset_crit}
                $\cZ\subset \Crit_{\tilde{\Pi}}$.
            \end{claim}

            \noindent \textit{Proof of Claim \ref{claim:Z_subset_crit}.} By the definition of $\Crit_f$ and biLipschitz property of $P_{\bar{g}, \bar{\Sigma}}$, it suffices to show that \textit{whenever $((1+f_j)\bar{g}, \Sigma'_j) \to ((1+f)\bar{g}, \bar{\Sigma}')$ in $\cL^{k,\alpha}_{top}(g, \Sigma; \Lambda, \delta)^G$, then we always have when $j\to \infty$,}
            \begin{align}
                (f_j - f)/d_j \to 0\,,  \label{Equ_(f_j - f)/d_j to 0}
            \end{align}
            where with $d_j:= \mbfD^{\cL}(((1+f_j)\bar{g}, \Sigma'_j), ((1+f)\bar{g}, \bar{\Sigma}'))>0$.

            To see this, recall by Lemma \ref{Lem_Tangent vectors to scL^k}, after passing to a subsequence, 
            \begin{align*}
                \begin{cases} 
                    u_j/d_j \to \hat{u}_\infty            & \ \text{ in } C^2_{loc}(\bar{\Sigma}'); \\
                    (f_j - f)/((1+f)d_j) \to \hat{f}_\infty & \ \text{ in } C^{k,\alpha}(M).
                \end{cases}
            \end{align*}
            where $u_j:= G^{\Sigma_j'}_{\bar{\Sigma}', (1+f)\bar{g}}$ is the graphical function of $\Sigma_j'$ over $\bar{\Sigma}'$, $\hat{u}_\infty$ is of slower growth, and $\hat{f}_\infty$ solves the equation 
            \[
                L_{\bar{\Sigma}', (1+f)\bar{g}} \hat{u}_\infty = \frac{n}{2}\nu_{\bar{\Sigma}', (1+f)\bar{g}} (\hat{f}_\infty).  
            \]
            But by Lemma \ref{Lem_Loc Parametriz of scL^k} (i), this equation has no slower growth solution unless $\hat{f}_\infty \equiv 0$, which proves (\ref{Equ_(f_j - f)/d_j to 0}). $\hfill\blacksquare$
            
        \end{proof}
        
        \begin{proof}[Proof of Theorem \ref{Thm_Generic Semi-nondeg_Loc}.]
            The case where $\Sigma$ is two-sided follows immediately from Theorem \ref{Thm_Loc Generic G-inv Semi-nondeg} with $G = \{\mathrm{Id}_M\}$. 
            
            When $\Sigma$ is one-sided, by Remark \ref{Rem_one-sided, G-inv, induced rep} (ii), we can consider the two-sided double cover $\hat{\Sigma} \subset (\hat{M}, \hat{g})$, a $\ZZ_2$-invariant LSMH.  Recall that by definition, one-sided $(g', \Sigma') \in \cL^{k,\alpha}(g, \Sigma; \Lambda, \delta)$ is semi-nondegenerate if and only if its double cover $\hat{\Sigma}'$ is $\ZZ_2$-semi-nondegenerate. 
            Hence Theorem \ref{Thm_Loc Generic G-inv Semi-nondeg} also implies Theorem \ref{Thm_Generic Semi-nondeg_Loc} with $G = \ZZ_2$.
        \end{proof}
        
{}        
    \section{Proof of Theorem \ref{Thm_Countable Decomp}}\label{Sec:Count_Decomp}
        The goal of this section is to prove Theorem \ref{Thm_Countable Decomp}. To achieve it, we shall introduce a two-step decomposition of $\cM^{k, \alpha}(M)$, where the first step is inspired by Edelen's cone decomposition while the latter is obtained by a compactness argument.
        
        \subsection{Tree representation and first decomposition}
            
            First, we introduce a tree structure representing the cone decomposition of a minimal hypersurface near a cone.
            
            \begin{Def}[Tree representation of cone decomposition]
                Given a $(\theta, \beta, \cS, N)$-cone decomposition of $V\llcorner (\BB(x, R), g)$ as in Definition \ref{Def:cone_decomposition} with parameters:
                \begin{itemize}
                    \item Integers $N_S$, $N_C$ satisfying $N_S + N_C \leq N$;
                    \item Points $\set{x_a}_a, \set{x_b}_b \subset \BB(x, R)$;
                    \item Radii $\set{R_a, \rho_a| R_a \geq 2 \rho_a}_a, \set{R_b}_b$;
                    \item Indices $\set{s_b}_b$;
                    \item Integers $\set{1 \leq m_a \leq [\theta]}_a$;
                    \item Cones $\set{\mathbf{C}_a}_a \subset \cC$,
                \end{itemize}
                where $a = 1, \cdots, N_C$ and $b = 1, \cdots, N_S$.
                The corresponding \textbf{tree representation} of the cone decomposition is a rooted tree (\cite[Section~B.5]{cormen2022introduction}) uniquely defined by:
                \begin{enumerate}
                    \item There are two types of nodes: every node of \textit{type I} is labeled with $(\mathbf{C}_a, m_a, x_a, R_a, \rho_a)$, while every node of \textit{type II} with $(S_{s_b}, x_b, R_b)$;
                    \item The root is labeled with either $(\mathbf{C}_a, m_a, x_a = x, R_a = R, \rho_a)$ or $(S_{s_b}, x_b = x, R_b = R)$;
                    \item For any type-I node $(\mathbf{C}_a, m_a, x_a, R_a, \rho_a)$, either $\rho_a = 0$, $\theta_{\mbfC_a}(0) > 1$ and it is a leaf; Or $\rho_\alpha > 0$ and it has a unique child of either
                          \begin{itemize}
                              \item type I $(\mathbf{C}_{a'}, m_{a'} = m_a, x_{a'} = x_a, R_{a'} = \rho_a, \rho_{a'})$, or
                              \item type II $(S_{s_{b'}}, x_{b'} = x_a, R_{b'} = \rho_a)$;
                          \end{itemize}
                    \item For any type-II node $(S_{s_b}, x_b, R_b)$ where $S_{s_b} = (S, \mathbf{C}, \set{\mathbf{C}_{{\hat\alpha}}, m_{{\hat\alpha}}, \BB(y_{{\hat\alpha}}, r_{{\hat\alpha}})}_{{\hat\alpha}\in I_b})$, it has $\sharp I_b$ child nodes such that for each ${\hat\alpha}$, there exists $R_{b,{\hat\alpha}}$ and $x_{b,{\hat\alpha}}$ such that
                          \[
                              |x_{b,{\hat\alpha}} - (x_b + R_b \cdot y_{\hat\alpha})| \leq \beta R_b r_{\hat\alpha}, \quad \frac{1}{2}\leq \frac{R_{b,{\hat\alpha}}}{R_b r_{\hat\alpha}} \leq 1 + \beta\,,
                          \]
                          so that the corresponding child node is either 
                          \begin{itemize}
                              \item of type I $(\mathbf{C}_{a'} = \mathbf{C}_{{\hat\alpha}}, m_{a'} = m_{\hat\alpha}, x_{a'} = x_{b, {\hat\alpha}}, R_{a'} = R_{b, {\hat\alpha}}, \rho_{a'})$, or
                              \item of type II $(S_{s_{b'}}, x_{b'} = x_{b, {\hat\alpha}}, R_{b'} = R_{b, {\hat\alpha}})$;
                          \end{itemize}
                \end{enumerate}
                
                The \textbf{coarse tree representation} of the cone decomposition is obtained by relabeling the rooted tree above, i.e., replacing the type-I node $(\mathbf{C}_a, m_a, x_a, R_a, \rho_a)$ by $(\theta_{\mathbf{C}_a}(0), m_a)$, and the type-II node $(S_{s_b}, x_b, R_b)$ by $S_{s_b}$.
            \end{Def}
            
            \begin{Def}
                For $\gamma \in (0, 1/100)$, two $(\theta, \beta, \cS, N)$-tree representaions with parameters
                \begin{itemize}
                    \item $(N_S, N_C, \set{x_a}, \set{x_b}, \set{R_a}, \set{\rho_a}, \set{R_b}, \set{m_a}, \set{\mbfC_a} \set{s_b})$,
                    \item $(N'_S, N'_C, \set{x'_a}, \set{x'_b}, \set{R'_a}, \set{\rho'_a}, \set{R'_b}, \set{m'_a}, \set{\mbfC'_a}, \set{s'_b})$,
                \end{itemize}
                are said to be \textbf{$\gamma$-close} if $N'_S = N_S$, $N'_C = N_C$ and they have the same coarse tree representations, s.t.
                \begin{enumerate}
                    \item If the corresponding two nodes are both of type I, then
                          \begin{itemize}
                              \item $\dist_H(\mathbf{C}_a \cap \partial B_1,\mathbf{C}_{a'} \cap \partial B_1) \leq \gamma$,
                              \item If $\rho_a > 0$, then
                                    \begin{itemize}
                                        \item $|\rho_a - \rho_{a'}|\leq \gamma \min(\rho_a, \rho_{a'})$;
                                        \item $|x_a - x_{a'}| \leq \gamma \min(\rho_a, \rho_{a'})$;
                                        \item $|R_a - R_{a'}| \leq \gamma \min(\rho_a, \rho_{a'})$;
                                    \end{itemize}
                                    Otherwise $\rho_a = 0$, then
                                    \begin{itemize}
                                        \item $\rho_{a'} = 0$;
                                        \item $|x_a - x_{a'}| \leq \gamma \min(R_a, R_{a'})$;
                                        \item $|R_a - R_{a'}| \leq \gamma \min(R_a, R_{a'})$.
                                    \end{itemize}
                          \end{itemize}
                    \item If the corresponding two nodes are both of type II, then
                          \begin{itemize}
                              \item $|x_b - x_{b'}| \leq \gamma \min(R_b, R_b') \min_{{\hat\alpha} \in I_b}(r_{\hat\alpha})$,
                              \item $|R_b - R_{b'}| \leq \gamma \min(R_b, R_b') \min_{{\hat\alpha} \in I_b}(r_{\hat\alpha})$.
                          \end{itemize}
                \end{enumerate}
            \end{Def}
            
            In order to deal with minimal hypersurfaces in a closed Riemannian manifold directly, we also introduce a large-scale cone decomposition for a minimal hypersurface in a closed Riemannian manifold.
            
            \begin{Def}[Large-scale Cone Decomposition]
                Given $\theta, \gamma, \beta \in \mathbb{R}_+$, $\sigma \in (0, 1/3)$, and $N \in \mathbb{N}$, let \begin{itemize}
                    \item $g_0$, $g$ be two $C^{k, \alpha}$ metrics on $N$;
                    \item $\Sigma_0$, $\Sigma$ be two minimal hypersurfaces in $(N, g_0)$ and $(N, g)$ respectively;
                    \item $\cS = \set{S_s}_{s}$ be a finite collection of $(\theta, \sigma, \gamma)$-smooth models;
                \end{itemize}
                A \textbf{large-scale $(\theta, \beta, g_0, \Sigma_0, \mathcal{S}, N)$-cone decomposition} of $\Sigma$ consists of:
                \begin{itemize}
                    \item a collection of radii $\set{r_{\hat\alpha}}_{\hat\alpha}$ corresponding to the singular sets $\Sing(\Sigma_0) = \set{p_{\hat\alpha}}_{\hat\alpha}$, such that $\set{B^g(p_{\hat\alpha}, r_{\hat\alpha})}_{\hat\alpha}$ are pairwise disjoint;
                    \item a $(\theta, \beta, \cS, N)$-cone decomposition for each $|\Sigma| \llcorner B^g(p_{\hat\alpha}, r_{\hat\alpha})$;
                    \item a $C^2$ function $u: \Sigma_0 \setminus \bigcup_{p_{\hat\alpha} \in \Sing(\Sigma_0)} B^g(p_{\hat\alpha}, {r_{\hat\alpha}/2}) \rightarrow \Sigma_0^\perp$ so that for $r_0 = \min_{\hat\alpha}\set{r_{\hat\alpha}} > 0$,
                          \[
                              r_0^{-1}|u| + |\nabla u| + r_0 |\nabla^2 u| \leq \beta\,,
                          \]
                          and $\Sigma \setminus B^g(p_{\hat\alpha}, {r_{\hat\alpha}})$ coincides with $\graph_{\Sigma_0}(u) \setminus B^g(p_{\hat\alpha}, {r_{\hat\alpha}})$;
                \end{itemize}
            \end{Def}
            
            Similarly, we can define the corresponding tree representation and the $\gamma$-closeness.
            
            \begin{Def}[Tree representation of Large-scale cone decomposition]
                Given a large-scale $(\theta, \beta, g_0, \Sigma_0, \cS, N)$-cone decomposition of $\Sigma \subset (N, g)$ with parameters:
                \begin{itemize}
                    \item $\Sing(\Sigma_0) = \set{p_{\hat\alpha}}_{\hat\alpha}$;
                    \item radii $\set{r_{\hat\alpha}}_{\hat\alpha}$;
                    \item $(\theta, \beta, \cS, N)$-cone decompositions for each $\Sigma \cap B^g(p_{\hat\alpha}, r_{\hat\alpha})$;
                \end{itemize}
                The corresponding \textbf{tree representation} of the large-scale cone decomposition is a rooted tree uniquely defined by:
                \begin{enumerate}
                    \item The root node is a labeled by a tuple $(\Sigma_0, g_0, \set{p_{\hat\alpha}}, \set{r_{\hat\alpha}})$;
                    \item The root node has $\sharp \Sing(\Sigma_0)$ children, indexed by $\hat\alpha$. The corresponding subtree rooted at the $\hat\alpha$-child is the tree representation of the $(\theta, \beta, \cS, N)$-cone decomposition for each $|\Sigma|\llcorner B^g(p_{\hat\alpha}, {r_{\hat\alpha}})$.
                \end{enumerate}
                
                Similarly, the \textbf{coarse tree representation} will be the same directed rooted tree with the subtrees above replaced by their corresponding coarse trees.
            \end{Def}
            
            \begin{Def}
                For $\gamma \in (0, 1/100)$, two $(\theta, \beta, g_0, \Sigma_0, \cS, N)$-tree representaions ($\beta \leq \gamma$) are said to be \textbf{$\gamma$-close} if 
                \begin{itemize}
                    \item Their root nodes have the same label;
                    \item Their subtrees corresponding to the $\hat\alpha$-child are $\gamma$-close for each $\hat\alpha$.
                \end{itemize}
            \end{Def}
            
            \begin{Thm}
                For any given $(g, \Sigma) \in \cM^{k, \alpha}(M)$, $\beta \in (0, 1/100)$, and $I , \Lambda \in \mathbb{N}$ with
                \[
                    \scH^7(\Sigma, g) \leq \Lambda,\ \ind(\Sigma, g) \leq I\,,
                \]
                there exist $\delta(g, \Sigma, \beta, \Lambda, I) > 0$ and $l(g, \Sigma) > 0$ satisfying the following property.
                
                For a $\delta$-``neighborhood'' of $(g, \Sigma)$, 
                \[
                    \begin{split}
                        \cM^{k, \alpha}(g, \Sigma; \Lambda, I, \beta) := \Big\{ (g', \Sigma') \in \cM^{k,\alpha}(M): \|g'\|_{C^{k, \alpha}} \leq \Lambda; \|g - g'\|_{C^{k - 1, \alpha}} + \mathbf{F}(\Sigma, \Sigma') \leq \delta\,,\\
                        \scH^7(\Sigma', g') \leq \Lambda,\ \ind(\Sigma', g') \leq I\Big\} \,.
                    \end{split}
                \]
                we have 
                \begin{enumerate}
                    \item There exist a finite collection of $(\theta, \sigma, \beta)$-smooth models $\cS$ and an integer $N$, so that any $(g', \Sigma') \in \cM^{k, \alpha}(g, \Sigma; \Lambda, I, \beta)$ admits a large-scale $(\tilde \theta_l, \beta, g, \Sigma, \cS, N)$-cone decomposition, where $l$ depends on $(g, \Sigma)$ only;
                    \item There exists a countable collection $\set{(g_v, \Sigma_v)}_{v \in \mathbb{N}} \subset \cM^{k, \alpha}(g, \Sigma; \Lambda, I, \beta)$ with fixed large-scale $(\tilde \theta_l, \beta, g, \Sigma, \cS, N)$-cone decompositions with the following property. Every $(g', \Sigma') \in \mathcal{M}^{k, \alpha}(g, \Sigma; \Lambda, I)$ admits a large-scale $(\tilde \theta_l, \beta, g, \Sigma, \cS, N)$-cone decomposition whose tree representation is $\beta$-close to that of some $(g_v, \Sigma_v)$.
                \end{enumerate}
            \end{Thm}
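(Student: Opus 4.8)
The plan is to deduce both statements from Edelen's cone decomposition theorem (Theorem~\ref{Thm:cone_decomposition}), applied in small \emph{fixed} geodesic balls around the finitely many points of $\Sing(\Sigma)$, together with Allard's regularity theorem and the compactness theory for stable minimal hypersurfaces to control the region on which $\Sigma$ is smooth. To set it up, write $\Sing(\Sigma)=\{p_1,\dots,p_m\}$; each tangent cone $\mbfC_{p_\alpha}\Sigma$ has multiplicity one (\cite{ilmanen_strong_1996}) with density bounded in terms of $\|\Sigma\|_g(M)$, so I choose $l=l(g,\Sigma)$ with $\tilde\theta_l\ge\max_\alpha\theta_{\mbfC_{p_\alpha}\Sigma}(\mathbf 0)$, set $\gamma:=\beta$ and $\sigma:=\sigma(I)\in(0,\tfrac1{100(I+1)}]$, and let $\delta_{l,I}$, $N$, $\cS=\{S_s\}_s$ be the constant and finite collection of $(\tilde\theta_l,\sigma,\beta)$-smooth models produced by Theorem~\ref{Thm:cone_decomposition}. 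Using the definition of tangent cone for the fixed $\Sigma$, I then pick radii $\bar r_\alpha\in(0,\injrad(g,\Sigma))$ with the balls $B^g(p_\alpha,2\bar r_\alpha)$ pairwise disjoint and small enough that $(\eta_{p_\alpha,\bar r_\alpha})_\sharp|\Sigma|$ is $C^2$-$(\delta_{l,I}/10)$-close to $|\mbfC_{p_\alpha}\Sigma|$ on $\AAa(\mathbf 0,1/8,1)$ and the rescaled metric is $C^3$-$(\delta_{l,I}/10)$-close to $g_{\eucl}$; finally $\delta=\delta(g,\Sigma,\beta,\Lambda,I)>0$ is fixed small at the end, also in terms of the regularity scale $r_\cS$ of $\Sigma$ on the compact set $\Sigma\setminus\bigcup_\alpha B^g(p_\alpha,\bar r_\alpha/2)$.

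For part (1), let $(g',\Sigma')\in\cM^{k,\alpha}(g,\Sigma;\Lambda,I,\beta)$. Since $\mbfF(|\Sigma|_g,|\Sigma'|_{g'})\le\delta$ makes the masses close, $|\Sigma'|$ converges to $|\Sigma|$ with multiplicity one as $\delta\to0$; by \cite{schoen_regularity_1981,sharp_compactness_2017} and Allard's theorem \cite{allard_first_1972}, for $\delta$ small $\Sigma'\setminus\bigcup_\alpha B^g(p_\alpha,\bar r_\alpha/2)$ is a graph over $\Sigma\setminus\bigcup_\alpha B^g(p_\alpha,\bar r_\alpha/2)$ of a function $u$ with $r_0^{-1}|u|+|\nabla u|+r_0|\nabla^2u|\le\beta$ (where $r_0=\min_\alpha\bar r_\alpha$), so in particular $\Sing(\Sigma')\subset\bigcup_\alpha B^g(p_\alpha,\bar r_\alpha)$; this $u$ is the global graph function required in the large-scale cone decomposition. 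Near each $p_\alpha$ I apply Theorem~\ref{Thm:cone_decomposition} with $V=|\Sigma'|$, $\mbfC=\mbfC_{p_\alpha}\Sigma$, $m=1$, in $B^g(p_\alpha,\bar r_\alpha)$: the metric hypothesis holds by the choice of $\bar r_\alpha$ and $\|g'-g\|_{C^{k-1,\alpha}}\le\delta$; the Hausdorff hypothesis follows from $\mbfF_{\BB_2}((\eta_{p_\alpha,\bar r_\alpha})_\sharp|\Sigma'|,(\eta_{p_\alpha,\bar r_\alpha})_\sharp|\Sigma|)\le\delta/\bar r_\alpha$, the choice of $\bar r_\alpha$, and the Allard-type conversion of $\mbfF$-closeness of supports into Hausdorff closeness; and the density pinching $\tfrac12\theta_{\mbfC}(\mathbf 0)\le\theta_{|\Sigma'|}(p_\alpha,\bar r_\alpha/2)$, $\theta_{|\Sigma'|}(p_\alpha,\bar r_\alpha)\le\tfrac32\theta_{\mbfC}(\mathbf 0)$ holds for $\delta$ small by the monotonicity formula, since the corresponding densities of $|\Sigma|$ are all within $\delta_{l,I}/10$ of $\theta_{\mbfC}(\mathbf 0)$ (Corollary~\ref{Cor_Converg in all Scales} and Lemma~\ref{Lem_Ana on SMC_Quanti Uniqueness of Tang Cone} also give the almost-conical behaviour at all intermediate scales). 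This yields a $(\tilde\theta_l,\beta,\cS,N)$-cone decomposition of $|\Sigma'|\llcorner B^g(p_\alpha,r')$ for some $r'\in(\bar r_\alpha(1-20(I+1)\sigma),\bar r_\alpha)$; one may then prepend the outer strong-cone region $\AAa(p_\alpha,r',\bar r_\alpha)$, which is a one-sheeted $\beta$-graph over $\mbfC_{p_\alpha}\Sigma$ by the above, to obtain a cone decomposition of the full ball $B^g(p_\alpha,\bar r_\alpha)$ after enlarging $N$ by $1$. Assembling $u$ with these $m$ local cone decompositions produces the required large-scale $(\tilde\theta_l,\beta,g,\Sigma,\cS,N)$-cone decomposition, with $\cS$, $N$, $l$ independent of $(g',\Sigma')$.

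For part (2), once $g$, $\Sigma$, $\cS$, $N$ and the radii $\bar r_\alpha$ are fixed, the tree representations of the large-scale cone decompositions produced above range in a space that admits a countable $\beta$-net for the $\beta$-closeness relation. Indeed there are only finitely many coarse tree representations, since the number of nodes is bounded in terms of $m$ and $N$ and the discrete labels come from the finite set $\cS$ and from $1\le m_a\le[\tilde\theta_l]$; for each coarse tree the cone labels $\mbfC_a$ vary in the $\mbfF$-compact set $\scC_\Lambda$ (Section~\ref{Subsec_Stable MHC}), while the remaining continuous data — the centers $x_a,x_b$, which lie in balls whose radii are comparable to their parents', and the radii $\rho_a,R_a,R_b$, bounded above by $\max_\alpha\bar r_\alpha$ — are covered by a countable set because $\beta$-closeness compares radii only up to the multiplicative factor $1+\beta$ (so a countable family of scales accumulating at $0$ suffices, even though a displaced singular point of $\Sigma'$ may force some radii to be arbitrarily small) and compares centers only up to a fixed fraction of the ambient radius. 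Choosing, for each net element, a representative $(g_v,\Sigma_v)\in\cM^{k,\alpha}(g,\Sigma;\Lambda,I,\beta)$ together with its constructed cone decomposition gives the countable family $\{(g_v,\Sigma_v)\}_{v\in\NN}$, and by construction the cone decomposition built for any $(g',\Sigma')$ in part (1) is $\beta$-close to that of some $(g_v,\Sigma_v)$.

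The step I expect to be the main obstacle is the uniform verification of Edelen's hypotheses in part (1): one must carefully interleave the quantifiers — $\bar r_\alpha$ small depending only on $(g,\Sigma,I)$, then $\delta$ small depending on $\bar r_\alpha$, $\delta_{l,I}$ and on $r_\cS$ — and check that multiplicity-one convergence together with Allard's theorem genuinely confines all the singular behaviour of $\Sigma'$ to the balls $B^g(p_\alpha,\bar r_\alpha)$ while the density pinching on those balls survives the metric and varifold perturbation. The combinatorial bookkeeping in part (2) is routine once the $\beta$-net is organized so that radii are compared multiplicatively and centers relative to their parent scale.
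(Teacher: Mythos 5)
Your proof takes essentially the same route as the paper: choose $l$ so that $\tilde\theta_l$ dominates the densities at $\Sing(\Sigma)$, pick small disjoint balls $B^g(p_\alpha,\bar r_\alpha)$ in which the rescaled metric is nearly Euclidean and $\Sigma$ is Hausdorff-close to its tangent cones, verify Edelen's hypotheses for $\Sigma'$ uniformly over the $\delta$-neighborhood, and then build a countable $\beta$-net on the tree-representation data by exploiting the finiteness of coarse trees and the $\mbfF$-compactness of $\scC_\Lambda$. Two minor remarks: you are slightly more careful than the paper on the fact that Edelen's theorem gives a decomposition only on $B_{r'}$ with $r'<\bar r_\alpha$ — prepending an outer strong-cone region (or, equivalently, taking the $r_\alpha$ in the large-scale decomposition to be $r'$ rather than $\bar r_\alpha$) is the right fix; and your description of the countable net in part (2) is a compressed version of the paper's explicit product covering $\mathrm{PC}(\tilde T)$ by intervals of multiplicative scales and ball covers of $M$ at each scale, but the underlying idea — radii compared multiplicatively, centers at a fixed fraction of the parent radius, cone labels from a finite $\gamma$-net in the compact space of cones of a given density — is the same and correct.
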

            \begin{Rem}\label{Rem:Fst_Decomp}
                By the second bullet of the theorem, for each $v$, we can define an ``intermediate canonical neighborhood'' $\cL^{k, \alpha}_0(g_v, \Sigma_v; \Lambda, I ,\beta)$ (we omit $g$ and $\Sigma$ here for simplicity) consisting of those pairs $(g', \Sigma')$'s satisfying:
                \begin{itemize}
                    \item $\|g'\|_{C^{k, \alpha}} \leq \Lambda$;
                    \item $\|g - g'\|_{C^{k-1, \alpha}} + \mathbf{F}(\Sigma, \Sigma') \leq \delta$;
                    \item $\scH^7(\Sigma', g') \leq \Lambda,\ \ind(\Sigma', g') \leq I$;
                    \item $(g', \Sigma')$ admits a large-scale $(\Lambda, \beta, g, \Sigma, \cS, N)$-cone decomposition whose tree representation is $\beta$-close to that of $(g_v, \Sigma_v)$.
                \end{itemize}
                
                Then, the theorem implies that 
                \[
                    \cM^{k, \alpha}(g, \Sigma; \Lambda, I, \beta) = \bigcup_v \cL^{k, \alpha}_0(g_v, \Sigma_v; \Lambda, I ,\beta)\,.
                \]
            \end{Rem}
            \begin{proof}
                \noindent{(1)} The first part follows from the existence theorem of cone decompositions, i.e., Theorem \ref{Thm:cone_decomposition}. 
                
                Indeed, given $(g, \Sigma)$, we can take $l \in \mathbb{N}$ so that $\tilde \theta_l = \sup_{x \in \Sing \Sigma}\set{\theta_\Sigma(x, 0)}$. Then for each ${p_{\hat\alpha}} \in \Sing(\Sigma)$, there exists a radius $r_{\hat\alpha} \in (0, 1)$ such that 
                \begin{itemize}
                    \item $\set{B^g({p_{\hat\alpha}}, {2r_{{\hat\alpha}}})}$ are pairwise disjoint;
                    \item $|(2r_{\hat\alpha})^{-2} (\eta_{{p_{\hat\alpha}}, 2r_{\hat\alpha}}^{-1})^*g - g_{\mathrm{eucl}}|_{C^3(\mathbb{B}_1)} \leq \delta_{l, I}/2$;
                    \item There exists a cone $\mathbf{C}_{p_{\hat\alpha}} \in \mathcal{C}$ such that
                          \[
                              \dist_H(\eta_{{p_{\hat\alpha}}, 2r_{\hat\alpha}}(\Sigma)\cap \mathbb{B}_1, \mathbf{C}_{p_{\hat\alpha}} \cap \mathbb{B}_1) \leq \delta_{l, I}/2\,,
                          \]
                          \[
                              \theta_{\mathbf{C}_{p_{\hat\alpha}}}(0) \leq \tilde \theta_l\,,
                          \]
                          and 
                          \[
                              \frac{3}{4} \theta_{\mathbf{C}_{p_{\hat\alpha}}}(0) \leq \theta_{\eta_{{p_{\hat\alpha}}, 2r_{\hat\alpha}}(\Sigma)}(0, 1/2),\quad \theta_{\eta_{{p_{\hat\alpha}}, 2r_{\hat\alpha}}(\Sigma)}(0, 1)\leq \frac{5}{4} \theta_{\mathbf{C}_{p_{\hat\alpha}}}(0)\,. 
                          \]
                \end{itemize}
                
                By taking $\delta$ small enough, we may see that for any $(g', \Sigma') \in \cM^{k, \alpha}(g, \Sigma; \Lambda, I, \beta)$, we have
                \begin{itemize}
                    \item By Allard's regularity theorem \cite{allard_first_1972}, there exists a $C^2$ function $u: \Sigma \setminus \bigcup_{p_{\hat\alpha} \in \Sing(\Sigma)} B^g({p_{\hat\alpha}}, r_{\hat\alpha}/2) \rightarrow \Sigma^\perp$ so that for $r_0 = \min\set{r_{\hat\alpha}}$,
                          \[
                              r_0^{-1}|u| + |\nabla u| + r_0 |\nabla^2 u| \leq \beta\,,
                          \]
                          and $\Sigma' \setminus \bigcup B^g({p_{\hat\alpha}}, r_{\hat\alpha}) $ coincides with $\graph_{\Sigma}(u) \setminus \bigcup B^g({p_{\hat\alpha}}, r_{\hat\alpha}) $;
                    \item $|(r_{\hat\alpha})^{-2}(\eta_{{p_{\hat\alpha}}, r_{\hat\alpha}}^{-1})^*g'  - g_{\eucl}|_{C^3(\mathbb{B}_1)} \leq \delta_{l, I}$ for each ${p_{\hat\alpha}}$;
                    \item \[
                              \dist_H(\eta_{{p_{\hat\alpha}}, r_{\hat\alpha}}(\Sigma')\cap \mathbb{B}_1, \mathbf{C}_{p_{\hat\alpha}} \cap \mathbb{B}_1) \leq \delta_{l, I}\,,
                          \]
                          and 
                          \[
                              \frac{1}{2}\theta_{\mathbf{C}_{p_{\hat\alpha}}}(0) \leq \theta_{\eta_{{p_{\hat\alpha}}, r_{\hat\alpha}}(\Sigma')}(0, 1/2),\quad \theta_{\eta_{{p_{\hat\alpha}}, r_{\hat\alpha}}(\Sigma')}(0, 1)\leq \frac{3}{2} \theta_{\mathbf{C}_{p_{\hat\alpha}}}(0)\,; 
                          \]
                \end{itemize}
                
                By Theorem \ref{Thm:cone_decomposition}, the last two bullets imply the existence of a $(\tilde \theta_l, \beta, \mathcal{S}_\alpha, N_\alpha)$-cone decomposition for each $\Sigma' \cap B^g({p_{\hat\alpha}}, {r_{{\hat\alpha}}}) $. Together with the first bullet, we obtain a large-scale $(\tilde \theta_l, \beta, g, \Sigma, \cS, N)$-cone decomposition of $(\Sigma', g')$, where $N = \sum_{\hat\alpha} N_{\hat\alpha}$ and $\mathcal{S} = \bigcup_{\hat\alpha} \mathcal{S}_{\hat\alpha}$.\\
                
                \noindent{(2)}
                By (1), for each $(g', \Sigma')$, we can fix a large-scale $(\tilde \theta_l, \beta, g, \Sigma, \mathcal{S}, N)$-cone decomposition, and its corresponding tree representation. It follows from the finiteness of $N$, $\cS$, $m_{\hat\alpha}$'s and the discreteness of $\tilde \theta_i$'s that there are only finitely many distinct coarse tree representations. 
                
                Let's fix one such a coarse tree representation $\tilde T = (\tilde V, \tilde E)$, and consider the set $\cL'(\tilde T)$ of all the pairs $(g', \Sigma')$ associated to this coarse tree representation. The goal will be to find a countable covering of $\cL'(\tilde T)$.
                
                To begin with, we construct countable coverings for each node of the coarse tree.
                
                \begin{enumerate}[(a)]
                    \item By the definition of coarse tree representation, all tree representations of pairs in $\cL'$ have the same root node;
                    \item Each type-I node $\tilde u$ associated with $S_{s_b} = (S, \mathbf{C}, \set{\mathbf{C}_{{\hat\alpha}}, m_{{\hat\alpha}}, \BB(y_{{\hat\alpha}}, r_{\hat\alpha})}_{{\hat\alpha}\in I_b}) \in \mathcal{S}$ in $\tilde V$ corresponds to the set of type-I nodes in $\cL'_{\tilde u}$
                          \[
                              \cL'_{\tilde u} = \set{(S_{s_b}, x_{\hat\beta}, R_{\hat\beta})}_{{\hat\beta}}\,,
                          \]
                          where $x_{\hat\beta} \in M$ and $R_{\hat\beta} \in (0, \infty)$.
                          
                          Let $r_0 = \min(\min_{{\hat\alpha} \in I_b}(r_{\hat\alpha}), 1/2)$ and define a countable collection of intervals 
                          \[
                              \mathrm{IC}_{\tilde u}:= \set{\mathrm{Int}_k := [(1 + \frac{\gamma r_0}{2})^k, (1 + \frac{\gamma r_0}{2})^{k+1}]}_{k \in \mathbb{Z}}
                          \] which covers $(0, \infty)$ (``I'' stands for interval and ``C'' for covering). For each $k$, we associate it with a finite covering $\mathrm{BC}_{\tilde u,k}$ of $(M, g)$ by geodesic balls of radius no greater than 
                          \[
                              \min(\mathrm{injrad}(M, g), (1 + \frac{\gamma r_0}{2})^k \gamma r_0/10).
                          \]
                          Therefore, we obtain a countable covering of the product space $M \times (0, \infty)$,
                          \[
                              \mathrm{PC}_{\tilde u} := \set{(B, \mathrm{Int}_k): k \in \mathbb{Z}, B \in \mathrm{BC}_{\tilde u, k}}\,.
                          \]
                          
                          Note that if $(x_{\hat\beta}, R_{\hat\beta})$ and $(x_{{\hat\beta}'}, R_{{\hat\beta}'})$ are in the same $(B, \mathrm{Int}_k) \in \mathrm{PC}_{\tilde u}$, then $(S_{s_b}, x_{\hat\beta}, R_{\hat\beta})$ and $(S_{s_b}, x_{{\hat\beta}'}, R_{{\hat\beta}'})$ satisfy the inequalities in the first part of the definition of $\gamma$-closeness.
                          
                    \item Each type-II node $\tilde w$ associated with $(\theta_{\mathbf{C}}(\mathbf{0}), m)$ in $\tilde V$ corresponds to the set of type-I nodes in $\cL'_{\tilde w}$
                          \[
                              \cL'_{\tilde w} = \set{(\mathbf{C}_{\hat\beta}, m, x_{\hat\beta}, R_{\hat\beta}, \rho_{\hat\beta})}
                          \]
                          where $\theta_{\mathbf{C}_{\hat\beta}}(\mathbf{0}) = \theta_{\mathbf{C}}(\mathbf{0})$, $x_{\hat\beta} \in M$, $R_{\hat\beta} \in (0, \infty)$ and $\rho_{\hat\beta} \in [0, \infty)$.
                          
                          By the compactness of stable cones with the same density $\theta_{\mathbf{C}}(\mathbf{0})$, we can find a finite set of cones
                          \[
                              \set{\mathbf{C}_i}^N_{i = 1}\,,
                          \]
                          where $N$ depends on $\gamma$ such that any other stable cone $\mathbf{C}'$ with $\theta_{\mathbf{C}'}(\mathbf{0}) = \theta_{\mathbf{C}}(\mathbf{0})$ has
                          \[
                          \dist_H(\mathbf{C}_{i_0} \cap \partial B_1,\mathbf{C}' \cap \partial B_1) \leq \gamma
                              \]
                              for some $i_0 \in \set{1, \cdots, N}$. Thus, we can define
                              \[
                              \mathrm{CC}_{\tilde w} := \set{\set{\mathbf{C}':\theta_{\mathbf{C}'}(\mathbf{0}) = \theta_{\mathbf{C}}(\mathbf{0}) ,\ \dist_H(\mathbf{C}_{i} \cap \partial B_1,\mathbf{C}' \cap \partial B_1) \leq \gamma}: i = 1, \cdots, N}\,.
                              \]
                              
                              To cover $[0, \infty)$ related to $\rho$, we first define a countable covering,
                              \[
                                  \mathrm{IC}_{\tilde w} := \set{0} \cup \set{\mathrm{Int}_{k} := [(1 + \frac{\gamma}{2})^k, (1 + \frac{\gamma}{2})^{k+1}]}_{k \in \mathbb{Z}}\,.
                              \]
                              
                              For the singleton $\set{0}$, we define a countable covering of $(0, \infty)$ related to $R$ as
                              \[
                                  \mathrm{IC}_{\tilde w, \set{0}} := \set{\mathrm{Int}_{\set{0}, k'} := [(1 + \frac{\gamma}{2})^{k'}, (1 + \frac{\gamma}{2})^{k'+1}]}_{k' \in \mathbb{Z}}
                              \]
                              and for each fixed $k'$, we consider a finite covering $\mathrm{BC}_{\tilde u, \set{0}, k'}$ of $(M, g)$ by geodesic balls of radius no greater than 
                              \[
                                  \min(\mathrm{injrad}(M, g), (1 + \frac{\gamma}{2})^{k'} \gamma/10).
                              \]
                              
                              For each $k$, we define
                              \[
                                  \mathrm{IC}_{\tilde w, k} := \set{\mathrm{Int}_{k, k'} := [(1 + \frac{\gamma}{2}\min(1, (1 + \frac{\gamma}{2})^{k}))^{k'}, (1 + \frac{\gamma}{2}\min(1, (1 + \frac{\gamma}{2})^{k}))^{k'+1}]}_{k' \in \mathbb{Z}}
                              \]
                              and in this case, for each fixed $k'$, similarly, we consider a finite covering $\mathrm{BC}_{\tilde u, k, k'}$ of $(M, g)$ by geodesic balls of radius no greater than 
                              \[
                                  \min\left(\mathrm{injrad}(M, g), (1 + \frac{\gamma}{2}\min(1, (1 + \frac{\gamma}{2})^{k}))^{k'} \gamma\min(1, (1 + \frac{\gamma}{2})^{k})/10\right)\,.
                              \]
                              
                              Therefore, for the product space $\set{\mathbf{C}':\theta_{\mathbf{C}'}(\mathbf{0}) = \theta_{\mathbf{C}}(\mathbf{0})} \times M \times (0, \infty) \times [0, \infty)$, we obtain a countable covering
                              \[
                                  \begin{aligned}
                                      \mathrm{PC}_{\tilde w} := \mathrm{CC}_{\tilde w} \otimes \Big\{ & \set{(B, \mathrm{Int}_{k, k'}, \mathrm{Int}_k): k, k' \in \mathbb{Z}, B \in \mathrm{BC}_{\tilde w, k, k'}} \cup       \\
                                                                                                      & \set{(B, \mathrm{Int}_{\set{0}, k'}, \set{0}): k' \in \mathbb{Z}, B \in \mathrm{BC}_{\tilde w, \set{0}, k'}}\Big\}\,.
                                  \end{aligned}
                              \]
                              
                              As above, note that if $(\mathbf{C}_{\hat\beta}, x_{\hat\beta}, R_{\hat\beta}, \rho_{\hat\beta})$ and $(\mathbf{C}_{{\hat\beta}'}, x_{{\hat\beta}'}, R_{{\hat\beta}'}, \rho_{{\hat\beta}'})$ are in the same element of $\mathrm{PC}_{\tilde w}$, then $(\mathbf{C}_{\hat\beta}, m, x_{\hat\beta}, R_{\hat\beta}, \rho_{\hat\beta})$ and $(\mathbf{C}_{{\hat\beta}'}, x_{{\hat\beta}'}, R_{{\hat\beta}'}, \rho_{\hat\beta})$ satisfy the inequalities in the first part of the definition of $\gamma$-closeness.
                \end{enumerate}
                
                With the construction above, we see that the tree representations of all pairs in $\cL'(\tilde T)$ are contained in the countable union
                \begin{equation}
                    \mathrm{PC}(\tilde T) := \bigotimes_{\tilde u} \mathrm{PC}_{\tilde u}\otimes \bigotimes_{\tilde w} \mathrm{PC}_{\tilde w}\,.
                \end{equation}
                
                Hence, we can find a countable subsets $\cL''(\tilde T)$ of $\cL'(\tilde T)$ satisfying the following property. For each element of $PC(\tilde T)$ containing at least one tree representation of pairs in $\cL'(\tilde T)$, it contains exactly one tree representation of a pair in $\cL''(\tilde T)$.
                
                In summary, it follows from the finiteness of coarse tree representations and the construction above that the set of pairs
                \begin{equation}
                    \bigcup_{\tilde T: \text{ coarse tree representation}}\cL''(\tilde T)
                \end{equation}
                is countable and satisfies the desired property.
            \end{proof}

            For any $\Lambda > 0$ and any integer $I > 0$, let
            \[
                \cM^{k, \alpha}(M; \Lambda, I) := \Big\{ (g, \Sigma) \in \cM^{k,\alpha}(M): \|g\|_{C^{k, \alpha}} \leq \Lambda; \scH^7(\Sigma', g') \leq \Lambda,\ \ind(\Sigma', g') \leq I\Big\} \,,
            \]
            and
            \[
                \begin{split}
                    \cM^{k, \alpha}_{>1}(M; \Lambda, I, \delta) := \Big\{ (g', \Sigma') \in \cM^{k,\alpha}(M): \|g'\|_{C^{k, \alpha}} \leq \Lambda; \scH^7(\Sigma', g') \leq \Lambda,\ \ind(\Sigma', g') \leq I\,,\\
                    \|g - g'\|_{C^{k - 1, \alpha}} + \mathbf{F}(m|\Sigma|, |\Sigma'|) < \delta \text{ for some }(g, \Sigma) \in \cM^{k, \alpha}(M; \Lambda, I), m \in \mathbb{N}^{>1}\,,\Big\} \,.
                \end{split}
            \]
            It follows from Sharp's compactness theorem \cite{sharp_compactness_2017} that for any $\delta > 0$, $ \cM^{k, \alpha}(M; \Lambda, I) \setminus \cM^{k, \alpha}_{>1}(M; \Lambda, I, \delta)$ is a compact set, and moreover,
            \[
                \cM^{k, \alpha}(M; \Lambda, I) = \bigcup^\infty_{j = 1}\left(\cM^{k, \alpha}(M; \Lambda, I) \setminus \cM^{k, \alpha}_{>1}(M; \Lambda, I, \frac{1}{j})\right)\,.
            \]
            
            On the one hand, for any fixed $\Lambda, I$ and $j$, by the compactness property, there exist a finite collection of metrics on $M$, $\set{g_i}$ and a finite collection of $g_i$-minimal hypersurfaces $\set{\Sigma_i}$, such that
            \begin{equation}
                \cM^{k, \alpha}(M; \Lambda, I) \setminus \cM^{k, \alpha}_{>1}(M; \Lambda, I, \frac{1}{j}) \subset \bigcup_{i} \cM^{k,\alpha}(g_i, \Sigma_i; \Lambda, I, \beta)\,.
            \end{equation}
            In other words, we have
            \[
                \begin{split}
                    \cM^{k,\alpha}(M) &= \bigcup^\infty_{I = 0} \bigcup^\infty_{\Lambda = 1} \cM^{k, \alpha}(M; \Lambda, I)\\
                        &= \bigcup^\infty_{I = 0} \bigcup^\infty_{\Lambda = 1}\bigcup^\infty_{j = 1} \cM^{k, \alpha}(M; \Lambda, I) \setminus \cM^{k, \alpha}_{>1}(M; \Lambda, I, \frac{1}{j})\\
                        &= \bigcup^\infty_{I = 0} \bigcup^\infty_{\Lambda = 1}\bigcup^\infty_{j = 1}\bigcup_{i} \cM^{k,\alpha}(g_i, \Sigma_i; \Lambda, I, \beta)\,.
                \end{split}
            \]
            By relabeling all the indices, we obtain a countable covering
            \begin{equation}
                \cM^{k,\alpha}(M) = \bigcup^\infty_{i = 1} \cM^{k,\alpha}(g_i, \Sigma_i; \Lambda_i, I_i, \beta)\,.
            \end{equation}
            
            On the other hand, by Remark \ref{Rem:Fst_Decomp},
            \[
                \cM^{k, \alpha}(g_i, \Sigma_i; \Lambda_i, I_i, \beta) = \bigcup^\infty_{v=1} \cL^{k, \alpha}_0(g_{i,v}, \Sigma_{i,v}; \Lambda_i, I_i ,\beta)\,,
            \]
            By relabeling the indices $\set{i, v}$, we have
            \begin{equation}\label{Eqn:Snd_Decomp}
                \cM^{k,\alpha}(M) = \bigcup^\infty_{i = 1} \cL^{k, \alpha}_0(g_i, \Sigma_i; \Lambda_i, I_i ,\beta)\,.
            \end{equation}
        
        \subsection{Compactness and second decomposition}
            
            \begin{Prop}[Sequential compactness of $\cL^{k, \alpha}_0$]\label{prop:comp_of_cL0}
               The intermediate ``canonical neighborhood'' $\cL^{k, \alpha}_0(g_v, \Sigma_v; \Lambda, I ,\beta)$ is sequentially compact. In other words, given a sequence $\set{(g_j, \Sigma_j)} \subset \cL^{k, \alpha}_0(g_v, \Sigma_v; \Lambda, I ,\beta)$, there exists a pair $(g_\infty, \Sigma_\infty) \in \cL^{k, \alpha}_0(g_v, \Sigma_v; \Lambda, I ,\beta)$, such that up to a subsequence, $g_j \rightarrow g_\infty$ in $C^{k-1, \alpha}$, and $\Sigma_j \rightarrow \Sigma_\infty$ in the varifold sense.
            \end{Prop}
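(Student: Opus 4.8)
The plan is to run a routine compactness extraction and then verify that every condition defining $\cL^{k,\alpha}_0$ is closed under the resulting convergence. First I would produce the limiting pair: from $\|g_j\|_{C^{k,\alpha}}\le\Lambda$ and the compact embedding $C^{k,\alpha}(M)\hookrightarrow C^{k-1,\alpha}(M)$, a subsequence has $g_j\to g_\infty$ in $C^{k-1,\alpha}$ with $\|g_\infty\|_{C^{k,\alpha}}\le\Lambda$; from $\scH^7(\Sigma_j,g_j)\le\Lambda$, $\ind(\Sigma_j,g_j)\le I$, the Schoen--Simon estimates \cite{schoen_regularity_1981}, and Sharp's compactness theorem \cite{sharp_compactness_2017}, a further subsequence has $|\Sigma_j|\to m|\Sigma_\infty|$ in the $\mbfF$-metric, where $\Sigma_\infty\subset(M,g_\infty)$ is an LSMH with $\scH^7(\Sigma_\infty,g_\infty)\le\Lambda$, $\ind(\Sigma_\infty,g_\infty)\le I$, and the convergence is $C^2_{loc}$ away from $\Sing(\Sigma_\infty)$. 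Passing $\mbfF(\Sigma_v,\Sigma_j)\le\delta$ to the limit and comparing total masses with $\|\Sigma_v\|_{g_v}(M)$ forces $m=1$ once $\delta$ is small, so $(g_\infty,\Sigma_\infty)$ satisfies the ``soft'' membership conditions (the norm, area, index, and $C^{k-1,\alpha}$- and $\mbfF$-closeness bounds).

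The substantive part is to promote the large-scale cone decompositions of the $(g_j,\Sigma_j)$ to one of $(g_\infty,\Sigma_\infty)$. For each $j$ fix the large-scale $(\tilde\theta_l,\beta,g_v,\Sigma_v,\cS,N)$-cone decomposition whose tree representation is $\beta$-close to that of $(g_v,\Sigma_v)$, together with its data: the radii $\{r^j_{\hat\alpha}\}$ over the fixed singular set $\Sing(\Sigma_v)$, the graph function of $\Sigma_j$ over $\Sigma_v$ away from those points, and, inside each ball, the centers $\{x^j_a,x^j_b\}$, radii $\{R^j_a,\rho^j_a,R^j_b\}$, cones $\{\mbfC^j_a\}\subset\cC$, integers $\{m^j_a\}$, and smooth-model indices $\{s^j_b\}$. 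Since there are only finitely many coarse tree representations, finitely many members of $\cS$, and finitely many admissible $m_a\le[\tilde\theta_l]$, a subsequence has the coarse tree constant; and $\beta$-closeness to the \emph{fixed} reference tree of $(g_v,\Sigma_v)$ bounds every radius from above, bounds it from below whenever the reference radius is positive, and confines every center to a fixed compact region, so after a further subsequence $r^j_{\hat\alpha}\to r^\infty_{\hat\alpha}$, $x^j_a\to x^\infty_a$, $R^j_a\to R^\infty_a$, $\rho^j_a\to\rho^\infty_a$ (with $\rho^\infty_a=0$ iff $\rho^j_a\equiv0$), $R^j_b\to R^\infty_b$, $\mbfC^j_a\to\mbfC^\infty_a$ in $\mbfF$ (by $\mbfF$-compactness of $\scC_\Lambda$), and $m^j_a,s^j_b$ are eventually constant. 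Because every inequality in the definition of $\beta$-closeness is non-strict, the tree assembled from the limit data is again $\beta$-close to that of $(g_v,\Sigma_v)$.

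It then remains to check that this limit data genuinely exhibits $(g_\infty,\Sigma_\infty)$ as a large-scale cone decomposition. Away from $\Sing(\Sigma_\infty)$ the varifold convergence upgrades to $C^2_{loc}$ convergence of the hypersurfaces by Allard's regularity theorem \cite{allard_first_1972} and Schauder estimates, so on each smooth region, and on each strong-cone region (Definition \ref{Def:strong_cone_region}) with $\rho^\infty_a>0$, the $m^j_a$ ordered graph functions over $x^j_a+\mbfC^j_a$ converge in $C^2$ on compact subsets to graph functions over $x^\infty_a+\mbfC^\infty_a$; the $\le\beta$ bounds, the multigraph identities, and the almost-constant-density-ratio inequalities are closed conditions and so persist, with multiplicity one of $\Sigma_\infty$ preventing the ordered sheets from fusing when $m_a\ge2$. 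For a leaf with $\rho^j_a\equiv0$, the strong-cone-region condition forces $\Sigma_j$ to be a single $C^2$ graph over $\mbfC^j_a$ on $\mathbb{B}(x^j_a,R^j_a)\setminus\{x^j_a\}$, hence $\mbfC^j_a=\mbfC_{x^j_a}\Sigma_j$ and $\theta_{|\Sigma_j|}(x^j_a,2)-\theta_{|\Sigma_j|}(x^j_a)\to0$ after rescaling, so Corollary \ref{Cor_Converg in all Scales}, built on Lemma \ref{Lem_Ana on SMC_Quanti Uniqueness of Tang Cone}, yields $\mbfC^\infty_a=\mbfC_{x^\infty_a}\Sigma_\infty$ and the graphical structure in the limit. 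The nesting relations among the regions are equalities or non-strict inequalities among converging quantities, hence preserved.

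I expect this last step to be the main obstacle: the cone decomposition is a recursively nested object spanning many scales, and one must check that \emph{all} of its quantitative conditions — most delicately the multigraph structure near the singular leaves and the identification of the limiting cones with tangent cones of $\Sigma_\infty$ — survive the varifold limit; this is exactly where the quantitative uniqueness of tangent cones (Lemma \ref{Lem_Ana on SMC_Quanti Uniqueness of Tang Cone}, Corollary \ref{Cor_Converg in all Scales}) and the multiplicity-one conclusion of Step 1 are needed. Once it is carried out, all the conditions defining $\cL^{k,\alpha}_0(g_v,\Sigma_v;\Lambda,I,\beta)$ hold for $(g_\infty,\Sigma_\infty)$, which gives the asserted sequential compactness.
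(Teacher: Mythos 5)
Your proposal follows the same strategy as the paper's proof: extract a convergent subsequence via the compact embedding $C^{k,\alpha}\hookrightarrow C^{k-1,\alpha}$ and Sharp's compactness, establish multiplicity one, and then pass the large-scale cone decomposition together with its $\beta$-closeness to the limit. Your detailed verification of the limit tree — finiteness of coarse trees, the bounds on radii, centers, and cones coming from $\beta$-closeness to the fixed reference tree of $(g_v,\Sigma_v)$, the closedness of the non-strict defining inequalities, and the tangent-cone identification via Corollary~\ref{Cor_Converg in all Scales} — fills in details the paper's proof leaves terse, and that part is sound.

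The one step that does not hold as written is the multiplicity-one conclusion. Passing $\mbfF(\Sigma_v,\Sigma_j)\le\delta$ to the limit yields $\mbfF(|\Sigma_v|,m|\Sigma_\infty|)\le\delta$ and hence $\bigl|\,m\|\Sigma_\infty\|(M)-\|\Sigma_v\|(M)\,\bigr|\le\delta$, but this carries no lower bound on $\|\Sigma_\infty\|(M)$ itself and so does not exclude, say, $m=2$ with $\|\Sigma_\infty\|(M)\approx\|\Sigma_v\|(M)/2$; and since the test functions in the definition of $\mbfF$ have $|Df|\le 1$, the bound gives no fine-scale density comparison near points of $\Sigma_v$ either. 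What actually forces $m=1$ — and what the paper is abbreviating when it cites the ``$\beta$-graphic property'' and the ``constancy theorem'' — is the last item in the definition of a large-scale cone decomposition: each $\Sigma_j$ coincides outside the fixed balls $B^g(p_{\hat\alpha},r_{\hat\alpha})$ with a \emph{single-sheeted} graph over $\Sigma_v$ of $C^2$-norm bounded by $\beta$. Under the locally graphical convergence from Schoen--Simon/Sharp, the sheet count is preserved, so by constancy of multiplicity on connected components of $\Sigma_\infty\setminus\Sing(\Sigma_\infty)$ we get $m=1$ everywhere. You already invoke this graph structure later in your argument, so the repair is simply to use it up front in place of the total-mass comparison.
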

            \begin{proof}
                By definition, we have
                \begin{itemize}
                    \item $\|g_j\|_{C^{k, \alpha}} \leq \Lambda$;
                    \item $\|g_j - g_v\|_{C^{k-1, \alpha}} + \mathbf{F}(\Sigma_j, \Sigma_v) \leq \delta$;
                    \item $\scH^7(\Sigma_j, g_j) \leq \Lambda,\ \ind(\Sigma_j, g_j) \leq I$.
                \end{itemize}
                The compactness of $C^{k, \alpha} \hookrightarrow C^{k-1, \alpha}$ and Sharp's compactness \cite{sharp_compactness_2017} together imply that there exists a pair $(g_\infty, \Sigma_\infty)$ such that
                \begin{itemize}
                    \item $\|g_\infty\|_{C^{k, \alpha}} \leq \Lambda$ and $\Sigma_\infty$ is a LSMH;
                    \item Up to a subsequence, $g_j \rightarrow g_\infty$ in $C^{k-1, \alpha}$, and $\Sigma_j \rightarrow \Sigma_\infty$ in the varifold sense;
                    \item $\|g_\infty - g_v\|_{C^{k-1, \alpha}} + \mathbf{F}(\Sigma_\infty, \Sigma_v) \leq \delta$;
                    \item $\scH^7(\Sigma_\infty, g_\infty) \leq \Lambda,\ \ind(\Sigma_\infty, g_\infty) \leq I$.
                \end{itemize}
                Now, it suffices to show that $(g_\infty, \Sigma_\infty)$ has a tree representation $\beta$-close to the one of $(g_v, \Sigma_v)$. 
                
                Indeed, by large-scale cone decomposition and the constancy theorem, we know that $\Sigma_\infty$ should be a multiplicity one minimal hypersurface. Moreover, the $\beta$-graphic property indicates that the convergence are all smooth and thus, the coarse tree representation will be the same. Since all the closeness property will be preserved along the limiting process, $(g_\infty, \Sigma_\infty)$ has a tree representation $\beta$-close to the one of $(g_v, \Sigma_v)$.
            \end{proof}
            
            Compactness is equivalent to the existence of a finite subcover of its open covers. Hence,  a finite decomposition of intermediate canonical neighborhoods as follows.
            
            \begin{Prop}[Finite covering of $\cL^{k,\alpha}_0$]
                For any $g, \Sigma, \Lambda, I, \beta$ as in (\ref{Eqn:Snd_Decomp}) (with subscripts omitted), there exists a countable set of pairs $\set{(g_{v}, \Sigma_{v})}_v \subset \cL^{k, \alpha}_0(g, \Sigma; \Lambda, I ,\beta)$ with corresponding $\set{\epsilon_v}_v$, such that
                \[
                    \cL^{k, \alpha}_0(g, \Sigma; \Lambda, I ,\beta) \subset \bigcup^{V}_{v = 1} \cL^{k,\alpha}(g_{v}, \Sigma_{v};\Lambda', \kappa_{v})\,.
                \]
                where $\Lambda' = \Lambda + I + 1$, $V \in \mathbb{N}$ and $\kappa_v = \kappa(g_{v}, \Sigma_{v}; \Lambda')$ defined in Theorem \ref{Thm_Countable Decomp}.
            \end{Prop}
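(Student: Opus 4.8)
The plan is to deduce the finite covering from the sequential compactness of $\cL^{k,\alpha}_0(g,\Sigma;\Lambda,I,\beta)$ proved in Proposition \ref{prop:comp_of_cL0}, together with a purely local fact: \emph{relative to $\cL^{k,\alpha}_0$, every point lies in the interior of the canonical neighborhood centered at itself}. Since the topology on $\cL^{k,\alpha}_0$ is the metric one induced by $\|\cdot\|_{C^{k-1,\alpha}}$ on metrics and $\mbfF$ on hypersurfaces, sequential compactness is honest compactness; hence an open cover by such relatively-interior neighborhoods has a finite subcover, and the centers of that subcover, together with the associated radii, are the desired $\{(g_v,\Sigma_v)\}$ and $\{\epsilon_v\}$ (the chosen centers automatically lie in $\cL^{k,\alpha}_0$, as required). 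Throughout, $\Lambda':=\Lambda+I+1$ and $\kappa_v:=\kappa(g_v,\Sigma_v;\Lambda')$.

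Concretely, I would establish: for each $(g',\Sigma')\in\cL^{k,\alpha}_0(g,\Sigma;\Lambda,I,\beta)$ there is $\epsilon=\epsilon(g',\Sigma')\in(0,\kappa(g',\Sigma';\Lambda'))$ so that every $(g'',\Sigma'')\in\cL^{k,\alpha}_0(g,\Sigma;\Lambda,I,\beta)$ with $\|g''-g'\|_{C^{k-1,\alpha}}+\mbfF(|\Sigma''|_{g''},|\Sigma'|_{g'})<\epsilon$ belongs to $\cL^{k,\alpha}(g',\Sigma';\Lambda',\kappa(g',\Sigma';\Lambda'))$. The first two bullets of Definition \ref{Def_injrad, canonical neighb} are free: every element of $\cL^{k,\alpha}_0$ already satisfies $\|g''\|_{C^{k,\alpha}}\leq\Lambda\leq\Lambda'$ and $\ind(\Sigma'';g'')\leq I\leq\Lambda'$, while the $C^{k-1,\alpha}$- and $\mbfF$-proximity to $(g',\Sigma')$ is exactly what $\epsilon<\kappa(g',\Sigma';\Lambda')$ provides. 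All the content is in the third bullet: for every $p\in\Sing(\Sigma')$ one must produce $p''\in\Sing(\Sigma'')\cap B^{g'}(p,\injrad(g',\Sigma'))$ with $\theta_{|\Sigma''|}(p'')=\theta_{|\Sigma'|}(p)$.

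For this I would use the large-scale cone decomposition. By membership in $\cL^{k,\alpha}_0$, both $\Sigma'$ and $\Sigma''$ carry large-scale $(\tilde\theta_l,\beta,g,\Sigma,\cS,N)$-cone decompositions whose tree representations are $\beta$-close to that of the fixed reference, and so share its coarse tree representation. In such a decomposition the singular set is precisely the collection of centers $x_a$ of the innermost strong-cone regions ($\rho_a=0$), where the tangent cone is $\mbfC_a$ with $\theta_{\mbfC_a}(\mathbf 0)>1$ and --- by the multiplicity-one property at singularities of LSMH, \cite[Theorem B]{ilmanen_strong_1996} --- multiplicity one, so the density there equals $\theta_{\mbfC_a}(\mathbf 0)$; outside $\bigcup_{\hat\alpha}B^g(p_{\hat\alpha},r_{\hat\alpha})$ each of $\Sigma',\Sigma''$ is a small $C^2$ graph over $\Sigma$ and hence regular. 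The common coarse tree thus puts $\Sing(\Sigma')$ and $\Sing(\Sigma'')$ in a canonical bijection with equal tangent-cone densities and, via the $\beta$-closeness, confines both to a common small neighborhood of the reference's leaf positions. Supplementing this with the $\mbfF$-closeness of $\Sigma''$ to $\Sigma'$, Allard's theorem \cite{allard_first_1972}, and upper semi-continuity of density (so that for $\epsilon$ small each singular point of $\Sigma''$ is $C^2$-close to one of $\Sigma'$, with none created or destroyed) gives the matching $p\mapsto p''$ with $p''\in B^{g'}(p,\injrad(g',\Sigma'))$ and $\theta_{|\Sigma''|}(p'')=\theta_{|\Sigma'|}(p)$. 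The main technical point I expect to fight with is precisely this last reconciliation: the $\beta$-bounds on leaf positions are scale-dependent, whereas $\injrad(g',\Sigma')$ is a fixed positive number attached to $\Sigma'$, so one has to choose $\epsilon$ small enough (depending on $(g',\Sigma')$) that the $\mbfF$-perturbation collapses the leaves of $\Sigma''$ onto those of $\Sigma'$ and rules out any spurious singularity; once this is secured, compactness of $\cL^{k,\alpha}_0$ finishes the proof.
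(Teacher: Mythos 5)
Your proposal is correct and follows the same strategy as the paper: reduce to compactness of $\cL^{k,\alpha}_0$ (Proposition \ref{prop:comp_of_cL0}) by showing that each $\cL^{k,\alpha}(g',\Sigma';\Lambda',\kappa(g',\Sigma';\Lambda'))$ contains a relatively open neighborhood of $(g',\Sigma')$ in $\cL^{k,\alpha}_0$, with the first two bullets of Definition \ref{Def_injrad, canonical neighb} automatic and the third --- matching singular points and densities --- deduced from the common coarse tree and the $\beta$-closeness of tree representations. The paper phrases the relative-openness step as a sequential/contradiction argument rather than directly, but the content is identical, and the paper is in fact terser than you on the third bullet (it just invokes the $\beta$-closeness of trees without spelling out the Allard/upper-semicontinuity reconciliation you flag as the delicate point).
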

            \begin{proof}
                By Proposition \ref{prop:comp_of_cL0}, we know that $\cL^{k, \alpha}_0(g_i, \Sigma_i; \Lambda_i, I ,\beta)$ is compact, so it suffices to show that for any $(g', \Sigma') \in \cL^{k, \alpha}_0(g, \Sigma; \Lambda, I ,\beta)$ and $\delta' > 0$, the corresponding canonical neighborhood $\cL^{k,\alpha}(g', \Sigma';\Lambda', \delta')$ contains an open neighborhood of $(g', \Sigma')$ in $\cL^{k, \alpha}_0(g, \Sigma; \Lambda, I ,\beta)$.
                
                Suppose otherwise that there exists some $(g', \Sigma')$ and a sequence of $\{(g_i, \Sigma_i)\} \subset \cL^{k, \alpha}_0(g, \Sigma; \Lambda, I ,\beta) \setminus \cL^{k,\alpha}(g', \Sigma';\Lambda', \delta')$ so that
                \[
                    (g_i, \Sigma_i) \rightarrow (g', \Sigma')\,.
                \]
                
                For large enough $i$, we can verify that
                \begin{itemize}
                    \item $g_i$ is a $C^{k, \alpha}$ metric on $M$ with $\|g_i\|_{C^{k, \alpha}}\leq \Lambda'$ and $\|g'-g_i\|_{C^{k-1, \alpha}}\leq \delta'$;
                    \item $\Sigma_i$ is a LSMH in $(M, g_i)$ satisfying
                          \[ \mbfF(|\Sigma_i|_{g_i}, |\Sigma'|_{g'})\leq \delta', \quad \ind(\Sigma_i)\leq \Lambda'\,; \]
                \end{itemize}
                Furthermore, the last condition is implied by the $\beta$-closeness of tree representations where $\beta < 1/100$.
                
                In other words, for $i$ large enough, $(g_i, \Sigma_i) \in \cL^{k,\alpha}(g', \Sigma';\Lambda, I, \epsilon')$ which gives a contradiction.
            \end{proof}
            
            \begin{proof}[Proof of Theorem \ref{Thm_Countable Decomp}]
                By (\ref{Eqn:Snd_Decomp}) and the previous covering proposition, we have
                \begin{align*}
                    \cM^{k,\alpha}(M) & = \bigcup^\infty_{i = 1} \cL^{k, \alpha}_0(g_i, \Sigma_i; \Lambda_i, I_i ,\beta)                                    \\
                                      & = \bigcup^\infty_{i = 1} \bigcup^{V_i}_{v = 1} \cL^{k,\alpha}(g_{i, v}, \Sigma_{i, v};\Lambda'_i, \kappa_{i, v})\,,
                \end{align*}
                where $\Lambda'_i = \Lambda_i + I_i + 1$, $V_i \in \mathbb{N}$ and $\kappa_{i, v} = \kappa(g_{i, v}, \Sigma_{i, v}; \Lambda'_i)$.
                
                Hence, Theorem \ref{Thm_Countable Decomp} follows from relabeling the indices $\set{i, v}$.
            \end{proof}
            
\appendix
    \section{Proof of Growth Rate Monotonicity} \label{Sec_App_Proof Growth Rate Mon}
        The goal for this section is to prove Lemma \ref{Lem_Ana on SMC_Growth Rate Monoton}.  Let $u\in C^2_{loc}A(K^{-3}, 1)) \cap L^2(A(K^{-3}, 1))$ be a given solution to $L_\mbfC u = 0$ as in Lemma \ref{Lem_Ana on SMC_Growth Rate Monoton}, where $K>2$ will be determined later.  By (\ref{Equ_Pre_Decomp Jac into homogen Jac field}), we can write 
        \[
            u(r, \omega) = \sum_{j\geq 1}(v_j^+(r)+v_j^-(r))\cdot \varphi_j(\omega)\,, 
        \]
        and by (\ref{Equ_Pre_Homogen Jac field}), (\ref{Equ_Pre_Asymp Spectrum gamma_j^pm}) and the definition of $J^\gamma_K(u; r)$, we have
        \begin{align*}
            J^\gamma_K(u; r) & = \int_{K^{-1}r}^r t^{n-1-n-2\gamma}\ dt\int_{\mbfC\cap \SSp^n} u(t, \omega)^2\ d\omega \\
                             & = \sum_{j\geq 1}\int_{K^{-1}r}^r t^{-1-2\gamma}\cdot (v_j^+(t)+v_j^-(t))^2\ dt,
        \end{align*}
        where 
        \begin{align*}
              & \ \int_{K^{-1}r}^r t^{-1-2\gamma}\cdot (v_j^+(t)+v_j^-(t))^2\ dt \\
            = & \ \begin{cases}
                \int_{K^{-1}r}^r (c_j^+ t^{\gamma_j^+ -\gamma} + c_j^- t^{\gamma_j^- - \gamma})^2 t^{-1}\ dt,   & \ \text{ if }\gamma_j^+ \neq \gamma_j^-;  \\
                \int_{K^{-1}r}^r (c_j^+ t^{-(n-2)/2 -\gamma} + c_j^- t^{-(n-2)/2 - \gamma}\log t)^2 t^{-1}\ dt, & \ \text{ if }\gamma_j^+ = \gamma_j^-.    
            \end{cases}
        \end{align*}
        Hence Lemma \ref{Lem_Ana on SMC_Growth Rate Monoton} is an immediate corollary of the following Lemma \ref{Lem_App_Growth rate mon alpha-beta} and \ref{Lem_App_Growth rate mon alpha-log}.
        \begin{Lem} \label{Lem_App_Growth rate mon alpha-beta}
            Suppose $\sigma>0$. Then there exists a real number $K_1 = K_1(\sigma)>2$ with the following property.  For every $\alpha\neq \beta\in \RR$ such that $|\alpha|, |\beta|, |\alpha+\beta| \geq \sigma$ and every $K\geq K_1$, the following \[
                \cI_K(r; c, c') := \int_r^{Kr}(cs^\alpha + c's^\beta)^2s^{-1}\ ds,   \]
            satisfies that for every $r>0$ and every $(0,0)\neq (c, c')\in \RR^2$ we have, \[
                \cI_K(K^2r; c, c') - 2\cI_K(Kr; c, c') + \cI_K(r; c, c') > 0.   \]
        \end{Lem}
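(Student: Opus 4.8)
The plan is to first turn the statement into the positive definiteness of a fixed $2\times 2$ matrix. Since $K>1$, for any $r>0$ the substitution $s\mapsto\lambda s$ gives $\cI_K(\lambda r;c,c')=\cI_K(r;c\lambda^\alpha,c'\lambda^\beta)$; applying this with $\lambda=K^2,K$ and evaluating the elementary integrals $\int_r^{Kr}s^{\gamma-1}\,ds=r^\gamma(K^\gamma-1)/\gamma$ (valid since $\alpha,\beta,\alpha+\beta$ are all nonzero), the second difference becomes, with $a:=cr^\alpha$, $b:=c'r^\beta$,
\[
\cI_K(K^2r;c,c')-2\cI_K(Kr;c,c')+\cI_K(r;c,c')=a^2\,P\,(K^{2\alpha}-1)^2+2ab\,Q\,(K^{\alpha+\beta}-1)^2+b^2\,R\,(K^{2\beta}-1)^2,
\]
where $P=\tfrac{K^{2\alpha}-1}{2\alpha}$, $Q=\tfrac{K^{\alpha+\beta}-1}{\alpha+\beta}$, $R=\tfrac{K^{2\beta}-1}{2\beta}$ are all strictly positive (numerator and denominator have the same sign). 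For each $r>0$ the map $(c,c')\mapsto(a,b)$ is a linear isomorphism of $\RR^2$, so the asserted inequality (for all $r>0$ and all $(c,c')\neq\mathbf 0$) is equivalent to positive definiteness of the symmetric matrix with these coefficients. Its $(1,1)$-entry is positive, so by Sylvester's criterion it suffices to prove the determinant is positive, which after inserting $P,Q,R$ reads
\[
\frac{(K^{2\alpha}-1)^3(K^{2\beta}-1)^3}{4\alpha\beta}>\frac{(K^{\alpha+\beta}-1)^6}{(\alpha+\beta)^2}.
\]

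Next I would set $t:=\log K>0$, $p:=t\alpha$, $q:=t\beta$, and introduce $G(x):=(e^{x}-1)^3/x$ on $\RR\setminus\{0\}$, which is everywhere positive; a direct computation shows the last inequality is exactly $G(2p)\,G(2q)>G(p+q)^2$, with the hypotheses becoming $|p|,|q|,|p+q|\ge\sigma t$ and $p\neq q$. Writing $\ell:=\log G$, I want the midpoint log-convexity $\ell(2p)+\ell(2q)>2\ell(p+q)$ whenever $t\ge t_0(\sigma)$. Two elementary facts drive this: (i) $\ell''(x)=x^{-2}-3e^{x}(e^{x}-1)^{-2}$ is positive exactly when $|\sinh(x/2)|>\sqrt3\,|x/2|$, hence positive for $|x|\ge x_0$ with $x_0$ a universal constant; and (ii) $\ell(x)=3x_{+}-\log|x|+\theta(x)$ with $x_+:=\max(x,0)$ and $|\theta(x)|\le 6e^{-|x|}$ for $|x|\ge 1$, both coming from bounding $\log(1-e^{-|x|})$.

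If $\alpha$ and $\beta$ have the same sign, then $2p,p+q,2q$ all lie in one component of $\RR\setminus\{0\}$, and every point of the segment from $2p$ to $2q$ has absolute value $\ge 2\min(|p|,|q|)\ge 2\sigma t>x_0$ once $t>x_0/2\sigma$; as $p+q$ is the midpoint of this nondegenerate segment ($p\neq q$), strict convexity of $\ell$ there yields the strict inequality. If $\alpha$ and $\beta$ have opposite signs, then $|p|\neq|q|$ (else $p+q=0$), and $\max(|p|,|q|)=\min(|p|,|q|)+|p+q|$; plugging (ii) in and using $(2p)_{+}+(2q)_{+}-2(p+q)_{+}=2\min(|p|,|q|)$ gives
\[
\ell(2p)+\ell(2q)-2\ell(p+q)=6\min(|p|,|q|)+\log\frac{|p+q|^{2}}{4|p|\,|q|}+O\!\left(e^{-\sigma t}\right),
\]
and the crude bounds $|p+q|\ge\sigma t$ together with $\max(|p|,|q|)\le 2\max(|p+q|,\min(|p|,|q|))$ show the logarithmic term is $\ge-2\log\min(|p|,|q|)-\log 8$, so the linear term $6\min(|p|,|q|)\ge 6\sigma t$ dominates for $t\ge t_1(\sigma)$. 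In both cases the determinant inequality holds, so the matrix above is positive definite; taking $K_1(\sigma):=\max\big(3,\,e^{\max(t_0(\sigma),t_1(\sigma))}\big)$ finishes the argument.

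The step I expect to be the real obstacle is the same-sign regime near the diagonal $\alpha=\beta$: one cannot merely take $K\to\infty$ asymptotics, because as $\alpha\to\beta$ the quantity $\ell(2p)+\ell(2q)-2\ell(p+q)$ tends to $0$ and would be overwhelmed by the exponentially small correction $\theta$. This is precisely why $K_1$ is allowed to depend on $\sigma$: one needs genuine strict convexity of $\log G$ on the entire scale $|x|\gtrsim\sigma\log K$, uniformly down to $\alpha=\beta$. The opposite-sign regime is softer but still requires weighing the exponential error $\theta$ against a main term that can be merely logarithmically large.
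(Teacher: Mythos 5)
Your proof is correct, and the reduction you perform at the start is exactly the one the paper makes: both compute the second difference as the quadratic form $a^2\frac{(K^{2\alpha}-1)^3}{2\alpha}+2ab\frac{(K^{\alpha+\beta}-1)^3}{\alpha+\beta}+b^2\frac{(K^{2\beta}-1)^3}{2\beta}$ and reduce the lemma to the determinant inequality $\Delta(K;\alpha,\beta)<0$. Where you diverge is in how that inequality is proved. The paper handles the same-sign case by brute-force expansion: it factors out $-(\alpha-\beta)^2/\big(4\alpha\beta(\alpha+\beta)^2\big)$, writes the bracketed polynomial in $K^{\alpha+\beta}$ with coefficients involving $\Lambda_i := \frac{(\alpha+\beta)^2}{(\alpha-\beta)^2}(K^{i\alpha-i\beta}-2+K^{i\beta-i\alpha})$, bounds $\Lambda_i \le 9(1+\beta^2)K^{i(\alpha-\beta)}(i\log K)^2$, and shows $K^{6(\alpha+\beta)}$ dominates. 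You instead recognize $\Delta<0$ as the midpoint log-convexity statement $G(2p)G(2q)>G(p+q)^2$ for $G(x)=(e^x-1)^3/x$, and in the same-sign case you appeal to genuine strict convexity of $\log G$ on $\{|x|>x_0\}$, which holds here because the full segment $[2p,2q]$ sits at distance $\ge 2\sigma\log K>x_0$ from the origin. That is cleaner and more conceptual than the paper's cancellation bookkeeping, and it transparently explains why $K_1$ depends on $\sigma$ (one needs $2\sigma\log K_1>x_0$). In the opposite-sign case the two proofs converge: the paper bounds $\Delta \le \frac{K^{6(\alpha+\beta)}}{(\alpha+\beta)^2}-\frac{K^{6\alpha}}{256|\alpha\beta|}$ and compares a linear-in-$\log K$ main term against logarithmic corrections in $\alpha,\beta$, which is essentially your decomposition $\ell(x)=3x_+-\log|x|+\theta(x)$ together with the identity $(2p)_++(2q)_+-2(p+q)_+=2\min(|p|,|q|)$; your version just makes the bookkeeping more transparent. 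Both approaches are correct; yours trades tedious polynomial estimates for a convexity lemma and an asymptotic expansion, which I would consider a genuine (and arguably more illuminating) alternative.
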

        \begin{proof}
            \begin{align*}
                \cI_K(r; c, c') & = \int_r^{Kr} (c^2 s^{2\alpha-1} + 2cc' s^{\alpha+\beta -1} + c'^2 s^{2\beta -1})\ ds                                                                                        \\
                                & = c^2 r^{2\alpha}\cdot \frac{K^{2\alpha}-1}{2\alpha} + 2cc' r^{\alpha+\beta}\cdot\frac{K^{\alpha+\beta}-1}{\alpha+\beta} + c'^2 r^{2\beta}\cdot \frac{K^{2\beta}-1}{2\beta}.
            \end{align*}
            Thus,
            \begin{align*}
                  & \ \cI_K(K^2r; c, c') - 2\cI_K(Kr; c, c') + \cI_K(r; c, c')                                                                                                                                   \\ 
                = & \  c^2 r^{2\alpha}\cdot \frac{(K^{2\alpha}-1)^3}{2\alpha} + 2cc' r^{\alpha+\beta}\cdot\frac{(K^{\alpha+\beta}-1)^3}{\alpha+\beta} + c'^2 r^{2\beta}\cdot \frac{(K^{2\beta}-1)^3}{2\beta}.
            \end{align*}
            And hence to prove the lemma, it suffices to show that when $K\geq K_1(\sigma)$, we have 
            \begin{align*}
                \Delta(K; \alpha, \beta):= \Big[ \frac{(K^{\alpha + \beta}-1)^3}{\alpha + \beta} \Big]^2 - \frac{(K^{2\alpha} - 1)^3}{2\alpha}\cdot \frac{(K^{2\beta} - 1)^3}{2\beta} < 0.
            \end{align*}
            Since $\Delta(K; \alpha, \beta) = \Delta(K; \beta, \alpha) = K^{6\alpha+6\beta}\cdot \Delta(K; -\alpha,-\beta)$, we can assume WLOG that $\alpha>\beta$ and $\alpha+\beta\geq\sigma>0$. \\

            \noindent \underline{\textit{Case I.}} $0<\beta<\alpha$.  By a simple calculation, we have
            \begin{align*}
                \Delta(K; \alpha, \beta) = \frac{-(\alpha -\beta)^2}{4\alpha\beta(\alpha+\beta)^2} & \Big[ K^{6\alpha+6\beta} - K^{5\alpha+5\beta}(6+3\Lambda_1)                                                       \\
                                                                                                   & +  K^{4\alpha+4\beta}(15 + 3\Lambda_2) - K^{3\alpha+3\beta}( 20 + \Lambda_3+9\Lambda_1) \\
                                                                                                   & +  K^{2\alpha+2\beta}(15 + 3\Lambda_2) - K^{\alpha+\beta}(6+3\Lambda_1) + 1  \Big], 
            \end{align*}
            where for $i=1,2,3$, 
            \begin{align*}
                0 \leq \Lambda_i & := \frac{(\alpha+\beta)^2}{(\alpha - \beta)^2}\cdot (K^{i\alpha-i\beta}-2 + K^{i\beta -i\alpha}) \\
                & \leq \begin{cases} 
                  (3\beta)^2 K^{i\alpha-i\beta}\cdot (i\log K)^2,&\ \text{ if }\alpha\leq 2\beta; \\
                  3^2K^{i\alpha -i\beta},&\ \text{ if }\alpha>2\beta.
                \end{cases}\\
                & \leq 9(1+\beta^2) K^{i\alpha - i\beta}\cdot(i\log K)^2.             
            \end{align*}
            Hence by taking $K\geq K_1(\sigma)>>1$, $K^{6\alpha + 6\beta}$ dominates other terms in $\Delta(K; \alpha, \beta)$. In particular, we have $\Delta(K; \alpha, \beta)<0$.\\

            \noindent \underline{\textit{Case II.}} $\beta<0<\alpha$.  Then when $K^{2\sigma}>2$, we have $K^{2\alpha} -1 >K^{2\alpha} /2$, $1-K^{2\beta} > 1/2$, and hence,
            \begin{align}
                \Delta(K; \alpha, \beta) = \Big[ \frac{(K^{\alpha + \beta}-1)^3}{\alpha + \beta} \Big]^2 - \frac{(K^{2\alpha} - 1)^3}{2\alpha}\cdot \frac{(K^{2\beta} - 1)^3}{2\beta} \leq \frac{K^{6\alpha+6\beta}}{(\alpha +\beta)^2} - \frac{K^{6\alpha}}{256|\alpha\beta|}.  \label{Equ_App_beta<0<alpha, est Delta}
            \end{align}
            And since 
            \begin{align*}
                \frac{(\alpha+\beta)^2}{|\alpha\beta|} \geq  \begin{cases}
                    \frac{\sigma}{2|\beta|},     & \ \text{ if }\alpha+\beta>\alpha/2;     \\
                    \frac{\sigma^2}{2|\beta|^2}, & \ \text{ if }\alpha+\beta\leq \alpha/2.
                \end{cases}
            \end{align*}
            (\ref{Equ_App_beta<0<alpha, est Delta}) then implies $\Delta(K; \alpha, \beta)<0$ by taking $K\geq K_1(\sigma)>>1$.
        \end{proof}
        
        \begin{Lem} \label{Lem_App_Growth rate mon alpha-log}
            Suppose $\sigma>0$. Then there exists a real number $K_2 = K_2(\sigma)>2$ with the following property.  For every $\alpha'\in \RR$ such that $|\alpha'|\geq \sigma$ and every $K\geq K_2$, the following 
            \[
                \tilde{\cI}_K(r; c, c') := \int_r^{Kr}(cs^{\alpha'} + c's^{\alpha'} \log s)^2s^{-1}\ ds,
            \]
            satisfies that for every $r>0$ and every $(0,0)\neq (c, c')\in \RR^2$ we have, 
            \[
                \tilde{\cI}_K(K^2r; c, c') - 2\tilde{\cI}_K(Kr; c, c') + \tilde{\cI}_K(r; c, c') > 0.
            \]
        \end{Lem}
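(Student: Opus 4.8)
The plan is to follow the proof of Lemma \ref{Lem_App_Growth rate mon alpha-beta}: reduce the second–difference inequality to the positive–definiteness of an explicit $2\times 2$ quadratic form, and then to an elementary one–variable inequality.

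First I would substitute $t=\log s$ (so $s^{2\alpha'-1}\,ds=e^{2\alpha't}\,dt$) to rewrite
\[
    \tilde{\cI}_K(r;c,c') = \int_{\log r}^{\log r+\log K} e^{2\alpha' t}\,(c+c't)^2\,dt .
\]
Shifting the variable of integration gives the scaling identity $\tilde{\cI}_K(\rho r;c,c')=\rho^{2\alpha'}\,\tilde{\cI}_K(r;\,c+c'\log\rho,\,c')$ for all $\rho>0$. Since $(c,c')\mapsto(c+c'\log r,c')$ is a linear automorphism of $\RR^2$ and $r^{2\alpha'}>0$, the claimed inequality $\tilde{\cI}_K(K^2r)-2\tilde{\cI}_K(Kr)+\tilde{\cI}_K(r)>0$ is equivalent, for all $(c,c')\neq(0,0)$, to
\[
    \Psi(c,c') := K^{4\alpha'}\Phi(c+2c'L,c') - 2K^{2\alpha'}\Phi(c+c'L,c') + \Phi(c,c') > 0 ,
\]
where $L:=\log K$ and $\Phi(c,c'):=\tilde{\cI}_K(1;c,c')=\int_0^L e^{2\alpha'v}(c+c'v)^2\,dv$ is a positive quadratic form with matrix entries $A=\int_0^L e^{2\alpha'v}\,dv$, $B=\int_0^L v e^{2\alpha'v}\,dv$, $C=\int_0^L v^2 e^{2\alpha'v}\,dv$, all evaluable in closed form.

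Next I would expand $\Psi$ as a quadratic form in $(c,c')$ and check positive–definiteness via the two leading minors. Writing $\rho:=K^{2\alpha'}-1$ (so $e^{2\alpha'L}=1+\rho$, $A=\rho/(2\alpha')$, and $\rho\neq 0$ because $|\alpha'|\ge\sigma$, $K>1$), a direct computation gives: the coefficient of $c^2$ in $\Psi$ equals $A\rho^2=\rho^3/(2\alpha')>0$ in both signs of $\alpha'$; and, after computing $A$, $B$, $C$ explicitly and using $AC-B^2=\bigl(\rho^2-(1+\rho)(\log(1+\rho))^2\bigr)/(2\alpha')^4$, the discriminant of $\Psi$ simplifies (with cancellations) to
\[
    \det\Psi = \frac{\rho^4}{(2\alpha')^4}\Bigl(\rho^2 - 3(1+\rho)\,(\log(1+\rho))^2\Bigr).
\]
Thus $\Psi$ is positive definite exactly when $\rho^2>3(1+\rho)(\log(1+\rho))^2$. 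I expect this determinant identity to be the main — though purely computational — obstacle; the algebra is routine but the cancellations that collapse it to $\rho^2-3(1+\rho)(\log(1+\rho))^2$ need to be carried out with care (following Appendix~\ref{Sec_App_Geom of Minimal Graph}–style bookkeeping rather than any new idea).

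Finally I would handle the scalar inequality. Let $\psi(\rho):=\rho^2-3(1+\rho)(\log(1+\rho))^2$ on $(-1,\infty)$. Since $t(\log t)^2\to0$ as $t\to0^+$, we get $\psi(\rho)\to 1$ as $\rho\to-1^+$, and $\psi(\rho)/\rho^2\to1$ as $\rho\to+\infty$; hence there exist $\rho_-\in(-1,0)$ and $\rho_+>0$ with $\psi>0$ on $(-1,\rho_-]\cup[\rho_+,\infty)$ (note $\psi<0$ near $\rho=0$, precisely the range we must avoid — which is why $|\alpha'|\ge\sigma$ is needed). Now $\rho=K^{2\alpha'}-1\ge K^{2\sigma}-1$ when $\alpha'\ge\sigma$, and $\rho=K^{2\alpha'}-1\le K^{-2\sigma}-1$ when $\alpha'\le-\sigma$; so taking $K_2=K_2(\sigma):=\max\{2,\ (1+\rho_+)^{1/(2\sigma)},\ (1+\rho_-)^{-1/(2\sigma)}\}$ forces $\rho\in(-1,\rho_-]\cup[\rho_+,\infty)$ whenever $K\ge K_2$ and $|\alpha'|\ge\sigma$. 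Then $\det\Psi>0$ and the $c^2$–coefficient is positive, so $\Psi$ is positive definite, which is the assertion of the Lemma.
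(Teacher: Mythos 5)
Your proposal is correct and essentially duplicates the paper's argument: the paper uses the scaling invariance to reduce to $(c,c')=(0,1)$ and treats the second difference as a quadratic polynomial in $\log r$, whereas you reduce to $r=1$ and treat it as a quadratic form in $(c,c')$; these are dual formulations that yield the same discriminant condition and the identical scalar inequality $\rho^2 > 3(1+\rho)(\log(1+\rho))^2$. Your flagged determinant identity does check out — it equals $-\Delta(K;\alpha)$ in the paper's notation after substituting $\alpha=2\alpha'$, $1+\rho=K^{\alpha}$, $\log(1+\rho)=\alpha\log K$ — and your choice of $K_2(\sigma)$ via $\rho_\pm$ is the same mechanism the paper uses when it invokes domination of the terms for $|\alpha\log K|$ large.
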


        \begin{proof}
            First note that the Lemma holds trivially if $c' = 0$. By the arbitrariness of $r>0$, we can assume WLOG that $(c, c') = (0, 1)$.  Also for simplicity of notations, let $\alpha:= 2\alpha'$. Then, 
            \begin{align*}
                \tilde{\cI}_K(r; 0, 1) & = \int_r^{Kr} s^{\alpha-1}(\log s)^2\ ds                                                                                                               \\
                                       & =\ r^{\alpha}(\log r)^2\cdot \frac{K^{\alpha}-1}{\alpha} + r^{\alpha}\log r\cdot (\frac{2K^{\alpha}\log K}{\alpha} - \frac{2(K^{\alpha}-1)}{\alpha^2}) \\
                                       & \ + r^{\alpha}\cdot \big(\frac{K^{\alpha}(\log K)^2}{\alpha} - \frac{2K^{\alpha}\log K}{\alpha^2} +\frac{2(K^{\alpha}-1)}{\alpha^3} \big). 
            \end{align*}
            A direct calculation provides,
            \begin{align*}
                    & \ \tilde{\cI}_K(K^2r; 0, 1) - 2\tilde{\cI}_K(Kr; 0, 1) + \tilde{\cI}_K(r; 0, 1)                                                                                                           \\
                =\  & \ r^\alpha(\log r)^2 \cdot \frac{(K^\alpha -1)^3}{\alpha}                                                                                                                                 \\
                +   & \ 2r^\alpha(\log r) \cdot \Big[ 3K^\alpha\log K\cdot \frac{(K^\alpha-1)^2}{\alpha} - \frac{(K^\alpha-1)^3}{\alpha^2} \Big]                                                                \\
                +   & \ r^\alpha\cdot \Big[  K^\alpha(\log K)^2 \cdot\frac{(K^\alpha-1)(9K^\alpha-3)}{\alpha} - K^\alpha\log K\cdot \frac{6(K^\alpha-1)^2}{\alpha^2} + \frac{2(K^\alpha-1)^3}{\alpha^3}  \Big].
            \end{align*}
            Hence to prove the Lemma, it suffices to show that 
            \begin{align*}
                \Delta(K; \alpha)  := & \ \Big[ 3K^\alpha\log K\cdot \frac{(K^\alpha-1)^2}{\alpha} - \frac{(K^\alpha-1)^3}{\alpha^2} \Big]^2 - \frac{(K^\alpha -1)^3}{\alpha}\cdot                                 \\
                                      & \ \Big[  K^\alpha(\log K)^2 \cdot\frac{(K^\alpha-1)(9K^\alpha-3)}{\alpha} - K^\alpha\log K\cdot \frac{6(K^\alpha-1)^2}{\alpha^2} + \frac{2(K^\alpha-1)^3}{\alpha^3}  \Big] \\
                =                     & \ \frac{(K^\alpha - 1)^4}{\alpha^2}\cdot\big[ 3 K^\alpha(\log K)^2 -\frac{(K^\alpha-1)^2}{\alpha^2} \big].
            \end{align*}
            is less than $0$ provided $K\geq K_2(\sigma)>>1$. This is clear since when $\alpha>0$ (and then by assumption, $\alpha\geq 2\sigma$), $-K^{2\alpha}/\alpha^2$ dominates the other terms, and when $\alpha<0$ (and again by assumption, $\alpha\leq -2\sigma$), $-1/\alpha^2$ dominates the other terms.
        \end{proof}

    \section{Geometry of Minimal Graphs} \label{Sec_App_Geom of Minimal Graph}
        Let $(M^{n+1}, g)$ be a Riemannian manifold without boundary (not necessarily complete) and $\Sigma\subset M$ be an embedded two-sided $C^3$ hypersurface (not necessarily proper) with a unit normal field $\nu$.
        Recall that for every $x\in \Sigma$, we have introduced the \textbf{regularity scale} $r_\cS (x):=r_\cS(x; M, g, \Sigma)$ of $\Sigma$ at $x$ in Definition \ref{Def_Reg Scale}. With aid of this, we defined the $C^k_*$-norms of functions over $\Sigma$, which is invariant under rescalings.

        For $f\in C^k(M)$ ($0\leq k\leq 3$ integer), $\beta\in \Sym(T^*M\otimes T^*M)$, and for every $x\in \Sigma$, we also define 
        \begin{align*}
        [f]_{x, g, C^k_*} := \sum_{j=0}^k r_\cS(x)^j\sup_{B^g_{r_\cS(x)}}|\nabla_g^j f|;&\ \ & 
        [\beta]_{x, g, C^k_*} := \sum_{j=0}^k r_\cS(x)^j\sup_{B^g_{r_\cS(x)}}|\nabla_g^j \beta|\,   
        \end{align*}
        This is also a norm invariant under rescaling, more precisely, for every $\lambda>0$, we have 
        \begin{align*}
          [f]_{x, \lambda^2 g, C^k_*} = [f]_{x, g, C^k_*};&\ & [\lambda^2\beta]_{x,\lambda^2 g, C^k_*} = [\beta]_{x, g, C^k_*}\,.       
        \end{align*}
        We shall omit the subscript $g$ if there's no ambiguity.
        
        Throughout this section, suppose $(M, g, \Sigma)\neq (\RR^{n+1}, g_{\eucl}, \RR^n)$, and thus $r_\cS <+\infty$ on $\Sigma$.     
        \begin{Thm} \label{Thm_Append_MSE for graphs}
            There exists $\delta=\delta(n)\in (0, 1)$ and $C=C(n)>>1$ with the following properties.
            \begin{enumerate} [(i)]
                \item If $u\in C^1(\Sigma)$ with $\|u\|_{C^1_*}\leq \delta$, then
                      \[
                          \Phi^u: \Sigma \to M,\ \ \ x\mapsto \exp^g_x(u(x)\cdot \nu(x)),
                      \]
                      is a $C^1$ embedding;
                \item There exists a $C^1$ area density function $F^f = F^f(x, z, \xi)$, where $x\in \Sigma$, $z\in \RR$ with $r_\cS^{-1}|z|<1$, $\xi\in T^*_x\Sigma$ with $|\xi|_g <1$, with pointwise estimate \[
                          |F^f(x, z, \xi) - 1| \leq C(n)(r_\cS(x)^{-1}|z| + |\xi| + [f]_{x, C^2_*});
                      \]
                      And such that for every $u\in C^2(\Sigma)$ with $\|u\|_{C^2_*}\leq \delta$, every $f\in C^2(M)$ with $[f]_{x, C^2_*}\leq \delta$, $\forall x\in \Sigma$, and every $\varphi\in C_c^0(M\setminus \Sing(\Sigma))$, we have 
                      \[
                          \int_M \varphi(x)\ d\|\graph_{\Sigma}(u)\|_{(1+f)g}(x) = \int_M \varphi\circ \Phi^u(x)\cdot F^f(x, u(x), du(x))\ d\|\Sigma\|_g.
                      \]
                \item Let $F^f$ be in (ii); let $\scM^f: C^2_*(\Sigma)\to C^0_{loc}(\Sigma)$ be the minimal surface operator ($\scM^f$ is only defined in the $\delta$-neighborhood of $\mathbf{0}$), in other words, for every $\varphi\in C_c^1(\Sigma)$, \[
                          \int_\Sigma \scM^f (u)\cdot \varphi\ d\|\Sigma\|_g := \frac{d}{dt}\Big|_{t=0} \int_\Sigma F^f\left(x, u+t\varphi, d(u+t\varphi)\right)\ d\|\Sigma\|_g.   \]
                      Then for every pair $f^\pm\in C^2(M)$ with $[f^\pm]_{x, C^2_*}\leq \delta$ along $x\in \Sigma$ and every pair $\|u^\pm\|_{C^2_*}\leq \delta$, we have
                      \begin{align*}
                          \scM^{f^+} (u^+) - \scM^{f^-} (u^-) = & -L_{\Sigma, g}(u^+ - u^-) + \frac{n}{2}\nu(f^+ - f^-) \\
                                                                & + div_{\Sigma, g}(\cE_1) + r_\cS^{-1}\cE_2,     
                      \end{align*}
                      where $\cE_1, \cE_2$ are functions on $\Sigma$ satisfying the pointwise bound along $x\in \Sigma$,
                      \begin{align*}
                          |\cE_1(x)| + |\cE_2(x)| & \leq C(n)\left(\sum_{i=\pm} [f^i]_{x, C^2_*} + r_\cS(x)^{-1}|u^i|(x) + |du^i(x)| \right)         \\
                                                  & \ \cdot \left([f^+ -f^-]_{x, C^2_*} + r_{\cS}(x)^{-1}|u^+ -u^-|(x) + |d(u^+ - u^-)|(x) \right). 
                      \end{align*}
                \item Let $u\in C^2(\Sigma)$ such that $\|u\|_{C^2_*}\leq \delta$, $\tilde{g}$ be a metric on $M$ such that $[\tilde{g}-g]_{x, C^3_*}\leq \delta$ for every $x\in \Sigma$. Denote for simplicity $\Sigma_u= \graph_{\Sigma, g}(u)$.
                Then we have pointwise estimate,
                \begin{align*}
                \left| |A_{\Sigma_u, \tilde{g}}|_{\tilde{g}}^2\circ \Phi^u - |A_\Sigma|_g^2 \right| \leq C(n)\left([\tilde{g}-g]_{x, C^3_*}+ \sum_{j=0}^2 r_\cS^{j-1}|\nabla^j_{\Sigma, g}u|\right)\cdot r_\cS^{-2}\,;
                \end{align*}
                And for every $\psi\in C^2_{loc}(\Sigma_u)$, we have \[
                (\Delta_{\Sigma_u}\psi)\circ\Phi^u - \Delta_\Sigma(\psi\circ \Phi^u) = \diverg_\Sigma(\vec{B}_0) + r_\cS^{-1}\cdot B_1\,,  \] 
                with pointwise estimate on error terms,
                \begin{align*}
                |\vec{B}_0| + |B_1| \leq C(n)\left([\tilde{g}-g]_{x, C^3_*} + \sum_{j=0}^2 r_\cS^{j-1}|\nabla_{\Sigma, g}^j u|\right)\cdot |d\psi|_g\,.
                \end{align*}
            \end{enumerate}
        \end{Thm}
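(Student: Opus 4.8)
\textbf{Proof proposal for Theorem \ref{Thm_Append_MSE for graphs}.}

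The plan is to reduce everything to the model case $(M,g)=(\RR^{n+1},g_\eucl)$, $\Sigma = \RR^n$, at unit scale, by invoking the rescaling-invariance of all the $C^k_*$ norms. Concretely, for $x\in\Sigma$ fix $r=r_\cS(x)$ and consider the rescaled chart $\eta_{x,r}:B^g_r(x)\to\BB_1\subset T_xM\cong\RR^{n+1}$; by the two defining conditions of the regularity scale (Definition \ref{Def_Reg Scale}), the pulled-back metric $r^{-2}(\eta_{x,r}^{-1})^*g$ is $C^3$-close (within $1/10$) to $g_\eucl$ on $\BB_1$, and $r^{-1}\eta_{x,r}(\Sigma)$ is a $C^3$ graph with $C^3$ norm $\le 1/10$ over a hyperplane. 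Then $\|u\|_{C^2_*}$, $[f]_{x,C^2_*}$, $[\tilde g-g]_{x,C^3_*}$ all become the ordinary $C^2$ or $C^3$ norms of the rescaled objects on $\BB_1$, which are $\le\delta$. So it suffices to prove each assertion for a $C^3$ hypersurface in $\BB_1$ that is $C^3$-$(1/10)$-close to a flat disk, with $g$ that is $C^3$-$(1/10)$-close to Euclidean, and with all the perturbation data of size $\le\delta$; the factors of $r_\cS$ in the final estimates are exactly what is needed to undo the rescaling. This is the standard device and I would state it once as a lemma.

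Granting that reduction, (i) is the implicit/inverse function theorem: $\Phi^u(x)=\exp^g_x(u(x)\nu(x))$ has $C^1$-linearization close to the identity when $\|u\|_{C^1}$ and $\|g-g_\eucl\|_{C^1}$ are small, hence is a $C^1$ embedding for $\delta=\delta(n)$ small. For (ii), one writes the area of $\graph_{\Sigma}(u)$ under the metric $(1+f)g$ by pulling back along $\Phi^u$: the area form is $\Phi^{u*}\big(d\mathrm{vol}_{(1+f)g}\lfloor\graph\big)$, which in the chart is an explicit algebraic expression $F^f(x,u,du)$ in $u$, $du$ (through the induced metric and Jacobian of $\Phi^u$) and in $f$ and its first derivatives along $\Sigma$ (through the conformal factor $(1+f)^{(n+1)/2}$ restricted to the graph and the first fundamental form); the claimed pointwise bound $|F^f-1|\le C(n)(r_\cS^{-1}|z|+|\xi|+[f]_{x,C^2_*})$ is just Taylor expansion of this expression about $(z,\xi,f)=(0,0,0)$, noting $F^0(x,0,0)=1$. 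The $C^1$-regularity of $F^f$ in $(x,z,\xi)$ follows since $\Sigma$ is $C^3$ and $g$, $f$ are $C^2$. For (iii), $\scM^f(u)$ is by definition the first variation of $\int F^f(x,u,du)$, i.e. the Euler--Lagrange operator $\scM^f(u)=-\,\mathrm{div}_\Sigma\!\big(\partial_\xi F^f(x,u,du)\big)+\partial_z F^f(x,u,du)$. I would expand $\scM^{f^+}(u^+)-\scM^{f^-}(u^-)$ by the fundamental theorem of calculus in the one-parameter family interpolating $(f^-,u^-)$ to $(f^+,u^+)$: the linearization of $\scM^f$ at $(0,0)$ in the $u$-direction is exactly $-L_{\Sigma,g}$ (the second variation of area is the Jacobi operator, by \eqref{Equ_Pre_Def Jac Operator}), and the linearization in the $f$-direction is the conformal-change term, which a short computation identifies as $\tfrac n2\,\nu(f)$ (the well-known first variation of area under a conformal perturbation of the metric, restricted to a minimal $\Sigma$); everything else is the remainder, which is quadratic because we are differencing two first-order quantities, giving the product structure of the bound on $\cE_1,\cE_2$. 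Splitting the remainder into a divergence part $\mathrm{div}_\Sigma(\cE_1)$ and a zeroth-order part $\cE_2$ is bookkeeping: derivative-of-$u$ error terms that are not already in divergence form are integrated by parts. Part (iv) is the same philosophy applied to the second fundamental form and Laplacian of the graph: $|A_{\Sigma_u,\tilde g}|^2\circ\Phi^u$ and the conjugated Laplacian $(\Delta_{\Sigma_u}\psi)\circ\Phi^u$ are, in the chart, explicit expressions in the $2$-jet of $u$, the $3$-jet of $\tilde g$ (the $3$-jet because Christoffel symbols and their derivatives enter the second fundamental form and the Laplacian on a graph), and the background $C^3$ data of $\Sigma$; Taylor expansion about $u=0$, $\tilde g=g$ gives the stated linear-in-the-perturbation bounds, and again one moves the non-divergence-form derivative terms in $(\Delta_{\Sigma_u}\psi)\circ\Phi^u-\Delta_\Sigma(\psi\circ\Phi^u)$ into $\mathrm{div}_\Sigma(\vec B_0)$ by integration by parts, leaving a lower-order piece $r_\cS^{-1}B_1$.

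The main obstacle is not conceptual but organizational: carrying the rescaling weights $r_\cS(x)$ correctly through every term, and in (iii)--(iv) performing the integration by parts that converts the raw Taylor remainders into the required $\mathrm{div}_\Sigma(\cdot)+r_\cS^{-1}(\cdot)$ form while keeping the pointwise bounds uniform in $n$. Two points deserve care. First, one must check that the ``constant'' $\delta(n)$ can be chosen uniformly so that $\Phi^u$ is an embedding and $F^f$, $\scM^f$ are defined on the whole $\delta$-ball, using only the $r_\cS$-normalized smallness; this is where the locally Lipschitz dependence of $r_\cS$ and property (iii) of $r_\cS$ ($|A_\Sigma|^2+|\Rm_g|\le C_n r_\cS^{-2}$) are used. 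Second, in (iii) the identification of the $f$-linearization as precisely $\tfrac n2\nu(f)$ (rather than some other multiple) must be done honestly: write $(1+f)g$, note the area element of an $n$-dimensional submanifold scales by $(1+f)^{n/2}$, vary in $f$ to get $\tfrac n2 f\,d\|\Sigma\|$ on $\Sigma$ itself, and then the Euler--Lagrange operator of $u\mapsto\int \tfrac n2 f(\Phi^u(x))\,d\|\Sigma\|$ at $u=0$ contributes $\tfrac n2\,\partial_\nu f=\tfrac n2\nu(f)$, since $\partial_u\Phi^u|_{u=0}=\nu$; the sign is fixed by the convention in \eqref{Equ_Pre_Def Jac Operator}. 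I expect the write-up to be ``explicit but tedious'' in the paper's own words, and I would relegate the brute-force expansions to the chart computation while stating the structural points (rescaling reduction, identification of linearizations, integration-by-parts normal form) in the main text.
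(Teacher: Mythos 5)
Your proposal follows essentially the same route as the paper's proof: rescale so that $r_\cS\equiv 1$ (the paper implements this by passing to Fermi coordinates $g = g^z + dz^2$ with $\Sigma_0 = \BB_1^n\times\{0\}$), compute $F^f(x,z,\xi)=\sqrt{(1+f)^n\det_{g^0}[g^z+\xi\otimes\xi]}$ and the Euler--Lagrange operator $\scM^f(u)=-\diverg_\Sigma\partial_\xi F^f + \partial_z F^f$ explicitly, then compare the endpoint quantities using the ODE system for $(g^z, A^z)$ to identify the linearizations $-L_{\Sigma,g}$ and $\tfrac n2\nu(\cdot)$. One small correction: no integration by parts is needed to obtain the $\diverg_\Sigma(\cE_1)+r_\cS^{-1}\cE_2$ form --- the Euler--Lagrange operator already separates into a divergence term and a zeroth-order term, so the corresponding errors split algebraically (and the same holds for the Laplacian of a graph, written in divergence form, in part (iv)); also, the conformal factor on the area element is $(1+f)^{n/2}$ throughout, as you correctly use later.
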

        \begin{proof}
        First one can easily check that the Theorem is formulated invariant under rescalings. Hence (i) follows immediately by renormalizing the metric with $r_\cS^{-2}$ near each point. To prove (ii)-(iv), by taking the Fermi coordinates, it suffices to work under the following further assumptions.
        \begin{enumerate}[(a)]
        \item $M \supset M_0:= \BB_1^n \times (-1, 1)$ with global coordinates $\{x^1, x^2, \dots, x^n, x^{n+1}=z\}$ and metric $g|_{M_0} = g^z + dz^2$, where $g^z$ is a family of metric on $\BB_1^n$ parametrized by $z\in (0, 1)$ satisfying that,
              \begin{itemize}
              \item $\|g-g_{\eucl}\|_{C^3, M} < 1/10$;
              \item $g^0_{ij}=\delta_{ij}$, $\nabla^{g^0}_{\partial_i}\partial_j = 0$ both at $\mathbf{0}\in \BB_1^n$, $\forall 1\leq i, j\leq n$.
              \end{itemize}
        \item $\Sigma_0:= \Sigma \cap M_0 = \BB_1^n\times \{0\} \subset M_0$, and thus $\Phi^u(x) = (x, u(x))$ on $\Sigma_0$;
        \item $1\leq r_\cS \leq C_n$ on $\Sigma_0$;
        \item When estimating geometric quantities of $\graph_\Sigma(u)$, we only need to work at $x = \mathbf{0}$ and assuming $\|u\|_{C^2, \BB_1} + \|f\|_{C^2, M_0}\leq \delta_n$.
        \end{enumerate}
        First note that, the smooth family of metric $\{g^z\}_{z\in (0, 1)}$ satisfies the ODE
        \begin{align}
        \begin{cases}
         \partial_z g_{ij}^z = -2A^z_{ij};\\
         \partial_z A_{ij}^z = -A^z_{ik}A^z_{jl}g_z^{kl} + R^z_{ij},
        \end{cases} \label{Equ_App MinGraph_ODE of g^z}
        \end{align}
        where $[g_z^{ij}]$ denotes the inverse matrix of $[g^z_{ij}]$; $R^z_{ij}:= R_g(\partial_i, \partial_z, \partial_z, \partial_j)|_{(x, z)}$; $A^0_{ij} = A_\Sigma(\partial_i,\partial_j)$.

        To prove (ii), note that 
        \begin{align*}
         \partial_i \Phi^u(x) & = (\partial_i + u_i(x)\partial_z)|_{(x, u(x))}, \ \ \ \forall 1\leq i\leq n\,; \\
         \nu^{u,f}(x, u(x)) & = \frac{1}{\sqrt{(1+f)(1+g_u^{ij}u_iu_j)}}\cdot (\partial_z - g_u^ij u_i(x)\partial_j)\,.
        \end{align*}
        where $\nu^{u, f}$ is the upward unit normal field of $\Phi^u(\Sigma_0)$  under $(1+f)g$, $u_j:= \partial_ju$. Hence the area element of $\Phi^u(\Sigma_0)$ at $(x, u(x))$ under metric $(1+f)g$ is given by $F^f(x, u(x), du(x))$, where \[
          F^f(x, z, \xi) := \sqrt{(1+f(x,z))^n\cdot \det{}_{g^0}\left[g^z+\xi\otimes \xi \right](x)}.   \]
        Together with (\ref{Equ_App MinGraph_ODE of g^z}), this proves the estimate in (ii).
        
        To prove (iii), notice that 
        \begin{align}
        \scM^f(u)(x) = -\diverg \partial_\xi F^f(x, u(x), du(x)) + \partial_zF^f(x, u(x), du(x))\,,  \label{Equ_App MinGraph_Min Surf Oper}     
        \end{align}
        and 
        \begin{align*}
        \partial_\xi F^f(x, z, \xi) & = F^f(x, z, \xi)\cdot g^0(x)G(x,z,\xi)^{-1}\xi,\\
        \partial_z F^f(x, z, \xi) & = F^f(x, z, \xi)\cdot \left(\frac{n\partial_zf}{2(1+f)}\Big|_{(x,z)} + tr(G^{-1}\partial_z G)|_{(x,z,\xi)} \right)\,,
        \end{align*}
        where $G(x,z,\xi):= [g^z(x) + \xi\otimes \xi]$.  Thus again by (\ref{Equ_App MinGraph_ODE of g^z}) we have,
        \begin{align}
        \left|(\partial_\xi F^{f^+}|_{(x,z^-,\xi^-)}^{(x,z^+, \xi^+)}) - (\xi^+ - \xi^-)^\sharp \right|\lesssim_n \mbfO(x,z^\pm,\xi^\pm, f^\pm)\,; \label{Equ_App MinGraph_error in div}
        \end{align}
        where 
        \begin{align*}
          \mbfO(x, z^\pm, \xi^\pm, f^\pm) :=\ & \left(\sum_{i\in \{\pm\}} |z^i|+|\xi^i|+ \|f^i\|_{C^2, M_0}\right)\\
          \cdot & \left(|z^+-z^-|+|\xi^+-\xi^-|+ \|f^+-f^-\|_{C^2, M_0}\right). 
        \end{align*}
        Also, since
        \begin{align*}
        \left|tr(G^{-1}\partial_z G)|_{(x,z^-,\xi^-)}^{(x,z^+, \xi^+)} + \left(|A_\Sigma|^2_{g^0} + \Ric_g(\partial_z,\partial_z)\right)\cdot(z^+-z^-)\right| 
        \lesssim_n \mbfO(x, z^\pm, \xi^\pm)\,,
        \end{align*}
        We thus have,
        \begin{align}
        \begin{split}
        & \left|(\partial_z F^{f^+}|_{(x,z^-,\xi^-)}^{(x,z^+, \xi^+)}) - \left[\frac{n\partial_z(f^+-f^-)}{2}\Big|_{(x, 0)} - \left(|A_\Sigma|^2 + \Ric_g(\partial_z, \partial_z)\right)(z^+ - z^-) \right] \right| \\
        \lesssim_n \ & \mbfO(x,z^\pm,\xi^\pm, f^\pm)\,; 
        \end{split}  \label{Equ_App MinGraph_error in 0-order term}
        \end{align}
        (iii) is proved by combining (\ref{Equ_App MinGraph_Min Surf Oper}), (\ref{Equ_App MinGraph_error in div}) and (\ref{Equ_App MinGraph_error in 0-order term}).

        To prove (iv), the second fundamental form of $\Sigma_u$ under $\tilde{g}$ is given by \[
          A_{\Sigma_u, \tilde{g}}(\partial_i \Phi^u, \partial_j \Phi^u) = \langle \nabla^{\tilde{g}}_{\partial_i + u_i \partial_z}(\partial_j + u_j \partial_z), \tilde{\nu}\rangle\,,   \]
        where $\tilde{\nu}= \tilde{\nu}^j\cdot \partial_j + \tilde{\nu}^z\cdot \partial_z$ is the unit normal field of $\Sigma_u$ under $\tilde{g}$, satisfying the pointwise estimate 
        \begin{align}
          \sum_{j=1}^n|\tilde{\nu}^j| + |\tilde{\nu}^z - 1| \leq C_n([\tilde{g}-g]_{C^3, M_0} + |u|+|du|)\,;  \label{Equ_App MinGraph_est normal field under tilde(g)}  
        \end{align}
        Also by (\ref{Equ_App MinGraph_ODE of g^z}), the pull pack metric $\tilde{g}_u:= (\Phi^u)^*\tilde{g}$ satisfies the pointwise estimate 
        \begin{align}
        |\tilde{g}_u - g^0|_{g_0}+ |\nabla_{g^0}\tilde{g}_u|_{g^0} \leq C_n\left( \|\tilde{g}-g\|_{C^3, M_0} + |u|+|du|_{g^0}+ |\nabla^2 u|_{g^0} \right)\,; \label{Equ_App MinGraph_est |tilde(g)_u - g^0|} 
        \end{align}
        And the Christoffel symbols of $\tilde{g}$ and $g$ satisfies 
        \begin{align}
          |\tilde{\Gamma}_{IJ}^K(\mathbf{0}, z)- \Gamma_{IJ}^K(\mathbf{0}, 0)| \leq C_n([\tilde{g} -g]_{C^3, M_0} + |z|), \ \ \ 1\leq I,J,K\leq n+1 \,. \label{Equ_App MinGraph_est Gamma_IJ^K} 
        \end{align}
        Combining (\ref{Equ_App MinGraph_est normal field under tilde(g)}), (\ref{Equ_App MinGraph_est |tilde(g)_u - g^0|}) and (\ref{Equ_App MinGraph_est Gamma_IJ^K}) one proves the estimate on difference of second fundamental forms.
        
         
        To estimate Laplacian, note that $\Delta_{\tilde{g}_u}\psi -\Delta_{g^0}\psi = \diverg_{g^0}(\vec{B_0}) + B_1$, where
        \begin{align*}
        \vec{B_0}(\mathbf{0}) = (\tilde{g}_u^{ij}-g^{ij})\partial_j\psi \cdot\partial_i|_{\mathbf{0}}; &\ & B_1(\mathbf{0}) = \frac12\tilde{g}_u^{ij}\partial_i(\log \det[\tilde{g}_u])\partial_j\psi(\mathbf{0})\,,
        \end{align*}
        and hence by (\ref{Equ_App MinGraph_est |tilde(g)_u - g^0|}), they satisfies the estimate in (iv).
        \end{proof}

    \section{Singular Sheeting Theorem} \label{Sec_App_Sing Cap}
        In this section, we show a singular sheeting theorem for stable minimal hypersurfaces in dimension eight, extending the well-known smooth sheeting theorem \cite[Theorem~1]{schoen_regularity_1981}. This is equivalent to the affirmative answer to a question asked by Ilmanen in dimension eight.

        \begin{Que}[{\cite[Question~4]{ilmanen_strong_1996}}]
            Show that if a stable hypersurface $M$ lies within an $\varepsilon$-neighborhood of another stable hypersurface $N$, then on a smaller ball, $M$ decomposes into disjoint components, each of which is weakly close to $N$ as a Radon measure (that is, constitutes a single layer over $N$).
        \end{Que}

        Let's start with the following multiplicity one Lemma.
        \begin{Lem} \label{Lem_App_Stable MH has multi 1 infty tangent cone}
            Let $\Sigma\subset \RR^{n+1}$ be a connected stable minimal hypersurface with optimal regularity and a varifold tangent cone $m|\mbfC|$ near infinity, i.e. 
            \[
                \lim_{r\searrow 0}\mbfF(|\eta_{1/r}(\Sigma)|, m|\mbfC|) = 0\,.
            \]
            If $\mbfC \subset \RR^{n+1}$ is a regular cone, then $m=1$.
        \end{Lem}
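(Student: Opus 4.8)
The plan is to argue by contradiction, so suppose $m\geq 2$. We may assume $\mbfC$ is nontrivial; the case of a hyperplane follows from classical slab/Bernstein-type rigidity for stable hypersurfaces with Euclidean volume growth, and in any case does not arise for the tangent cones at singular points for which this lemma is needed.

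\textbf{Sheeting near infinity.} Since $|\eta_{\mathbf{0},1/r}(\Sigma)|\to m|\mbfC|$ and $\mbfC$ is a regular cone (hence smooth on $\RR^{n+1}\setminus\set{\mathbf{0}}$), for every large $R$ the blow-down $\eta_{\mathbf{0},1/R}(\Sigma)$ is $\mbfF$-close to $m|\mbfC|$ on $\AAa(\mathbf{0},1/2,2)$, with density ratio there close to $m\,\theta_{\mbfC}(\mathbf{0})$. By Allard's regularity theorem \cite{allard_first_1972} and the Schoen--Simon sheeting theorem \cite{schoen_regularity_1981}, this forces $\eta_{\mathbf{0},1/R}(\Sigma)\llcorner\AAa(\mathbf{0},1/2,2)$ to be a union of exactly $m$ disjoint smooth graphs over $\mbfC$ with small $C^2_*$-norm. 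Rescaling back and patching over all $R\geq R_0$ yields $R_0>0$ and $C^2$ functions $u_1<u_2<\dots<u_m$ on $\mbfC\setminus\BB_{R_0}$ with $\overline{\Sigma}\setminus\BB_{R_0}=\bigsqcup_{i=1}^m\graph_{\mbfC}(u_i)\setminus\BB_{R_0}$, the strict inequalities coming from embeddedness, and $\|u_i\|_{C^2_*,\,\mbfC\cap\AAa(\mathbf{0},\rho,2\rho)}\to 0$ as $\rho\to\infty$.

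\textbf{Asymptotics of the gaps.} For each $i$ set $w_i:=u_{i+1}-u_i>0$ on $\mbfC\setminus\BB_{R_0}$. Writing $w_i$ as the difference of two solutions of the minimal-surface equation over $\mbfC$, the computations of Appendix \ref{Sec_App_Geom of Minimal Graph} show that $w_i$ solves a perturbed Jacobi equation of the form (\ref{Equ_Pre_Pert Jac equ}) with errors $|\vec b_0|+|b_1|\leq\Psi(\|u_i\|_{C^2_*}+\|u_{i+1}\|_{C^2_*})\,(|w_i|+|\nabla w_i|)$ tending to $0$ as $|x|\to\infty$. By the asymptotic analysis at infinity of \cite[Section~4]{WangZH20_Deform} (the analogue of Lemma \ref{Lem_Pre_Asymp Rate takes value in Gamma(C) and int by part}(ii)), a \emph{positive} solution of such an equation has a well-defined rate $\cA\cR_\infty(w_i;\mbfC)\in\set{\gamma_1^-(\mbfC),\gamma_1^+(\mbfC)}$ with leading angular profile a positive multiple of $\varphi_1$; as $\mbfC$ is nontrivial, $\mu_1<0$, hence $\gamma_1^+(\mbfC)<0$ and every $w_i\to 0$ at infinity. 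I would also record, via rescalings $(\Sigma-x_0)/R$ about points $x_0\in\mbfC$, that $\Sigma$ converges at infinity to a multiplicity-$m$ hyperplane, together with the precise first-order asymptotics of each sheet.

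\textbf{The main obstacle: producing the contradiction.} It is here that connectedness and stability of $\Sigma$ must be combined. The route I expect to take: by the Fischer-Colbrie--Schoen characterization, stability of $\Sigma$ provides a globally positive Jacobi field $\varphi$ on $\Sigma$, whose restriction to each sheet is again a positive solution of a perturbed Jacobi equation, so $\varphi$ has leading profile $a_i\,|x|^{\gamma_1^{\pm}}\varphi_1$ with $a_i>0$ along $\graph_{\mbfC}(u_i)$. On the other hand, connectedness forces the sheets to be joined through the compact core $\overline{\Sigma}\cap\BB_{R_0}$, while $\overline{\Sigma}$ separates $\RR^{n+1}$ into at most two regions; comparing $\varphi$ on the two sheets bounding a common slab-shaped component $W_i$ of $\RR^{n+1}\setminus\overline{\Sigma}$, and using the strong maximum principle together with unique continuation along the core, one aims to see that following $W_i$ inward its closure meets $\overline{\Sigma}$ along a hypersurface on which $\varphi$ would be forced to vanish while staying positive nearby --- contradicting $\varphi>0$ everywhere. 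Making this rigorous, i.e.\ ruling out the relevant neck configuration by a growth estimate in the spirit of Corollary \ref{Cor_Ana on SMC_Growth Rate Monoton with Perturb} and Lemma \ref{Lem_Ana on SMC_AR_infty(C+x; C) = 0}, is the crux of the proof and the step I expect to be the hardest.
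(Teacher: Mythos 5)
Your first step (sheeting near infinity via Schoen--Simon, giving $m$ ordered graphs $u_1<\dots<u_m$ over $\mbfC$ outside a large ball) is exactly what the paper does, and the observation that the gaps $w_i=u_{i+1}-u_i$ decay is sound, though the paper doesn't actually need it. The problem is the third step: you describe a positive-Jacobi-field / maximum-principle / neck argument, but you yourself acknowledge you cannot make it rigorous, and indeed the sketch as written does not close. There is no clean reason why ``following $W_i$ inward'' forces $\varphi$ to vanish on a hypersurface; the sheets can join through a compact core without creating such a configuration, and the Fischer-Colbrie--Schoen positive Jacobi field on a noncompact singular stable hypersurface gives you no obvious handle on how the sheets merge in the interior. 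So this is a genuine gap, not just a missing detail.

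The paper takes an entirely different route to the contradiction, namely the ``number of ends'' argument of Cao--Shen--Zhu. Your sheeting step shows that if $m\geq 2$ then $\Sigma$ has at least $m$ topological ends. On a complete manifold with more than one end, one constructs a nonconstant bounded harmonic function $u$ with finite Dirichlet energy by solving Dirichlet problems on an exhaustion ($u_i=1$ on the boundary component of the distinguished end, $u_i=0$ on the others) and passing to the limit; the paper adapts this to the singular setting by additionally imposing $u_i=0$ on $\partial\BB(\Sing(\Sigma),1/R_i)\cap\Sigma$ and using the gradient bound $|\nabla u|\leq C/\dist(p,x)$ near each isolated singularity $p$. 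Then stability enters through the Schoen--Yau refined Kato/Bochner inequality $\int \phi^2|\nabla\sqrt{e(u)}|^2\leq C\int e(u)|\nabla\phi|^2$; choosing $\phi$ to be a log-cutoff at infinity and at $\Sing(\Sigma)$ (possible since $\Sing(\Sigma)$ is a finite set in dimension $8$ and hence of zero capacity) forces $e(u)$ to be constant, hence zero since $E(u)<\infty$, contradicting nonconstancy of $u$. This ``harmonic function detects ends, stability kills them'' mechanism is the key idea your proposal is missing; it bypasses entirely the positive Jacobi field and the neck analysis you were attempting.
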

        \begin{proof}
            By \cite{schoen_regularity_1981}, outside a large ball, $\Sigma$ is union of $m$ disjoint graphs over $\mbfC$. Then we can follow the same arguments in \cite{cao_structure_1997} to show that $\Sigma$ has only one end and thus, $m = 1$. For sake of completeness we include a sketch of the proof here.

            Indeed, we only need to deal with the case where $\Sing(\Sigma) \neq \emptyset$. Suppose for contradiction that $\Sigma$ has more than one end. In the construction of a non-constant bounded harmonic function with finite energy in \cite[Lemma~2]{cao_structure_1997}, we now consider $u_i$ solving the following Dirichlet problem on $D_i \setminus \mathbb{B}(\Sing(\Sigma), 1/R_i)$,
            \[
                \begin{cases}
                    \Delta u_i(x) &= 0\\
                    u_i|_{\partial E^{(i)}_1} &= 1\\
                    u_i|_{\partial E^{(i)}_j} &= 0\, (j \neq 1)\\
                    u_i|_{\partial \mathbb{B}(\Sing(\Sigma), 1/R_i)\cap \Sigma} &= 0\,.\\
                \end{cases}
            \]
            The same arguments therein lead to a nontrivial harmonic function $u(x) := \lim_{i \to \infty}u_i(x)$, with
            \begin{equation}
                0 \leq u(x) \leq 1,\quad E(u):=\int_M |\nabla u|^2 dx \leq C_1\,.
            \end{equation}
            Moreover, by standard elliptic estimates, near $p \in \Sing(\Sigma)$, we also have
            \begin{equation}\label{eqn:est_grad_u}
                |\nabla u(x)| \leq \frac{C_1}{\dist(p,x)}\,.
            \end{equation}

            Let $e(u) = |\nabla u|^2$ and $\phi(x)$ be a smooth function compactly supported in $\Sigma$. Then it follows from the proof of \cite[Lemma~2]{schoen_harmonicmapstopology_1976} that
            \[
                \int \phi^2 |\nabla \sqrt{e(u)}|^2 \leq C_2 \int e(u) |\nabla \phi|^2\,.
            \]
            If we take $\phi^2$ to be a cut-off function supported in $\mathbb{B}_{2R} \setminus \mathbb{B}(\Sing(\Sigma), 1/(2R))$ with $\phi^2 \equiv 1$ in $\mathbb{B}_{R} \setminus \mathbb{B}(\Sing(\Sigma), 1/R)$ and linear interpolating in between, then combining with (\ref{eqn:est_grad_u}), we obtain
            \[
                \int_{\mathbb{B}_{R}\cap \Sigma \setminus \mathbb{B}(\Sing(\Sigma), 1/R)} |\nabla \sqrt{e(u)}|^2 \leq \frac{C_2}{R^2}E(u) + C_3 R^4\vol\big(\mathbb{B}(\Sing(\Sigma), 1/R) \cap \Sigma\big)\,.
            \]
            Letting $R \to \infty$, we see that $e(u)$ is a constant on $\Sigma$. It follows from $E(u) < \infty$ that $e(u) \equiv 0$ and thus, $u$ is a constant, contradicting our construction. 
        \end{proof}
        
        \begin{Prop} \label{Prop_Induct_conn stable converg implies multi 1}
            Let $\set{\Sigma_j \subset (\BB_2^8, g_j)}_j$ be a family of two-sided stable minimal hypersurfaces, with 
            \[
                g_j\to g_{\eucl} \text{ in } C^4_{loc}, \quad |\Sigma_j|\ \mbfF\text{-converges to }m|\mbfC| \text{ in } \BB_2\,,
            \]
            where $\mbfC\subset \RR^8$ is a stable minimal hypercone, and $m\geq 1$ is an integer.  
            If all $\Sigma_j$'s are connected in $\BB_1$, then $m = 1$.
        \end{Prop}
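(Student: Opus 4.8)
The plan is to argue by contradiction: assume $m\geq 2$, and to extract from $\{\Sigma_j\}$ a blow‑up limit which is a connected complete stable minimal hypersurface in $\RR^8$ whose tangent cone at infinity is $m'|\mbfD|$ for a regular stable minimal hypercone $\mbfD$ and an integer $m'\geq 2$, contradicting Lemma~\ref{Lem_App_Stable MH has multi 1 infty tangent cone}. Note that the hypotheses of that Lemma will indeed be met: a stable minimal hypercone in $\RR^8$ is automatically regular (isolated singularity, by \cite{schoen_regularity_1981}) and is connected, since two disjoint closed minimal hypersurfaces in $\SSp^7$ would have to intersect by Frankel's property of positively curved manifolds.

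First I would fix the blow‑up scale. Since $\mbfC$ is smooth away from $\mathbf{0}$, the smooth sheeting theorem \cite{schoen_regularity_1981} shows that for every fixed $\rho\in(0,1)$ and all large $j$, $\Sigma_j\cap\AAa(\mathbf{0},\rho,1)$ is a union of $m$ pairwise disjoint graphs over $\mbfC$ whose $C^2_*$‑norms tend to $0$. Fix a small $\varepsilon_0>0$ and let $r_j$ be the infimum of those $r\in(0,1)$ for which $\Sigma_j\cap\AAa(\mathbf{0},r,1)$ is a union of $m$ pairwise disjoint graphs over $\mbfC$ of $C^2_*$‑norm $\leq\varepsilon_0$. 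The previous sentence forces $r_j\to0$. One also has $r_j>0$: were $r_j=0$, then $\Sigma_j\cap(\BB_1\setminus\{\mathbf{0}\})$ would be a union of $m$ disjoint small graphs over $\mbfC$ whose graph functions vanish at the vertex, so $\Sigma_j\cap\BB_1$ would be either a disjoint union of $m\geq2$ smooth sheets (impossible, as $\Sigma_j$ is connected in $\BB_1$) or would have $\mathbf{0}$ as a singular point with tangent cone of multiplicity $\geq m\geq2$ (impossible in dimension eight by the multiplicity‑one property of tangent cones of stable minimal hypersurfaces, \cite[Theorem~B]{ilmanen_strong_1996}).

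Next I would rescale: set $\hat\Sigma_j:=\eta_{\mathbf{0},r_j}(\Sigma_j)$, with the rescaled metrics converging to $g_{\eucl}$ in $C^4_{loc}(\RR^8)$. Monotonicity gives uniform area bounds for $\hat\Sigma_j$ on fixed balls, so by the compactness theory of \cite{schoen_regularity_1981} a subsequence converges — locally smoothly away from finitely many points — to a stable minimal hypersurface $V_\infty$ in $(\RR^8,g_{\eucl})$ with optimal regularity. By the choice of $r_j$, on $\AAa(\mathbf{0},1,\infty)$ the set $\spt V_\infty$ carries the limits of the $m$ sheets of $\hat\Sigma_j$, and these do not all coincide; the key point (this is where the connectedness hypothesis is used) is that at least two of these limiting sheets lie in a common connected component $\Sigma_\infty^\ast$ of $\spt V_\infty$. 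Granting this, $\Sigma_\infty^\ast$ is a connected, two‑sided, stable minimal hypersurface in $\RR^8$ with optimal regularity which, outside $\BB_1$, is a union of $m'$ disjoint graphs over $\mbfC$ with $m'\geq2$; it has Euclidean volume growth, hence a varifold tangent cone at infinity of the form $M'|\mbfD|$ with $\mbfD$ a regular stable minimal hypercone $\mbfF$‑close to $\mbfC$, and discreteness of densities forces $\theta_\mbfD(\mathbf{0})=\theta_\mbfC(\mathbf{0})$, so $M'=m'\geq2$. Lemma~\ref{Lem_App_Stable MH has multi 1 infty tangent cone} then yields $M'=1$, a contradiction; hence $m=1$. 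The reduction to components is justified by rescaling the path in $\Sigma_j\cap\BB_1$ joining two of the $m$ sheets (which must pass through $\overline{\BB_{2r_j}}$, since on $\Sigma_j\cap\AAa(\mathbf{0},2r_j,1)$ the sheets are disjoint), and noting that a component of $\hat\Sigma_j\cap\overline{\BB_2}$ joining two sheets passes to a connected subset of $\spt V_\infty$ joining the corresponding two limiting sheets.

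The step I expect to be the main obstacle is precisely this passage to the limit, i.e.\ showing that the $m$‑sheeted structure of $\hat\Sigma_j$ survives in $V_\infty$ as (at least two) \emph{genuinely distinct} sheets joined inside $\BB_1$ — equivalently, ruling out the degenerate scenario in which all $m$ sheets of $\hat\Sigma_j$ collapse onto a single graph over $\mbfC$ (with multiplicity $m$) and the "joining piece" shrinks away. Resolving this requires choosing the blow‑up scale $r_j$ so that the sheets have a definite relative separation at scale $r_j$ — which one expects to follow by combining the definition of $r_j$ as a "first breakdown scale" with Schoen–Simon sheeting applied at the nearby scales and with the monotonicity formula — together with a lower area bound for the joining piece $P_j\subset\hat\Sigma_j\cap\overline{\BB_2}$. (An alternative route is an induction on the density $\theta_\mbfC(\mathbf{0})$ among stable minimal hypercones, with base case $\mbfC=\RR^7$ handled by Allard's theorem and \cite{schoen_regularity_1981}.) The remaining ingredients — that $r_j$ is well defined with $0<r_j\to0$, that $\Sigma_\infty^\ast$ is two‑sided, and the identification $\theta_\mbfD=\theta_\mbfC$ — are routine.
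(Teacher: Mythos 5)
Your overall picture is right --- blow up at a critical scale, extract a limit, appeal to Lemma~\ref{Lem_App_Stable MH has multi 1 infty tangent cone} --- but the central step you flag as the ``main obstacle'' is not just an obstacle: the degenerate collapse you are worried about is in fact what always happens, and your primary strategy of producing two genuinely distinct limiting sheets in a common connected component cannot be carried out. Here is why. Whatever rescaled limit $V_\infty$ you obtain, it is supported on a connected (by the $\BB_1$-connectedness of the $\Sigma_j$, preserved under rescaling at a scale $\geq r_j$) stable minimal hypersurface $\hat\Sigma_\infty$, and you can only write $V_\infty = m'|\hat\Sigma_\infty|$. But then $\hat\Sigma_\infty$ itself has a varifold tangent cone at infinity which is a multiple of a regular stable cone, and Lemma~\ref{Lem_App_Stable MH has multi 1 infty tangent cone} applied to $\hat\Sigma_\infty$ (not to the $V_\infty$ you want to build!) forces that multiple to be one. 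In other words, $\hat\Sigma_\infty$ has a \emph{single} sheet at infinity, and therefore the convergence $\hat\Sigma_j \to V_\infty$ must have multiplicity $m' = m \geq 2$. So the $m$ sheets of $\hat\Sigma_j$ \emph{do} all collapse onto one graph, and there is no blow-up limit of the kind your main argument requires.

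The paper's actual proof accepts this collapse and gets a contradiction a different way: it runs an induction on the discrete density $\theta_\mbfC(\mathbf{0})$, which you mention only parenthetically as an ``alternative route'' but which is in fact the essential mechanism, not an alternative. Two further technical points differ from your setup and matter for the induction. First, the blow-up scale is not your ``first multi-graph breakdown scale'' but a density-gap scale: $r_j$ is the smallest $s$ for which some $p \in \BB_{1/16}$ has $\theta_{|\Sigma_j|}(p,1) - \theta_{|\Sigma_j|}(p,s) \leq \epsilon$, and one rescales about that nearby point $p_j$ (not about $\mathbf{0}$). This guarantees, via the quantitative uniqueness of tangent cones (Lemma~\ref{Lem_Ana on SMC_Quanti Uniqueness of Tang Cone}), that the rescaled limit $\hat\Sigma_\infty$ is \emph{not} a translated cone, which is exactly the nontriviality you need. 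Second, because $\hat\Sigma_\infty$ is not a cone, any singular point of it has density strictly less than $\theta_\mbfC(\mathbf{0})$, so by discreteness it drops to $\leq \theta_l$; the joining piece of $\hat\Sigma_j$ (forced by connectedness to survive after deleting small balls about $\Sing(\hat\Sigma_\infty)$) then has a connected component converging with multiplicity $\geq 2$ to such a lower-density tangent cone, violating the inductive hypothesis. If instead $\Sing(\hat\Sigma_\infty)=\emptyset$, Schoen--Simon sheeting makes $\hat\Sigma_j$ a disjoint union of $m$ graphs over $\hat\Sigma_\infty$, contradicting connectedness directly. Your proposal as written therefore has a genuine gap: the distinct-sheets conclusion is false, and the fix is to run the density induction that you relegated to an aside, together with the density-gap (not multi-graph-breakdown) blow-up centering at $p_j$.
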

        \begin{proof}
            Recall in (\ref{Equ_Pre_Density dicrete for SMC}), the densities of stable minimal hypercones in $\RR^8$ form a discrete set, denoted by \[
                \{\theta_\mbfC(\mathbf{0}): \mbfC \subset \RR^8 \text{ stable minimal hypercone}\} = \{1 = \theta_0 <\theta_1 <\theta_2 < \dots \nearrow +\infty\}.     \]
            We shall prove the lemma by induction on $\theta_l$. 

            Note that when $\theta_\mbfC(\mathbf{0}) = 1$, by volume monotonicity, $\mbfC$ is a hyperplane. Hence by \cite[Theorem~1]{schoen_regularity_1981}, $\Sigma_j$ is a multi-graph over $\mbfC$ in $\BB_1$, and the connectedness of $\Sigma_j\cap \BB_1$ implie that this should be a graph, so $m=1$.
            
            Now fix $l\geq 0$ and suppose the lemma holds for stable minimal hypercones with density $\leq \theta_l$. Consider $\mbfC$ with $\theta_\mbfC(\mathbf{0}) = \theta_{l+1}$ and suppose for contradiction that $m\geq 2$.  By \cite[Theorem~1]{schoen_regularity_1981} again, when $j>>1$,  $\Sigma_j\cap A_{g_j}(\mathbf{0}, 1/2, 1)$ is a disjoint union of $m$ graphs over $\mbfC$. 
            
            Fix an $\epsilon\in (0, 1)$ (To be determined later) and let 
            \[
                r_j := \inf\{s\in (0, 1/16): \exists p\in B^{g_j}(\mathbf{0},1/16) \text{ s.t. } \theta_{|\Sigma_j|}(p, 1) - \theta_{|\Sigma_j|}(p, s) \leq \epsilon \}.   
            \]
            And the multi-graphical property mentioned above implies that $r_j\to 0$ when $j\to \infty$. By Lemma \ref{Lem_Ana on SMC_Quanti Uniqueness of Tang Cone}, we can take $\epsilon = \epsilon(\theta_{l+1}, \theta_l, \mbfC)<<1$, such that there exists $p_j\in B^{g_j}(\mathbf{0},1/4)$ and $\Sigma_j \cap A^{g_j}(p_j, 2r_j, 1)$ is a disjoint union of $m$ graphs over translated cone $\mbfC+p_j$ with graphical functions of small $C^2_*$-norm. Note that $p_j \rightarrow \mathbf{0}$.
            
            Since $m > 1$, by Ilmanen's strong maximum principle \cite[Theorem A]{ilmanen_strong_1996}, we have $r_j >0$.  Note that the rescaled minimal hypersurface $\hat{\Sigma}_j := \eta_{p_j, 4r_j}(\Sigma_j) \hookrightarrow (\BB_{1/(2r_j)}, (4r_j)^{-2}(\eta_{p_j, 4r_j}^{-1})^* g_j)$ is still connected in $\BB_R$ for every $1\leq R \leq 1/2r_j$.  When $j\to \infty$, $|\hat{\Sigma}_j|$ $\mbfF$-subconverges to some $m'|\hat{\Sigma}_\infty|$, where $\hat{\Sigma}_\infty \subset (\RR^8, g_{\eucl})$ is a connected stable minimal hypersurface, and a multi-graph over $\mbfC$ outside $\BB_{1/2}$. By Lemma \ref{Lem_App_Stable MH has multi 1 infty tangent cone}, $\hat{\Sigma}_\infty$ is a single graph over $\mbfC$ outside $\BB_{1/2}$. Therefore, $m' = m \geq 2$ and by the choice of $r_j$, $\hat{\Sigma}_\infty$ is not a translation of a cone. 
            
            Now either 
            \begin{itemize}
                \item $\Sing(\hat{\Sigma}_\infty) = \emptyset$, or
                \item by volume monotonicity and the fact that $\mbfC$ is the tangent cone of $\hat\Sigma$ at infinity, every $\hat{p}\in \Sing(\hat{\Sigma}_\infty)$ has density strictly less than $\theta_{|\hat{\Sigma}_\infty|}(\infty) = \theta_{l+1}$. Hence, $\forall \hat{p}\in \Sing(\hat{\Sigma}_\infty), \ \theta_{|\hat{\Sigma}_\infty|}(\hat p) \leq \theta_l$.
            \end{itemize}
             Moreover, since $\hat{\Sigma}_\infty$ is a single graph over $\mbfC$ outside $\BB_{1/2}$, $\Sing(\hat{\Sigma}_\infty) \subset \BB_1$. We are going to obtain contradictions in either case.
            
            \begin{itemize}
                \item If $\Sing(\hat{\Sigma}_\infty) = \emptyset$, then $\hat\Sigma_j$ is a disjoint union of $m$ graphs over $\hat \Sigma_\infty$, which contradicts the connectedness of $\hat \Sigma_j \cap \BB_1$.
                \item If $\Sing(\hat{\Sigma}_\infty) \neq \emptyset$, then there should be only finitely many points, i.e.
                      \[
                          \Sing(\hat{\Sigma}_\infty) = \set{p_i}^L_{i = 1} \subset \BB_1\,.
                      \]
                      Hence, there exists a strictly increasing sequence $\set{j_k} \subset \mathbb{N}$ satisfying the following property.
                      
                      For every $k$, there exists a radius $r_k \in (0, 1/k)$, such that 
                      \begin{enumerate}
                          \item $\hat\Sigma_{j_k} \cap \BB_1$ is a disjoint union of $m$ graphs over $\hat\Sigma_{\infty} \cap \BB_1$ outside $\BB(\Sing(\hat{\Sigma}_\infty), {r_k})$;
                          \item For each $i \in \set{1, 2, \cdots, L}$, $\hat\Sigma_{j_k} \cap \partial \mathbb{B}(p_i, r_k)$ is a disjoint union of $m$ connected hypersurfaces;
                          \item For each $i \in \set{1, 2, \cdots, L}$, $\mbfF_{\BB_1}(|\eta_{p_i, r_k}(\hat\Sigma_{j_k})|,m|\eta_{p_i, r_k}(\hat\Sigma_\infty)|) < 1/k$;
                          \item There exists an $i_k \in \set{1, 2, \cdots, L}$, such that $\hat\Sigma_{j_k} \cap \BB(p_{i_k}, r_k)$ has a connected component $\hat\Gamma_{j_k}$ with
                                \[
                                    \mbfF_{\BB_1}(|\eta_{p_{i_k}, r_k}(\hat\Gamma_{j_k})|,m_k|\eta_{p_{i_k}, r_k}(\hat\Sigma_\infty)|) < 1/k\,,
                                \]
                                for some $m_k \in \set{2, \cdots, m}$. In particular, since $L$ and $m$ are finite, by taking a subsequence, we may replace $i_k$ and $m_k$ by some fixed $i_0$ and $m_0 \geq 2$ independent of $k$.
                      \end{enumerate}
                      
                      The last property follows from the connectedness of $\hat \Sigma_j \cap \BB_1$ and the other three properties. Moreover, $\eta_{p_{i_0}, r_k}(\hat\Sigma_\infty)$ would $\bfF$-converge to its tangent cone $\mbfC_{p_{i_0}}(\hat\Sigma_\infty)$, and thus,
                      \[
                          \mbfF_{\BB_1}(|\eta_{p_{i_0}, r_k}(\hat\Gamma_{j_k})|,m_0|\mbfC_{p_{i_0}}(\hat\Sigma_\infty)|) \to 0\,.
                      \]
                      This violates our inductive hypothesis as $\theta_{\mbfC_{p_{i_0}}(\hat\Sigma_\infty)}(\mathbf{0}) \leq \theta_l$.
            \end{itemize}
            
            In summary, we obtain that $m = 1$ for any stable hypercone in $\mathbb{R}^8$.
        \end{proof}
        
        \begin{Cor} \label{Cor_Conv of SMH w multi}
            Let $g_j$ be a family of $C^4$ metrics on a closed manifold $M^8$, $\Sigma_j \subset (M, g_j)$ be connected stable minimal hypersurfaces, $1\leq j\leq \infty$. Suppose that when $j\to \infty$, 
            \[ 
                g_j\to g_\infty\text{ in }C^4,\quad |\Sigma_j|\ \mbfF\text{-converges to } m|\Sigma_\infty|\,,
            \]
            for some integer $m\geq 1$.  Then 
            \begin{enumerate} [(i)]
                \item If $\Sigma_\infty$ is two-sided, then so are $\Sigma_j$ when $j>>1$, and $m = 1$.
                \item If $\Sigma_\infty$ is one-sided, then either $m = 1$ and $\Sigma_j$ are one-sided, or $m = 2$ and $\Sigma_j$ are two-sided, when $j>>1$.
            \end{enumerate}
        \end{Cor}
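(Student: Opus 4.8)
The plan is to reduce the global statement to the local multiplicity-one result just proved, Proposition \ref{Prop_Induct_conn stable converg implies multi 1}, and then to extract both $m$ and the sidedness of $\Sigma_j$ from elementary covering-space bookkeeping governed by the monodromy of the normal bundle of $\Sigma_\infty$. Throughout one works with $j\gg1$ and uses that, since $\dim M=8$, all the $\Sigma_j$ obey the Schoen--Simon regularity and sheeting theory; that $\Sigma_\infty$ is connected (otherwise $\Sigma_j$ would lie in a disjoint union of neighbourhoods and be disconnected), so $\Reg(\Sigma_\infty)$ is a connected smooth minimal hypersurface; and that $\Sing(\Sigma_\infty)$ is a finite set.

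First I would record the structure of $\Sigma_j$ over $\Reg(\Sigma_\infty)$: applying \cite[Theorem~1]{schoen_regularity_1981} to $|\Sigma_j|\to m|\Sigma_\infty|$, over a tubular neighbourhood of any $U\subset\subset\Reg(\Sigma_\infty)$ and for $j$ large, $\Sigma_j$ is a disjoint union of exactly $m$ smooth normal graphs over $U$, with graph functions tending to $0$ in $C^2$; on a small ball these $m$ sheets are linearly ordered by signed distance with respect to a local unit normal $\nu$ of $\Sigma_\infty$. Next, near a fixed $p\in\Sing(\Sigma_\infty)$ I would run the scheme of the proof of Proposition \ref{Prop_Induct_conn stable converg implies multi 1} componentwise. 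In a fixed small geodesic ball $B=B^{g_\infty}(p,r_0)$ each connected component $\Gamma$ of $\Sigma_j\cap B$ is a connected stable minimal hypersurface in the simply connected $B$, hence two-sided there; because blow-ups of $g_\infty$ at $p$ are Euclidean and blow-ups of $\Sigma_\infty$ at $p$ recover its tangent cone $\mbfC_p\Sigma_\infty$, the innermost-scale argument of that proof --- passing to the smallest scale at which a component's ``$m'$ disjoint graphs over $\mbfC_p\Sigma_\infty$'' description fails, rescaling to a connected Euclidean limit that is a single graph over $\mbfC_p\Sigma_\infty$ at infinity by Lemma \ref{Lem_App_Stable MH has multi 1 infty tangent cone} but is not a dilation of a cone, then inducting on the strictly smaller densities of its singular tangent cones --- forces every such $\Gamma$ to have varifold multiplicity one over $\mbfC_p\Sigma_\infty$. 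Since the component multiplicities sum to $m$, $\Sigma_j\cap B$ has exactly $m$ components, each of multiplicity one, so each contains exactly one of the $m$ annular sheets from the previous step; thus the $m$ pieces of $\Sigma_j$ near $p$ are canonically matched with those sheets and glue them together continuously across $\Sing(\Sigma_\infty)$.

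With this local picture in hand I would conclude by a topological count. For $j$ large these patches exhibit $\Sigma_j$ as a degree-$m$ topological covering of $\Clos(\Sigma_\infty)$, which over $\Reg(\Sigma_\infty)$ is an honest $m$-sheeted covering whose monodromy $\pi_1(\Reg(\Sigma_\infty))\to S_m$ factors through the $\ZZ_2$-character $\chi$ of $\Reg(\Sigma_\infty)$ detecting one-sidedness (equivalently, $w_1$ of the normal line bundle), the nontrivial elements acting by the order-reversing involution $\tau(k)=m+1-k$ (the sheet ordering is reversed exactly when $\nu$ is reversed, and embedded sheets cannot otherwise permute). Hence the connected components of $\Sigma_j$ correspond to the $\langle\tau\rangle$-orbits in $\{1,\dots,m\}$, and connectedness of $\Sigma_j$ forces a single orbit. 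If $\Sigma_\infty$ is two-sided then $\chi=0$, all orbits are singletons, so $m=1$; then $\Sigma_j$ is a single normal graph over $\Sigma_\infty$, diffeomorphic to it and two-sided, which is (i). If $\Sigma_\infty$ is one-sided then $\chi\neq0$ and the $\langle\tau\rangle$-orbits are the pairs $\{k,m+1-k\}$ together with the singleton $\{(m+1)/2\}$ when $m$ is odd; a single orbit occurs only for $m=1$ --- in which case $\Sigma_j$ is the graph of a section of the (non-orientable) normal bundle of $\Sigma_\infty$, hence one-sided --- or for $m=2$ --- in which case the part of $\Sigma_j$ lying over $\Reg(\Sigma_\infty)$ is the connected double cover classified by $\chi$, along which the normal bundle trivialises, hence $\Sigma_j$ is two-sided --- which is (ii).

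I expect the main obstacle to be the second step: making rigorous that the $m$-sheeted graphical structure over $\Reg(\Sigma_\infty)$ extends, with no extra reconnection, to exactly $m$ disjoint connected pieces over a neighbourhood of each singular point of $\Sigma_\infty$. This is precisely where Proposition \ref{Prop_Induct_conn stable converg implies multi 1} is indispensable --- the sheeting theorem alone is blind to how the pieces join up across the singularities --- and some care is needed to transplant its innermost-scale and induction argument from the model cone setting to a fixed small ball in the closed manifold $M$.
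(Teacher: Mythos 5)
Your proposal is correct in substance, modulo the technical step you yourself flag (transplanting the innermost-scale induction of Proposition~\ref{Prop_Induct_conn stable converg implies multi 1} from the cone model to a fixed small ball, so as to rule out reconnection of sheets across $\Sing(\Sigma_\infty)$), but it takes a genuinely different route from the paper's, chiefly in part (ii). The paper's proof of (i) is the same reconnection argument you sketch (``as in the part of the proof of Proposition~\ref{Prop_Induct_conn stable converg implies multi 1} dealing with $\hat\Sigma_\infty$, the multiplicity $m$ should be $1$''), followed by a separate two-sidedness argument: since small geodesic balls are contractible, $\Sigma_j$ is two-sided near $\Sing(\Sigma_\infty)$, and a global unit normal of $\Sigma_\infty$ pulls back to one on $\Sigma_j$ away from the singular set and extends over it. For (ii) the paper passes to the orientation double cover $\pi:\hat M\to M$, where $\hat\Sigma_\infty=\pi^{-1}(\Sigma_\infty)$ becomes two-sided and $\pi^{-1}(\Sigma_j)$ has one or two components; applying (i) to those components gives $m\in\{1,2\}$, and the sidedness of $\Sigma_j$ is read off from whether $\pi^{-1}(\Sigma_j)$ is connected. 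You instead stay on $M$ and extract both $m$ and the sidedness of $\Sigma_j$ from the monodromy representation $\pi_1(\Reg(\Sigma_\infty))\to S_m$ of the ordered sheets, observing that disjoint ordered graphs can only permute via the order-reversing involution $\tau(k)=m+1-k$, so that the representation factors through the orientation character $\chi = w_1(\nu)$ of $\Sigma_\infty$, and then counting $\langle\tau\rangle$-orbits. Both routes hinge on the same hard lemma (no reconnection at singular points, where the sheeting theorem is silent); the paper's double-cover reduction is slicker in that it recycles (i) as a black box and avoids all combinatorics, whereas your version makes the role of the normal-bundle $w_1$ explicit and proves (i) and (ii) in one stroke, at the modest cost of having to assemble the global component count from the local (sheeting) and singular (reconnection) pictures by hand.
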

        \begin{proof}
            \item
            \paragraph{(i)} By \cite{schoen_regularity_1981}, for $j$ sufficiently large, away from $\Sing(\Sigma_\infty)$, every $\Sigma_j$ is a disjoint union of $m$ graphs over $\Sigma_\infty$. As the part in the proof of Proposition \ref{Prop_Induct_conn stable converg implies multi 1} dealing with $\hat \Sigma_\infty$, the multiplicty $m$ should be $1$. Moreover, since every LSMH in a contractible space is two-sided, $\Sigma_j \cap B^{g_\infty}(\Sing(\Sigma_\infty), r_0)$ is two-sided for small enough $r_0$. When $j>>1$, a unit normal field over $\Sigma_\infty$ induces a unit normal field on $\Sigma_j$ away from $\Sing(\Sigma_\infty)$, and by the previous statement, can extend globally to $\Sigma_j$. This proves that $\Sigma_j$ is two-sided for $j >> 1$.
            
            \paragraph{(ii)} Let $\pi: \hat{M} \to M$ be the double cover associated to $\Sigma_\infty$ such that $\pi^{-1}(\Sigma_\infty)$ is connected and two-sided in $\hat{M}$. Then $\pi^{-1}(\Sigma_j)$ are either connected or have two connected components, which corresponding to $\Sigma_j$ being one-sided or two-sided. Since $|\pi^{-1}(\Sigma_j)|$ $\mbfF$-converges to $m|\hat{\Sigma}_\infty|$, by (i), we have $m = 1$ or $2$. And since $\Sigma_j$ is connected and $\pi$ is a double cover, for sufficiently large $j$, we must have that $\Sigma_j$ is two-sided provided $m =2$, and one-sided provided $m = 1$.
        \end{proof}

        \begin{Thm}[Singular sheeting theorem]
            Given a closed Riemannian manifold $(M^8, g)$ and a connected stable two-sided minimal hypersurface $\Sigma$, for any $\Lambda>0$ and $\delta > 0$, there exists a constant $\varepsilon = \varepsilon(M, g, \Sigma, \Lambda, \delta) > 0$ with the following property. For any stable minimal hypersurface $\Sigma'$ in $(M, g)$ with
            \[
                \scH^7(\Sigma') \leq \Lambda,\quad \dist_H(\Sigma', \Sigma) \leq \varepsilon\,,
            \]
            then $\Sigma'$ can be decomposed into finitely many disjoint minimal hypersurfaces $\set{\Sigma'_i}$ with
            \[
                \mathbf{F}(|\Sigma|, |\Sigma'_i|) < \delta\,.
            \]
        \end{Thm}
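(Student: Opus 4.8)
\emph{Proof strategy.} The plan is to argue by contradiction and to reduce everything to Corollary \ref{Cor_Conv of SMH w multi}, the multiplicity control for connected stable minimal hypersurfaces. Suppose the theorem fails for some $\Lambda,\delta>0$; then there is a sequence $\{\Sigma_j'\}$ of stable minimal hypersurfaces in $(M,g)$ with $\scH^7(\Sigma_j')\leq\Lambda$ and $\dist_H(\Sigma_j',\Sigma)\to 0$, none of which admits a decomposition into disjoint minimal hypersurfaces each within $\mbfF$-distance $\delta$ of $|\Sigma|$. The only natural candidate decomposition is the one into connected components, so it suffices to prove that for $j$ large \emph{every} connected component $\Sigma_{j,i}'$ of $\Sigma_j'$ satisfies $\mbfF(|\Sigma|,|\Sigma_{j,i}'|)<\delta$, and that the number of components is finite.

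First I would record a uniform bound on the number of components. By the monotonicity formula in the closed manifold $(M,g)$, there is $c_0=c_0(M,g)>0$ such that every nonzero stationary integral $7$-varifold in $(M,g)$ has mass at least $c_0$; since the connected components of $\Sigma_j'$ are pairwise disjoint and $\scH^7(\Sigma_j')\leq\Lambda$, there are at most $\Lambda/c_0$ of them. After passing to a subsequence I may therefore assume the number of components is a fixed integer $N$, labelled $\Sigma_{j,1}',\dots,\Sigma_{j,N}'$. Fixing $i$, each $\Sigma_{j,i}'$ is a connected stable minimal hypersurface with optimal regularity and area $\leq\Lambda$, so by \cite{schoen_regularity_1981} (and \cite{sharp_compactness_2017}), after a further subsequence $|\Sigma_{j,i}'|$ $\mbfF$-converges to a stationary integral varifold $V_i$ which is again a stable minimal hypersurface with optimal regularity and $\|V_i\|(M)\geq c_0>0$. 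Since $\dist_H(\Sigma_j',\Sigma)\to 0$ we get $\spt V_i\subseteq\overline\Sigma$. As $\Sigma=\Reg(\Sigma)$ is connected and $\scH^7(\overline\Sigma\setminus\Sigma)=0$, the constancy theorem \cite{simonLecturesGeometricMeasure1984} forces $V_i\llcorner\Sigma=m_i|\Sigma|$ for a nonnegative integer $m_i$, and $m_i\geq 1$ because $V_i$ has mass $\geq c_0$ while $\|V_i\|(\overline\Sigma\setminus\Sigma)=0$. Hence $|\Sigma_{j,i}'|\to m_i|\Sigma|$ with $m_i\in\ZZ_{\geq 1}$.

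Now I would invoke Corollary \ref{Cor_Conv of SMH w multi}(i) with the constant sequence $g_j\equiv g$, $\Sigma_j:=\Sigma_{j,i}'$, and $\Sigma_\infty:=\Sigma$: since $\Sigma$ is two-sided, the limiting multiplicity $m_i$ must be $1$. Consequently $\mbfF(|\Sigma|,|\Sigma_{j,i}'|)\to 0$ as $j\to\infty$ for each $i=1,\dots,N$, so for all sufficiently large $j$ the decomposition of $\Sigma_j'$ into its $N$ connected components is a decomposition into finitely many disjoint minimal hypersurfaces, each within $\mbfF$-distance $\delta$ of $|\Sigma|$. This contradicts the choice of the sequence $\{\Sigma_j'\}$ and proves the theorem.

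\emph{Main obstacle.} The genuinely substantive point — that a connected stable minimal hypersurface cannot ``sheet'' with higher multiplicity onto $\Sigma$ in dimension $8$, even in the presence of singularities — is exactly the content of Corollary \ref{Cor_Conv of SMH w multi}, which in turn rests on Proposition \ref{Prop_Induct_conn stable converg implies multi 1} and the infinity-tangent-cone multiplicity-one Lemma \ref{Lem_App_Stable MH has multi 1 infty tangent cone}. Since these are already established, the remaining steps above are soft: the only care needed is in the bookkeeping of connected components and in confirming that the varifold limit of each component is all of $|\Sigma|$ rather than a proper piece, which the constancy theorem together with the uniform mass lower bound handles.
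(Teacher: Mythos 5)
Your proof is correct and follows essentially the same route as the paper: decompose $\Sigma'$ into its connected components, argue by contradiction, use Schoen--Simon/Sharp compactness plus the constancy theorem to identify the limit of a bad component as $m|\Sigma|$, and then invoke Corollary~\ref{Cor_Conv of SMH w multi}(i) to force $m=1$. The paper streamlines the bookkeeping by taking the contradicting sequence to consist of single connected components from the start, but the content is the same, and your explicit mass lower bound $c_0(M,g)$ for the finiteness of the number of components is a harmless elaboration of what the paper asserts directly from $\scH^7(\Sigma')\leq\Lambda$.
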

        \begin{proof}
            We take $\set{\Sigma'_i}$ to be the disjoint union of connected components of $\Sigma'$, each of which is obviously a stable minimal hypersurface. Since $\scH^7(\Sigma') \leq \Lambda$, the collection is finite. 

            Let's suppose for contradiction that there exists a sequence of connected stable minimal hypersurfaces $\set{\Sigma''_j}^\infty_{j = 1}$ with 
            \[
                \Sigma''_j \subset B^g(\Sigma, 1/j), \quad \scH^7(\Sigma''_j) \leq \Lambda\,,
            \]
            but
            \[
                \mathbf{F}(|\Sigma|, |\Sigma''_j|) \geq \delta\,.
            \]
            By \cite{schoen_regularity_1981} again, we know that $|\Sigma''_j|$ $\mathbf{F}$-subconverges to some $m|\Sigma''_\infty|$ whose support $
            \overline\Sigma''_\infty$ is a stable minimal hypersurface. Moreover, it follows from $\Sigma''_j \subset B^g(\Sigma, 1/j)$ that 
            \[
                \overline\Sigma''_\infty \subset \overline\Sigma\,,
            \]
            which, by the constancy theorem, indicates that $\Sigma''_\infty = \Sigma$.

            By Corollary \ref{Cor_Conv of SMH w multi}(i), $m = 1$, which contradicts our assumption that $\mathbf{F}(|\Sigma|, |\Sigma''_j|) \geq \delta$.
        \end{proof}

        \begin{Lem} \label{Lem_App_Disjt MH near cone}
            Let $\mbfC\subset \RR^8$ be a stable minimal hypercone, and $\{g_j\}_{j\geq 1}$ be a sequence of Riemannian metrics on $\BB_2$ which $C^4_{loc}$-converges to $g_{\eucl}$. For each $j\geq 1$, let $\Sigma_j^{(1)}, \Sigma_j^{(2)}$ be two disjoint stable minimal hypersurfaces in $(\BB_2, g_j)$, and such that when $j\to \infty$, $|\Sigma_j^{(i)}|$ $\mbfF$-converge to $|\mbfC|$, $i=1,2$.
            
            Then for $j>>1$, at least one of the following holds. 
            \begin{enumerate} [(a)]
                \item At least one of the $\Sigma_j^{(i)}$'s has no singularity in $\BB_1$;
                \item For every $i=1, 2$ and every $p\in \Sing(\Sigma_j^{(i)})\cap \BB_1$, we have \[
                          \theta_{|\Sigma_j^{(i)}|}(p) < \theta_\mbfC(\mathbf{0}).   \]
            \end{enumerate}
        \end{Lem}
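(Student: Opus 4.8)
The plan is to argue by contradiction, running an induction on the density level of $\mbfC$. Suppose the conclusion fails, so that after passing to a subsequence (not relabeled) both $\Sigma_j^{(1)}$ and $\Sigma_j^{(2)}$ are singular in $\BB_1$ and, say, there is $p_j\in\Sing(\Sigma_j^{(1)})\cap\BB_1$ with $\theta_{|\Sigma_j^{(1)}|}(p_j)\geq\theta_\mbfC(\mathbf0)$. If $\mbfC$ is a hyperplane this already contradicts Schoen--Simon regularity \cite{schoen_regularity_1981}, so I may assume $\mbfC$ is nontrivial, write $\theta_\mbfC(\mathbf0)=\theta_l$, and induct on $l\geq1$. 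Since $|\Sigma_j^{(i)}|\to|\mbfC|$ and the density of the cone $\mbfC$ is maximized only at its vertex, upper semicontinuity of density forces $\theta_{|\Sigma_j^{(1)}|}(p_j)=\theta_l$ and $p_j\to\mathbf0$; the same argument applied to any $q_j\in\Sing(\Sigma_j^{(2)})\cap\BB_1$ (whose density is at least the smallest nontrivial value $\theta_1>1$) gives $q_j\to\mathbf0$. Monotonicity together with $|\Sigma_j^{(1)}|\to|\mbfC|$ also yields $\theta_{|\Sigma_j^{(1)}|}(p_j,2)-\theta_{|\Sigma_j^{(1)}|}(p_j)\to0$, so Corollary \ref{Cor_Converg in all Scales} applies at $p_j$ and $\mbfC_{p_j}\Sigma_j^{(1)}\to\mbfC$, while the tangent cones of $\Sigma_j^{(2)}$ at $q_j$ subconverge to some stable cone $\mbfC''$ with $\theta_{\mbfC''}(\mathbf0)\leq\theta_l$.

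Next I would exploit quantitative uniqueness of tangent cones. Applying Lemma \ref{Lem_Ana on SMC_Quanti Uniqueness of Tang Cone} at $p_j$ with limiting cone $\mbfC$ and inner radius $r_j\to0$ chosen so that $\theta_{|\Sigma_j^{(1)}|}(p_j,2)-\theta_{|\Sigma_j^{(1)}|}(p_j,r_j)\leq\delta$ (possible for $j$ large, since the density ratios at $p_j$ converge to $\theta_l$ at every fixed scale), and using that $\Sigma_j^{(1)}$ is a small graph over $\mbfC$ on $A^{g_j}(p_j;\tfrac14,\tfrac12)$ by Allard's theorem, one gets $\Sigma_j^{(1)}\llcorner A^{g_j}(p_j;2r_j,\tfrac12)=\graph_{p_j+\mbfC}(\tilde u_j)$ with $\|\tilde u_j\|_{C^2_*}\to0$; combined with the fact that the mass of $\Sigma_j^{(1)}$ in $B^{g_j}(p_j,2r_j)$ is $O(r_j^{7})\to0$, this gives $\mbfF_{B(p_j,1/2)}(|\Sigma_j^{(1)}|,|p_j+\mbfC|)\to0$, i.e. $\Sigma_j^{(1)}$ is conical at all scales about its singularity $p_j$. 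Since $\Sigma_j^{(2)}$ is disjoint from $\Sigma_j^{(1)}$ and also $\mbfF$-converges to $|\mbfC|$, on the annulus where $\Sigma_j^{(1)}$ is a small graph over $p_j+\mbfC$ the hypersurface $\Sigma_j^{(2)}$ lies entirely on one side of it.

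Now I would rescale the pair about $p_j$ at a carefully chosen scale $\lambda_j\to0$ — roughly the maximum of $r_j$, of $\dist_{g_j}(p_j,q_j)$, of $\dist_{g_j}(q_j,\Sigma_j^{(1)})$, and of the ``core scale'' of $\Sigma_j^{(2)}$ at $q_j$ — so that the rescaled pair remains nondegenerate and records both singularities at once. By Schoen--Simon compactness the rescaled sheets subconverge to stable minimal hypersurfaces $\widehat\Sigma_\infty^{(1)},\widehat\Sigma_\infty^{(2)}\subset(\RR^8,g_{\eucl})$, with $\spt\widehat\Sigma_\infty^{(2)}$ weakly on one side of $\spt\widehat\Sigma_\infty^{(1)}$. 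The limit $\widehat\Sigma_\infty^{(1)}$ is asymptotic to $\mbfC$ and has density ratio identically $\theta_l$ from scale $0$ to scale $\infty$ (the monotone quantity equals $\theta_l$ both at the vertex and at infinity), hence by rigidity in the monotonicity formula it is a cone, and being a cone asymptotic to $\mbfC$ it equals $\mbfC$; while $\widehat\Sigma_\infty^{(2)}$ still carries a genuine singularity of density $\leq\theta_l$ at some $\widehat q_\infty$ with $|\widehat q_\infty|\leq1$. Then I distinguish two cases. If $\spt\widehat\Sigma_\infty^{(2)}\cap\spt\mbfC\neq\emptyset$, Ilmanen's strong maximum principle \cite{ilmanen_strong_1996} and unique continuation force $\spt\widehat\Sigma_\infty^{(2)}=\spt\mbfC$, so $\widehat q_\infty=\mathbf0$ with tangent cone $\mbfC$; one rules this out by examining the gap between the two disjoint sheets, a positive Jacobi field on $\mbfC$ whose leading behavior is governed by $\langle w,\nu_\mbfC\rangle$ for a translation vector $w$, a $\mu=0$ eigenfunction on the link which, since $\mu_2\leq0>\mu_1$ (cf. Lemma \ref{Lem_gamma_gap >0}), must change sign — so the two sheets cannot be ordered. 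If instead $\spt\widehat\Sigma_\infty^{(2)}$ is disjoint from $\spt\mbfC$, then $\widehat\Sigma_\infty^{(2)}$ is a stable minimal hypersurface on one side of $\mbfC$: if its tangent cone at infinity is $\mbfC$, the asymptotic analysis of \cite[\S4]{WangZH20_Deform} (with a sliding/maximum-principle argument) shows it is $\mbfC$ or a smooth leaf, contradicting its singularity; otherwise the tangent cone at $\widehat q_\infty$ has density $\leq\theta_{l-1}<\theta_l$, and a further blow-up there — where $\widehat\Sigma_\infty^{(1)}$ becomes a hyperplane and the touching case is excluded by the halfspace/Frankel theorem on $\SSp^7$ — reduces to the inductive hypothesis.

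The main obstacle is this last step: choosing the rescaling $\lambda_j$ so that the limit is genuinely informative while controlling multiplicity and connectedness of the rescaled sheets (in the spirit of Proposition \ref{Prop_Induct_conn stable converg implies multi 1}), and then assembling the rigidity inputs — Ilmanen's strong maximum principle, the sign of the second eigenfunction on the link, and the classification of stable minimal hypersurfaces asymptotic to $\mbfC$ — into a single clean induction on the density level $\theta_l$.
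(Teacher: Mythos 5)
Your proposal lands on several of the right ideas — quantitative uniqueness of tangent cones at $p_j$ (so $\Sigma_j^{(1)}$ is conical at every scale), a blow-up about $p_j$, Ilmanen's strong maximum principle, and a Hardt--Simon-type rigidity for stable hypersurfaces asymptotic to $\mbfC$. But it misses the single clean choice that makes the paper's proof short, and as a result your case analysis becomes both more complicated and, in one branch, not quite sound.

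The paper defines $r_j$ not from a list of geometric quantities but as the breakdown scale of the \emph{graphical property of $\Sigma_j^{(2)}$ over $\Sigma_j^{(1)}$}: the infimum of $s$ such that $\Sigma_j^{(2)}\cap A^{g_j}(p_j,s,1)=\graph_{\Sigma_j^{(1)}}(u_j)$ with $\|u_j\|_{C^2_*}\leq 1$. This one definition does all of the work at once. Disjointness of the closures (Ilmanen) gives $r_j>0$, and Allard gives $r_j\to 0$. Because $r_j$ is a breakdown scale, the rescaled sheet $\hat\Sigma_j^{(2)}:=\eta_{p_j,r_j}(\Sigma_j^{(2)})$ cannot subconverge to $\mbfC$: if it did, $\Sigma_j^{(2)}$ would still be a small graph over $\Sigma_j^{(1)}$ below scale $r_j$, contradicting minimality of $r_j$. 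Thus $\hat\Sigma_\infty\neq\mbfC$ is automatic, and Ilmanen's strong maximum principle forces $\hat\Sigma_\infty$ and $\mbfC$ to be disjoint. The same definition hands you, for free, smoothness of $\Sigma_j^{(2)}$ on $A^{g_j}(p_j,2r_j,3/2)$ (graphs with small $C^2_*$-norm over a smooth sheet are smooth) \emph{and} that $\hat\Sigma_\infty$ is graphical over $\mbfC$ outside $\BB_2$, so its tangent cone at infinity is $\mbfC$. Then a single appeal to Hardt--Simon/\cite{WangZH20_Deform} gives $\Sing(\hat\Sigma_\infty)=\emptyset$, and Allard upgrades this to $\Sing(\Sigma_j^{(2)})\cap B^{g_j}(p_j,3r_j)=\emptyset$ for $j\gg 1$. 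Together with the annulus estimate, $\Sigma_j^{(2)}$ has no singularity in $\BB_1$, which is alternative (a). No induction on the density level $\theta_l$, and no case split, are needed.

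Two concrete problems in your route. First, the blow-up scale $\lambda_j$ you describe (``roughly the maximum of $r_j$, $\dist(p_j,q_j)$, $\dist(q_j,\Sigma_j^{(1)})$, and the core scale of $\Sigma_j^{(2)}$'') is not pinned down; different choices give limits with different asymptotics at infinity, which is exactly why you are forced to branch into the sub-cases about the tangent cone at infinity — a branching the paper's $r_j$ eliminates. Second, your ``touching'' case ($\spt\widehat\Sigma_\infty^{(2)}\cap\spt\mbfC\neq\emptyset$, hence $\widehat\Sigma_\infty^{(2)}=\mbfC$) is a scenario the paper's $r_j$ rules out a priori, and your attempt to exclude it by a positive-Jacobi-field argument keyed to $\mu_2\leq 0 > \mu_1$ is not reliable here: if the two rescaled sheets both converge to $\mbfC$ with multiplicity one, you cannot conclude the renormalized gap survives as a nontrivial ordered Jacobi field without first controlling the renormalization scale — the very thing the graphicality-breakdown definition is designed to do. The upshot: choose $r_j$ as in the paper and the proof collapses to Ilmanen plus Hardt--Simon in one step; without that choice, your induction on $\theta_l$ and the second-eigenfunction argument are neither needed nor fully justified.
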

        \begin{proof}
            By passing to a subsequence, suppose WLOG that (b) doesn't hold for $j\geq 1$. By the upper-semi continuity of the density function at singular points under $\mbfF$-convergence, suppose WLOG that $p_j\in \Sing(\Sigma_j^{(1)})\cap \BB_1$ has $\theta_{|\Sigma_j^{(1)}|}(p_j) = \theta_\mbfC(\mathbf{0})$.  The goal is to show that under these assumptions, $\Sigma_j^{(2)}$ is smooth for $j>>1$. 
            
            First note that by the volume monotonicity, 
            \[
                \limsup_{j\to \infty} \left(\theta_{|\Sigma_j^{(1)}|}(p_j, 1) - \theta_{|\Sigma_j^{(1)}|}(p_j)\right) = \theta_\mbfC(\mathbf{0}, 1) - \theta_\mbfC(\mathbf{0}) = 0.   
            \]
            Hence by Lemma \ref{Lem_Ana on SMC_Quanti Uniqueness of Tang Cone}, for every sequence $\set{s_j\searrow 0}$, $|\eta_{p_j, s_j}(\Sigma_j^{(1)})|$ $\mbfF$-converges to $|\mbfC|$ as $j\to \infty$. 
            
            Let $r_j$ be the infimum among all $s\in (0, 1)$ such that \[
                \Sigma_j^{(2)}\cap A^{g_j}(p_j, s, 1) = \graph_{\Sigma_j^{(1)}}(u_j)\cap A^{g_j}(p_j, s, 1),   \]
            for some $u_j\in C^2(\Sigma_j^{(1)})$ satisfying $\|u_j\|_{C^2_*}\leq 1$. Since $\Sigma_j^{(i)}$ are disjoint, and so are their closures by Ilmanen's strong maximum principle \cite{ilmanen_strong_1996}, we know that $r_j>0$.  Also by Allard's regularity theorem \cite{allard_first_1972}, $r_j \to 0$ when $j\to \infty$. And by the definition of $r_j$, for $j>>1$.
            \begin{align}
                \Sing(\Sigma_j^{(2)})\cap A^{g_j}(p_j, 2r_j, 3/2) = \emptyset.  \label{Equ_App_Sing(Sigma_j^2) dsjt A(2r_j, 1)}
            \end{align}
            
            Consider the blow-up sequence $\hat{\Sigma}_j^{(i)}:= \eta_{p_j, r_j}(\Sigma_j^{(i)})$.  By the discussion above, when $j\to \infty$, $|\hat{\Sigma}_j^{(1)}|$ $\mbfF$-converges to $|\mbfC|$, and $\hat{\Sigma}_j^{(2)}$ $\mbfF$-subconverges to some stable minimal hypersurface $\hat{\Sigma}_\infty\subset (\RR^8, g_{\eucl})$, which is different from $\mbfC$ by the choice of $r_j$. Therefore, $\hat \Sigma_\infty$ is disjoint from $\mbfC$ again by the strong maximum principle. Finally, by \cite{hardtAreaMinimizingHypersurfaces1985, WangZH20_Deform}, $\Sing(\hat{\Sigma}_\infty) = \emptyset$, and thus, by Allard's regularity theorem \cite{allard_first_1972} again,
            \[
                \Sing(\Sigma_j^{(2)})\cap B^{g_j}(p_j, 3r_j) = \emptyset\,,
            \] for $j>>1$.  Together with (\ref{Equ_App_Sing(Sigma_j^2) dsjt A(2r_j, 1)}), we obtain the smoothness of $\Sigma^{(2)}_j$ for $j$ sufficiently large. 
        \end{proof}
        
        \begin{proof}[Proof of Theorem \ref{Thm_Pre_SCAP under converg w multip}.]
            For all cones $\mbfC\in \cC$ which are not hyperplanes, we shall define $\SCAP$ as follows,
            \begin{align}
                \SCAP(\mbfC):= 1 + \sup\{\limsup_{j\to \infty}\SCAP(\Sigma_j \cap \BB_1, \BB_1, g_j)\},  \label{Equ_Def_SCAP}
            \end{align}
            where the supremum is taken over all sequence of pairs $(\Sigma_j, g_j)$ satisfying, 
            \begin{enumerate} [(i)]
                \item $g_j$ is a Riemannian metric on $\BB_2$, $\Sigma_j \subset (\BB_2, g_j)$ is a stable minimal hypersurface;
                \item When $j\to \infty$, $g_j\to g_{\eucl}$ in $C^4_{loc}$, and $|\Sigma_j|$ $\mbfF$-converges to $|\mbfC|$;
                \item For every $p_j\in \Sing(\Sigma_j)\cap \BB_1$, $\theta_{|\Sigma_j|}(p_j) < \theta_{\mbfC}(\mathbf{0})$.
            \end{enumerate}
            And if there is no such a sequence of pairs for $\mbfC$, then we simply define $\SCAP(\mbfC) = 1$.
            
            First note that by (\ref{Equ_Pre_Density dicrete for SMC}), the densities of stable minimal hypercones form a discrete subset of $\RR$, denoted by $\{1=\theta_0<\theta_1<\theta_2<\dots \nearrow +\infty\}$. We denote \[
                \scC(l):= \{\mbfC\in \cC: \theta_\mbfC(\mathbf{0}) = \theta_l\}.   \]
            By the discussion in Section \ref{Sec_Analysis on SMC}, $\scC(l)$ is $\mbfF$-compact for every $l\geq 1$. We also denote $\scC(\leq l):= \bigcup_{1\leq j\leq l}\scC(l)$.
            
            Now, (\ref{Equ_Def_SCAP}) should be viewed as an inductive definition on $\theta_l$. For $\mbfC \in \scC(1)$, we can define $\SCAP(\mbfC) = 1$. Then for every $l\geq 1$, once $\SCAP$ is defined for cones $\mbfC\in \scC(\leq l)$, we can define $\SCAP(\Sigma, N, g)$ following (\ref{Equ_App_SCap for 1 and 2-sided}) for every triple $(\Sigma, N, g)\in \scM$ such that $\forall p\in \Sing(\Sigma)$, $\mbfC_p\Sigma \in \scC(\leq l)$.  And then we can define $\SCAP$ for cones $\mbfC\in \scC(l+1)$ following (\ref{Equ_Def_SCAP}) above.
            
            Under this inductive definition, we first verify inductively that 
            \[
                \SCAP(\mbfC)<+\infty
            \] for every $\mbfC\in \cC$. In fact, for every $l\geq 1$, suppose this has been established for cones in $\scC(\leq l)$. And by a diagonal argument, we can see that $\SCAP$ is upper semi-continuous on $\scC(l)$. Then it follows from the  $\mbfF$-compactness of $\scC(l)$ that
            \[
                \sup_{\mbfC\in \scC(\leq l)} \SCAP(\mbfC) < +\infty\,.
            \]
            Now consider a cone $\mbfC\in \scC(l+1)$, and let $(\Sigma_j, \BB_2, g_j)$ be a maximizing sequence of (\ref{Equ_Def_SCAP}) satisfying (i)-(iii) above. Then 
            \begin{align*}
                \SCAP(\mbfC) & \leq 1 + (\limsup_{j\to \infty}\# \Sing(\Sigma_j)\cap \BB_1 )\cdot \sup_{\mbfC'\in \scC(\leq l)} \SCAP(\mbfC') \\
                             & \leq 1 + \SCap(\mbfC)\cdot \sup_{\mbfC'\in \scC(\leq l)} \SCAP(\mbfC')                                         \\
                             & < +\infty,  
            \end{align*}
            where the second and the last inequality follow from Theorem \ref{Thm_Pre_SCap exist}.\\

            \noindent \textit{Proof of (iii) in Theorem \ref{Thm_Pre_SCAP under converg w multip}.} For any $p \in \Sigma_\infty$, there exists a radius $r_p > 0$, such that $\Sigma_\infty \setminus B^{g_\infty}(p, r_p)$ has index $I$. Since $\ind(\Sigma_j) = I$, by Sharp's compactness, for $j$ large enough, $\Sigma_j \setminus B^{g_\infty}(p, r_p)$ also has index $I$ and thus $\Sigma_j \cap B^{g_\infty}(p, r_p)$ is stable. 
            
            Therefore, by rescaling, it suffices to prove the inequality when $\Sigma_j$'s are stable and $(\Sigma_\infty, N, g_\infty) = (\mbfC, \BB_2, g_{\eucl})$ for some cone $\mbfC\in \cC$. By the definition (\ref{Equ_Def_SCAP}), one only needs to verify the case when there exists some $p_j\in \Sing(\Sigma_j)\cap \BB_1$ such that $\theta_{|\Sigma_j|}(p_j) = \theta_{\mbfC}(\mathbf{0})$. By the volume monotonicity, 
            \[
                \limsup_{j\to \infty} \theta_{|\Sigma_j|}(p_j, 1) - \theta_{|\Sigma_j|}(p_j) = \theta_{\mbfC}(\mathbf{0}, 1) - \theta_\mbfC(\mathbf{0}) = 0.   
            \]
            Hence by Lemma \ref{Lem_Ana on SMC_Quanti Uniqueness of Tang Cone} and Corollary \ref{Cor_Converg in all Scales}, $\Sing(\Sigma_j)\cap \BB_1 = \{p_j\}$, and $\mbfC_{p_j}(\Sigma_j)$ $\mbfF$-converges to $\mbfC$.  It follows from the upper semi-continuity of $\SCAP$ on $\scC(l)$ that
            \[
                \limsup_{j\to\infty}\SCAP(\Sigma_j, \BB_1, g_j) = \limsup_{j\to \infty} \SCAP(\mbfC_{p_j}(\Sigma_j)) \leq \SCAP(\mbfC)\,.  
            \]$\,$\\
            
            \noindent \textit{Proof of (iv) in Theorem \ref{Thm_Pre_SCAP under converg w multip}.}
            By the connectedness of $\Sigma_j$ and Corollary \ref{Cor_Conv of SMH w multi}, we know that $m = 2$ and $\Sigma_\infty$ is one-sided while $\Sigma_j$ are all two-sided for $j>>1$.  Let $\pi:\hat{N} \to N$ be the double cover such that $\hat{\Sigma}_\infty := \pi^{-1}(\Sigma_\infty)$ is connected and two-sided. Then for $j>>1$, $\pi^{-1}(\Sigma_j)$ has two connected components $\Sigma_j^\pm$, each isometric to $\Sigma_j$.  Moreover, $\hat{\Sigma}_\infty$ and $\Sigma_j^\pm$ are stable minimal hypersurfaces in $(\hat{N}, \hat{g}_j:= \pi^* g_j)$, and $|\Sigma^\pm_j|$ $\mbfF$-converges to $|\hat{\Sigma}_\infty|$ when $j\to \infty$. 
            Let $p\in \Sing(\hat{\Sigma}_\infty)$, and $W_p$ be a small neighborhood of $p$ such that $\Sing(\hat{\Sigma}_\infty)\cap \Clos(W_p) = \{p\}$. By definition (\ref{Equ_Def_SCAP}) and Lemma \ref{Lem_App_Disjt MH near cone}, we have
            \begin{equation}\label{Eqn:SCAP_drop}
                \limsup_{j\to \infty} \min\{\SCAP(\Sigma_j^\pm, W_p, \hat{g}_j)\}  \leq \SCAP(\hat{\Sigma}_\infty, W_p, \hat{g}_\infty) - 1\,.
            \end{equation}
            WLOG, let's assume that $\Sigma_j^+$ corresponds to the one with smaller $\SCAP$.  Therefore, by Theorem \ref{Thm_Pre_SCAP under converg w multip} (iii) and (\ref{Eqn:SCAP_drop}), we obtain
            \begin{align*}
                \SCAP(\Sigma_\infty, N, g_\infty) & = \SCAP(\hat{\Sigma}_\infty, \hat{N}, \hat{g}_\infty)                                                                        \\
                                                  & \geq \limsup_{j\to \infty} \SCAP(\Sigma_j^+, \hat{N}\setminus \Clos(W_p), \hat{g}_j) + \SCAP(\Sigma_j^+, W_p, \hat{g}_j) + 1 \\
                                                  & = \limsup_{j\to \infty} \SCAP(\Sigma_j, N, g_j) + 1.    
            \end{align*}
        \end{proof}
    
    \section{Regular Deformation Theorem}  \label{Sec_App_Reg Deform Thm}
        The goal for this section is to prove Theorem \ref{Thm_Reg Deform Thm_OneTwo sided}. We first briefly review the global analysis on singular LSMH introduced in \cite{WangZH20_Deform}.
        
        Let $k\geq 4$, $\alpha\in (0, 1)$ and $\Sigma$ be a two-sided LSMH in a closed manifold $(M^8, g)$ with a unit normal field $\nu$.  
        
        The function space for spectral theory of the Jacobi operator $L_{\Sigma, g}$ is $\scB(\Sigma)$, defined as follows. By \cite[Lemma 3.1]{WangZH20_Deform}, there exists a constant $C_{\Sigma, g}>0$ such that,
        \[
            \|\phi\|_{\scB(\Sigma)}^2:= Q_{\Sigma, g}(\phi, \phi) + C_{\Sigma, g}\|\phi\|_{L^2(\|\Sigma\|_g)}^2 \geq \|\phi\|_{L^2(\|\Sigma\|_g)}^2, \ \ \forall \phi\in C_c^1(\Sigma),   
        \]
        where $Q_{\Sigma, g}$ is the quadratic form associated to the second variation of area of $\Sigma$, defined in (\ref{Equ_Pre_Stablity Ineq for MH}).  Hence, \[
            \scB(\Sigma):= \overline{C_c^1(\Sigma)}^{\|\cdot\|_{\scB}},   \]
        is a well-defined Hilbert space which is naturally included in $L^2(\|\Sigma\|_g)$ and naturally contains $W^{1,2}(\Sigma)$.  By \cite[Lemma 3.2]{WangZH20_Deform}, every $\phi\in \scB(\Sigma)$ is locally $W^{1,2}$ along $\Sigma$; By \cite[Proposition 3.5 \& Lemma 3.9]{WangZH20_Deform}, $\scB(\Sigma)\hookrightarrow L^2(\|\Sigma\|_g)$ is a compact embedding. Hence we have the following spectrum decomposition of $L^2$.
        \begin{Lem} [{\cite[Proposition 3.5, Corollary 3.7]{WangZH20_Deform}}] \label{Lem_Spectrum Decomp of Jac operator}
            There exists a sequence
            \[
                \lambda_1<\lambda_2< \lambda_3< ... \nearrow +\infty
            \] and a corresponding sequence of finite dimensional pairwise $L^2$-orthogonal linear subspaces $\{E_j\}_{j\geq 1}$ of $\scB(\Sigma)\cap C^2(\Sigma)$ such that \[
                -L_{\Sigma, g}\phi = \lambda_j\phi, \ \ \ \forall \phi\in E_j,   \]
            and that 
            \begin{align*}
                L^2(\|\Sigma\|_g) = \overline{\bigoplus_{j\geq 1}E_j}^{L^2}; & \  & \scB(\Sigma) = \overline{\bigoplus_{j\geq 1}E_j}^{\scB}.
            \end{align*}
            Moreover, $\dim E_1 = 1$ and $E_1$ is generated by a positive function on $\Sigma$. And the index of $\Sigma$ defined in (\ref{Equ_Pre_Def of Ind(Sigma)}) coincides with the index of $-L_{\Sigma, g}$:  
            \[
                \ind(\Sigma) = \sum_{\lambda_j<0} \dim E_j\,.   
            \]
        \end{Lem}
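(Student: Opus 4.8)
The plan is to realize the Jacobi operator $-L_{\Sigma,g}$ as a self-adjoint operator with compact resolvent on the Hilbert space $\scB(\Sigma)$ and then invoke the spectral theorem for compact operators. Concretely, the symmetric bilinear form
\[
    B(\phi,\psi):= Q_{\Sigma,g}(\phi,\psi)+C_{\Sigma,g}\langle\phi,\psi\rangle_{L^2(\|\Sigma\|_g)}
\]
is, by \cite[Lemma 3.1]{WangZH20_Deform} and the very definition of $\scB(\Sigma)$, bounded and coercive, with induced norm $\|\cdot\|_{\scB}$. By the Riesz representation (Lax--Milgram) there is a bounded map $T\colon L^2(\|\Sigma\|_g)\to\scB(\Sigma)$ with $B(Tf,\psi)=\langle f,\psi\rangle_{L^2}$ for all $\psi\in\scB(\Sigma)$; composing with the compact inclusion $\scB(\Sigma)\hookrightarrow L^2(\|\Sigma\|_g)$ (\cite[Proposition 3.5, Lemma 3.9]{WangZH20_Deform}) yields a compact, self-adjoint, injective, positive operator on $L^2(\|\Sigma\|_g)$. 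The spectral theorem then produces an $L^2$-orthonormal basis of eigenfunctions with eigenvalues $\mu_k\searrow 0$; setting $\lambda=\mu^{-1}-C_{\Sigma,g}$ and grouping coincident values gives the increasing sequence $\lambda_1<\lambda_2<\cdots\nearrow+\infty$ together with finite-dimensional, pairwise $L^2$-orthogonal eigenspaces $E_j$ on which $-L_{\Sigma,g}\phi=\lambda_j\phi$ holds weakly.

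Next I would upgrade regularity and establish both decompositions. Every $\phi\in\scB(\Sigma)$ is $W^{1,2}_{loc}$ along $\Sigma=\Reg(\Sigma)$ by \cite[Lemma 3.2]{WangZH20_Deform}, so interior Schauder estimates applied to $-L_{\Sigma,g}\phi=\lambda_j\phi$ on the smooth part give $E_j\subset C^2_{loc}(\Sigma)\cap\scB(\Sigma)$. The identity $L^2(\|\Sigma\|_g)=\overline{\bigoplus_j E_j}^{L^2}$ is the spectral theorem itself. For the $\scB$-closure, note that for $\psi_j\in E_j$ one has $B(\phi,\psi_j)=(\lambda_j+C_{\Sigma,g})\langle\phi,\psi_j\rangle_{L^2}$; hence any $\phi\in\scB(\Sigma)$ that is $\scB$-orthogonal to every $E_j$ is $L^2$-orthogonal to every $E_j$, so $\phi=0$ and $\bigoplus_j E_j$ is $\scB$-dense.

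For the ground state, I would use $\lambda_1=\inf\{Q_{\Sigma,g}(\phi,\phi):\phi\in\scB(\Sigma),\ \|\phi\|_{L^2(\|\Sigma\|_g)}=1\}$. A minimizer exists: a minimizing sequence is $\scB$-bounded, hence converges weakly in $\scB(\Sigma)$ and strongly in $L^2$ by compactness, and $Q_{\Sigma,g}$ is weakly lower semicontinuous along such sequences. Since $|\nabla_{\Sigma,g}|\phi||=|\nabla_{\Sigma,g}\phi|$ a.e.\ and the potential term is unchanged under $\phi\mapsto|\phi|$, there is a nonnegative minimizer $\phi_1\geq 0$, solving $-L_{\Sigma,g}\phi_1=\lambda_1\phi_1$; the Harnack inequality on the connected regular part $\Reg(\Sigma)$ (the singular set being a finite set of points, of codimension $7$ in $\Sigma$, hence not disconnecting it) forces $\phi_1>0$ everywhere on $\Sigma$. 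Two $L^2$-orthogonal sign-definite functions cannot coexist, so $\dim E_1=1$ with positive generator.

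Finally, the index identity. By construction, $Q_{\Sigma,g}$ on $\scB(\Sigma)$ has exactly $\sum_{\lambda_j<0}\dim E_j<+\infty$ negative directions, realized on $\bigoplus_{\lambda_j<0}E_j$. To match this with $\ind(\Sigma)$ from (\ref{Equ_Pre_Def of Ind(Sigma)}): restricting an ambient field $X\in\mathfrak X(M)$ to its normal part $\langle X,\nu\rangle\nu$ along $\Sigma$ shows $\ind(\Sigma)\leq\sum_{\lambda_j<0}\dim E_j$; conversely, each negative eigenfunction lies in $\scB(\Sigma)$, so by Lemma \ref{Lem_Pre_Asymp Rate takes value in Gamma(C) and int by part} it has asymptotic rate $\geq\gamma_1^+(\mbfC_p\Sigma)$ at every singularity and finite weighted energy $\int_\Sigma(|\nabla_{\Sigma,g}u|^2+\rho^{-2}u^2)<\infty$; one cuts off near $\Sing(\Sigma)$ at scale $\varepsilon$, extends each $\phi\nu$ to a compactly supported ambient field, and checks the second-variation error is $o(1)$ as $\varepsilon\to 0$, so for small $\varepsilon$ the resulting subspace of $\mathfrak X(M)$ remains negative definite of the same dimension, giving equality. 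I expect this last truncation step to be the \textbf{main obstacle}: ensuring that cutting off the negative eigenfunctions near the conical singularities does not destroy negative-definiteness, which is exactly where the Hardy-type weighted bound from Lemma \ref{Lem_Pre_Asymp Rate takes value in Gamma(C) and int by part} is essential; the other ingredients (form coercivity, compact embedding, the spectral theorem, Harnack) are either quoted or standard.
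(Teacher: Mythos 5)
This lemma is quoted verbatim from \cite{WangZH20_Deform} (Proposition~3.5 and Corollary~3.7 there); the paper offers no independent proof, only the citation, so there is no in-paper argument against which to compare yours.

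Your reconstruction takes the standard and natural route (Lax--Milgram plus the compact embedding $\scB(\Sigma)\hookrightarrow L^2$, then the spectral theorem; elliptic regularity on $\Reg(\Sigma)$; the $\phi\mapsto|\phi|$ trick plus Harnack for the simple positive ground state; and a cutoff argument to identify the spectral index with $\ind(\Sigma)$), and the outline is sound. Two places, however, need more than you give. First, in both directions of the index identity you quietly assume that the relevant test functions lie in $W^{1,2}(\Sigma)$: you invoke Lemma~\ref{Lem_Pre_Asymp Rate takes value in Gamma(C) and int by part}(i), which is stated for $u\in W^{1,2}(\Sigma)$, to get $\cA\cR_p\geq\gamma_1^+$, but a priori the eigenfunctions lie only in $\scB(\Sigma)$, and $\scB(\Sigma)\subset W^{1,2}(\Sigma)$ is not automatic --- it requires a Hardy inequality controlling $\int(|A_{\Sigma,g}|^2+\Ric)u^2$ by $\|u\|_\scB^2$, which degenerates exactly when $\mu_1(\mbfC_p\Sigma)=-(n-2)^2/4$ (i.e.\ $\gamma_1^+=-(n-2)/2$). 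In that borderline case Lemma~\ref{Lem_Pre_Asymp Rate takes value in Gamma(C) and int by part}(iii) also needs the strict inequality $\cA\cR_p>-(n-2)/2$, so one must first upgrade $\geq\gamma_1^+$ to strict decay before the weighted energy bound, and hence your cutoff error estimate, is available. Second, for the easy inequality $\ind(\Sigma)\leq\sum_{\lambda_j<0}\dim E_j$ you need $\langle X,\nu\rangle\in\scB(\Sigma)$ for $X\in\mathfrak X(M)$; since $\scB(\Sigma)$ is defined as a $\scB$-closure of $C^1_c(\Sigma)$, this needs a removable-set (zero-capacity) argument to approximate a bounded smooth function on $\Sigma$ that does not vanish near $\Sing(\Sigma)$ by compactly supported ones. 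Both points are precisely what \cite{WangZH20_Deform} is invoked for, so flagging the truncation step as the main obstacle is right, but as written your sketch does not yet close it.
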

        $\Sigma$ is called \textbf{non-degenerate} if the spectrum $\{\lambda_j\}_{j \geq 1}$ of $-L_{\Sigma, g}$ doesn't contain $0$.  This definition is compatible with the non-degeneracy for smooth minimal hypersurfaces.
        Also, by Lemma \ref{Lem_Pre_Asymp Rate takes value in Gamma(C) and int by part}, every slower growth Jacobi field falls in $W^{1,2}(\Sigma)\subset \scB(\Sigma)$.  Hence, $\Sigma$ being non-degenerate implies semi-nondegeneracy of $\Sigma$.
        
        \cite{WangZH20_Deform} proved the following results on induced Jacobi field for a sequence of converging LSMH.
        \begin{Thm} [{\cite[Theorem 4.2]{WangZH20_Deform}}] \label{Thm_App_Wang20 Induced Jac field}
            Suppose $\set{\Sigma_j\subset (M, g_j)}_j$ be a family of two-sided LSMH satisfying (a), (b) in Theorem \ref{Thm_Reg Deform Thm_OneTwo sided} with $f\in C^{k, \alpha}(M)$ such that $\nu(f)$ does not identically vanishes along $\Sigma$. 
            Then there exists a nontrivial induced Jacobi field $u\in C^2_{loc}(\Sigma)$ satisfying \[
                L_{\Sigma, g}u = c\nu(f),   \]
            for some $c\geq 0$. And for every $p\in \Sing(\Sigma)$, $\cA\cR_p(u)\geq \gamma_1^-(\mbfC_p\Sigma)$.
            
            Moreover, if for some $p\in \Sing(\Sigma)$, $\cA\cR_p(u)\in \{\gamma_1^\pm(\mbfC_p\Sigma)\}$, then for some sufficiently small neighborhood $U_p\subset M$ of $p$, we have $\Sing(\Sigma_j)\cap U_p = \emptyset$ for infinitely many $j>>1$.
        \end{Thm}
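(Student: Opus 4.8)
The plan is to obtain $u$ as a limit of suitably renormalised graphical functions of the $\Sigma_j$ over $\Sigma$, to identify the equation it solves through the minimal‑graph analysis of Appendix~\ref{Sec_App_Geom of Minimal Graph}, and to control its decay at $\Sing(\Sigma)$ by the growth‑rate monotonicity of Section~\ref{Sec_Analysis on SMC}; the borderline‑rate ``moreover'' is then a blow‑up argument at the singular points of the $\Sigma_j$.

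\emph{Construction of $u$ and its equation.} We may assume $(g_j,\Sigma_j)\neq(g,\Sigma)$ for infinitely many $j$ and pass to that subsequence. By Sharp's compactness \cite{sharp_compactness_2017} together with \cite{schoen_regularity_1981} and \cite{allard_first_1972}, $|\Sigma_j|_{g_j}\to|\Sigma|_g$ with multiplicity one, and for each $r>0$ and $j$ large, $\Sigma_j$ is a normal $g$-graph over $\Sigma\setminus B^g(\Sing(\Sigma),r)$ of a function $u_j\to0$ in $C^2_{loc}(\Sigma\setminus\Sing(\Sigma))$. Since $\Sigma$ is $g$-minimal and $\Sigma_j$ is $g_j$-minimal with $g_j=(1+t_jf_j)g$, Theorem~\ref{Thm_Append_MSE for graphs}(iii) (with $(f^-,u^-)=(0,0)$) gives, away from $\Sing(\Sigma)$,
\[
 -L_{\Sigma,g}u_j+\tfrac n2\,t_j\,\nu(f_j)=\diverg_{\Sigma}(\cE_1^j)+r_\cS^{-1}\cE_2^j,\qquad |\cE_1^j|+|\cE_2^j|\le C\,(t_j[f_j]_{C^2_*}+r_\cS^{-1}|u_j|+|du_j|)^2 .
\]
Fix a small $r_0>0$, set $d_j:=\|u_j\|_{L^2(\Sigma\setminus B^g(\Sing(\Sigma),r_0);\|\Sigma\|_g)}+t_j\|\nu(f)\|_{L^2(\Sigma)}>0$ (positive since either $t_j\equiv0$ and $\Sigma_j\neq\Sigma$, or $t_j>0$ and $\nu(f)\not\equiv0$), and put $\hat u_j:=u_j/d_j$; after a further subsequence $t_j/d_j\to\tau\in[0,1]$. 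Dividing the equation by $d_j$ makes its right-hand side bounded on compacta by $(\text{$o(1)$})\cdot(1+\|\hat u_j\|_{C^1})$, so standard interior elliptic estimates with absorption yield uniform $C^2_{loc}(\Sigma\setminus\Sing(\Sigma))$ bounds; hence $\hat u_j\to u$ in $C^2_{loc}$ for some $u$ with $L_{\Sigma,g}u=c\,\nu(f)$, $c:=\tfrac n2\tau\ge0$, and $c\,\nu(f)\in L^\infty(\Sigma)$.

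\emph{No concentration and the bound on $\cA\cR_p$.} Fix $p\in\Sing(\Sigma)$, $\mbfC:=\mbfC_p\Sigma$. As $\gamma_1^-(\mbfC)$ is the largest of the $\gamma_j^-(\mbfC)$ while every $\gamma_j^+(\mbfC)\ge-(n-2)/2\ge\gamma_1^-(\mbfC)$, for $\sigma>0$ below the spectral gap of $\mbfC$ near $\gamma_1^-(\mbfC)$ the exponent $\gamma:=\gamma_1^-(\mbfC)-\sigma$ has $\dist_\RR(\gamma,\Gamma(\mbfC)\cup\{-(n-2)/2\})\ge\sigma$ and (using $\mu_1\le-n+1$) lies in $(-n+1,1)$, so Corollary~\ref{Cor_Ana on SMC_Growth Rate Monoton with Perturb} is applicable. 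By Lemma~\ref{Lem_Ana on SMC_Quanti Uniqueness of Tang Cone}/Corollary~\ref{Cor_Converg in all Scales} applied to $\Sigma$ and to $\Sigma_j$, for $j$ large both are $C^2_*$-small graphs over $\mbfC$ on each $A^g(p;s,2s)$ with $s$ down to a scale $r_j\searrow0$ (at which either a singular point of $\Sigma_j$ forms or its density ratio at $p$ drops below $\theta_\mbfC(\mathbf{0})$). Writing $\tilde w_j$ for the difference of the corresponding cone-graphical functions (comparable to $u_j$ via the reparametrisation estimates of Theorem~\ref{Thm_Append_MSE for graphs}), $\tilde w_j$ solves a perturbed Jacobi equation \eqref{Equ_Pre_Pert Jac equ} on $\mbfC$ with small errors (Remark~\ref{Rem_Pre_Jac field equ on MH near cone}), the bounded forcing $t_j\nu(f_j)$ — of asymptotic rate $\ge0>\gamma$ and of size $O(d_j)$ after renormalising — being a lower-order term the monotonicity tolerates. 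Corollary~\ref{Cor_Ana on SMC_Growth Rate Monoton with Perturb} then makes $l\mapsto J_K^\gamma(\tilde w_j;K^{-l})$ convex (in the three-term sense) on $K^{-l}\in(r_j,r_0)$; since increments increase it suffices that the increment be $\le0$ at the innermost scale, which holds because $\eta_{p,r_j}(\Sigma_j)$ subconverges to a stable minimal hypersurface asymptotic to $\mbfC$ at infinity whose graphical function over $\mbfC$ has $\cA\cR_\infty\ge0>\gamma$ (Lemma~\ref{Lem_Ana on SMC_AR_infty(C+x; C) = 0} and the end of Section~\ref{Sec_Analysis on SMC}). With the outer bound $J_K^\gamma(\tilde w_j;r_0)\lesssim d_j^2$ this yields $\|u_j\|_{L^2(A^g(p;\tau/2,\tau);\|\Sigma\|_g)}\le Cd_j\,\tau^{\,n/2+\gamma_1^-(\mbfC)-\sigma}$ for $\tau\in(r_j,r_0)$, uniformly in $j$. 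Summing dyadically, $\{\hat u_j\}$ does not $L^2$-concentrate at $\Sing(\Sigma)$, so $u\not\equiv0$ (by the normalisation, or by the inhomogeneous equation when $c>0$), and passing to the limit and letting $\sigma\to0$ gives $\cA\cR_p(u)\ge\gamma_1^-(\mbfC_p\Sigma)$ for every $p\in\Sing(\Sigma)$ (with Lemma~\ref{Lem_Pre_Asymp Rate takes value in Gamma(C) and int by part}).

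\emph{The borderline case.} Suppose $\cA\cR_p(u)\in\{\gamma_1^\pm(\mbfC_p\Sigma)\}$ but, contrary to the claim, $\Sing(\Sigma_j)\cap U_p\neq\emptyset$ for all large $j$ and every neighbourhood $U_p$ of $p$; then there are $p_j\in\Sing(\Sigma_j)$, $p_j\to p$, and, by upper semicontinuity of density and discreteness \eqref{Equ_Pre_Density dicrete for SMC}, $\mbfC_{p_j}\Sigma_j\to\mbfC_p\Sigma$ along a subsequence. When $\cA\cR_p(u)$ equals $\gamma_1^\pm$ the growth estimate of the previous step is attained, so the three-term monotonicity is asymptotically an equality, which forces the rescaling of $\Sigma_j$ at the transition scale $r_j$ to converge to a \emph{homogeneous} model over $\mbfC_p\Sigma$ — a dilation/translation of the cone, or a Hardt--Simon leaf; combining this with the Hardt--Simon description of minimal hypersurfaces asymptotic to stable hypercones (\cite{hardtAreaMinimizingHypersurfaces1985}, \cite[Section~4]{WangZH20_Deform}), which pins down the admissible values of $\cA\cR_\infty$ and yields smoothness of the homogeneous models, one deduces that $\Sigma_j$ is in fact regular on a fixed neighbourhood of $p$ for infinitely many $j$, a contradiction. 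Hence $\cA\cR_p(u)\in\{\gamma_1^\pm(\mbfC_p\Sigma)\}$ forces $\Sing(\Sigma_j)\cap U_p=\emptyset$ for infinitely many $j$, as asserted. I expect this last step — extracting the homogeneous normal form at the transition scale and ruling out residual singularities of $\Sigma_j$ below it — to be the main obstacle, the growth-rate monotonicity of Section~\ref{Sec_Analysis on SMC} and the cone asymptotics being the tools that make it tractable.
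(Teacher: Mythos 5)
You should know at the outset that the paper contains no proof of this statement: it is imported verbatim from \cite{WangZH20_Deform} (Theorem 4.2 there) and is used as a black box in Appendix \ref{Sec_App_Reg Deform Thm} to deduce Theorem \ref{Thm_Reg Deform Thm_OneTwo sided}. So the comparison is with the cited source, not with anything in this paper. Your skeleton — renormalized graphical functions over $\Sigma$, the minimal-graph equation from Theorem \ref{Thm_Append_MSE for graphs}(iii), a three-annulus growth estimate at each singular point, and a blow-up at the transition scale — is the right circle of ideas, but two steps as written have genuine gaps.

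First, the inner boundary condition for the three-annulus iteration. You close the iteration by asserting that $\eta_{p,r_j}(\Sigma_j)$ subconverges to a minimal hypersurface whose graphical function over $\mbfC:=\mbfC_p\Sigma$ has $\cA\cR_\infty\ge 0$, citing Lemma \ref{Lem_Ana on SMC_AR_infty(C+x; C) = 0}. That lemma only treats translated cones $\mbfC+\mathbf{x}$, and a translated cone is what one obtains in Lemma \ref{Lem_L^2 Growth Estimate} precisely because hypothesis (b) there forces $\Sigma^1$ to retain a singular point of the same density. In the present theorem no such hypothesis is available — the ``moreover'' clause is exactly about the case in which the singularity disappears — so the blow-up limit at the transition scale can be, for instance, a Hardt--Simon leaf, whose graphical function over $\mbfC$ decays at rate $\gamma_1^+(\mbfC)<0$ at infinity, or a priori some other hypersurface asymptotic to $\mbfC$ with an even faster decay. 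What the argument actually needs is that no admissible limit decays faster than $r^{\gamma_1^-(\mbfC)}$ at infinity; establishing this (via the structure theory of embedded minimal hypersurfaces asymptotic to a regular stable cone, cf.\ \cite[Section 4]{WangZH20_Deform} and \cite{hardtAreaMinimizingHypersurfaces1985}) is the real content behind the bound $\cA\cR_p(u)\ge\gamma_1^-(\mbfC_p\Sigma)$, and your sketch assumes it away. (A related, smaller point: the uniform $C^2_{loc}$ bounds on $\hat u_j$ on annuli approaching $p$ already require this growth control, so the two paragraphs of your construction must be run together rather than in sequence.)

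Second, the borderline case. You derive regularity of $\Sigma_j$ near $p$ from ``asymptotic equality in the three-term monotonicity forcing a homogeneous model.'' Equality in Lemma \ref{Lem_Ana on SMC_Growth Rate Monoton} forces the solution to vanish identically, not to be homogeneous; and if $u$ decays exactly like $r^{\gamma_1^\pm}$ the three-term quantity computed with the shifted exponent $\gamma_1^--\sigma$ is strictly convex, so no near-equality occurs. The mechanism is instead the sign information carried by the rate: $\cA\cR_p(u)\in\{\gamma_1^\pm(\mbfC_p\Sigma)\}$ means the leading mode is $r^{\gamma_1^\pm}\varphi_1$ with $\varphi_1>0$, so at the transition scale $\Sigma_j$ lies to leading order on one side of the cone; Hardt--Simon theory then identifies the blow-up limit as a smooth foliation leaf, and Allard \cite{allard_first_1972} together with Schoen--Simon \cite{schoen_regularity_1981} upgrades this to regularity of $\Sigma_j$ on $B(p,Cr_j)$, which combined with the graphical (hence regular) annulus $A(p,r_j,r_0)$ yields a singularity-free neighborhood of fixed size. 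None of this one-sidedness/foliation step appears in your outline, and without it the contradiction you aim for does not materialize; this, rather than the monotonicity formula, is where the work in \cite{WangZH20_Deform} lies.
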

        
        To apply this theorem, in \cite{liwang2020generic}, it is shown that if $\Sigma$ is non-degenerate, then there is an open dense way of choosing $f$ in Theorem \ref{Thm_App_Wang20 Induced Jac field} such that the corresponding $u$ satisfies $\cA\cR_p(u)\in \{\gamma_1^\pm(\mbfC_p\Sigma)\}$ for at least one $p\in \Sing(\Sigma)$.  Here, we shall generalize this to semi-nondegenerate $\Sigma$, using the same argument as in \cite{WangZH20_Deform, liwang2020generic}.
        \begin{Lem}  \label{Lem_App_scF^k,alpha}
            Suppose $\Sigma$ is a two-sided LSMH. Then the following $\scF^{k,\alpha}$ is open and dense in $C^{k,\alpha}(M)$.
            When $\Sigma$ is non-degenerate, \[
                \scF^{k,\alpha} := \{f\in C^{k,\alpha}(M): L_{\Sigma, g}u = \nu(f) \text{ has no slower growth solution }u\};   \]
            When $\Sigma$ is degenerate, \[
                \scF^{k,\alpha}:= \{f\in C^{k,\alpha}(M): L_{\Sigma, g}u = \nu(f) \text{ has no solution }u\in \scB(\Sigma)\}.  \]
            
            Moreover, if $G < \text{Diff}(M)$ is a finite group which acts fixed-point-freely on $M$, such that $(\Sigma, M, g)$ is $G$-invariant in the sense of Definition \ref{Def_G-inv}, then $\scF^{k,\alpha}\cap C^{k,\alpha}(M)^G$ is also dense in $C^{k,\alpha}(M)^G$.
        \end{Lem}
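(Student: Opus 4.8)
The plan is to recast the statement as a linear Fredholm problem for the Jacobi operator $L_{\Sigma,g}$ on the Hilbert space $\scB(\Sigma)$, using the spectral decomposition of Lemma~\ref{Lem_Spectrum Decomp of Jac operator}. Since $\scB(\Sigma)\hookrightarrow L^2(\|\Sigma\|_g)$ is compact, $-L_{\Sigma,g}$ has discrete spectrum $\lambda_1<\lambda_2<\cdots\nearrow+\infty$ with finite-dimensional eigenspaces. When $\Sigma$ is non-degenerate, $0$ is not an eigenvalue, so $L_{\Sigma,g}$ restricts to a continuous linear isomorphism of $\scB(\Sigma)$ onto $L^2(\|\Sigma\|_g)$ and, for each $f$, there is a unique $u_f\in\scB(\Sigma)$ with $L_{\Sigma,g}u_f=\nu(f)$. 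When $\Sigma$ is degenerate, the Fredholm alternative for the self-adjoint operator $-L_{\Sigma,g}$ gives: $L_{\Sigma,g}u=\nu(f)$ admits a solution $u\in\scB(\Sigma)$ if and only if $\nu(f)\perp\Ker L_{\Sigma,g}|_{\scB(\Sigma)}$ in $L^2(\|\Sigma\|_g)$. In either case, as $f\mapsto u_f$ and $f\mapsto\nu(f)|_\Sigma$ are linear, the complement $\cN:=C^{k,\alpha}(M)\setminus\scF^{k,\alpha}$ is a linear subspace of $C^{k,\alpha}(M)$, so it suffices to show $\cN$ is closed and proper: the complement of a proper closed subspace of a Banach space is automatically open and dense.

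Closedness of $\cN$: I would exhibit it as a finite intersection of kernels of continuous linear functionals. In the degenerate case $\cN=\bigcap_i\{f:\int_\Sigma\phi_0^{(i)}\,\nu(f)\,d\|\Sigma\|_g=0\}$ for an $L^2$-basis $\{\phi_0^{(i)}\}$ of the finite-dimensional space $\Ker L_{\Sigma,g}|_{\scB(\Sigma)}$; each $\phi_0^{(i)}$ lies in $L^1(\|\Sigma\|_g)$ since $\scH^n(\Sigma)<+\infty$, so $f\mapsto\int_\Sigma\phi_0^{(i)}\,\nu(f)$ is continuous on $C^{k,\alpha}(M)$, and $\cN$ is closed. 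In the non-degenerate case, note first that a slower-growth solution of $L_{\Sigma,g}u=\nu(f)$ has finite energy (Lemma~\ref{Lem_Pre_Asymp Rate takes value in Gamma(C) and int by part} and the gap $\gamma_2^+>\gamma_1^+\ge-(n-2)/2$ from Lemma~\ref{Lem_gamma_gap >0}), hence lies in $\scB(\Sigma)$ and must equal $u_f$; so $\scF^{k,\alpha}=\{f:u_f$ is not of slower growth$\}$, and $u_f$ fails to be of slower growth precisely when, at some $p\in\Sing\Sigma$, the expansion of $u_f$ at $p$ carries a nonzero coefficient on one of the finitely many homogeneous modes of rate strictly below $\gamma_2^+(\mbfC_p\Sigma)$. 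Each such coefficient is a continuous linear functional of $f$, via a Green's-identity pairing of $u_f$ against a compactly-supported cut-off of the conjugate homogeneous local Jacobi field; concretely the coefficient $c_1^+(u_f)$ of the $\rho^{\gamma_1^+(\mbfC_p\Sigma)}\varphi_1$-mode satisfies $c_1^+(u_f)=C_p\int_\Sigma(\zeta_p-\tilde w_p)\,\nu(f)\,d\|\Sigma\|_g$ with $C_p\ne0$, where $\tilde w_p$ is a cut-off of the homogeneous local Jacobi field of rate $\gamma_1^-(\mbfC_p\Sigma)$ from \eqref{Equ_Pre_Homogen Jac field} and $\zeta_p:=(L_{\Sigma,g}|_{\scB(\Sigma)})^{-1}(L_{\Sigma,g}\tilde w_p)\in\scB(\Sigma)$. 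Thus $\cN$ is closed. (Alternatively, closedness follows from a compactness argument in the spirit of Lemma~\ref{Lem_Cptness for Slower Growth Jac Fields}.)

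Properness of $\cN$ — that is, $\scF^{k,\alpha}\ne\emptyset$ — is where the real work lies. In the degenerate case it is immediate: pick $0\ne\phi_0\in\Ker L_{\Sigma,g}|_{\scB(\Sigma)}$ and, since $\{\nu(f)|_\Sigma:f\in C^\infty(M)\}$ contains every smooth function on $\Sigma$ supported away from $\Sing\Sigma$ and is therefore $L^2(\|\Sigma\|_g)$-dense, choose $f$ with $\int_\Sigma\phi_0\,\nu(f)\ne0$. In the non-degenerate case I would show that the functional $f\mapsto c_1^+(u_f)$ above is not identically zero for at least one $p\in\Sing\Sigma$: the difference $\zeta_p-\tilde w_p$ is a local Jacobi field near $p$ built from an element $\zeta_p\in\scB(\Sigma)$ of rate $\ge\gamma_1^+(\mbfC_p\Sigma)$ and $\tilde w_p$ of rate $\gamma_1^-(\mbfC_p\Sigma)<\gamma_1^+(\mbfC_p\Sigma)$, so $\zeta_p-\tilde w_p$ is a nonzero element of $L^2(\|\Sigma\|_g)$, and the $L^2$-density of $\nu(C^\infty(M))$ again produces $f$ with $c_1^+(u_f)\ne0$, i.e.\ with $u_f$ not of slower growth. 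The borderline case $\mu_1(\mbfC_p\Sigma)=-(n-2)^2/4$, where $\gamma_1^+=\gamma_1^-=-(n-2)/2$ and the slow mode is $\rho^{-(n-2)/2}\log\rho$, is handled the same way with $\tilde w_p$ replaced by the logarithmic homogeneous solution. I expect this step — selecting the right local comparison Jacobi field, verifying the Green's-identity formula with $C_p\ne0$, and tracking which asymptotic rates lie in $\scB(\Sigma)$ versus $W^{1,2}(\Sigma)$ — to be the main obstacle, as it is precisely where the asymptotic analysis of \cite{WangZH20_Deform} is invoked; the rest is soft functional analysis.

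For the equivariant assertion I would run the same argument on the closed subspace $C^{k,\alpha}(M)^G\subset C^{k,\alpha}(M)$. Since $L_{\Sigma,g}$, $\scB(\Sigma)$, $\nu(\cdot)$ and the induced $G$-representation (Definition~\ref{Def_G-inv}) are all $G$-equivariant, $\cN\cap C^{k,\alpha}(M)^G$ is again a closed linear subspace of $C^{k,\alpha}(M)^G$, and for its properness it is enough to make the witness $G$-invariant: take $\phi_0$ in $(\Ker L_{\Sigma,g}|_{\scB(\Sigma)})^G$ in the degenerate case, resp.\ the singular point $p$ and a $G$-invariant cut-off local model $\tilde w_p$ obtained by symmetrizing over the orbit in the non-degenerate case, and then $G$-symmetrize the bump $f$ to $\bar f:=\sum_{\gamma\in G}\gamma\cdot f$. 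Symmetrization is legitimate because $G$ acts fixed-point-freely on $M$, so the $|G|$ translates of a small bump have pairwise disjoint supports; then $\int_\Sigma\phi_0\,\nu(\bar f)=|G|\int_\Sigma\phi_0\,\nu(f)\ne0$, and likewise in the non-degenerate case, completing the proof.
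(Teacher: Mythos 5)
Your overall strategy — recast $\cN := C^{k,\alpha}(M)\setminus\scF^{k,\alpha}$ as a proper closed linear subspace, so that its complement is automatically open and dense — is the same as the paper's; the paper packages the closedness and the ``$cf_1+f_2$'' reduction to nonemptiness in a citation to \cite[Lemma 3.21]{WangZH20_Deform}, while you try to exhibit $\cN$ explicitly as an intersection of kernels of continuous functionals. The degenerate case is then identical: the Fredholm alternative gives $\cN = \{f : \nu(f)\perp\Ker L_{\Sigma,g}|_{\scB(\Sigma)}\}$, a finite intersection of closed hyperplanes, and $L^2$-density of $\{\nu(f)|_\Sigma\}$ gives properness. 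The real divergence is the nonemptiness step in the non-degenerate case. You go via duality: construct a global $L^2$ Jacobi field $\zeta_p-\tilde w_p$ with a nonzero $\gamma_1^-$-mode at $p$ and pair it against $\nu(f)$ through Green's identity to extract the coefficient $c_1^+(u_f)$. The paper instead constructs the solution $u$ directly — a positive local Jacobi field near $p$ in $\scB(\Sigma)$, cut off so $\operatorname{spt}(u)\subset\subset U_p$ and $L_{\Sigma,g}u$ vanishes near $p$ — and recovers $\nu(f):=L_{\Sigma,g}u$. Positivity and $L_{\Sigma,g}u=0$ near $p$ force $\cA\cR_p(u)\in\{\gamma_1^\pm\}$ by Lemma~\ref{Lem_Pre_Asymp Rate takes value in Gamma(C) and int by part}(ii), membership in $\scB(\Sigma)$ forces $\cA\cR_p(u)=\gamma_1^+<\gamma_2^+$, and uniqueness of the $\scB(\Sigma)$-solution under non-degeneracy (Lemma~\ref{Lem_Spectrum Decomp of Jac operator}) finishes. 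This is cleaner: there is no Wronskian, no boundary-term bookkeeping at $p$, and no inversion of $L_{\Sigma,g}$.

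The place where your outline has a concrete gap — one you yourself flag as the ``main obstacle'' — is the definition $\zeta_p := (L_{\Sigma,g}|_{\scB(\Sigma)})^{-1}(L_{\Sigma,g}\tilde w_p)$ when $\tilde w_p$ is a cut-off of the \emph{cone's} homogeneous rate-$\gamma_1^-$ field from \eqref{Equ_Pre_Homogen Jac field}. Near $p$, $L_{\Sigma,g}\tilde w_p = (L_{\Sigma,g}-L_{\mbfC_p\Sigma})\tilde w_p$ picks up the graphical error between $\Sigma$ and its tangent cone, and this decays only like $\|\phi\|_{C^2_*}\rho^{\gamma_1^--2}$; with $\gamma_1^-\in[-3,-5/2]$ this lies in $L^2(\|\Sigma\|_g)$ only if the decay rate of $\phi$ exceeds roughly $1$ to $3/2$, which is not guaranteed in general, so $\zeta_p$ need not be well-defined as stated. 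The fix is to take $\tilde w_p$ to be a genuine local $\Sigma$-Jacobi field near $p$ with prescribed rate $\gamma_1^-$ (which the asymptotic analysis of \cite{WangZH20_Deform} supplies) and cut it off far from $p$; then $L_{\Sigma,g}\tilde w_p$ is bounded and compactly supported away from $p$, and your construction goes through. The paper's positivity ansatz sidesteps this entirely. The equivariant argument — localize in a fundamental domain for the fixed-point-free $G$-action and symmetrize the bump — matches the paper.
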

        \begin{proof}
            When $\Sigma$ is degenerate, by integration by parts, 
            \[
                C^{k,\alpha}(M)\setminus \scF^{k,\alpha} = \{f\in C^{k,\alpha}(M): \nu(f)\in (\Ker L_{\Sigma, g})^\perp\},   
            \]
            which is a closed linear subspace of $C^{k,\alpha}(M)$ and does not contain any $C^{k,\alpha}(M)^G$. Hence $\scF^{k,\alpha}\cap C^{k,\alpha}(M)^G$ is open and dense in $C^{k,\alpha}(M)^G$.
            
            When $\Sigma$ is non-degenerate, openness of $\scF^{k,\alpha}$ follows from \cite[Lemma 3.21]{WangZH20_Deform}. And hence for a general finite subgroup $G < \text{Diff}(M)$, $\scF^{k,\alpha}\cap C^{k,\alpha}(M)^G$ is relatively open in $C^{k,\alpha}(M)^G$.
            
            To prove the denseness for general $G$, we also apply \cite[Lemma 3.21]{WangZH20_Deform} to see that for every $f_1\in \scF^{k,\alpha}\cap C^{k,\alpha}(M)^G$, every $c\neq 0$ and every $f_2\in C^{k,\alpha}(M)^G\setminus \scF^{k,\alpha}$, we have $cf_1+f_2\in \scF^{k,\alpha}\cap C^{k,\alpha}(M)^G$. Hence to prove denseness, it suffices to verify that $\scF^{k,\alpha}\cap C^{k,\alpha}(M)^G \neq \emptyset$. To see this, let $p\in \Sing(\Sigma)$ and $U_p\ni p$ be a small neighborhood in $M$ such that $\gamma(U_p)\cap U_p = \emptyset$, $\forall \gamma\in G\setminus \{\mathrm{Id}_M\}$. Take $u\in C_{loc}^\infty(U_p)\cap \scB(\Sigma)$ such that $u>0$ near $p$, $\spt(u)\subset \subset U_p$ and $L_{\Sigma, g} u$ vanishes near $\{p\}$. Then take $f\in C_c^\infty(U_p)$ such that $\nu(f) = L_{\Sigma, g} u$ in $U_p$.  Then extend $f$ and $u$ to a $G$-invariant smooth function on $M$ and $\Sigma$. By Lemma \ref{Lem_Spectrum Decomp of Jac operator}, $u\in \scB(\Sigma)$ is the unique solution of $L_{\Sigma, g} u = \nu(f)$. Then by Lemma \ref{Lem_Pre_Asymp Rate takes value in Gamma(C) and int by part} (ii), $f\in \scF^{k,\alpha}\cap C^{k,\alpha}(M)^G$. This finishes the proof of denseness.
        \end{proof}
        
        \begin{Rem}
            If $\Sigma$ is semi-nondegenerate, by adapting the proof of Lemma \ref{Lem_Cptness for Slower Growth Jac Fields}, one can also show that 
            \[
                \{f\in C^{k,\alpha}(M): L_{\Sigma, g}u = \nu(f) \text{ has no solution }u \text{ of slower growth} \},   
            \]
            is open and dense in $C^{k,\alpha}(M)$.
        \end{Rem}
        
        \begin{proof}[Proof of Theorem \ref{Thm_Reg Deform Thm_OneTwo sided}.]
            First assume that $\Sigma$ is two-sided. Let $\scF^{k,\alpha}$ be given by Lemma \ref{Lem_App_scF^k,alpha}. Also let $f\in \scF^{k,\alpha}$ and $\Sigma_j, g_j$ be given by Theorem \ref{Thm_Reg Deform Thm_OneTwo sided} (a), (b). Then by Theorem \ref{Thm_App_Wang20 Induced Jac field}, the induced Jacobi field $u \not\equiv 0$ satisfies $L_{\Sigma, g}u = c\nu(f)$ for some $c\geq 0$. Hence either $c\neq 0$, or $c = 0$ and thus $u\in \Ker^+L_{\Sigma, g}$. By definition of $\scF^{k,\alpha}$ and semi-nondegeneracy of $\Sigma$, we know that in both cases, $u$ is not a function of slower growth on $\Sigma$, in other words, there exists $p\in \Sing(\Sigma)$ such that $\cA\cR_p(u)<\gamma_2^+(\mbfC_p\Sigma)$.  Hence by Lemma \ref{Lem_Pre_Asymp Rate takes value in Gamma(C) and int by part}, $\cA\cR_p(u)\in \set{\gamma_1^\pm(\mbfC_p\Sigma)}$ and Theorem \ref{Thm_App_Wang20 Induced Jac field} applies to conclude Theorem \ref{Thm_Reg Deform Thm_OneTwo sided}.
            
            When $\Sigma$ is one-sided, let $\hat{\Sigma}\subset (\hat{M}, \hat{g})$ be its two-sided double cover. Then using Lemma \ref{Lem_App_scF^k,alpha} with $G = \ZZ_2 < \text{Diff}(\hat{M})$ to be the deck transformation, the same argument as above proves Theorem \ref{Thm_Reg Deform Thm_OneTwo sided}.
        \end{proof}
        
        \bibliography{reference}
\end{document}